\documentclass[11pt]{article}

\usepackage[margin=1in]{geometry}
\usepackage{amsmath,amssymb,amsthm,mathtools,bm,mathrsfs,euscript}
\usepackage[Symbolsmallscale]{upgreek}
\usepackage{enumitem,microtype,booktabs,array}
\usepackage[colorlinks=true,linkcolor=blue,citecolor=blue,urlcolor=blue]{hyperref}
\usepackage{aliascnt}
\usepackage[nameinlink,capitalise]{cleveref}
\allowdisplaybreaks
\numberwithin{equation}{section}
\renewcommand{\le}{\leqslant}
\renewcommand{\leq}{\leqslant}
\renewcommand{\ge}{\geqslant}
\renewcommand{\geq}{\geqslant}

\newtheorem{theorem}{Theorem}[section]
\newtheorem*{informalthm}{Informal theorem}
\newaliascnt{proposition}{theorem}
\newtheorem{proposition}[proposition]{Proposition}
\aliascntresetthe{proposition}
\newaliascnt{lemma}{theorem}
\newtheorem{lemma}[lemma]{Lemma}
\aliascntresetthe{lemma}
\newaliascnt{corollary}{theorem}

\aliascntresetthe{corollary}
\theoremstyle{definition}
\newaliascnt{definition}{theorem}

\aliascntresetthe{definition}
\newaliascnt{algorithm}{theorem}
\newtheorem{algorithm}[algorithm]{Algorithm}
\aliascntresetthe{algorithm}
\newaliascnt{assumption}{theorem}

\aliascntresetthe{assumption}
\newtheorem*{assumption*}{Assumption}
\theoremstyle{remark}
\newaliascnt{remark}{theorem}
\newtheorem{remark}[remark]{Remark}
\aliascntresetthe{remark}

\crefname{algorithm}{Algorithm}{Algorithms}
\Crefname{algorithm}{Algorithm}{Algorithms}

\newcommand{\R}{\mathbb R}
\newcommand{\E}{\mathbb E}

\newcommand{\Id}{I}
\newcommand{\cN}{\mathsf N}
\newcommand{\dd}{\mathrm d}
\newcommand{\deq}{\coloneqq}
\newcommand{\eps}{\varepsilon}
\newcommand{\grad}{\nabla}
\newcommand{\Hess}{\nabla^2}
\newcommand{\norm}[1]{\lVert #1\rVert}
\newcommand{\TV}{\mathsf{TV}}
\newcommand\msf[1]{\mathsf{#1}}
\DeclareMathOperator\KL{\msf{KL}}
\newcommand\Ren{\mathsf R}
\newcommand\mmid{\mathbin{\|}}
\newcommand{\W}{W_2}
\DeclareMathOperator{\law}{Law}
\newcommand{\prox}{\operatorname{prox}}
\newcommand{\polylog}{\operatorname{polylog}}
\newcommand{\wtO}{\widetilde O}
\newcommand{\one}{\mathbf 1}

\newcommand{\BPS}{\mathrm{BPS}}
\newcommand{\ip}[2]{\langle #1,#2\rangle}
\newcommand{\HS}{\mathrm{HS}}
\newcommand{\op}{\mathrm{op}}
\newcommand{\T}{\mathsf T}
\newcommand{\LHam}{\mathcal L_{\mathsf H}}
\newcommand{\Lip}{\operatorname{Lip}}
\newcommand{\Ebias}[1][]{\mathcal E_{{\rm bias}\if\relax\detokenize{#1}\relax\else,#1\fi}}
\newcommand{\Evar}[1][]{\mathcal E_{{\rm var}\if\relax\detokenize{#1}\relax\else,#1\fi}}
\newcommand{\Edisc}[1][]{\mathcal E_{{\rm disc}\if\relax\detokenize{#1}\relax\else,#1\fi}}

\usepackage{xcolor}

\title{Smoothed Picard Hamiltonian Monte Carlo}
\author{
 Fan Chen\thanks{Department of Electrical Engineering and Computer Science,
 Massachusetts Institute of Technology.
 Email: \href{mailto:fanchen@mit.edu}{\texttt{fanchen@mit.edu}}.}
 \and
 Sinho Chewi\thanks{Department of Statistics and Data Science, Yale University.
 Email: \href{mailto:sinho.chewi@yale.edu}{\texttt{sinho.chewi@yale.edu}}.}
 \and
 Jianfeng Lu\thanks{Department of Mathematics, Duke University.
 Email: \href{mailto:jianfeng@math.duke.edu}{\texttt{jianfeng@math.duke.edu}}.}
 \and
 Matthew S. Zhang\thanks{Department of Mathematics, Massachusetts Institute of Technology.
 Email: \href{mailto:shuns436@mit.edu}{\texttt{shuns436@mit.edu}}.}
}
\date{August 2026}

\begin{document}
\maketitle

\begin{abstract}
    We develop a new low-accuracy sampler, called \emph{smoothed Picard Hamiltonian Monte Carlo}, which combines Gaussian smoothing, Picard iteration, and higher-order discretization.
    For a log-concave target $\pi \propto \exp(-V)$ in dimension $d$ satisfying $0 \prec \alpha I \preceq \nabla^2 V \preceq \beta I$, with condition number $\kappa \deq \beta/\alpha$, smoothed Picard HMC returns a sample with $\sqrt \alpha\,W_2(\cdot,\pi) \le \varepsilon$ using $\widetilde O(\kappa^2 + \kappa^{7/6} d^{1/6}/\varepsilon^{1/3})$ gradient queries.
    We also prove stronger $W_q$ bounds, and then develop an algorithmic framework, the recursive warm start generator, to upgrade these $W_q$ bounds to stronger divergence guarantees.
    This produces a warm start for the proximal bouncy particle sampler, introduced in a companion work, leading to a high-accuracy log-concave sampler with complexity $\widetilde O((\kappa^{7/6} d^{1/6} + \kappa^{1/2} d^{1/4})\polylog(1/\varepsilon))$.
\end{abstract}

\tableofcontents

\section{Introduction}
\label{sec:introduction}

We study the problem of sampling from a target probability density $\pi \propto \exp(-V)$ over $\R^d$, where
\(V\in C^2(\R^d)\) satisfies, for \(0<\alpha\le\beta\),
\begin{equation}
  \alpha\Id\preceq\Hess V(x)\preceq\beta\Id\,,
  \qquad x\in\R^d\,,
  \qquad \kappa\deq \frac\beta\alpha\,.
  \label{eq:curvature}
\end{equation}
We design a new sampler that combines Gaussian smoothing of the density with a high-order
discretization of Hamiltonian dynamics via Picard iteration and Chebyshev--Lobatto quadrature.  This combination gives a stable unadjusted Hamiltonian step even
though the original potential is assumed to have only two derivatives.

Our first result is a low-accuracy sampler with Wasserstein guarantee. \begin{theorem}[$W_2$ guarantee]
\label{thm:w2-main}
Assume \eqref{eq:curvature}, let \(0<\eps\le1/2\), and suppose that we are given a point
\(x_{\rm ref}\) with
\(\norm{\grad V(x_{\rm ref})}\le\sqrt{\alpha d}\).
Then, the smoothed Picard HMC algorithm
(\cref{alg:w2-complete}) returns a sample from a probability law $\widehat \pi$ satisfying
\begin{equation*}
 \sqrt\alpha\,W_2(\widehat\pi,\pi)\le\eps\,.
\end{equation*}
The expected number of gradient and proximal queries is at most
\begin{equation*}
 O\Bigl(
   \Bigl\{\kappa^2
   +\frac{\kappa^{7/6}d^{1/6}}{\eps^{1/3}}\Bigr\}
   \log^4\frac{\kappa d}{\eps}
 \Bigr)\,.
\end{equation*}
\end{theorem}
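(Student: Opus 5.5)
The plan is to analyse the algorithm as an unadjusted HMC chain targeting a Gaussian-smoothed surrogate $\pi_\sigma \propto \exp(-V_\sigma)$, where $V_\sigma$ is obtained from $V$ by convolution with $\cN(0,\sigma^2 I)$ (or the closely related proximal/Moreau-type smoothing, which accounts for the proximal queries). The total error then splits as
\[
W_2(\widehat\pi,\pi)\le W_2(\pi_\sigma,\pi)+W_2(\widehat\pi,\pi_\sigma)\,.
\]
The first term is a bias term $\Ebias$. The second term is controlled by a standard coupling argument for HMC with Picard-discretized trajectories, and it decomposes into a discretization error $\Edisc$ and, because gradients of $V_\sigma$ are only available through stochastic estimates $\grad V(x+\sigma\xi)$, a variance term $\Evar$.

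\textbf{Step 1 (smoothing).} I will show that $V_\sigma$ inherits $\alpha I\preceq \Hess V_\sigma\preceq \beta I$. Its higher derivatives are then bounded by $\beta/\sigma^{k-2}$, roughly $\norm{\nabla^k V_\sigma}_{\op}\lesssim \beta\,\sigma^{2-k}$, using Gaussian integration by parts on $\Hess V$. Writing $V_\sigma = V + O(\beta\sigma^2)$ in a controlled sense, a transport or Talagrand argument gives
\[
\sqrt\alpha\, W_2(\pi_\sigma,\pi)\lesssim \sqrt{\kappa}\,\sqrt{\beta d}\,\sigma^{2}\cdot(\text{const})\,,
\]
and I will choose $\sigma$ so that this is at most $\eps/2$.

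\textbf{Step 2 (exact HMC contraction).} I will use the known contraction of ideal HMC for strongly log-concave targets. With integration time $T\asymp 1/\sqrt\beta$, the synchronous coupling contracts by a factor $1-c\,\alpha T^2$ per step. This gives a mixing time of $\wtO(\kappa)$ outer iterations, started from $x_{\rm ref}$, whose initial $W_2$ distance is $O(\sqrt{d/\alpha})$ by the gradient condition.

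\textbf{Step 3 (Picard/Chebyshev--Lobatto discretization).} Each trajectory is split into $K$ segments of length $h=T/K$. On each segment, the ODE is solved by Picard iteration on a degree-$p$ Chebyshev--Lobatto interpolation. Picard contracts once $\beta h^2\ll1$, and each iteration halves the error, so $O(\log(\kappa d/\eps))$ sweeps suffice. The interpolation error is of order $h^{p+1}\sup\norm{\partial_t^{p+1}x_t}$, and by Step 1 this is bounded in terms of derivatives of $V_\sigma$, which scale like $\beta\sigma^{-(p-1)}$ times moments of order $\sqrt{d}$.

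\textbf{Step 4 (local-to-global).} I will sum these local errors through the contraction: the global $W_2$ error is the local error times the number of steps divided by the contraction rate. Balancing this against $\sigma$ from Step 1 and optimizing over $h$ and $p$ should produce the $\kappa^{7/6}d^{1/6}\eps^{-1/3}$ rate; the exponents $7/6$ and $1/6$ suggest an effective third-order balance. The $\kappa^2$ term comes from the step-size constraint $h\lesssim 1/\sqrt\beta$ combined with the variance term: the stochastic gradients of $V_\sigma$ have variance of order $\beta^2\sigma^2 d$, and averaging across the Picard nodes and steps forces a batch size of order $\kappa$ per step.

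\textbf{Main obstacle.} The hardest step is Step 3–4. The high-order error bound requires controlling derivatives of the smoothed potential along the discrete trajectory, and these must be controlled uniformly in high probability along the run, not only at stationarity. I will handle this by establishing moment bounds for the iterates, via a Lyapunov argument from the contraction, and then using the $\sigma^{2-k}$ derivative bounds. The second delicate point is keeping the stochastic-gradient noise unbiased enough that it enters only as a martingale variance term. For this I will draw fresh Gaussian smoothing samples at every node and use the independence across steps.

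Finally, the $\log^4$ factor collects the logarithms from four sources: the number of Picard sweeps, the polynomial degree, the mixing time, and high-probability truncation. The "expected" query count reflects a randomized batch size or truncation, which I expect to be handled by a geometric-type argument.
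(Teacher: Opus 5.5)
There is a genuine gap: as designed, your scheme cannot reach $d^{1/6}$, for two independent reasons.

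\textbf{First, smoothing the potential creates a target bias that caps the rate at $d^{1/4}$.} With $\pi_\sigma\propto e^{-V*\cN(0,\sigma^2\Id)}$ (normalize $\beta=1$), the term $W_2(\pi_\sigma,\pi)$ is genuinely of order $\sqrt d\,\sigma^2$, already at $\kappa=O(1)$. For a generic product potential with non-constant Hessian, $V_\sigma-V\approx\frac{\sigma^2}{2}\Delta V$ shifts the mean of every coordinate by order $\sigma^2$.

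So even granting your estimate $\sqrt\alpha\,W_2(\pi_\sigma,\pi)\lesssim\sqrt{\kappa d}\,\sigma^2$, you must take $\sigma\lesssim\eps^{1/2}(\kappa d)^{-1/4}$. Chebyshev--Lobatto quadrature only profits from the $\sigma^{2-k}$ derivative bounds when $h\lesssim\sigma$. Hence you need at least $\kappa/h\gtrsim\kappa^{5/4}d^{1/4}\eps^{-1/2}$ steps.

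The missing idea is to smooth the \emph{density}: target $\pi_\eta=\pi*\cN(0,\eta\Id)$, i.e.\ $e^{-V_\eta}=e^{-V}*\cN(0,\eta\Id)$. This incurs no bias at all, because $\pi_\eta R_\eta=\pi$ exactly for the restricted Gaussian oracle $R_{\eta,y}\propto e^{-V(x)-\norm{x-y}^2/(2\eta)}$, which is also $W_2$-contractive. The paper applies it once at the end via FORS, at cost $\wtO(1+(\eta^2d)^{1/3})$.

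The price is that $\grad V_\eta(y)=\E[\grad V(X)\mid X+\sqrt\eta\,G=y]$ is not computable. One therefore uses $\grad V(\prox_{\eta V}(y)+\sqrt\eta\,G)$, whose bias is only $\eta^{3/2}\sqrt d$, because the rescaled RGO lies within $W_2$ distance $\eta\sqrt d$ of $\cN(0,\Id)$. Propagated through the local error framework, this costs $\kappa\sqrt d\,(\eta^{3/2}+\eta^{1/2}h^2)\asymp\kappa\sqrt d\,h^3$ at $h\asymp\sqrt\eta$. That is the same order as the other error terms, and it is what allows $h\asymp\eps^{1/3}\kappa^{-1/6}d^{-1/6}$, i.e.\ the $\kappa^{7/6}d^{1/6}\eps^{-1/3}$ count.

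\textbf{Second, treating the gradient noise as a martingale term only gives $d^{1/3}$.} Each stochastic gradient fluctuates by about $\sqrt{\eta d}$ (your $\sigma\sqrt d$), so a time-$h$ phase perturbs the momentum by about $h\sqrt{\eta d}$. Summed as centered errors over the $\kappa/h$ steps needed to mix, this contributes $\sqrt{\kappa d\,\eta h}\asymp\sqrt{\kappa d}\,h^{3/2}$ to $W_2$. That forces $h\lesssim(\eps^2/d)^{1/3}$, i.e.\ about $\kappa d^{1/3}\eps^{-2/3}$ steps, regardless of how the bias is treated.

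The paper instead writes one phase as $u+B_h\{\zeta+F(\zeta,G)\}$, where $\zeta$ is the Ornstein--Uhlenbeck refresh Gaussian. It proves a second-order $W_2$ estimate by heat-flow interpolation and Gaussian integration by parts: if $F$ is $A$-Lipschitz in $\zeta$ and $B$-Lipschitz in $G$, replacing $F$ by its conditional mean moves the law by only $C\sqrt m\,\norm{L}_{\op}(B^2+AB)$. With $A\asymp h^2$ and $B\asymp\sqrt{\eta h}$, this yields $\mathcal E_{\rm var}\lesssim\sqrt d\,(\eta h^{3/2}+\eta^{1/2}h^3)$ and an accumulated error $\sqrt{\kappa d}\,h^3$.

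This gain needs fresh Gaussian noise at every step, on which the subsequent gradient evaluations depend only weakly. In your design (ideal HMC with full refreshment once per trajectory of length $1/\sqrt\beta$, and many Picard segments in between), the refresh Gaussian moves the whole trajectory at order one. Then $A\asymp1$ and the second-order estimate gives nothing beyond the naive bound. That is why the paper refreshes partially after every phase of length $h$ and proves contraction $1-ch/\kappa$ in the twisted metric $M_\kappa$.

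Two accounting points also differ from your sketch. The restriction $h\le c/\kappa$ needed for that contraction is the actual source of the $\kappa^2$ term, not a batch size. The expectation in the query bound comes from the terminal FORS call.

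Finally, you do not need derivative control along the run. The discretization defect is only required at stationarity, as $W_2(\Pi_\eta K,\Pi_\eta)$ with $\Pi_\eta=\pi_\eta\otimes\cN(0,\Id)$, because the exact-gradient numerical kernel $K$ is itself contractive.
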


We find the dimension dependence in this result to be quite surprising.
Prior to this work, under~\eqref{eq:curvature}, the best dimension dependence was $d^{1/3}$.
This was achieved with the randomized midpoint discretization of the
underdamped Langevin, with rate
$\widetilde O(\kappa d^{1/3}/\varepsilon^{2/3})$ in $W_2$ \cite{SL19}, and also the shifted ODE method~\cite{FLO21}.
With the development of the shifted composition framework~\cite{ACZ26} and accelerated entropic hypocoercivity~\cite{Lu26, LL26}, the rate was improved to $\widetilde O(\kappa^{5/6} d^{1/3}/\varepsilon^{2/3})$ in $\KL$. The closely related Poisson
midpoint discretization of underdamped Langevin achieved an incomparable rate of roughly $\widetilde O(\kappa^{7/6} d^{1/3}/\varepsilon^{1/3} + \kappa^{11/8} d^{3/8}/\varepsilon^{1/4})$ in $W_2$~\cite{SN26}.  More recently, exact simulation of the underdamped Langevin
diffusion via FORS achieved
$\widetilde O(\kappa^{2/3}d^{1/3}\polylog(1/\eps))$ complexity
\cite{CCRZ26}.
In contrast,~\cref{thm:w2-main} achieves dimension dependence $d^{1/6}$.

\smallskip
In a companion work~\cite{PBPS26}, we introduce a new high-accuracy sampler, called the proximal bouncy particle sampler or \emph{proximal BPS}, with $\widetilde O(\kappa^{1/2} d^{1/4}\polylog(1/\varepsilon))$ complexity from a warm start. One key application of smoothed Picard HMC is to provide an implementable warm start for that sampler.

Toward this end, we develop a novel algorithmic framework which we call the recursive warm start generator. It is designed as a wrapper around smoothed samplers---that is, samplers which target the Gaussian smoothed distribution $\pi * \mathsf N(0,\eta I)$---and it upgrades transport metric guarantees into stronger divergence guarantees. Namely, with negligible overhead, it upgrades $W_2$ bounds into $\KL$ bounds; $W_q$ bounds into mixed TV and R\'enyi bounds; and $W_{\psi_2}$ bounds into genuine R\'enyi bounds. We apply this to smoothed Picard HMC by extending the arguments for~\cref{thm:w2-main} to produce stronger $W_q$ guarantees, leading to the following warm start result.

\begin{theorem}[Warm start]
\label{thm:adaptive-gibbs-warm}
Assume \eqref{eq:curvature} and that we are given \(x_{\rm ref}\) satisfying
\(\norm{\grad V(x_{\rm ref})}\le\sqrt{\alpha d}\).  For every
\(0<\delta<1/2\), the algorithm of
\cref{ssec:recursive_overview} returns a sample from a law
\(\widehat\pi_\delta\) with the following guarantee.
There exists a law \(\widehat\pi_\delta^\dagger\) satisfying
\begin{equation*}
 \TV(\widehat\pi_\delta,\widehat\pi_\delta^\dagger)\le\delta\,,
 \qquad
 \Ren_2(\widehat\pi_\delta^\dagger \mmid \pi)\le1\,.
\end{equation*}
Moreover, the expected number of gradient queries is at most
\begin{equation*}
    O\Bigl(
      \kappa^{7/6}d^{1/6}
      \log^7\frac{\kappa d}{\delta}
    \Bigr)\,.
\end{equation*}
\end{theorem}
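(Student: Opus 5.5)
The proof has two parts. The first is a $W_q$ (or $W_{\psi_2}$-type) extension of the analysis behind \cref{thm:w2-main}. The second is a recursive warm start wrapper that converts such transport bounds for a \emph{smoothed} target into a mixed TV/R\'enyi bound for $\pi$ itself.

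\textbf{Step 1: higher-moment transport bounds.} Fix a smoothing level $\eta$ and consider the smoothed target $\pi_\eta \deq \pi * \cN(0,\eta \Id)$. Its potential $V_\eta$ is again $\alpha/(1+\alpha\eta)$-strongly convex and $\beta$-smooth, and it is $C^\infty$, with higher derivatives controlled by powers of $\eta^{-1/2}$. This higher regularity is what makes the Chebyshev--Lobatto/Picard step high order. I would rerun the one-step local error analysis of smoothed Picard HMC. Instead of bounding the $L^2$ norm of the discretization error $\norm{x_{k}-x_k^\star}$ under a synchronous coupling with exact HMC, I would bound its $L^q$ norms. The local errors are polynomials in Gaussian momenta and in $\grad V$ evaluated along the trajectory, so Gaussian concentration and moment bounds on $\norm{\grad V(X)}$ under the iterates give $L^q$ bounds that lose only $\sqrt q$ factors. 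Combining these with the contraction of exact HMC gives $\sqrt\alpha\, W_q(\widehat\pi,\pi_\eta)\le \eps$ at the same cost as \cref{thm:w2-main}, up to $\polylog$ factors once $q\asymp \log(1/\delta)$.

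\textbf{Step 2: from $W_q$ to R\'enyi via the smoothing kernel.} If $\mu$ is close to $\pi_\eta$ in $W_q$, draw $Y\sim\mu$ and then sample $X$ from the posterior kernel $\pi(x\mid y)\propto \pi(x)\exp(-\norm{x-y}^2/(2\eta))$. This is a strongly log-concave target with condition number $O(1)$ when $\eta\asymp 1/\beta$, so it can be sampled with high accuracy from a warm start; the proximal/Gibbs step is what makes this cheap. The output law is $\mu P$, where $P$ is the reverse channel, and $\pi_\eta P = \pi$. For two Gaussian-kernel channels, the R\'enyi divergence between $\delta_y P$ and $\delta_{y'}P$ is bounded by $O(\norm{y-y'}^2/\eta)$ times a constant depending on the log-concavity. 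Couple $Y\sim\mu$ with $Y'\sim\pi_\eta$ optimally in $W_q$. On the event $\norm{Y-Y'}\lesssim \sqrt\eta$, which has probability $\ge 1-\delta$ by Markov's inequality on the $q$-th moment, the conditional R\'enyi bound is $O(1)$. Removing the complementary event defines $\widehat\pi^\dagger_\delta$ with $\TV\le\delta$, and convexity of $\exp((2-1)\Ren_2)$ gives $\Ren_2\le 1$.

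\textbf{Step 3: recursion.} With $\eta\asymp1/\beta$ fixed, the required accuracy $\sqrt\eta$ would force $\eps\asymp \kappa^{-1/2}$ in \cref{thm:w2-main}. That gives cost $\kappa^{7/6}d^{1/6}\kappa^{1/6}$, which is too expensive. I would therefore recurse on a geometric sequence $\eta_0 \gg \eta_1\gg\cdots\gg\eta_K\asymp 1/\beta$, with $K=O(\log\kappa)$. At level $j$, a warm start for $\pi_{\eta_j}$ is obtained from level $j-1$ through the Gibbs kernel between $\pi_{\eta_{j-1}}$ and $\pi_{\eta_j}$. Each level then only needs an $O(1)$-relative-accuracy improvement in the ratio $\eta_{j-1}/\eta_j$, so the $\eps^{-1/3}$ penalty is paid only at constant relative accuracy. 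The cost per level is then $\wtO(\kappa^{7/6}d^{1/6})$, and the $\kappa^2$ burn-in term is avoided because every level starts warm; the $\kappa^2$ in \cref{thm:w2-main} came from a cold start. Summing over levels and the $\polylog$ factors from $q$ and the inner samplers gives $\log^7$.

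\textbf{Main obstacle.} The hardest step is Step 1, specifically propagating $L^q$ (rather than $L^2$) control of the Picard local errors along the chain. The errors depend on higher derivatives of $V_\eta$ evaluated at non-stationary iterates, so one needs uniform-in-time sub-Gaussian tail bounds for $\norm{\grad V_\eta(x_k)}$. I would try to obtain these first via a Lyapunov argument on $\exp(\lambda\norm{x_k-x_{\rm ref}}^2)$.
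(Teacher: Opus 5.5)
Your Steps 1 and 2 point in the right direction: a $W_q$ guarantee for the smoothed sampler, followed by truncation and a reverse transport bound. Your channel estimate $\Ren_q(R_{\eta,y}\mmid R_{\eta,y'})\le q\norm{y-y'}^2/(2\eta)$ is correct. Step 3, however, does not work, and it is exactly where the $\kappa^{7/6}$ is decided.

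The $\kappa^2$ term in \cref{thm:w2-main,thm:rhmc-module} is not a cold-start artifact. It comes from the step-size restriction $h\le c/\kappa$ under which the Picard kernel is shown to contract (\cref{prop:rhmc-proof-contraction}). A per-phase factor $1-ch/\kappa$ then needs $\gtrsim\kappa/h\gtrsim\kappa^2$ phases to act at all, so a warm start only removes the logarithm from $N$. Likewise, the $\eps^{-1/3}$ cost is set by the stationary bias, variance and discretization errors, which do not depend on the initialization. Your last level therefore still pays for its absolute accuracy $\sqrt\alpha\,W_q\lesssim\kappa^{-1/2}$, namely $\kappa^2+\kappa^{4/3}d^{1/6}$ queries.

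Your levels are also not implementable. The smoothing level is dictated by the sampler, because the score estimator has bias $\eta^{3/2}\sqrt d$ (\cref{lem:rhmc-proof-stochastic-gradient}). This forces the normalized $\eta$ to be tiny (the paper takes $\eta\asymp h^2$ up to logarithms), i.e.\ a smoothing variance far below $1/\beta$. So you can neither target $\pi_{\eta_j}$ with $\eta_j\gtrsim1/\beta$ nor choose $\eta\asymp1/\beta$ in Step 2. The ``Gibbs kernel between $\pi_{\eta_{j-1}}$ and $\pi_{\eta_j}$'' is an RGO for $V_{\eta_j}$, whose gradient you cannot query. Even the final $O(1)$-conditioned RGO at variance $\asymp1/\beta$ is not cheap: FORS (\cref{thm:fors-implementation}) costs $\wtO(d^{1/3})$ there, which exceeds the budget when $d\gg\kappa^7$.

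What you are missing is the paper's two-layer construction.

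\textbf{Outer layer.} This is a proximal sampler with step $(2\beta)^{-1}$ (\cref{alg:outer-warm-overview}). The smoothed sampler is then only called on targets with condition number at most $6$, where its $\kappa_U^2$ term is $O(1)$. The outer chain runs $\asymp\kappa$ steps up to logarithms, using R\'enyi contraction and then hypercontractivity to raise the order. Per-step errors accumulate to $\asymp\kappa\Delta_0^2$, so each inner call needs accuracy $\Delta_0\asymp\kappa^{-1/2}$ and costs $\wtO(\kappa^{1/6}d^{1/6})$. This is where $\kappa^{7/6}=\kappa\cdot\kappa^{1/6}$ comes from.

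\textbf{Inner layer.} Each RGO is sampled recursively along conditionals, not along marginals $\pi_{\eta_j}$. Independent noise $\sqrt{\tau_j/\beta_{A_j}}\,G$ is added to the smoothed sampler's output. This enlarged scale, not the sampler's own $\eta$, is what enters the truncation and reverse transport bounds (\cref{lem:wq-truncation,lem:gaussian-channel-reverse-transport}). The remaining target is the RGO $R^{V_{A,u}}_{a,Y}=R^V_{A^+,u^+}$ of \cref{lem:rgo-calculus}: an explicit potential with a gradient oracle and geometrically smaller condition number. The recursion stops once $A_j\le\underline A\asymp(d\mathfrak L)^{-1/2}$, where FORS costs $O(1)$. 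This recursion is the correct form of your ``constant relative accuracy per level'' idea, but on its own it still pays $\kappa^2$; only the outer layer removes it.

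Finally, on Step 1: a synchronous coupling with Gaussian concentration controls the centered stochastic-gradient fluctuations only to first order, which yields $d^{1/3}$. The $d^{1/6}$ rate needs the second-order $W_q$ estimate (\cref{lem:rhmc-proof-random-map}). The non-stationary tail bounds you worry about are never needed, because the $L^q$ local error framework (\cref{lem:rhmc-proof-centered-kernel-perturbation}) only measures the discretization error started from $\Pi_\eta$.
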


In \cref{thm:picard-warm-start}, we prove a stronger statement that relaxes the assumption of strong log-concavity to a log-Sobolev inequality, and provides a KL divergence guarantee.
Note that the recursive warm start generator also removes the $\kappa^2$ term from the complexity.

Finally, by using this warm start result to initialize proximal BPS~\cite{PBPS26}, we obtain a new high-accuracy sampler which enjoys the following guarantee.

\begin{theorem}[Warm start + proximal BPS]
\label{thm:main-synthesis}
Assume \eqref{eq:curvature} and suppose that we are given a point $x_{\rm ref}$ with
\(\norm{\grad V(x_{\rm ref})}\le\sqrt{\alpha d}\).  For every
\(0<\varepsilon<1/4\), the warm start
algorithm followed by the proximal BPS algorithm returns a sample from a law
\(\widehat\pi\) satisfying
\begin{equation*}
 \TV(\widehat\pi,\pi)\le\varepsilon\,.
\end{equation*}
The expected number of gradient evaluations is
\begin{equation*}
 O\Bigl(
     \kappa^{7/6}d^{1/6}\log^7\frac{\kappa d}{\varepsilon}
     +\kappa^{1/2}d^{1/4}\log^{11/4}\frac{\kappa d}{\varepsilon}
 \Bigr)\,.
\end{equation*}
\end{theorem}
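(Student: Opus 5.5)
The plan is to compose \cref{thm:adaptive-gibbs-warm} with the warm-start guarantee of proximal BPS from the companion work~\cite{PBPS26}, and to pay for the small TV defect of the warm start through a coupling argument. First I run the warm start algorithm with $\delta\deq\varepsilon/2$. By \cref{thm:adaptive-gibbs-warm}, its output law $\widehat\pi_\delta$ is within $\TV$ distance $\delta$ of a law $\widehat\pi_\delta^\dagger$ with $\Ren_2(\widehat\pi_\delta^\dagger\mmid\pi)\le1$. This costs $O(\kappa^{7/6}d^{1/6}\log^7(\kappa d/\varepsilon))$ expected gradient queries, which is the first term of the claimed bound.

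Next I invoke the main theorem of~\cite{PBPS26}. I take it in the form: from any initialization $\mu_0$ with $\Ren_2(\mu_0\mmid\pi)\le 1$ (more generally, any $O(1)$ bound on a R\'enyi divergence of order at least $2$), proximal BPS returns a law $\mu_0 P$ with $\TV(\mu_0P,\pi)\le\varepsilon/2$ using $O(\kappa^{1/2}d^{1/4}\log^{11/4}(\kappa d/\varepsilon))$ expected gradient evaluations. Here $P$ denotes the Markov kernel of the full proximal BPS procedure, including its internal randomness and its random stopping. I apply this with $\mu_0=\widehat\pi_\delta^\dagger$. The algorithm is actually initialized from $\widehat\pi_\delta$, so the data-processing inequality for $\TV$ under the kernel $P$ gives
\[
\TV(\widehat\pi_\delta P,\pi)\le\TV(\widehat\pi_\delta P,\widehat\pi_\delta^\dagger P)+\TV(\widehat\pi_\delta^\dagger P,\pi)\le\delta+\varepsilon/2=\varepsilon\,.
\]
The constraint $\varepsilon<1/4$ ensures $\delta<1/2$, as \cref{thm:adaptive-gibbs-warm} requires.

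The main obstacle is the complexity accounting in the second stage. The query bound for proximal BPS is an expectation computed under the warm start $\widehat\pi_\delta^\dagger$, but we run it from $\widehat\pi_\delta$, and on the $\delta$-probability event where the two differ the cost could a priori be large. I would handle this with the optimal coupling of $\widehat\pi_\delta$ and $\widehat\pi_\delta^\dagger$, letting $G$ be the event that the two initial points agree. On $G$ the run is identical to a run from $\widehat\pi_\delta^\dagger$, so its expected cost is at most the stated bound. On $G^c$ I would use a worst-case cost bound for proximal BPS, for example from a cap on the number of iterations built into the algorithm. Failing that, I would truncate the run at a fixed budget equal to a constant multiple of the expected cost and count truncation as failure; Markov's inequality then bounds the failure probability by a small constant. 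That constant can be driven down to $\varepsilon/4$ at a $\log(1/\varepsilon)$ factor, which is absorbed into $\log^{11/4}$, or it can be made small through restarts. Adding the two stages then gives the stated total.
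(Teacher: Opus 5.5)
This is the paper's proof. It runs the warm start with $\delta=\varepsilon/2$ (via \cref{thm:picard-warm-start}(ii) with $\Delta=1/2$, which is what yields \cref{thm:adaptive-gibbs-warm}), applies the kernel $K_{\varepsilon/2,1}$ of \cref{thm:bps-first-order} with $\Delta=1$, and concludes by data processing for $\TV$ and the triangle inequality, exactly as you do.

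Your cost worry does not arise in the paper. The bound on $\E N_{\grad V}$ in \cref{thm:bps-first-order} is stated for the kernel with no condition on its initial law, so it applies directly from $\widehat\pi_{\varepsilon/2}$ and your fallback is unnecessary. As written, the fallback is also imprecise: Markov truncation to failure probability $\varepsilon/4$ needs a budget of order $\varepsilon^{-1}$ times the expected cost, not a $\log(1/\varepsilon)$ factor. This is still harmless in expectation, because the two runs differ only on an event of probability $\delta=O(\varepsilon)$.

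The one bookkeeping step you skip is that the actual bound \eqref{eq:bps-gradient-cost} also contributes a $\sqrt\kappa\,\log^3(e\kappa d/\varepsilon)$ term. The paper absorbs it into the warm-start cost $\kappa^{7/6}d^{1/6}\log^7(e\kappa d/\varepsilon)$.
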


Prior to this work, the best high-accuracy sampler was exact diffusion simulation~\cite{CCRZ26}, which achieved a complexity of $\widetilde O(\kappa^{2/3} d^{1/3}\polylog(1/\varepsilon))$. Hence, our new high-accuracy sampler has state-of-the-art dependence on the dimension, but not on the condition number.

\begin{remark}[Further consequences]
The proximal sampler reduction \cite{LST21,CCSW22} converts general
log-smooth sampling problems into well-conditioned log-concave subproblems.
By invoking this framework, we could also state corollaries in other settings, such as when $\pi$ is only assumed to satisfy a Poincar\'e inequality.
Since this reduction is standard, we omit these extensions for brevity.
\end{remark}

We give an overview of the main new ideas---both algorithmic and analytic---introduced in this work in the technical overview (\cref{sec:technical-overview}).

\paragraph{Related work.}
This work advances the study of the complexity of log-concave sampling; an
introduction to the subject and an overview of existing approaches can be
found in~\cite{Chewi26Book}.

Our use of Gaussian smoothing is reminiscent of score-based diffusion
models~\cite{Soh+15NonEquib,SonErm19Gradients,HoJaiAbb20DDPM,Song+21SDE} and stochastic localization~\cite{Eldan13ThinShell},
which also rely on score functions for Gaussian convolutions of a target distribution.
Here, we do not learn score
functions from data: we use queries to the unsmoothed score to construct
nearly unbiased estimators of the smoothed score, and control their bias
and fluctuations in the sampling analysis.

Hamiltonian Monte Carlo was introduced as hybrid Monte Carlo in lattice
field theory~\cite{DKPR87}; see~\cite{N11} for its development in statistics.
Early analyses quantified its favorable dimension dependence through
asymptotic scaling results for product targets~\cite{BPRSS13}.
Subsequent works established non-asymptotic convergence guarantees for
ideal HMC~\cite{LSV18,BEZ20,MS21,CheVem22HMC}.
More recent work has focused on accelerated convergence of randomized
HMC and related piecewise deterministic processes, including convergence
in relative entropy~\cite{LuWan22PDMP,MitSriWanWib26RHMC,MonWan26EntropicPDMP}.

A substantial literature studies numerical discretization, both for
unadjusted HMC~\cite{MV18,MS19,BE23,BRM25,GLBMM24,BMW26,ZhaAltChe26HMC}
and for Metropolized HMC~\cite{CDWY20,LeeSheTia20MALA,CheGatJia26MHMC,ZhaAltChe26HMC}.
Representative results achieve $d^{1/4}$ dimension dependence under
quantitative third derivative bounds, such as structured or
Frobenius-Lipschitz Hessian assumptions~\cite{MV18,CheGatJia26MHMC,ZhaAltChe26HMC}.
Higher-order integrators and polynomial collocation yield still better
dimension dependence under stronger smoothness and structural
assumptions~\cite{MS17,LSV18}.
Our departure is to improve the dimension dependence \emph{without} imposing
higher-order smoothness on the original potential: Gaussian smoothing
provides the regularity needed for accurate integration, while we control
the error from estimating the smoothed score using queries to the
unsmoothed score.

The recursive warm start framework builds heavily on the proximal
sampler~\cite{LST21,CCSW22}.
The main analytic mechanism for warm start generation is regularization
along the heat flow, leading to reverse transport inequalities.  Such inequalities are
dual formulations of dimension-free Harnack inequalities, originating in
Wang's work~\cite{Wang1997LSINonCompact,Wang04Equiv}; see also
\cite{BakGenLed15HarnackOT,AltChe25SCI} for the transport formulation.
Related regularization arguments have been used to generate algorithmic
warm starts with underdamped Langevin~\cite{AC24} and HMC~\cite{ZhaAltChe26HMC},
and to establish KL divergence guarantees via the shifted composition
framework~\cite{AltChe26SCIII,ACZ26}.
Coupling-based regularization has also yielded total variation
bounds~\cite{BE23} and divergence guarantees for unadjusted
HMC~\cite{BMW26}.  These works analyze regularization for particular
sampling dynamics.  Our framework instead applies heat flow regularization
to the output of any smoothed sampler, yielding a general route to KL, mixed TV and R\'enyi, or genuine
R\'enyi warm starts.

\paragraph{AI usage.}
The main ideas underlying the algorithm and its analysis emerged through extensive interactions with GPT-5.6 Sol. The initial goal was to improve the dimensional dependence beyond $d^{1/3}$. In response, GPT proposed considering the smoothed potential $\widehat{V}_\eta \deq \E_{G\sim \mathsf N(0,\Id)} V(\cdot +\sqrt{\eta}\, G)$, which led to a sampler with dimensional dependence $d^{1/4}$. We subsequently suggested the more natural smoothing adopted in this paper and, through further refinements of the analysis and extensive interactions between the authors and GPT, obtained the improved dimensional dependence and the algorithms presented here. The authors independently checked the arguments, prepared the manuscript, and take full responsibility for its contents.

\section{Preliminaries}
\label{sec:setting}

Except in the generic warm start result \cref{thm:recursive-warm-generator}
and its consequence \cref{thm:picard-warm-start}, throughout the remainder of the paper
\(V\in C^2(\R^d)\) and \eqref{eq:curvature} holds pointwise.
For most of the paper, we normalize $\beta = 1$, so that $\kappa^{-1} I \preceq \nabla^2 V \preceq I$; this can be achieved by rescaling.

For simplicity of exposition, we initially allow ourselves access to proximal queries to $V$, in addition to queries to $\nabla V$.
For step size \(a>0\), one proximal query returns
\begin{equation}
 \prox_{aV}(y)
 \deq \operatorname*{argmin}_{x\in\R^d}
 {\Bigl\{V(x)+\frac{\norm{x-y}^2}{2a}\Bigr\}}\,.
 \label{eq:prox-def}
\end{equation}
In this paper, we only ever need to call the proximal oracle for a $\beta$-smooth potential with step size at most $1/(2\beta)$.
In this regime, the computation of $\prox_{aV}$ is a well-conditioned optimization problem and can be easily implemented via standard routines.
In \cref{app:gradient-only-prox}, we remove the use of proximal queries altogether.

The sampling analogue of the proximal operator is the restricted Gaussian oracle (RGO), which is the distribution
\begin{equation*}
 R_{a,y}(\dd x)
 \propto
 \exp\Bigl\{-V(x)-\frac{\norm{x-y}^2}{2a}\Bigr\}\,\dd x\,.
\end{equation*}
In \cref{sec:fors-terminal}, we show how to approximately implement it in $\widetilde O(1+{(a^2 d)}^{1/3})$ queries, based on the exact diffusion sampler of~\cite{CCRZ26}.

We assume that an admissible reference point is supplied:
\begin{equation}
  \norm{\grad V(x_{\rm ref})}\le\sqrt{\alpha d}\,.
  \label{eq:reference}
\end{equation}
This reference point could be obtained via an optimization routine.

The total variation distance is
\(\TV(P,Q)\deq\sup_A|P(A)-Q(A)|\).
We also use
\begin{equation*}
 \KL(P \mmid Q)\deq\int\log\bigl(\frac{\dd P}{\dd Q}\bigr)\,\dd P\,,
 \qquad
 \W^2(P,Q)\deq\inf_{(X,Y)}\E\norm{X-Y}^2\,,
\end{equation*}
with the usual value \(+\infty\) for KL without absolute continuity; the
infimum in the definition of \(\W\) is over all couplings of \(P\) and \(Q\).
For \(q>1\), define R\'enyi divergence by
\begin{equation*}
  \Ren_q(P \mmid Q)
  \deq \frac1{q-1}\log\int
       \bigl(\frac{\dd P}{\dd Q}\bigr)^q\,\dd Q\,,
\end{equation*}
with value \(+\infty\) when \(P\not\ll Q\); this is the standard convention
of \cite{EH14}.

For matrices and tensors, \(\norm{\cdot}_{\op}\) and
\(\norm{\cdot}_{\HS}\) denote the operator and Hilbert--Schmidt (or Frobenius) norms,
respectively.  For an $r$-tensor $A$ on $\R^d$, this means
\[
 \norm A_{\HS}^2
 =\sum_{j_1,\ldots,j_r\in[d]}
   \bigl|A[e_{j_1},\ldots,e_{j_r}]\bigr|^2\,,
\]
where $(e_j)_{j\in[d]}$ is any orthonormal basis of $\R^d$.

The notation \(\wtO(B)\) means \(B\) times a fixed universal power of a
logarithm of the dimension, condition number, inverse accuracy, and any
explicit confidence parameter.  Throughout, \(C,c>0\) denote universal
constants that may change from line to line.

\section{Technical overview}
\label{sec:technical-overview}

Recall our convention that $\beta =1$, so that $\kappa^{-1} I\preceq\Hess V \preceq I$.

\subsection{The smoothing trick}\label{ssec:smoothing}

For a smoothing variance
\(\eta>0\), write
\begin{equation*}
 \pi_\eta\deq \pi*\cN(0,\eta\Id)\,,
 \qquad
 \Pi_\eta\deq \pi_\eta\otimes\cN(0,\Id)\,.
\end{equation*}
The \textbf{key algorithmic idea} of this paper is to pick a smoothing parameter
\(\eta>0\) and to sample from the smoothed distribution \(\pi_\eta\)
instead of the original target \(\pi\).  If
\(\law(Y)\approx\pi_\eta\), then we can recover a sample from \(\pi\) via
the RGO\@: for \(X\sim R_{\eta,Y}\), we have \(\law(X)\approx\pi\).

The smoothed
potential \(V_\eta\), defined by
\(e^{-V_\eta}\deq e^{-V}*\cN(0,\eta\Id)\), is given by
\begin{equation}
 \grad V_\eta(y)
 =\E\bigl[\grad V(X)\bigm\vert X+\sqrt\eta\,G=y\bigr]\,,
 \label{eq:overview-smoothed-score}
\end{equation}
where $G \sim \cN(0, \Id)$ and $X \sim \pi$ are independent.
The smoothed potential admits quantitative higher-order derivative bounds, allowing us to apply higher-order integrators.
The catch is that~\eqref{eq:overview-smoothed-score} is given as a conditional expectation, and is therefore not explicitly computable.
Write \(g_\eta\deq\grad V_\eta\).

Therefore, we replace evaluations of \(\nabla V_\eta\) with
\begin{equation}
 \widehat g_\eta(y;G)
 \deq \grad V\bigl(\prox_{\eta V}(y)+\sqrt\eta\,G\bigr)\,,
 \qquad G\sim\cN(0,\Id)\,.
 \label{eq:w2-exact-center-probe}
\end{equation}
The intuition is that
\(\nabla V_\eta(y)=\int\nabla V\,\dd R_{\eta,y}\), and
\(\law(\prox_{\eta V}(y)+\sqrt\eta\,G)\approx R_{\eta,y}\), so that
\(\widehat g_\eta(y;G)\) is almost an unbiased estimator of
\(\nabla V_\eta(y)\).

\subsection{The higher-order integrator and the \texorpdfstring{\(W_2\)}{W2} guarantee}

\paragraph{Picard iteration.}
The numerical scheme we consider below is a discretization of an unadjusted generalized Hamiltonian sampler
with partial Ornstein--Uhlenbeck (OU) momentum refreshment, following the terminology
of \cite{GLBMM24}.
Between momentum refreshments, we follow the Hamiltonian dynamics
\begin{align*}
    \dot X_t = P_t\,, \qquad \dot P_t = -\nabla V_\eta(X_t)\,.
\end{align*}
The dynamics can be written in integral form as
\begin{equation}
 X_t
 =X_0 +tP_0
 -\int_0^t(t-s)\,g_\eta(X_s)\,\dd s\,, \qquad P_t = P_0 - \int_0^t g_\eta(X_s)\,\dd s\,.
 \label{eq:overview-hamiltonian-volterra}
\end{equation}

Since the integral cannot generally be computed explicitly, we simulate it with Picard iteration. Let ${(X_t^{[0]}, P_t^{[0]})}_{t\ge 0}$ be a computable initial guess for the trajectory.
The ideal Picard iteration maps this to an updated trajectory ${(X_t^{[1]}, P_t^{[1]})}_{t\ge 0}$ by applying~\eqref{eq:overview-hamiltonian-volterra} but with the non-linear term $g_\eta$ evaluated at the preceding trajectory.
That is,
\begin{align}\label{eq:ideal_picard}
    X_t^{[1]}
 \deq X_0 +tP_0
 -\int_0^t(t-s)\,g_\eta(X_s^{[0]})\,\dd s\,, \qquad P_t^{[1]} \deq P_0 - \int_0^t g_\eta(X_s^{[0]})\,\dd s\,.
\end{align}
A particularly simple choice for the initial trajectory is
\(X_t^{[0]}\deq X_0+tP_0\).

This describes one Picard iteration, and in principle one can iterate further.
For the main results of this paper, it suffices to iterate up to depth $2$.
Higher depth iteration is treated in~\cref{app:depth-K-picard}.

The ideal Picard iteration requires exact integration, which is not directly implementable.
To develop an algorithm, we apply numerical integration, or quadrature.

\paragraph{Chebyshev--Lobatto quadrature.}
Fix an integer \(J\ge2\).  The Chebyshev--Lobatto nodes on the interval \([0,h]\) are defined to be
\begin{equation*}
    t_j\deq \frac h2\,\bigl\{1-\cos\bigl({\textstyle \frac{j-1}{J-1}}\,\uppi\bigr)\bigr\}\,,
 \qquad j=1,\ldots,J\,.
\end{equation*}
Associated with the nodes are certain interpolating polynomials $\ell_j$, $j=1,\dotsc,J$,
characterized by
\(\ell_j(t_i)=\mathbf 1_{\{i=j\}}\).  Then, for every vector-valued function
\(f\) defined at the nodes, let
\begin{equation}
  (\mathcal I_{J,h}f)(t)\deq \sum_{j=1}^{J}f(t_j)\,\ell_j(t)
  \label{eq:interpolation-operator}
\end{equation}
be its Lagrange interpolant; this is an approximation to $f(t)$ itself.
Background material on Chebyshev{--}Lobatto quadrature is collected in \cref{app:chebyshev-lobatto}.

To obtain a numerical scheme, we replace $g_\eta$ in~\eqref{eq:ideal_picard} with its interpolant.
This produces the iterations
\begin{align*}
    X_t^{[1]}
    &= X_0 + tP_0 - \int_0^t (t-s)\,\bigl(\mathcal I_{J,h}[g_\eta \circ X^{[0]}]\bigr)(s)\,\dd s\,,
    &P_t^{[1]}
    &= P_0 -\int_0^t \mathcal I_{J,h}[g_\eta \circ X^{[0]}](s)\,\dd s\,, \\[0.25em]
    X_t^{[2]}
    &= X_0 + tP_0 - \int_0^t (t-s)\,\bigl(\mathcal I_{J,h}[g_\eta \circ X^{[1]}]\bigr)(s)\,\dd s\,,
    &P_t^{[2]}
    &= P_0 -\int_0^t \mathcal I_{J,h}[g_\eta \circ X^{[1]}](s)\,\dd s\,.
\end{align*}
Note that we only ever need to evaluate these paths at the nodes $t_j$.
We can write the iterations in terms of the precomputed quantities
\begin{align}\label{eq:rhmc-chebyshev-data}
 \omega_j&\deq\int_0^h\ell_j(t)\,\dd t\,,
 &\omega_{i,j}&\deq\int_0^{t_i}(t_i-t)\,\ell_j(t)\,\dd t\,,
 &1\le i,j\le J\,.
\end{align}
Finally, we replace \(g_\eta\) by the stochastic-gradient approximation
\(\widehat g_\eta\) from \eqref{eq:w2-exact-center-probe}.  Thus, throughout
this subsection, the algorithm has exact access to \(\prox_{\eta V}\).
We replace these proximal queries by gradient queries in
\cref{app:gradient-only-prox}.

This leads to the following algorithm.

\begin{algorithm}[Smoothed Picard HMC]
\label{alg:rhmc-prefix}
Initialize
\(X_{\rm init}\deq x_{\rm ref}\) and
\(P_{\rm init}\sim\cN(0,\Id)\).  Given
\((X_{\rm init},P_{\rm init})\), one
phase draws independent standard Gaussians
\(\zeta_0,\zeta_1,(G_j^{[0]},G_j^{[1]})_{j=1}^{J}\), and performs the following steps.
\begin{enumerate}
    \item Partially refresh the momentum and create the starting trajectory.

        Set $P_0 \deq e^{-h/2}\,P_{\rm init} + \sqrt{1-e^{-h}}\,\zeta_0$ and $X_{t_j}^{[0]} \deq X_{\rm init} + t_j P_0$.
    \item Perform the first Picard update:
\begin{align*}
            X_{t_i}^{[1]}&\deq X_{\rm init} +t_iP_0
   -\sum_{j=1}^{J}\omega_{i,j}\,
      \widehat g_\eta
      (X_{t_j}^{[0]};G_j^{[0]})\,.
        \end{align*}
    \item Perform the second Picard update:
\begin{align*}
            X_h^{[2]} &\deq X_{\rm init} +hP_0-\sum_{j=1}^{J}\omega_{J,j}\,
      \widehat g_\eta
      (X_{t_j}^{[1]};G_j^{[1]})\,, \qquad
      P_h^{[2]} \deq P_0 -\sum_{j=1}^{J}\omega_j\,
      \widehat g_\eta
      (X_{t_j}^{[1]};G_j^{[1]})\,.
        \end{align*}
    \item Partially refresh the momentum and initialize the next phase.

        Set $X_{\rm init} \deq X_h^{[2]}$ and $P_{\rm init} \deq e^{-h/2}\,P_h^{[2]} + \sqrt{1-e^{-h}}\,\zeta_1$.
\end{enumerate}
After the prescribed number of phases, return \( Y\deq X_{\rm init}\).
\end{algorithm}

Finally, recall from~\cref{ssec:smoothing} that once we have an approximate sample from the smoothed distribution $\pi_\eta$, we can recover an approximate sample from $\pi$ by sampling from the RGO\@.
We implement this last step via FORS, leading to the full algorithm.

\begin{algorithm}[Smoothed Picard HMC with a terminal FORS step]
\label{alg:w2-complete}
Given the final position \(Y\) of~\cref{alg:rhmc-prefix}, independently run the terminal
FORS routine of \cref{thm:fors-implementation} with target \(R_{\eta,Y}\).
Return
its output \(X_{\rm out}\).
\end{algorithm}

The exact guarantee is \cref{thm:w2-main}, and it is proven in~\cref{sec:w2-proof}.
We next describe some of the ideas of the analysis at a high level in order to explain the source of the $d^{1/6}$ scaling.
For the sake of exposition, we ignore the dependence on $\kappa$ and $\varepsilon$.

\paragraph{Analysis.}
The analysis is based on the local error framework~\cite{MilTre21StochNum}, which produces multi-step $W_2$ bounds via computations of ``local'' or one-step errors.
The key is to decompose the one-step error into ``deterministic'' and ``stochastic'' components.
Over the course of $\asymp h^{-1}$ iterations, the former is expected to accrue with a prefactor $h^{-1}$, whereas the latter only accrues with a prefactor $h^{-1/2}$ due to cancellations.
In our analysis, the ``deterministic'' component consists of two terms: the \emph{bias} of our stochastic gradient estimator, since $\E\widehat g_\eta(\cdot; G) \ne g_\eta$, and the \emph{discretization error} of the Picard integrator.
The ``stochastic'' component consists of the fluctuations, or \emph{variance}, produced by the stochastic gradient estimator.
We handle these terms individually.
\begin{itemize}
    \item \textbf{Discretization error.}
        Here, the error further consists of two parts: the quadrature error, and the error of the Picard iteration itself.
        They are controlled in~\cref{prop:rhmc-proof-deterministic-defect-l2}.
        \begin{itemize}
            \item \emph{Quadrature error.} It is classical that the error of approximating an integral via numerical quadrature depends strongly on the smoothness of the integrand.
                In our setting, this amounts to the smoothness of $V_\eta$.
                By studying Gaussian cumulants, we present a refined estimate showing that the size of $\norm{D^k V_\eta}_{\HS}$ is of order at most $\sqrt d\,\eta^{-(k-2)/2}$, for $k\ge 2$ (\cref{lem:rhmc-proof-potential-gradient-derivatives}).
                From this bound, we are able to show that the quadrature error is bounded by a term involving $(h/\sqrt \eta)^J$, where $J$ is the number of Chebyshev--Lobatto nodes.
                By taking polylogarithmic $J$, this error is made negligible, provided $h < \sqrt \eta$.
                This shows that the natural scale is to take $h \asymp \sqrt \eta$, and we henceforth impose this choice in order to facilitate the interpretation of the subsequent bounds.
            \item \emph{Picard error.}
                The error of Picard iteration decreases exponentially with the depth.
                We show that already for the depth-$2$ scheme, this error is of order $\sqrt d\,h^5$, which is negligible compared to the other error terms.
        \end{itemize}
    \item \textbf{Bias.} Deterministically, we have $\norm{\E\widehat g_\eta(\cdot; G) - g_\eta} \le \eta^{3/2}\sqrt d$ (\cref{lem:rhmc-proof-stochastic-gradient}).
        Propagating this through the dynamics, it is straightforward to show that the bias error is of order $\sqrt d\,(h\eta^{3/2} + \eta^{1/2} h^3) \asymp \sqrt d\,h^4$ (\cref{prop:rhmc-proof-conditional-mean-error}).
    \item \textbf{Variance.} As shown in~\cite{PW26}, sharp analyses of stochastic gradient samplers should leverage the fact that \emph{centered} perturbations of a standard Gaussian are closer in $W_2$ to the Gaussian than a na\"{\i}ve coupling argument would predict; in fact, in this case, the $W_2$ distance scales with the square of the size of the perturbation.
        We refer to this type of result as a second-order $W_2$ estimate.
        We extend the convolution bound of~\cite{PW26} to a more general setting in~\cref{lem:rhmc-proof-random-map-l2}, based on a novel heat flow interpolation argument.
        We use this to prove a bound on the variance error term of order $\sqrt d\,(\eta h^{3/2} + \eta^{1/2} h^3) \lesssim \sqrt d\,h^{7/2}$ (\cref{prop:rhmc-proof-random-error-l2}).
\end{itemize}

After accumulating these errors through the local error framework, we find that the multi-step error is of order $\sqrt d\,h^3$, leading to the scaling $h \asymp d^{-1/6}$.

Finally, since smoothed Picard HMC step targets the smoothed distribution $\pi_\eta$, we need a terminal step in which we sample from the RGO $R_\eta$.
As shown in~\cref{sec:fors-terminal}, when implemented with the exact diffusion sampler of~\cite{CCRZ26}, the cost of this step is $\widetilde O(1+(\eta^2 d)^{1/3})$.
At our scale $\eta \asymp d^{-1/3}$, this terminal step only costs $\widetilde O(d^{1/9})$ queries, so it is not dominant.

\subsection{Higher moment control}

For our warm start construction, we need a stronger $W_q$ guarantee, $q\ge 2$, for smoothed Picard HMC.
We establish the following result.
\begin{informalthm}[see \cref{thm:rhmc-module} for precise statement]
Suppose that $\kappa^{-1} I\preceq\Hess V\preceq I$ and we are given a reference point
$x_{\rm ref}$ satisfying $\norm{\grad V(x_{\rm ref})}\le\sqrt{d/\kappa}$.
For any $q\ge2$, $0<\eps\le1$, and fixed $c_0>0$, gradient-only smoothed
Picard HMC can choose a smoothing variance $0<\eta\le c_0$ and return a law
$\widehat\pi_\eta$ satisfying $W_q(\widehat\pi_\eta,\pi_\eta)\le\eps$, using
\begin{equation*}
 O\Bigl(\Bigl\{\kappa^2+
     \frac{\kappa^{4/3}\,(d+q)^{1/6}}{\eps^{1/3}}\Bigr\}\,
 \Bigl\{q+\log\frac{e\kappa d}{\eps}\Bigr\}^{6}\Bigr)
\end{equation*}
gradient queries in expectation, with an implicit constant depending only on
$c_0$.
\end{informalthm}

This is taken up in \cref{sec:higher-moment-analysis} and follows the same outline as the analysis in \cref{sec:w2-proof}, except that we need several $L^q$ extensions of the ingredients from \cref{sec:w2-proof} which can be somewhat subtle.
Note that this theorem only guarantees closeness to the smoothed distribution $\pi_\eta$, but this is all that is required for our warm start generator described in \cref{ssec:recursive_overview}.

We highlight in particular that \cref{lem:rhmc-proof-centered-kernel-perturbation} establishes an $L^q$ extension of the classical local error framework, which could be useful for future works.

Note that when we upgrade the $W_2$ guarantee in \cref{thm:w2-main} to a $W_q$ guarantee, we pay a price which is polynomial in $q$.
However, for our warm start application, we only need to take $q$ to be logarithmic in the problem parameters, so this turns out to be harmless.

\subsection{Recursive warm start generation}\label{ssec:recursive_overview}

Next, we develop a powerful recursive framework for warm start generation.
Here, we suppose we have access to a sampling subroutine which we call a \emph{smoothed sampler}.
Given a strongly convex and smooth potential \(V\), normalized to satisfy $\kappa^{-1} I \preceq \nabla^2 V \preceq I$, as well as an admissible reference point $x_{\rm ref}$ with $\norm{\nabla V(x_{\rm ref})} \le \sqrt{d/\kappa}$, the smoothed sampler returns a smoothing level $0 \le \eta \ll 1$ and a sample which is $\varepsilon$-close in law to $\pi_V * \cN(0, \eta I)$, where $\pi_V \propto \exp(-V)$.

We remark that if $V$ satisfies $\alpha I \preceq \nabla^2 V \preceq \beta I$, then we can apply the smoothed sampler to $V(\cdot/\sqrt\beta)$, and then rescale the output by $\beta^{-1/2}$.
After this rescaling, the reported smoothing level $\eta$ corresponds to the law of the rescaled sample being close to $\pi_V * \cN(0, \beta^{-1} \eta I)$.

We consider three types of guarantees for the smoothed sampler: the output law is close to the smoothed distribution in $W_2$, in $W_p$ (for $p\ge 2$), or in the sub-Gaussian Orlicz{--}Wasserstein metric $W_{\psi_2}$.
In these three cases, by taking advantage of regularization along the heat flow, we develop a recursive sampler that upgrades the guarantee to $\KL$, to mixed $\TV$ and $\Ren_q$ (as in the guarantee of \cref{thm:adaptive-gibbs-warm}), and to $\Ren_q$, respectively.

For $V\in C^2(\R^d)$, $A > 0$, and $u\in\R^d$, consider the RGO potential and distribution
\begin{align*}
    V_{A,u}
    &\deq V + \frac{\norm{\cdot - u}^2}{2A}\,, \qquad R^V_{A,u} \propto \exp(-V_{A,u})\,.
\end{align*}
The following recursive sampler aims to draw an approximate sample from the RGO distribution $R^V_{A,u}$.
Note that when $A = \infty$, this just corresponds to $\pi \propto \exp(-V)$. The sampling task is easier for smaller $A$ as $V_{A, u}$ then has a smaller condition number. The trick of the following algorithm is to recursively reduce the RGO sampling task to one with a smaller $A$, until the terminal level is reached.

\begin{algorithm}[Recursive RGO sampler]
\label{alg:generic-warm-overview}
Fix a terminal level $\underline A > 0$.
\begin{itemize}
    \item If $A \le \underline A$, then simply apply the localized FORS routine in~\cref{thm:fors-implementation} directly to $R_{A,u}^V$.
    \item Otherwise, proceed via the following steps.
        \begin{itemize}
            \item Apply the smoothed sampler to $V_{A,u}$, and let $\eta > 0$ be the reported smoothing level, and let $Z$ be the rescaled output.
                Thus, $\law(Z) \approx R_{A,u}^V * \cN(0, \beta_A^{-1} \eta I)$, where $\beta_A$ is the smoothness of $V_{A,u}$.
            \item Pick $\tau > 0$, draw an independent
\(G\sim\cN(0,\Id)\), and form
\begin{equation*}
  Y\deq Z+\sqrt{\frac\tau{\beta_A}}\,G\,,
  \qquad
  a\deq\frac{\eta+\tau}{\beta_A}\,.
\end{equation*}
Thus, $\law(Y) \approx R^V_{A,u} * \cN(0, aI)$.
\item We wish to obtain an approximate sample from $R^V_{A,u}$. It suffices to sample from $R^{V_{A,u}}_{a,Y}$.
    Indeed, by the definition of the RGO, for any potential $V$, we have the identity $X \sim \pi_V$ if $Y \sim \pi_V * \cN(0,aI)$ and $X \sim R^V_{a,Y}$.

    In~\cref{lem:rgo-calculus}, we justify the following closure property of the RGO\@: $R_{a,Y}^{V_{A,u}} = R^V_{A^+, u^+}$, where the updated parameters are
\begin{equation*}
  \frac1{A^+}\deq\frac1A+\frac1a\,,
  \qquad
  u^+\deq A^+\,\Bigl(\frac uA+\frac Ya\Bigr)\,.
\end{equation*}
Thus, we call the recursive RGO sampler on \(R^V_{A^+,u^+}\), and
return its output.
        \end{itemize}
\end{itemize}
\end{algorithm}

At this point, it may be unclear why we add an additional Gaussian to the output of the smoothed sampler.
In fact, this is the key step which enables the use of regularization along the heat flow---specifically, in the form of reverse transport inequalities---to upgrade the smoothed sampler guarantee from Wasserstein to stronger distances.
The guarantee for \cref{alg:generic-warm-overview} is given in \cref{thm:recursive-rgo-sampler}.

We can apply \cref{alg:generic-warm-overview} to sample from $\pi$ by taking $A = \infty$.
However, if the iteration complexity of the smoothed sampler has poor dependence on the condition number, then it is better to wrap the entire recursive scheme within the proximal sampler~\cite{LST21, CCSW22}.

\begin{algorithm}[Proximal sampler wrapper]
\label{alg:outer-warm-overview}
    Given an initial sample \(X_0\), an outer variance \(a_{\rm out}>0\),
    and a number \(N\) of outer steps, iterate as follows.  At step
    \(n\in\{0,\ldots,N-1\}\), draw
    \(Y_n=X_n+\cN(0,a_{\rm out} I)\), and set \(X_{n+1}\) to be the
    output of \cref{alg:generic-warm-overview} with target
    \(R^V_{a_{\rm out},Y_n}\).  Return the final iterate \(X_N\).
\end{algorithm}

This wrapper ensures that we only ever call the smoothed sampler on well-conditioned targets, leading to an overall improved condition number dependence.
This aligns with the original use of the proximal sampler in~\cite{LST21}.
The full guarantee for the recursive warm start generator is given in
\cref{thm:recursive-warm-generator}.
We emphasize that the guarantee is fully generic and applies to any smoothed sampler implementing our interface.

Instantiating the recursive warm start generator with smoothed Picard HMC as a subroutine then establishes our warm start guarantee in \cref{thm:adaptive-gibbs-warm}.

\section{Proof of the \texorpdfstring{$W_2$}{W\_2} guarantee}
\label{sec:w2-proof}

In this section, our goal is to prove the $W_2$ guarantee (\cref{thm:w2-main}). However, since we will later need to obtain stronger moment estimates for~\cref{thm:adaptive-gibbs-warm}, we will first develop the part of the analysis that is common to both results.

\subsection{Setup and smoothing estimates}
\label{sec:rhmc-proof}

For a positive-definite matrix \(M\), write
\[
 W_{q,M}(P,Q)\deq
 \inf_{X\sim P,\,Y\sim Q}\bigl(\E\norm{X-Y}_M^q\bigr)^{1/q}\,,
 \qquad
 \norm{x}_M\deq(x^\T Mx)^{1/2}\,.
\]
Recall that
\[
 \pi_\eta\deq \pi * \cN(0,\eta\Id)\,, \qquad
 \Pi_\eta(\dd x\,\dd p)\deq \pi_\eta(\dd x)\,\cN(0,\Id)(\dd p)\,.
\]
Let \(X\mid Y=y\) be drawn from $R_{\eta,y}$.
Recalling the definition~\eqref{eq:prox-def} of the proximal operator, throughout this section we write
\[
 x_y^+\deq\prox_{\eta V}(y)\,.
\]
It is easy to check the identities
\begin{equation}
 g_\eta(y)= \nabla V_\eta(y) =\eta^{-1}\,(y-\E[X\mid Y=y])\,,
 \qquad
 \Hess V_\eta(y)=\eta^{-1}\Id-\eta^{-2}\operatorname{Cov}(X\mid Y=y)\,.
 \label{eq:rhmc-proof-potential-gradient-covariance}
\end{equation}

\begin{lemma}[Hessian bounds for the smoothed potential]
\label{lem:rhmc-proof-smoothed-curvature}
For \(0<\eta\le1\),
\begin{equation*}
 \frac1{\kappa+\eta}\,\Id
 \preceq \Hess V_\eta
 \preceq \frac1{1+\eta}\,\Id
 \preceq\Id\,.
\end{equation*}
In particular \(g_\eta\) is $1$-Lipschitz.
\end{lemma}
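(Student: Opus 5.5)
We prove the two inequalities separately, starting from the covariance identity \eqref{eq:rhmc-proof-potential-gradient-covariance}, $\Hess V_\eta(y)=\eta^{-1}\Id-\eta^{-2}\operatorname{Cov}(X\mid Y=y)$. Under the normalization $\beta=1$ we have $\kappa^{-1}\Id\preceq\Hess V\preceq\Id$. The conditional law $R_{\eta,y}$ has potential $V+\norm{\cdot-y}^2/(2\eta)$, whose Hessian lies between $(\kappa^{-1}+\eta^{-1})\Id$ and $(1+\eta^{-1})\Id$. The whole argument therefore reduces to two-sided covariance bounds for this strongly log-concave and smooth measure.

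\textbf{Upper bound on $\Hess V_\eta$.} By the Brascamp--Lieb inequality (or the Cramér--Rao / Poincaré bound for strongly log-concave measures),
\begin{equation*}
 \operatorname{Cov}(R_{\eta,y})\preceq \bigl(\kappa^{-1}+\eta^{-1}\bigr)^{-1}\Id\,.
\end{equation*}
This only gives a \emph{lower} bound on $\Hess V_\eta$, namely
\begin{equation*}
 \eta^{-1}-\eta^{-2}\,\frac{\kappa\eta}{\kappa+\eta}=\frac{1}{\kappa+\eta}\,,
\end{equation*}
which is exactly the claimed lower bound.

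\textbf{Lower bound on $\Hess V_\eta$.} For this direction we need the reverse inequality $\operatorname{Cov}(R_{\eta,y})\succeq(1+\eta^{-1})^{-1}\Id$. This is the Cramér--Rao-type lower bound: for a log-concave density $e^{-U}$ with $\Hess U\preceq L\,\Id$, one has $\operatorname{Cov}\succeq L^{-1}\Id$. To prove it, apply Cauchy--Schwarz with the integration-by-parts identity $\E[(X-\E X)\,\grad U(X)^\T]=\Id$. This gives $\Id\preceq\operatorname{Cov}^{1/2}\,\E[\grad U\grad U^\T]\,\operatorname{Cov}^{1/2}$ in the appropriate matrix sense. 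Combined with $\E[\grad U\grad U^\T]=\E\Hess U\preceq L\,\Id$, it yields $\operatorname{Cov}\succeq L^{-1}\Id$. Substituting $L=1+\eta^{-1}$ gives
\begin{equation*}
 \Hess V_\eta\preceq\eta^{-1}-\eta^{-2}\,\frac{\eta}{1+\eta}=\frac{1}{1+\eta}\,.
\end{equation*}

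\textbf{Cleaner alternative.} A simpler route to both bounds is the infimal-convolution viewpoint. Since $V_\eta$ is the log of a Gaussian convolution, the Prékopa--Leindler / Brascamp--Lieb preservation result states that convolving an $\alpha'$-strongly log-concave density with $\cN(0,\eta\Id)$ gives an $(\alpha'^{-1}+\eta)^{-1}$-strongly log-concave density. This yields the lower bound $1/(\kappa+\eta)$ directly. The matching upper bound is the dual statement for smoothness, which is exactly the Cramér--Rao argument above. Either way, the final inequalities $1/(1+\eta)\le1$ and the $1$-Lipschitz property of $g_\eta=\grad V_\eta$ are immediate from $0\preceq\Hess V_\eta\preceq\Id$.

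\textbf{Main obstacle.} The only delicate step is the matrix Cauchy--Schwarz in the covariance lower bound. I would handle it by testing against a fixed unit vector $v$: write $1=\E[\ip{v}{X-\E X}\,\ip{\operatorname{Cov}^{-1}v}{\grad U}]$, choose the direction appropriately, and reduce to scalar Cauchy--Schwarz. No assumption beyond $V\in C^2$ and the stated Hessian bounds is needed, and $\eta\le1$ is not actually used.
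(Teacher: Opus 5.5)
Your proof is correct and is essentially the paper's argument: substitute the Brascamp--Lieb bound $\operatorname{Cov}(X\mid Y=y)\preceq(\kappa^{-1}+\eta^{-1})^{-1}\Id$ and the Cram\'er--Rao bound $\operatorname{Cov}(X\mid Y=y)\succeq(1+\eta^{-1})^{-1}\Id$ for the RGO into $\Hess V_\eta=\eta^{-1}\Id-\eta^{-2}\operatorname{Cov}(X\mid Y=y)$; your self-contained derivation of Cram\'er--Rao is a harmless addition. The only slip is cosmetic: your two paragraph headings are swapped, since the Brascamp--Lieb step gives the lower bound $1/(\kappa+\eta)$ and the Cram\'er--Rao step gives the upper bound $1/(1+\eta)$, as your own text in fact says.
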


\begin{proof}
The RGO potential has Hessian between
\((\kappa^{-1}+\eta^{-1})\,\Id\) and \((1+\eta^{-1})\,\Id\).  The
 Brascamp--Lieb covariance inequality \cite{BL76} gives
\[
 \operatorname{Cov}(X\mid Y=y)
 \preceq(\kappa^{-1}+\eta^{-1})^{-1}\Id\,.
\]
For the reverse bound, the
Cram\'er--Rao inequality (see, e.g.,~\cite[Theorem 3.5.8]{Chewi26Book}) gives
\[
 \operatorname{Cov}(X\mid Y=y)
 \succeq (1+\eta^{-1})^{-1}\Id\,.
\]
Substitute both bounds in
\eqref{eq:rhmc-proof-potential-gradient-covariance}.
\end{proof}

The following is the key estimate that decomposes the stochastic gradient error into a bias term and a stochastic error term.

\begin{lemma}[Stochastic gradient estimates]
\label{lem:rhmc-proof-stochastic-gradient}
Write
\begin{equation*}
 \widehat g_\eta(y;G)\deq g_\eta(y)+b_\eta(y)+\xi_\eta(y;G)\,,
 \qquad \E\xi_\eta(y;G)=0\,.
\end{equation*}
Then, for all \( u,y,y',G,G'\),
\begin{align}
 \norm{b_\eta(y)}&\le\eta^{3/2}\sqrt d\,,\label{eq:rhmc-proof-stochastic-gradient-bias}\\
 \E e^{\ip{u}{\xi_\eta(y;G)}}&\le e^{\eta\,\norm u^2/2}\,,\label{eq:rhmc-proof-stochastic-gradient-subg}\\
 \norm{\widehat g_\eta(y;G)-\widehat g_\eta(y';G)}
 &\le\norm{y-y'}\,,\label{eq:rhmc-proof-stochastic-gradient-y-lip}\\
 \norm{\widehat g_\eta(y;G)-\widehat g_\eta(y;G')}
 &\le\sqrt\eta\,\norm{G-G'}\,.\label{eq:rhmc-proof-stochastic-gradient-g-lip}
\end{align}
\end{lemma}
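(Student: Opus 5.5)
The four estimates are of different difficulty. The two Lipschitz bounds and the sub-Gaussian bound follow from Lipschitz properties of $\grad V$ and of the proximal map. The bias bound \eqref{eq:rhmc-proof-stochastic-gradient-bias} is the real content, and I would prove it by a Wasserstein comparison between the RGO and a Gaussian. Throughout, $\beta=1$, so $\grad V$ is $1$-Lipschitz and $\widehat g_\eta(y;G)=\grad V(x_y^++\sqrt\eta\,G)$.

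\textbf{Lipschitz bounds.} For \eqref{eq:rhmc-proof-stochastic-gradient-g-lip}, the point $x_y^+$ is fixed. Since $\grad V$ is $1$-Lipschitz,
\[
\norm{\widehat g_\eta(y;G)-\widehat g_\eta(y;G')}\le\sqrt\eta\,\norm{G-G'}\,.
\]
For \eqref{eq:rhmc-proof-stochastic-gradient-y-lip}, I use that $\prox_{\eta V}$ of a convex $V$ is nonexpansive. Indeed, the optimality condition reads $x_y^++\eta\,\grad V(x_y^+)=y$, and $\Id+\eta\grad V$ is monotone with inverse $1$-Lipschitz. Hence $\norm{x_y^+-x_{y'}^+}\le\norm{y-y'}$, and composing with the $1$-Lipschitz map $\grad V(\cdot+\sqrt\eta\,G)$ gives the claim.

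\textbf{Sub-Gaussian bound.} Fix $u$. The map $G\mapsto\ip{u}{\widehat g_\eta(y;G)}$ is $\sqrt\eta\,\norm u$-Lipschitz by \eqref{eq:rhmc-proof-stochastic-gradient-g-lip}. By the sharp Gaussian concentration for Lipschitz functions (Herbst's argument from the Gaussian log-Sobolev inequality with constant $1$), an $L$-Lipschitz $F$ satisfies $\E e^{F-\E F}\le e^{L^2/2}$. Since $\xi_\eta=\widehat g_\eta-\E\widehat g_\eta$, this gives \eqref{eq:rhmc-proof-stochastic-gradient-subg} with $L^2=\eta\norm u^2$.

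\textbf{Bias (main step).} By \eqref{eq:overview-smoothed-score}, $g_\eta(y)=\E_{R_{\eta,y}}\grad V$. Also $\E\widehat g_\eta(y;G)=\E_Q\grad V$, where $Q\deq\cN(x_y^+,\eta\Id)$. Since $\grad V$ is $1$-Lipschitz,
\[
\norm{b_\eta(y)}\le W_1(Q,R_{\eta,y})\le W_2(Q,R_{\eta,y})\,,
\]
so it suffices to show $W_2^2(R_{\eta,y},Q)\le\eta^3 d$. The measure $Q$ is $\eta^{-1}$-strongly log-concave, so it satisfies Talagrand's inequality and the log-Sobolev inequality with constant $\eta$. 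Together,
\[
W_2^2(R,Q)\le2\eta\,\KL(R\mmid Q)\le\eta^2\,\E_R\bigl\|\grad\log\tfrac{\dd R}{\dd Q}\bigr\|^2\,.
\]
Computing the relative score: $-\log R=V+\norm{\cdot-y}^2/(2\eta)+{\rm const}$ and $-\log Q=\norm{\cdot-x_y^+}^2/(2\eta)+{\rm const}$. Using the optimality condition $(y-x_y^+)/\eta=\grad V(x_y^+)$, the relative score simplifies to $-(\grad V(x)-\grad V(x_y^+))$, whose norm is at most $\norm{x-x_y^+}$.

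It remains to show $\E_R\norm{X-x_y^+}^2\le\eta d$. Here $R$ is $(\kappa^{-1}+\eta^{-1})$-strongly log-concave with mode $x_y^+$, and the standard bound $\E\norm{X-\text{mode}}^2\le d/\lambda$ for a $\lambda$-strongly log-concave law gives $\eta d$ or better. This standard bound is the one point I would double-check; it follows, e.g., from integrating by parts $\E\ip{\grad U(X)-\grad U(x^*)}{X-x^*}=d$. Combining, $W_2^2\le\eta^2\cdot\eta d$, which yields $\norm{b_\eta(y)}\le\eta^{3/2}\sqrt d$.
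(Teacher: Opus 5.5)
Your proposal is correct and follows essentially the same route as the paper. The paper rescales to $u=(x-x_y^+)/\sqrt\eta$ and compares the normalized RGO $\mathsf r_y$ with $\cN(0,\Id)$ via Talagrand plus the Gaussian log-Sobolev inequality, using $\norm{\grad\rho_y(u)}\le\eta\norm u$ and the mode-centered second-moment bound; this is exactly your comparison of $R_{\eta,y}$ with $\cN(x_y^+,\eta\Id)$, written in unscaled coordinates, and the Lipschitz and sub-Gaussian bounds are handled identically.
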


\begin{proof}
Proximal optimality gives
\(\grad V(x_y^+)=(y-x_y^+)/\eta\).  Define
\[
    \rho_y(u)\deq V(x_y^++\sqrt\eta u)-V(x_y^+)
             -\sqrt\eta\,\ip{\grad V(x_y^+)}u\,.
\]
Then \(0\preceq\Hess\rho_y\preceq\eta\Id\), and under $R_{\eta,y}$,
\((X-x_y^+)/\sqrt\eta\) has density
proportional to \( u\mapsto e^{-\norm u^2/2-\rho_y(u)}\).  Call this law \(\mathsf r_y\).
We have
\begin{equation}
    W_2(\mathsf r_y,\cN(0,\Id))\le\sqrt{\E_{\mathsf r_y}[\norm{\nabla \rho_y(U)}^2]}
    \le \eta\sqrt{\E_{\mathsf r_y}\norm U^2}\le\eta\sqrt d\,.
 \label{eq:rhmc-proof-r-gaussian}
\end{equation}
Indeed, the first inequality combines Talagrand's inequality for the standard
Gaussian with the Gaussian log-Sobolev inequality; the second uses that
\(\nabla\rho_y(0)=0\) and \(\nabla\rho_y\) is \(\eta\)-Lipschitz; and the last
follows by integration by parts since $\mathsf r_y$ is $1$-strongly
log-concave with its mode at $0$.

Moreover,
\[
 g_\eta(y)=\grad V(x_y^+)+\eta^{-1/2}\,\E_{\mathsf r_y}\grad\rho_y(U)\,,
 \qquad
 \widehat g_\eta(y;G)
 =\grad V(x_y^+)+\eta^{-1/2}\,\grad\rho_y(G)\,.
\]
The gradient of \(\rho_y\) is \(\eta\)-Lipschitz, so
\eqref{eq:rhmc-proof-r-gaussian} gives \eqref{eq:rhmc-proof-stochastic-gradient-bias}.  Since $\widehat g_\eta(y;\cdot)$ is \(\sqrt\eta\)-Lipschitz, the standard Gaussian concentration inequality
gives \eqref{eq:rhmc-proof-stochastic-gradient-subg}, and the same observation gives
\eqref{eq:rhmc-proof-stochastic-gradient-g-lip}.  Finally, \(y\mapsto x_y^+\) is non-expansive and
\(\grad V\) is $1$-Lipschitz, proving \eqref{eq:rhmc-proof-stochastic-gradient-y-lip}.
\end{proof}

We will also need high-order regularity of the smoothed potential.  The key
point is a near-Gaussian cancellation, which is stronger than the generic moment bound for a
strongly log-concave law. The Gaussian cumulant estimates used in the proof
are collected in \cref{app:gaussian-analysis}.

\begin{lemma}[Derivatives of the smoothed potential]
\label{lem:rhmc-proof-potential-gradient-derivatives}
For every integer \(k\ge2\),
\begin{align}
    \sup_{y\in\R^d}\,\norm{D^kV_\eta(y)}_{\HS}
 &\le (k-1)!\,\sqrt d\,\eta^{-(k-2)/2}\,,
 \label{eq:rhmc-proof-derivative-full}\\
 \sup_{y\in\R^d,\,\norm v=1}\,\norm{D^kV_\eta(y)[v,\cdot^{k-1}]}_{\HS}
 &\le (k-1)!\,\eta^{-(k-2)/2}\,.
 \label{eq:rhmc-proof-derivative-slot}
\end{align}
Consequently,
\begin{equation}
 \norm{\Hess V_\eta(y)-\Hess V_\eta(y')}_{\HS}
 \le 2\eta^{-1/2}\,\norm{y-y'}\,.
 \label{eq:rhmc-proof-hessian-F-lipschitz}
\end{equation}
\end{lemma}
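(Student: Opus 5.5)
We express derivatives of $V_\eta$ as conditional cumulants of the RGO law. Since $e^{-V_\eta(y)}\propto\int e^{-V(x)-\norm{x-y}^2/(2\eta)}\,\dd x$, we have
\[
V_\eta(y)=\frac{\norm y^2}{2\eta}-\log\int e^{\ip{y}{x}/\eta}\,e^{-V(x)-\norm x^2/(2\eta)}\,\dd x+\text{const}\,.
\]
Hence for $k\ge3$,
\[
D^kV_\eta(y)=-\eta^{-k}\,\kappa_k(X\mid Y=y)\,,
\]
where $\kappa_k$ denotes the $k$-th cumulant tensor of $R_{\eta,y}$. As in the proof of \cref{lem:rhmc-proof-stochastic-gradient}, rescale $X=x_y^++\sqrt\eta\,U$ with $U\sim\mathsf r_y\propto e^{-\norm u^2/2-\rho_y(u)}$. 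Cumulants of order $k\ge2$ are translation invariant and $k$-homogeneous, so
\[
D^kV_\eta(y)=-\eta^{-k/2}\,\kappa_k(\mathsf r_y)\,.
\]
Thus \eqref{eq:rhmc-proof-derivative-full} reduces to
\[
\norm{\kappa_k(\mathsf r_y)}_{\HS}\le (k-1)!\,\sqrt d\,\eta\qquad(k\ge3)\,.
\]
The factor $\eta$ reflects that $\mathsf r_y$ is a perturbation of $\cN(0,\Id)$, whose higher cumulants vanish, by a potential $\rho_y$ with $0\preceq\Hess\rho_y\preceq\eta\Id$. The case $k=2$ is \cref{lem:rhmc-proof-smoothed-curvature}: $\norm{\Hess V_\eta}_{\HS}\le\sqrt d\norm{\Hess V_\eta}_{\op}\le\sqrt d$, and the slot version holds since $\norm{\Hess V_\eta v}\le1$.

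To bound the cumulants, I would use the Gaussian cumulant estimates of \cref{app:gaussian-analysis}. The plan is to interpolate $\mathsf r^{(s)}\propto e^{-\norm u^2/2-s\rho_y(u)}$ for $s\in[0,1]$. Then $\frac{\dd}{\dd s}\kappa_k(\mathsf r^{(s)})$ is a mixed cumulant, namely $-\kappa(\rho_y,U,\dots,U)$ with $k$ copies of $U$. This can be rewritten through Gaussian-type integration by parts, or equivalently the Brascamp--Lieb/Helffer--Sjöstrand representation, as cumulants of $U$ contracted against $\nabla\rho_y$. The latter is $\eta$-Lipschitz with $\norm{\nabla\rho_y(u)}\le\eta\norm u$.

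An alternative, which I would try first, is a direct recursion. With $V^{(s)}\deq V_{s}$ the smoothed potential at the right scale, one has the exact identity $\partial_\eta V_\eta=\tfrac12(\Delta V_\eta-\norm{\nabla V_\eta}^2)$. Differentiating this $k$ times gives a Riccati-type hierarchy in the $D^jV_\eta$. I would then bound it inductively, with $\Delta$ handled by the semigroup and the quadratic term producing the $(k-1)!$ combinatorics through the Leibniz rule, $\sum\binom{k-1}{j}(j-1)!(k-j-1)!\lesssim (k-1)!$.

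The slot version \eqref{eq:rhmc-proof-derivative-slot} follows the same argument with one index frozen along $v$. There the $\sqrt d$ coming from a trace is replaced by the directional bound $\norm{\Hess V_\eta v}\le1$. The Lipschitz bound \eqref{eq:rhmc-proof-hessian-F-lipschitz} follows by writing
\[
\Hess V_\eta(y)-\Hess V_\eta(y')=\int_0^1 D^3V_\eta(y_t)[y-y',\cdot,\cdot]\,\dd t\,,
\]
and applying \eqref{eq:rhmc-proof-derivative-slot} with $k=3$ and $v=(y-y')/\norm{y-y'}$, which gives $2!\,\eta^{-1/2}$.

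The main obstacle is obtaining the HS norm (with $\sqrt d$, not $d^{k/2}$) together with the sharp factor $\eta$ in the cumulants for all $k$ simultaneously. Naive moment bounds for a strongly log-concave law give $\norm{\kappa_k}_{\HS}\lesssim d^{k/2}$-type growth. The key cancellation is that each derivative of the perturbation $\rho_y$ costs $\eta$ while the Gaussian contributes nothing at order $k\ge3$. I would isolate this via the Ornstein--Uhlenbeck/Hermite expansion in \cref{app:gaussian-analysis}, contracting the single Hessian factor $\Hess\rho_y$ (with $\norm{\Hess\rho_y}_{\HS}\le\eta\sqrt d$) against operator-norm-bounded remaining factors.
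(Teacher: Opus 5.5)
Your reduction matches the paper's. The cumulant identity, translation invariance and $k$-homogeneity reduce the lemma to the bound $\norm{\operatorname{Cum}_k(\mathsf r_y)}_{\HS}\le(k-1)!\sqrt d\,\eta$ (and its slot analogue) for $k\ge3$. The case $k=2$ is \cref{lem:rhmc-proof-smoothed-curvature}, and \eqref{eq:rhmc-proof-hessian-F-lipschitz} follows by integrating the $k=3$ slot bound.

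\textbf{The gap.} That cumulant estimate is the whole content of the lemma, and you leave it as ``the main obstacle'' with three strategies, none carried out.

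The Riccati route fails as sketched:
\begin{itemize}
\item Since $V$ is only $C^2$, $D^kV$ need not exist at $\eta=0$ for $k\ge3$. The factor $\eta^{-(k-2)/2}$ must therefore come from a parabolic smoothing estimate for the transport--diffusion equation satisfied by $D^kV_\eta$, which contains $\Delta D^kV_\eta$ and $\ip{\grad V_\eta}{\grad D^kV_\eta}$. Leibniz bookkeeping cannot supply it.
\item A naive Duhamel integration from $0$ breaks down. The quadratic sources are of size $s^{-(k-2)/2}$, so the integral already diverges at $s=0$ when $k=4$.
\item Your combinatorial claim is wrong. Since $\binom{k-1}{j}(j-1)!\,(k-j-1)!=(k-1)!/j$, the sum is $(k-1)!$ times a harmonic number, not $O((k-1)!)$.
\end{itemize}

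The interpolation route needs an iterated covariance formula for the non-Gaussian measures $\mathsf r^{(s)}$, which you do not provide. Likewise, an Ornstein--Uhlenbeck/Hermite expansion applies only to functions of a Gaussian, and you never express $U\sim\mathsf r_y$ as one.

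\textbf{The missing idea} is to write $U$ as a near-identity function of a Gaussian.
\begin{itemize}
\item The potential of $\mathsf r_y$ has Hessian between $\Id$ and $(1+\eta)\Id$. Apply Caffarelli's contraction theorem to the Brenier map $T_y$ from $\cN(0,\Id)$ to $\mathsf r_y$ and to its inverse. This gives $(1+\eta)^{-1/2}\Id\preceq DT_y\preceq\Id$, hence $\norm{DT_y-\Id}_{\op}\le\eta/2$, with $U$ distributed as $T_y(G)$.
\item The Nourdin--Nualart representation (\cref{lem:rhmc-proof-iterated-covariance}) then writes $\operatorname{Cum}_k(T_y(G))[v,\cdot^{k-1}]$ as a sum of $(k-1)!$ terms of the form $\E\Gamma_k$.
\item The first factor $\Gamma_2(v)=DT_y\,D(-\mathcal L_{\mathsf{OU}})^{-1}\ip v{T_y}$ lies within $\eta\norm v$ of the constant $v$ in $L^2$. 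This uses $D(-\mathcal L_{\mathsf{OU}})^{-1}\ip vG=v$, the dual Poincar\'e inequality, and Poincar\'e for $\ip v{T_y-\mathrm{id}}$.
\item Each further $\Gamma_{j+1}$ contracts $DT_y$ (operator norm at most $1$) against $D(-\mathcal L_{\mathsf{OU}})^{-1}\Gamma_j$. The $L^2$ norm of the latter is at most the centered $L^2$ norm of $\Gamma_j$.
\item So every ordering contributes at most $\eta$ in slot norm, and summing squares over a basis yields the $\sqrt d$ (\cref{lem:rhmc-proof-near-identity-cumulant}).
\end{itemize}
This is the rigorous version of your closing heuristic. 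The smallness enters once, through $DT_y-\Id$ rather than through $\Hess\rho_y$, and every other factor is a contraction. The Gaussian structure that the $\Gamma$-calculus needs is exactly what the transport map supplies.
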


\begin{proof}
Let \(\mathsf r_y\) be the normalized RGO from the proof of
\cref{lem:rhmc-proof-stochastic-gradient}.  Its potential has Hessian between
\(\Id\) and \((1+\eta)\Id\).  Let \(T_y\) be the optimal transport
from the standard Gaussian to \(\mathsf r_y\).  Caffarelli's contraction theorem
\cite{C00}, applied in both directions, gives
\begin{equation}
 (1+\eta)^{-1/2}\Id\preceq DT_y\preceq\Id\,,
 \qquad \norm{DT_y-\Id}_{\op}\le\eta/2\,,
 \label{eq:rhmc-proof-transport-close}
\end{equation}
almost everywhere.

By \eqref{eq:smoothed-potential-cumulant-identity}, for \(k\ge3\),
\[
 D^kV_\eta(y)=-\eta^{-k}\operatorname{Cum}_k(X\mid Y=y)\,.
\]
Under the RGO,
\(X=x_y^++\sqrt\eta\,T_y(G)\), $G \sim \cN(0,I)$.  Translation invariance and homogeneity of
cumulants, followed by \eqref{eq:rhmc-proof-transport-close} and
\cref{lem:rhmc-proof-near-identity-cumulant} with \(\varepsilon_T\le\eta/2\), prove
\eqref{eq:rhmc-proof-derivative-full}--\eqref{eq:rhmc-proof-derivative-slot}.
The case \(k=2\) is \cref{lem:rhmc-proof-smoothed-curvature}; integrating the
\(k=3\) estimate~\eqref{eq:rhmc-proof-derivative-slot} along a segment gives
\eqref{eq:rhmc-proof-hessian-F-lipschitz}.
\end{proof}

\subsection{Local error analysis}

The $W_2$ proof will be based on the local error framework.

\begin{lemma}[Local error framework]
\label{lem:rhmc-proof-local-error}
Let \(P,\widehat P\) be Markov kernels on \(\R^m\).  Suppose that, for every
\(x,y\in\R^m\), there is a coupling
\(X\sim\delta_xP\), \(Y\sim\delta_yP\) such that
\begin{equation}
 \norm{X-Y}_{L^2}
 \le\rho\,\norm{x-y}\,,
 \qquad
 \norm{(X-x)-(Y-y)}_{L^2}
 \le\chi\,\norm{x-y}\,.
 \label{eq:rhmc-proof-local-error-coupling}
\end{equation}
Suppose also that, writing \(\widehat X\sim\delta_x\widehat P\),
\begin{equation*}
 W_2(\delta_x\widehat P,\delta_xP)\le\mathcal E_{\rm strong}\,,
 \qquad
 \norm{\E\widehat X-\E X}\le\mathcal E_{\rm weak}\,.
\end{equation*}
Assume \(0<\rho<1\).  Then, for all probability laws \(\mu,\nu\) on
\(\R^m\),
\begin{equation*}
 W_2^2(\mu\widehat P,\nu P)
 \le\rho W_2^2(\mu,\nu)
 +\mathcal E_{\rm strong}^2
 +\frac{(\mathcal E_{\rm weak}
          +\chi\mathcal E_{\rm strong})^2}{\rho\,(1-\rho)}\,.
\end{equation*}
\end{lemma}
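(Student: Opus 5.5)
We prove the bound by a one-step coupling followed by an orthogonality argument in \(L^2\). Fix an optimal coupling \((x,y)\) of \(\mu\) and \(\nu\), so that \(\E\norm{x-y}^2=W_2^2(\mu,\nu)\). Conditionally on \((x,y)\), we build three random variables. Let \(Y\sim\delta_yP\) and \(X\sim\delta_xP\) be coupled as in \eqref{eq:rhmc-proof-local-error-coupling}. Let \(\widehat X\sim\delta_x\widehat P\) be coupled optimally with \(X\), so that \(\norm{\widehat X-X}_{L^2}\le\mathcal E_{\rm strong}\). The gluing lemma makes this construction legitimate, and a measurable selection of the couplings in \((x,y)\) is routine. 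Then \((\widehat X,Y)\) is a coupling of \(\mu\widehat P\) and \(\nu P\), and it suffices to bound \(\E\norm{\widehat X-Y}^2\).

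Write \(\widehat X-Y=(X-Y)+e\) with \(e\deq\widehat X-X\), and split \(e=\bar e+\tilde e\), where \(\bar e\deq\E[e\mid x,y]\) and \(\tilde e\) is conditionally centred. By hypothesis \(\norm{\bar e}\le\mathcal E_{\rm weak}\), and hence \(\E\norm{\tilde e}^2\le\E\norm{e}^2\le\mathcal E_{\rm strong}^2\). Expanding the square gives
\[
\E\norm{\widehat X-Y}^2=\E\norm{X-Y}^2+2\E\ip{X-Y}{e}+\E\norm e^2 .
\]
The first term is at most \(\rho^2\E\norm{x-y}^2\), and the last term is at most \(\mathcal E_{\rm strong}^2\).

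The crux is the cross term. The key step is to write \(X-Y=(x-y)+D\) with \(D\deq(X-x)-(Y-y)\), where \(\norm D_{L^2}\le\chi\norm{x-y}\) conditionally. Since \(x-y\) is \((x,y)\)-measurable,
\[
\E\ip{x-y}{e}=\E\ip{x-y}{\bar e}\le\mathcal E_{\rm weak}\,\E\norm{x-y}.
\]
For the remaining piece, Cauchy--Schwarz gives \(\E\ip{D}{e}\le\chi\,\mathcal E_{\rm strong}\,\E\norm{x-y}\), applied conditionally and then averaged. Combining these,
\[
2\E\ip{X-Y}{e}\le 2\,(\mathcal E_{\rm weak}+\chi\mathcal E_{\rm strong})\,\E\norm{x-y}
\le 2\,(\mathcal E_{\rm weak}+\chi\mathcal E_{\rm strong})\,W_2(\mu,\nu).
\]

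To finish, apply Young's inequality in the form \(2ab\le\rho(1-\rho)\,a^2+b^2/(\rho(1-\rho))\) with \(a=W_2(\mu,\nu)\) and \(b=\mathcal E_{\rm weak}+\chi\mathcal E_{\rm strong}\). The total is then at most
\[
\bigl(\rho^2+\rho(1-\rho)\bigr)W_2^2+\mathcal E_{\rm strong}^2+\frac{b^2}{\rho(1-\rho)},
\]
and \(\rho^2+\rho(1-\rho)=\rho\), which is exactly the stated bound.

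The main obstacle is the cross term. A naive bound \(\E\ip{X-Y}{e}\le\rho\,W_2\,\mathcal E_{\rm strong}\) would lose the weak/strong separation. The decomposition of \(X-Y\) into the deterministic part \(x-y\), which only sees the mean error, and the small increment difference \(D\), which sees the full error, is what produces the combination \(\mathcal E_{\rm weak}+\chi\mathcal E_{\rm strong}\). The only other care needed is the measurable gluing of the conditional couplings.
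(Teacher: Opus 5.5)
Your proof is correct and is essentially the standard argument behind the result the paper cites (\cite[Lemma~5.1.2]{Chewi26Book}); the paper gives no proof of its own. That lemma is proved exactly as you do: expand the square, split the cross term through $X-Y=(x-y)+D$ so that the $(x,y)$-measurable part sees only the conditional mean error while $D$ absorbs the strong error via Cauchy--Schwarz, and close with Young's inequality using $\rho^2+\rho(1-\rho)=\rho$.
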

\begin{proof}
This is the standard local error theorem
\cite[Lemma~5.1.2]{Chewi26Book}.
\end{proof}

We introduce three kernels and associated mappings.
\begin{itemize}
    \item Let \(\widehat K\) denote the one-phase
kernel defined by \cref{alg:rhmc-prefix} with
stochastic gradients \(\widehat g_\eta\).  We can express the output of the phase as
\(\widehat\Phi_z(\zeta,G)\) for Gaussian variables $\zeta$, $G$.
\item Define
\[
    \bar\Phi_z(\zeta)\deq\E[\widehat\Phi_z(\zeta,G)\mid \zeta]\,,
 \qquad
 \bar K(z,\cdot)\deq(\bar\Phi_z)_\#\cN(0,\Id_{2d})\,.
\]
\item Finally, let \(K\) be the exact gradient numerical kernel obtained by
replacing every stochastic gradient $\widehat g_\eta$ by \(g_\eta\).  Write \(\Phi_z\) for the corresponding map, so that
\[
 K(z,\cdot)\deq(\Phi_z)_\#\cN(0,\Id_{2d})\,.
\]
\end{itemize}

It is standard that contraction for HMC is only witnessed after a change of coordinates. Set
\begin{equation*}
 M_\kappa\deq
 \begin{bmatrix}\frac1{2\kappa}+\frac12&\frac12\\[1mm]\frac12&1\end{bmatrix}
 \otimes\Id_d\,.
\end{equation*}
Since $\kappa \geq 1$, this defines a metric which is uniformly equivalent to the Euclidean norm; such twisted metrics are standard in the analysis of kinetic dynamics; see, e.g.,~\cite{CCBJ18, GLBMM24}.
We apply the lemma after the linear change of coordinates
\(z\mapsto M_\kappa^{1/2}z\), with \(P\deq K\) and
\(\widehat P\deq\widehat K\).
The structure of the proof is given by the following key theorem.

\begin{theorem}[Local error analysis in \(W_2\)]
\label{prop:rhmc-proof-global-recurrence-l2}
Let \(J\ge2\), and assume that \(h\le c/\kappa\).
Then, for every phase-space law \(\widehat\mu\),
\begin{equation*}
 W_{2,M_\kappa}^2(\widehat\mu \widehat K,\Pi_\eta)
 \le(1-ch/\kappa)\,W_{2,M_\kappa}^2(\widehat\mu,\Pi_\eta)
 +C\Evar^2
 +\frac{C\kappa}{h}\,(\Ebias^2+\Edisc^2)\,,
\end{equation*}
where we define
\begin{align*}
 \Ebias
 &\deq\sup_{z,\zeta\in\R^{2d}}\,
   \norm{\bar\Phi_z(\zeta)-\Phi_z(\zeta)}\,, \quad
 \Evar
 \deq\sup_{z\in\R^{2d}}
   W_2(\delta_z\widehat K,\delta_z\bar K)\,,\quad
 \Edisc
 \deq W_2(\Pi_\eta K,\Pi_\eta)\,.
\end{align*}
Consequently,
\begin{align}
 W_{2,M_\kappa}^2(\widehat\mu \widehat K^N,\Pi_\eta)
 &\le e^{-cNh/\kappa}\,W_{2,M_\kappa}^2(\widehat\mu,\Pi_\eta)
 +\frac{C\kappa}{h}\,\Evar^2
 +\frac{C\kappa^2}{h^2}\,(\Ebias^2+\Edisc^2)\,.
 \label{eq:rhmc-proof-global-recurrence-l2}
\end{align}
\end{theorem}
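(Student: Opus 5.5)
We plan to prove the one-step recursion by inserting the intermediate kernel \(\bar K\) and splitting the three error sources. We apply the local error framework (\cref{lem:rhmc-proof-local-error}) in the coordinates \(z\mapsto M_\kappa^{1/2}z\) with \(P\deq K\) and \(\widehat P\deq\bar K\), and then handle the passage from \(\bar K\) to \(\widehat K\) separately.

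\textbf{Contraction of \(K\).} First we verify \eqref{eq:rhmc-proof-local-error-coupling} for the exact-gradient numerical kernel \(K\) under the synchronous coupling, which uses the same \(\zeta_0,\zeta_1\). The map \(z\mapsto\Phi_z(\zeta)\) is a composition of an OU half-refreshment, a depth-\(2\) Picard/Chebyshev--Lobatto step with the \(1\)-Lipschitz field \(g_\eta\), and another half-refreshment. Its Jacobian differs from the leapfrog-type linearization \(\begin{bmatrix}I-\tfrac{h^2}{2}H & hI\\ -hH & I\end{bmatrix}\) by \(O(h^3)\) in operator norm, where \(H\) is an averaged Hessian of \(V_\eta\) with spectrum in \([1/(\kappa+1),1]\) by \cref{lem:rhmc-proof-smoothed-curvature}. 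This uses that the quadrature weights satisfy \(\sum_j\omega_j=h\), \(\sum_j\omega_{J,j}=h^2/2\), and \(|\omega_{i,j}|\lesssim h^2\). A direct computation of \(J^\T M_\kappa J\) with the damping factor \(e^{-h}\) from the two refreshments then gives \(J^\T M_\kappa J\preceq(1-2ch/\kappa)M_\kappa\) once \(h\le c/\kappa\). This is the standard twisted-metric contraction of \cite{CCBJ18,GLBMM24}. It yields \(\rho=1-ch/\kappa\) and \(\chi\lesssim h\), because \(\Phi_z(\zeta)-z\) has Lipschitz constant \(O(h)\) in \(z\). We expect this matrix computation to be the main obstacle. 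One must check that the Picard and quadrature perturbations, which are \(O(h^3)\), are dominated by the \(h/\kappa\) gain; this is exactly where \(h\le c/\kappa\) is used.

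\textbf{Local errors and the stochastic step.} For \(\widehat P=\bar K\), the coupling \(\bar\Phi_z(\zeta)\) versus \(\Phi_z(\zeta)\) with the same \(\zeta\) gives \(\mathcal E_{\rm strong}\le\Ebias\) and \(\mathcal E_{\rm weak}\le\Ebias\). The lemma then yields
\[
W_{2,M_\kappa}^2(\widehat\mu\bar K,\Pi_\eta K)\le(1-ch/\kappa)\,W_{2,M_\kappa}^2(\widehat\mu,\Pi_\eta)+C\frac{\kappa}{h}\Ebias^2,
\]
since \((1+\chi)^2/(\rho(1-\rho))\lesssim\kappa/h\). Here \(M_\kappa\) is uniformly equivalent to the identity, so changing norms costs only constants. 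Next, \(\widehat K\) and \(\bar K\) differ only in the conditionally centered noise \(G\). We therefore use the local error lemma a second time, with \(\mathcal E_{\rm weak}=0\) by construction of \(\bar\Phi\) and \(\mathcal E_{\rm strong}=\Evar\). Because the weak error vanishes, the variance enters only as \(\Evar^2(1+\chi^2/(\rho(1-\rho)))\lesssim\Evar^2(1+h\kappa)\lesssim\Evar^2\), with no \(\kappa/h\) amplification. Concretely, we run the lemma once with \(\widehat P=\widehat K\) and \(P=K\), taking \(\mathcal E_{\rm strong}\le\Evar+\Ebias\) and \(\mathcal E_{\rm weak}\le\Ebias\). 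This works because \(\E\widehat\Phi=\E\bar\Phi\), so the mean defect is purely bias.

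\textbf{Stationarity defect and iteration.} Finally, \(\Pi_\eta\) is not invariant for \(K\). We write
\[
W_{2,M_\kappa}(\widehat\mu\widehat K,\Pi_\eta)\le W_{2,M_\kappa}(\widehat\mu\widehat K,\Pi_\eta K)+C\Edisc,
\]
and square using \((a+b)^2\le(1+\tfrac{ch}{2\kappa})a^2+(1+\tfrac{2\kappa}{ch})b^2\). This absorbs half of the contraction and puts \(\Edisc^2\) alongside \(\Ebias^2\) with the \(\kappa/h\) prefactor, which gives the one-step bound. Unrolling the recursion \(a_{n+1}\le(1-ch/\kappa)a_n+B\) gives \(a_N\le e^{-cNh/\kappa}a_0+B\kappa/(ch)\), and this is \eqref{eq:rhmc-proof-global-recurrence-l2}.
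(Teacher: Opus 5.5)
Your proposal is correct and is essentially the paper's argument: in the end you apply the local error lemma once with $\widehat P=\widehat K$, $P=K$, $\mathcal E_{\rm weak}\le\Ebias$ and $\mathcal E_{\rm strong}\le\Evar+\Ebias$ (using $\E\widehat\Phi_z=\E\bar\Phi_z$), observe that $\chi^2/(\rho(1-\rho))\lesssim h\kappa\lesssim 1$ so $\Evar$ is not amplified, then absorb $\Edisc$ by Young's inequality with weight $\asymp h/\kappa$ and unroll the recursion. One correction to your contraction sketch: the remainder relative to the linearization is only $O(h^2)$, not $O(h^3)$, because the node-wise averaged Hessians $H_j$ need not agree, and uniformity in $J$ requires the row-sum bound $\sum_j|\omega_{i,j}|\le h^2$ together with $\omega_j\ge0$ rather than entrywise bounds; an $O(h^2)$ remainder is still exactly what $h\le c/\kappa$ absorbs against the $h/(8\kappa)$ gain, so nothing breaks.
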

\begin{proof}
Fix \(z,z'\in\R^{2d}\) and set
\(Z\deq\Phi_z(\zeta)\), \(Z'\deq\Phi_{z'}(\zeta)\).
Since \(J\ge2\) and
\(h\le c/\kappa\), the assumption on \(h\) implies
the hypotheses of
\cref{prop:rhmc-proof-contraction,lem:rhmc-proof-increment-coupling} below.
Those results verify \eqref{eq:rhmc-proof-local-error-coupling} with
\((Z,Z')\) in place of \((X,Y)\).  We can bound
\begin{align*}
 \mathcal E_{\rm weak}
 &\deq
 \sup_{z\in\R^{2d}}\,
 \norm{\E \widehat\Phi_z(\zeta,G)-\E\Phi_z(\zeta)}_{M_\kappa}
 \le
 \sup_{z\in\R^{2d}}
 \E\norm{\bar\Phi_z(\zeta)-\Phi_z(\zeta)}_{M_\kappa}
 \le C\Ebias\,,
\end{align*}
and
\begin{align*}
 \mathcal E_{\rm strong}
 &\deq\sup_{z\in\R^{2d}}
   W_{2,M_\kappa}(\delta_z\widehat K,\delta_zK)
 \le\sup_{z\in\R^{2d}}\,
 \{
 W_{2,M_\kappa}(\delta_z\widehat K,\delta_z\bar K)
 +W_{2,M_\kappa}(\delta_z\bar K,\delta_zK)
 \}\\
 &\le C\,(\Evar+\Ebias)\,,
\end{align*}
since the $M_\kappa$ norm is equivalent to the standard Euclidean norm.
Thus, the hypotheses of
\cref{lem:rhmc-proof-local-error} hold with
\[
 \rho\deq1-\frac{ch}{\kappa}\,,
 \qquad
 \chi\deq Ch\,,
 \qquad
 \mathcal E_{\rm strong}\le C\,(\Evar+\Ebias)\,,
 \qquad
 \mathcal E_{\rm weak}\le C\Ebias\,.
\]
We obtain
\[
 W_{2,M_\kappa}^2(\widehat\mu\widehat K,\Pi_\eta K)
 \le\Bigl(1-\frac{ch}{\kappa}\Bigr)\,W_{2,M_\kappa}^2(\widehat\mu,\Pi_\eta)
 +C\Evar^2+\frac{C\kappa}{h}\,\Ebias^2\,.
\]
By Young's inequality, for $\lambda > 0$,
\begin{align*}
    W_{2,M_\kappa}^2(\widehat\mu\widehat K, \Pi_\eta)
    &\le (1+\lambda)\,W_{2,M_\kappa}^2(\widehat\mu\widehat K, \Pi_\eta K) + (1+\lambda^{-1})\,W_{2,M_\kappa}^2(\Pi_\eta K, \Pi_\eta)\,.
\end{align*}
Uniform norm equivalence gives
\(W_{2,M_\kappa}(\Pi_\eta K,\Pi_\eta)\le C\Edisc\).
Choose \( \lambda \asymp h/\kappa\).  After reducing
the universal constant in the contraction factor \(ch/\kappa\), this proves the one-step
bound; summing the recurrence proves the global bound.
\end{proof}

The three errors \(\Ebias\), \(\Evar\), and \(\Edisc\) correspond,
respectively, to the bias and centered fluctuation of the stochastic gradient
phase and to the discretization error of the numerical kernel.  Thus, the
main task is to bound these errors.

\subsection{Properties of the exact-gradient kernel}
\label{sec:rhmc-proof-exact-kernel}

Write \(a\deq e^{-h/2}\),  and for a symmetric matrix \(H\),
let
\[
 A_H\deq \begin{bmatrix}0&\Id\\-H&-\Id\end{bmatrix}\,.
\]
We use the integrated weight bounds in \cref{prop:chebyshev-lobatto-bounds}.

\begin{lemma}[First-order difference expansion]
\label{lem:rhmc-proof-difference-expansion}
Couple draws \(Z_1\sim\delta_zK\) and \(Z_1'\sim\delta_{z'}K\) from
phase-space inputs
\(z\deq (x_{\rm init},p_{\rm init})\) and
\(z'\deq (x'_{\rm init},p'_{\rm init})\) using the same Gaussian variables.
Set
\[
 \Delta_0\deq z-z'\,,
 \qquad
 \Delta_1\deq Z_1-Z_1'\,.
\]
If \(h^2\le1\), then
\[
 \Delta_1=(\Id+hA_H)\Delta_0+\mathcal R_h\Delta_0\,,
 \qquad
 \norm{\mathcal R_h}_{\op}\le Ch^2\,,
\]
for a symmetric \(H\) with
\(\frac1{2\kappa}\,\Id\preceq H\preceq\Id\).
\end{lemma}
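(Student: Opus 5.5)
The plan is to track the difference of the two coupled trajectories through each stage of one phase. Because the same Gaussians $\zeta_0,\zeta_1$ are used, the noise cancels. In the exact-gradient kernel $K$ there is no $G$, so the only nonlinearity is $g_\eta$.

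\textbf{Linearizing the gradient differences.} For any two points, the mean value theorem in integral form gives
\[
 g_\eta(x)-g_\eta(x')=\Bigl(\int_0^1\Hess V_\eta(x'+s(x-x'))\,\dd s\Bigr)(x-x')\,,
\]
and the averaged Hessian lies between $(\kappa+\eta)^{-1}\Id\succeq(2\kappa)^{-1}\Id$ and $\Id$ by \cref{lem:rhmc-proof-smoothed-curvature}. There are $2J$ gradient evaluations, each with its own averaged Hessian $H_{j}^{[k]}$. The lemma, however, asks for a single $H$. I will take $H\deq H_J^{[1]}$, or an average of the $H_j^{[1]}$ weighted by $\omega_j/h$, and absorb the mismatch into $\mathcal R_h$.

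\textbf{Tracking the difference through the phase.} Write $\Delta_0=(\delta x,\delta p)$.
\begin{itemize}
\item The first refresh gives $\delta P_0=a\,\delta p$, with $a=e^{-h/2}=1-h/2+O(h^2)$.
\item Stage 0 gives $\delta X^{[0]}_{t_j}=\delta x+t_j a\,\delta p$.
\item Stage 1 gives $\delta X^{[1]}_{t_i}=\delta x+t_ia\,\delta p-\sum_j\omega_{i,j}H^{[0]}_j\,\delta X^{[0]}_{t_j}$.
\end{itemize}
By \cref{prop:chebyshev-lobatto-bounds}, $\sum_j|\omega_{i,j}|\lesssim h^2$ and $\sum_j|\omega_j|\lesssim h$. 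Hence $\delta X^{[1]}_{t_j}=\delta x+O(h)\norm{\Delta_0}$, uniformly in $j$.

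Next come the final updates.
\begin{itemize}
\item $\delta X_h^{[2]}=\delta x+ha\,\delta p+O(h^2)\norm{\Delta_0}=\delta x+h\,\delta p+O(h^2)\norm{\Delta_0}$.
\item $\delta P_h^{[2]}=a\,\delta p-\sum_j\omega_jH^{[1]}_j\,\delta X^{[1]}_{t_j}$.
\end{itemize}
For the momentum, I replace $\delta X^{[1]}_{t_j}$ by $\delta x$ at cost $O(h)\cdot O(h)$. That leaves $-\bigl(\sum_j\omega_jH^{[1]}_j\bigr)\delta x$, and I define
\[
 H\deq h^{-1}\sum_j\omega_jH_j^{[1]}\,.
\]
The Chebyshev--Lobatto weights are positive with $\sum_j\omega_j=h$ (exactness on constants). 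So $H$ is a convex combination of the $H_j^{[1]}$, hence symmetric with $(2\kappa)^{-1}\Id\preceq H\preceq\Id$. This choice makes the $\delta x$-coefficient exactly $-hH$, with no mismatch term to control.

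The final refresh multiplies the momentum by $a$ again, producing
\[
 \delta P_{\rm new}=a^2\delta p-ahH\,\delta x+O(h^2)\norm{\Delta_0}=\delta p-h\,\delta p-hH\,\delta x+O(h^2)\norm{\Delta_0}\,.
\]
Together with the position update this is exactly $(\Id+hA_H)\Delta_0+\mathcal R_h\Delta_0$. Every remainder is a fixed linear map of $\Delta_0$, because all the $H$'s are determined by the fixed pair of trajectories. Each remainder has operator norm $\le Ch^2$ once $h\le1$; the condition $h^2\le1$ in the statement is what lets higher powers of $h$ collapse into $h^2$.

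\textbf{Main obstacle.} The delicate point is making a single $H$ carry the correct spectral bounds. This rests on the positivity of the Chebyshev--Lobatto quadrature weights $\omega_j$ together with $\sum_j\omega_j=h$. If \cref{prop:chebyshev-lobatto-bounds} only supplies $\sum_j|\omega_j|\lesssim h$, I would instead take $H=H^{[1]}_J$ and bound $\sum_j\omega_j(H_j^{[1]}-H)$. That difference is $O(h)$ only when the Hessians along nearby points are close, which would require the Lipschitz estimate \eqref{eq:rhmc-proof-hessian-F-lipschitz} and would introduce $\eta$-dependence. So I expect to rely on positivity, which is classical for Clenshaw--Curtis weights.
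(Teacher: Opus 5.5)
Your proposal is correct and follows the paper's proof essentially step for step. It uses the same integral-averaged Hessians from \cref{lem:rhmc-proof-smoothed-curvature}, the same choice \(H=h^{-1}\sum_{j}\omega_jH_j\) built from the layer-\(1\) nodes, and the same bookkeeping of \(O(h^2)\) remainders via \(\sum_j|\omega_{i,j}|\le h^2\) and the expansions of \(a\) and \(a^2\); your fallback worry is unnecessary, since \cref{prop:chebyshev-lobatto-bounds} explicitly supplies \(\omega_j\ge0\) and \(\sum_j\omega_j=h\).
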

\begin{proof}
To verify this, write
\(\Delta x\deq x_{\rm init}-x'_{\rm init}\) and
\(\Delta p\deq p_{\rm init}-p'_{\rm init}\).
The difference of gradients can be written as
\[
 g_\eta(x)-g_\eta(x')=H(x,x')\,(x-x')\,,
 \quad
 H(x,x')\deq \int_0^1\Hess V_\eta(x'+t\,(x-x'))\,\dd t\,.
\]
By \cref{lem:rhmc-proof-smoothed-curvature},
\(\frac1{2\kappa}\,\Id\preceq H \preceq \Id\).

The first OU half-step changes \(\Delta p\) to
\(a\,\Delta p\).
Thus,
\[
 \Delta X_{t_j}^{[0]}=\Delta x+at_j\,\Delta p\,.
\]
Writing $H_j^{[0]} \deq H(X_{t_j}^{[0]},X_{t_j}^{\prime[0]})$,
the first Picard update gives
\[
 \Delta X_{t_i}^{[1]}
 =\Delta x+at_i\,\Delta p
 \underbrace{-\sum_{j\in[J]}\omega_{i,j}H_j^{[0]}
 \,(\Delta x+at_j\,\Delta p)}_{\eqqcolon E_i}\,.
\]
The bounds \(\norm{H_j^{[0]}}_{\op}\le1\), \(a\le1\), \(t_j\le h\), and
\(\sum_{j\in[J]}|\omega_{i,j}|\le h^2\) (\cref{prop:chebyshev-lobatto-bounds}) imply
\[
 \norm{E_i}
 \le\sum_{j\in[J]}|\omega_{i,j}|\,
       \bigl(\norm{\Delta x}+at_j\,\norm{\Delta p}\bigr)
 \le h^2
       \,\bigl(\norm{\Delta x}+h\,\norm{\Delta p}\bigr)\,.
\]
Then, set
\[
 H_j\deq H\bigl(X_{t_j}^{[1]},X_{t_j}^{\prime[1]}\bigr)\,,
 \qquad
 H\deq h^{-1}\sum_{j\in[J]}\omega_jH_j\,.
\]
Positivity and \(\sum_{j\in[J]}\omega_j=h\) imply
\(\frac1{2\kappa}\,\Id\preceq H\preceq\Id\).  The exact endpoint differences are
\begin{align*}
 \Delta X_{\rm init}^+
 &=\Delta x+ah\,\Delta p
   -\sum_{j\in[J]}\omega_{J,j}H_j\,\Delta X_{t_j}^{[1]}\,,
 \qquad \Delta P_{\rm init}^+
 = a^2\,\Delta p-a\sum_{j\in[J]}\omega_jH_j\,\Delta X_{t_j}^{[1]}\,.
\end{align*}
Subtract \(\Delta x+h\,\Delta p\) from the first line and
\(\Delta p-hH\,\Delta x-h\,\Delta p\) from the second.  Use
\(a=1-h/2+O(h^2)\), \(a^2=1-h+O(h^2)\),
\eqref{eq:rhmc-chebyshev-bounds}, and the preceding bound on \(E_i\).  Every remaining
term is bounded by
\(Ch^2\,\norm{(\Delta x,\Delta p)}\), which proves the claimed
expansion.
\end{proof}

\begin{proposition}[Contraction]
\label{prop:rhmc-proof-contraction}
If \(h\le c/\kappa\), then, for all \(z,z',\zeta\in\R^{2d}\),
\begin{equation}
 \norm{\Phi_z(\zeta)-\Phi_{z'}(\zeta)}_{M_\kappa}
 \le\Bigl(1-\frac{ch}{\kappa}\Bigr)\,\norm{z-z'}_{M_\kappa}\,.
 \label{eq:rhmc-proof-pointwise-contraction}
\end{equation}
\end{proposition}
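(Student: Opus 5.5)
The plan is to combine the first-order expansion of \cref{lem:rhmc-proof-difference-expansion} with a Lyapunov-type inequality for the matrix \(A_H\) in the twisted metric \(M_\kappa\). Since the coupling \(\Phi_z(\zeta),\Phi_{z'}(\zeta)\) uses the same Gaussian variables, the lemma gives \(\Delta_1=(\Id+hA_H)\Delta_0+\mathcal R_h\Delta_0\) with \(\norm{\mathcal R_h}_{\op}\le Ch^2\) and \(\frac1{2\kappa}\Id\preceq H\preceq\Id\). The hypothesis \(h\le c/\kappa\le c\) gives \(h^2\le1\). By the triangle inequality in \(\norm{\cdot}_{M_\kappa}\) and uniform equivalence of \(M_\kappa\) with the identity (eigenvalues in a fixed interval \([c_1,C_1]\) since \(\kappa\ge1\)), it suffices to prove
\[
 \norm{(\Id+hA_H)v}_{M_\kappa}\le\Bigl(1-\frac{2ch}{\kappa}\Bigr)\norm v_{M_\kappa}\,.
\]
The remainder then contributes at most \(Ch^2\norm v_{M_\kappa}\le C c\,(h/\kappa)\norm v_{M_\kappa}\), which is absorbed after shrinking \(c\).

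Expanding the square,
\[
 \norm{(\Id+hA_H)v}_{M_\kappa}^2
 =\norm v_{M_\kappa}^2+2h\,v^\T M_\kappa A_Hv+h^2\norm{A_Hv}_{M_\kappa}^2\,.
\]
The last term is at most \(Ch^2\norm v^2\), since \(\norm{A_H}_{\op}\le C\), and it is absorbed as before. The key claim is therefore
\[
 -\operatorname{sym}(M_\kappa A_H)\succeq \frac{c'}{\kappa}\,M_\kappa\,.
\]
Every block of \(M_\kappa\) and \(A_H\) is a polynomial in \(H\). Diagonalizing \(H\) therefore reduces the claim to \(2\times2\) matrices with \(H\) replaced by a scalar \(\lambda\in[\frac1{2\kappa},1]\).

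For the scalar reduction, \(A=\bigl[\begin{smallmatrix}0&1\\-\lambda&-1\end{smallmatrix}\bigr]\) and \(M=\bigl[\begin{smallmatrix}\frac1{2\kappa}+\frac12&\frac12\\ \frac12&1\end{smallmatrix}\bigr]\). A direct computation gives
\[
 Q\deq-\operatorname{sym}(MA)=\begin{bmatrix}\lambda/2&(\lambda-\frac1{2\kappa})/2\\(\lambda-\frac1{2\kappa})/2&1/2\end{bmatrix}\,.
\]
We must show \(Q-\frac{c'}\kappa M\succeq0\). Its diagonal entries are at least \(\frac\lambda2-\frac{c'}{\kappa}\ge\frac1{8\kappa}\) and \(\frac14\) for small \(c'\). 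Writing \(b\deq(\frac12+c')/\kappa\), its determinant is, up to \(O(c'/\kappa)\) corrections,
\[
 \tfrac14\bigl(\lambda-(\lambda-b)^2\bigr)=\tfrac14\bigl(\lambda(1-\lambda)+2\lambda b-b^2\bigr)\,.
\]
The function \(\lambda(1-\lambda+2b)\) is concave on \([\frac1{2\kappa},1]\). Hence it is minimized at an endpoint, where it is \(\gtrsim1/\kappa\): it is about \(\frac1{2\kappa}\) at the left endpoint and \(2b\ge1/\kappa\) at \(\lambda=1\). Since \(b^2\lesssim1/\kappa^2\) is a lower-order term for large \(\kappa\), the determinant is \(\gtrsim1/\kappa\) minus the \(O(c'/\kappa)\) corrections. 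For \(\kappa\) near \(1\), a direct check works. This positivity is the main obstacle: the cross term \(\frac1{2\kappa}\) in \(M_\kappa\) is exactly what makes the determinant stay of order \(1/\kappa\) near \(\lambda=1\) instead of degenerating. I would do this computation carefully, keeping track of universal constants.

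Combining the steps yields
\[
 \norm{(\Id+hA_H)v}_{M_\kappa}^2\le\Bigl(1-\frac{2c'h}{\kappa}+Ch^2\Bigr)\norm v_{M_\kappa}^2\,.
\]
Taking square roots and using \(h\le c/\kappa\) with \(c\ll c'\), then adding the \(\mathcal R_h\) contribution, gives \eqref{eq:rhmc-proof-pointwise-contraction} after renaming constants.
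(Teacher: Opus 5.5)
Your proposal is correct and follows essentially the same route as the paper: you use the first-order expansion from \cref{lem:rhmc-proof-difference-expansion}, reduce the Lyapunov inequality $A_H^\T M_\kappa+M_\kappa A_H\preceq-\frac{c}{\kappa}M_\kappa$ to $2\times2$ blocks in an eigenbasis of $H$, expand the square, and absorb the $O(h^2)$ terms using $h\le c/\kappa$. The only difference is in checking the $2\times2$ inequality, where the paper's argument is tidier: $-(A_\lambda^\T M_\kappa^{(2)}+M_\kappa^{(2)}A_\lambda)$ has trace at most $2$ and determinant $\lambda-(\lambda-\frac1{2\kappa})^2\ge\frac1{2\kappa}$, so its smallest eigenvalue is at least $\frac1{4\kappa}$, and $M_\kappa^{(2)}\preceq2\Id$ then gives the inequality with constant $\frac1{8\kappa}$, avoiding your $O(c'/\kappa)$ bookkeeping and the endpoint case analysis.
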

\begin{proof}
Let \(q\) be a unit
eigenvector of \(H\) with eigenvalue
\(\lambda\in[\frac1{2\kappa},1]\).  The two-dimensional phase-space subspace $\{(u q,v q):u,v\in\R\}$
is invariant under \(A_H\).  In the coordinates \((u,v)\), the restrictions
of \(A_H\) and \(M_\kappa\) to this subspace are represented by
\[
 A_H(uq,vq)=\bigl(vq,-(\lambda u+v)q\bigr)\,,
 \qquad
 A_\lambda\deq\begin{bmatrix}0&1\\-\lambda&-1\end{bmatrix}\,,
 \qquad
 M_\kappa^{(2)}\deq
 \begin{bmatrix}\frac1{2\kappa}+\frac12&\frac12\\[1mm]\frac12&1\end{bmatrix}\,.
\]
A direct calculation gives
\begin{equation}
 A_\lambda^\T M_\kappa^{(2)}+M_\kappa^{(2)}A_\lambda
 =-\begin{bmatrix}
     \lambda&\lambda-\frac1{2\kappa}\\
     \lambda-\frac1{2\kappa}&1
    \end{bmatrix}
 \preceq-\frac1{8\kappa}\,M_\kappa^{(2)}\,.
 \label{eq:rhmc-proof-LMI}
\end{equation}
Indeed, the positive matrix after the minus sign has trace at most \(2\) and
determinant
\[
 \lambda-\Bigl(\lambda-\frac1{2\kappa}\Bigr)^2
 \ge\frac1{2\kappa}\,.
\]
Its smallest eigenvalue is therefore at least \(1/(4\kappa)\).  Since
\(M_\kappa^{(2)}\preceq2\Id\), this proves \eqref{eq:rhmc-proof-LMI}.
Decomposing along an orthonormal eigenbasis of \(H\) yields the full
phase-space inequality
\begin{align}\label{eq:rhmc-proof-LMI-2}
 A_H^\T M_\kappa+M_\kappa A_H
 \preceq-\frac1{8\kappa}\,M_\kappa\,.
 \end{align}
Apply \cref{lem:rhmc-proof-difference-expansion}.  Uniform equivalence of
the Euclidean and \(M_\kappa\) norms gives
\[
 \norm{\mathcal R_h\Delta_0}_{M_\kappa}
 \le Ch^2\,\norm{\Delta_0}_{M_\kappa}\,.
\]
Expanding the square and
applying \eqref{eq:rhmc-proof-LMI-2} gives
\[
 \norm{\Delta_1}_{M_\kappa}^2
 \le\{1-ch/\kappa+Ch^2\}\,
      \norm{\Delta_0}_{M_\kappa}^2\,.
\]
The step size restriction absorbs the last term.  Taking square roots and changing
the universal constant gives \eqref{eq:rhmc-proof-pointwise-contraction}.
\end{proof}

\begin{lemma}[Synchronous increment coupling]
\label{lem:rhmc-proof-increment-coupling}
If \(h\le c/\kappa\), then, for all \(z,z',\zeta\in\R^{2d}\),
\begin{equation}
 \norm{\{\Phi_z(\zeta)-z\}-\{\Phi_{z'}(\zeta)-z'\}}_{M_\kappa}
 \le Ch\,\norm{z-z'}_{M_\kappa}\,.
 \label{eq:rhmc-proof-increment-coupling}
\end{equation}
\end{lemma}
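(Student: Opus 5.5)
The plan is to read \eqref{eq:rhmc-proof-increment-coupling} off the first-order difference expansion in \cref{lem:rhmc-proof-difference-expansion}. We couple $\Phi_z(\zeta)$ and $\Phi_{z'}(\zeta)$ through the same Gaussian variables and write $\Delta_0\deq z-z'$ and $\Delta_1\deq\Phi_z(\zeta)-\Phi_{z'}(\zeta)$. The left-hand side of \eqref{eq:rhmc-proof-increment-coupling} is exactly $\norm{\Delta_1-\Delta_0}_{M_\kappa}$. Since $h\le c/\kappa\le c$, we have $h^2\le1$, so the lemma applies and gives
\[
 \Delta_1-\Delta_0=hA_H\Delta_0+\mathcal R_h\Delta_0\,,\qquad \norm{\mathcal R_h}_{\op}\le Ch^2\,,
\]
for some symmetric $H$ with $\frac1{2\kappa}\,\Id\preceq H\preceq\Id$.

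Next we bound the operator norm of $A_H$ uniformly. The blocks of $A_H$ are $0$, $\Id$, $-H$ and $-\Id$. Each has operator norm at most $1$ because $\norm H_{\op}\le1$, so $\norm{A_H}_{\op}\le 2$ in the Euclidean norm. Combining this with the remainder bound,
\[
 \norm{\Delta_1-\Delta_0}\le (2h+Ch^2)\,\norm{\Delta_0}\le C'h\,\norm{\Delta_0}\,,
\]
using $h\le1$.

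Finally we transfer this Euclidean bound to the $M_\kappa$ norm, and this is the only point needing care. We need the equivalence constants to be independent of $\kappa$. The matrix $M_\kappa^{(2)}$ has entries bounded by $1$, so $M_\kappa\preceq 2\Id$. Its determinant is $\frac1{2\kappa}+\frac14\ge\frac14$ and its trace is at most $2$, so its smallest eigenvalue is at least $1/8$. This gives $\frac18\,\Id\preceq M_\kappa\preceq2\Id$ uniformly in $\kappa\ge1$. This is the same uniform equivalence already used in \cref{prop:rhmc-proof-contraction}. Applying it on both sides of the Euclidean bound changes only the universal constant, which proves \eqref{eq:rhmc-proof-increment-coupling}. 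I do not expect a genuine obstacle; the substantive work was already done in \cref{lem:rhmc-proof-difference-expansion}.
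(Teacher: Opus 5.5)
Your proposal is correct and matches the paper's proof: you subtract $\Delta_0$ from the expansion in \cref{lem:rhmc-proof-difference-expansion}, bound $A_H$ using $\norm{H}_{\op}\le1$, absorb $h^2\le h$, and pass to the $M_\kappa$ norm by uniform norm equivalence. Your explicit derivation of $\frac18\,\Id\preceq M_\kappa\preceq2\Id$ spells out the equivalence that the paper leaves implicit.
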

\begin{proof}
Subtract \(\Delta_0\) from the expansion in
\cref{lem:rhmc-proof-difference-expansion}.  Uniform equivalence of the
Euclidean and \(M_\kappa\) norms, together with
\(\norm H_{\op}\le1\), gives
\(\norm{A_H\Delta_0}_{M_\kappa}
\le C\,\norm{\Delta_0}_{M_\kappa}\).
Moreover,
\(h^2\le h\) under the step size assumption.  Hence
\(\norm{\Delta_1-\Delta_0}_{M_\kappa}
\le Ch\,\norm{\Delta_0}_{M_\kappa}\), which is
\eqref{eq:rhmc-proof-increment-coupling}.
\end{proof}

\subsection{Local error bounds}
\label{sec:rhmc-proof-local-bounds}

\subsubsection{Bias bound}
\label{sec:rhmc-proof-bias-bound}

We first control the displacement between the conditional mean and
exact gradient kernels.  Put
\[
 \bar g_\eta(y)\deq\E\widehat g_\eta(y;G)
 =g_\eta(y)+b_\eta(y)\,.
\]

\begin{proposition}[Bias bound]
\label{prop:rhmc-proof-conditional-mean-error}
If \(h\le1\), then
\begin{equation}
 \Ebias
 \le C\sqrt d\,
 \bigl(h\eta^{3/2}+ \eta^{1/2} h^3/\sqrt J\bigr)\,.
 \label{eq:rhmc-proof-conditional-mean-error}
\end{equation}
\end{proposition}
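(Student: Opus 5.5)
The plan is to compare the conditional-mean map \(\bar\Phi_z(\zeta)\) and the exact-gradient map \(\Phi_z(\zeta)\) stage by stage, with \(z\) and \(\zeta\) held fixed. Since the OU half-steps are linear, deterministic given \(\zeta\), and contractive (\(a\le1\)), it suffices to bound the difference of the conditional means of \((X_h^{[2]},P_h^{[2]})\).

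\textbf{Stage one.} The initial trajectory \(X^{[0]}_{t_j}\) is deterministic given \((z,\zeta)\). Using the decomposition of \cref{lem:rhmc-proof-stochastic-gradient}, the stochastic first Picard iterate is
\[
 \widehat X^{[1]}_{t_i}=X^{[1]}_{t_i}+\delta_i\,,
 \qquad
 \delta_i\deq-\sum_{j}\omega_{i,j}\,\bigl(b_\eta(X^{[0]}_{t_j})+\xi_\eta(X^{[0]}_{t_j};G^{[0]}_j)\bigr)\,,
\]
where \(X^{[1]}\) denotes the exact-gradient iterate. The perturbation \(\delta_i\) has a deterministic part of size at most \(h^2\eta^{3/2}\sqrt d\), using \(\sum_j|\omega_{i,j}|\le h^2\) from \cref{prop:chebyshev-lobatto-bounds} together with \eqref{eq:rhmc-proof-stochastic-gradient-bias}. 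Its random part is a sum of independent centered terms, since the \(G_j^{[0]}\) are independent. By \eqref{eq:rhmc-proof-stochastic-gradient-subg}, each \(\xi\) has covariance \(\preceq\eta\Id\). Hence the covariance of the random part is \(\preceq\eta\sum_j\omega_{i,j}^2\,\Id\), and I will use the Chebyshev--Lobatto weight estimates to bound \(\sum_j\omega_{i,j}^2\lesssim h^4/J\). The factor \(1/J\) in the target bound enters exactly here.

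\textbf{Stage two.} Since \(G^{[1]}\) is independent of \(G^{[0]}\), conditioning gives
\[
 \E\,\widehat g_\eta(\widehat X^{[1]}_{t_j};G_j^{[1]})
 =\E\,b_\eta(\widehat X^{[1]}_{t_j})+\E\,g_\eta(\widehat X^{[1]}_{t_j})\,.
\]
The first term is bounded by \(\eta^{3/2}\sqrt d\). For the second, I Taylor expand \(g_\eta\) around the deterministic point \(X^{[1]}_{t_j}\):
\[
 \E g_\eta(X+\delta)-g_\eta(X)=\Hess V_\eta(X)\,\E\delta+\E R(\delta)\,.
\]
The linear term only sees the deterministic part of \(\delta\), so it is at most \(h^2\eta^{3/2}\sqrt d\) by \cref{lem:rhmc-proof-smoothed-curvature}. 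The remainder \(R\) is quadratic in \(\delta\). I will bound each coordinate \(\ip v{R}\) by a trace pairing of the slot matrix \(D^3V_\eta[v,\cdot,\cdot]\) with the second-moment matrix of \(\delta\), using Cauchy--Schwarz in Hilbert--Schmidt norm:
\[
 \bigl|\operatorname{tr}(A_v\Sigma)\bigr|\le\norm{A_v}_{\HS}\,\norm\Sigma_{\HS}\le2\eta^{-1/2}\cdot\sqrt d\,\norm\Sigma_{\op}\,,
\]
where \(\norm{A_v}_{\HS}\le2\eta^{-1/2}\) is \eqref{eq:rhmc-proof-derivative-slot} with \(k=3\). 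This yields a contribution of order \(\sqrt d\,\eta^{1/2}h^4/J\), up to lower-order cross terms with the bias part. Multiplying by the outer weights \(\sum_j|\omega_{J,j}|\le h^2\) and \(\sum_j\omega_j=h\) gives
\[
 \Ebias\lesssim h\,\bigl(\eta^{3/2}\sqrt d+h^2\eta^{3/2}\sqrt d+\sqrt d\,\eta^{1/2}h^4/J\bigr)\,,
\]
which is dominated by the right-hand side of \eqref{eq:rhmc-proof-conditional-mean-error}, since \(h\le1\) and \(J\ge1\).

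\textbf{Main obstacle.} The difficulty is the quadratic remainder. If I write it in integral form, \(\int_0^1(1-s)\,D^3V_\eta(X+s\delta)[\delta,\delta]\,\dd s\), the third derivative is evaluated at a random point. The trace trick then no longer applies directly, and a naive bound via \eqref{eq:rhmc-proof-hessian-F-lipschitz} gives \(\eta^{-1/2}\E\norm\delta^2\), which carries a factor \(d\) instead of \(\sqrt d\). I would first try a third-order expansion. This freezes \(D^3V_\eta\) at the deterministic point \(X^{[1]}_{t_j}\), where the trace argument works, and leaves a quartic-derivative remainder controlled by \eqref{eq:rhmc-proof-derivative-slot} with \(k=4\) and moments of \(\delta\).

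If the \(\E\norm\delta^3\) term is still too large, the fallback is the cruder mixed estimate. I would bound \(\norm{\E[(\Hess V_\eta(X+s\delta)-\Hess V_\eta(X))\delta]}\) using the slot bound on \(D^3\) in one argument and the \(L^2\) size \(\sqrt{\eta d}\,h^2/\sqrt J\) of \(\delta\) in the other. Combined with the outer factor \(h\), this should be the route that produces exactly the stated \(\sqrt d\,\eta^{1/2}h^3/\sqrt J\) rate.
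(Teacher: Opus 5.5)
Your stage-one analysis and your use of independence between the two stochastic-gradient blocks match the paper. The gap is in stage two: none of the routes you describe closes with a $\sqrt d$ prefactor.

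\begin{itemize}
\item As you note, the second-order expansion with $D^3V_\eta$ frozen at $X^{[1]}_{t_j}$ is not a valid remainder formula.
\item The third-order expansion leaves a remainder that \eqref{eq:rhmc-proof-derivative-slot} with $k=4$ only bounds by a multiple of $\eta^{-1}\E\norm\delta^3$. This is of order $\eta^{1/2}d^{3/2}h^6/J^{3/2}$.
\item Your fallback (slot bound on $D^3$ in one factor, $L^2$ size of $\delta$ in the other) is the naive estimate you had already rejected. It gives a multiple of $\eta^{-1/2}\E\norm\delta^2$, of order $\eta^{1/2}dh^4/J$, and hence $\eta^{1/2}dh^5/J$ after the outer weight $h$.
\end{itemize}
That last quantity is not dominated by $\sqrt d\,\eta^{1/2}h^3/\sqrt J$ once $\sqrt d\,h^2\gg\sqrt J$. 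This happens, for instance, at the scale $h\asymp d^{-1/6}$ used in \cref{thm:w2-main} with $J$ polylogarithmic. So your claim that this route ``produces exactly the stated rate'' is an arithmetic slip.

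The missing observation is that the second term of the target is first order in the fluctuation. It is the outer weight $h$ times the $L^2$ size $h^2\sqrt{\eta d/J}$ of $\delta$. No cancellation from centering is needed, and no derivative of $V_\eta$ beyond the second enters. Since $g_\eta$ is $1$-Lipschitz by \cref{lem:rhmc-proof-smoothed-curvature}, Jensen's inequality and your stage-one estimates give
\[
 \norm{\E g_\eta(X^{[1]}_{t_j}+\delta_j)-g_\eta(X^{[1]}_{t_j})}
 \le\E\norm{\delta_j}
 \le h^2\eta^{3/2}\sqrt d+\bigl(\E\norm{\delta_j-\E\delta_j}^2\bigr)^{1/2}
 \le h^2\eta^{3/2}\sqrt d+Ch^2\sqrt{\eta d/J}\,.
\]
Add $\norm{\E b_\eta(\widehat X^{[1]}_{t_j})}\le\eta^{3/2}\sqrt d$, then weight by $\sum_j|\omega_{J,j}|\le h^2$ and $a\sum_j\omega_j\le h$; this yields \eqref{eq:rhmc-proof-conditional-mean-error}. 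This is precisely the paper's argument, phrased there via the $1$-Lipschitz map $\bar g_\eta=g_\eta+b_\eta$. Your Taylor machinery aims at the sharper $\sqrt d\,\eta^{1/2}h^5/J$, which the proposition does not need and which you have not established.
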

\begin{proof}
Fix \(z,\zeta\in\R^{2d}\) and write
\[
 B\deq\sup_{y\in\R^d}\,\norm{b_\eta(y)}
 \le\eta^{3/2}\sqrt d\,,
\]
by \eqref{eq:rhmc-proof-stochastic-gradient-bias}.  To distinguish the two
first Picard updates, let \(X_{t_i}^{[1]}\) denote the iterate obtained with
the exact gradient \(g_\eta\), and let \(\widehat X_{t_i}^{[1]}\) denote
the iterate obtained with \(\widehat g_\eta\).  Taking the expectation over the first
stochastic gradient block,
\begin{align*}
 \bar X_{t_i}^{[1]}-X_{t_i}^{[1]}
 &\deq \E[\widehat X_{t_i}^{[1]}\mid\zeta]-X_{t_i}^{[1]}
 =-\sum_{j\in[J]}\omega_{i,j}
     b_\eta(X_{t_j}^{[0]})\,.
\end{align*}
Consequently, \eqref{eq:rhmc-chebyshev-bounds} gives the
estimate
\begin{equation}
 \max_{j\in[J]}\,\norm{\bar X_{t_j}^{[1]}-X_{t_j}^{[1]}}
 \le h^2\,B\,.
 \label{eq:rhmc-proof-mean-node-bias}
\end{equation}
We also need to control the fluctuations:
\[
 \widehat X_{t_i}^{[1]}-\bar X_{t_i}^{[1]}
 =-\sum_{j\in[J]}\omega_{i,j}
   \xi_\eta(X_{t_j}^{[0]};G_j^{[0]})\,.
\]
The variables in the sum are independent and centered, and
\eqref{eq:rhmc-proof-stochastic-gradient-subg} implies
\(\E\norm{\xi_\eta(y;G)}^2\le\eta d\).  Therefore,
\begin{align}
 \E[\norm{\widehat X_{t_i}^{[1]}
                 -\bar X_{t_i}^{[1]}}^2\mid\zeta]
 &=\sum_{j\in[J]}\omega_{i,j}^2\,
   \E[\norm{\xi_\eta(X_{t_j}^{[0]};G_j^{[0]})}^2\mid\zeta]
 \le \frac{5\uppi^2h^4}{128\,(J-1)}\,\eta d
 \le \frac{Ch^4\eta d}{J}\,.
 \label{eq:rhmc-proof-centered-node-fluctuation}
\end{align}

It remains to propagate these errors through the endpoint update.
The endpoint stochastic gradient block is independent of the first block,
so for each \(j\),
\[
 \E[\widehat g_\eta
       (\widehat X_{t_j}^{[1]};G_j^{[1]})\mid\zeta]
 =\E[\bar g_\eta(\widehat X_{t_j}^{[1]})\mid\zeta]\,.
\]
Since \(\widehat g_\eta(\,\cdot\,;G)\) is \(1\)-Lipschitz for every \(G\),
so is \(\bar g_\eta\).  Using
\(\bar g_\eta=g_\eta+b_\eta\), followed by
\eqref{eq:rhmc-proof-mean-node-bias},
\eqref{eq:rhmc-proof-centered-node-fluctuation}, and Jensen's inequality,
we obtain
\begin{align*}
 \norm{\E[\bar g_\eta(\widehat X_{t_j}^{[1]})\mid\zeta]
             -g_\eta(X_{t_j}^{[1]})}
 &\le
 \E[\norm{\widehat X_{t_j}^{[1]}
                  -\bar X_{t_j}^{[1]}}\mid\zeta]
 +\norm{\bar X_{t_j}^{[1]}-X_{t_j}^{[1]}}
 +\norm{b_\eta(X_{t_j}^{[1]})}\\
 &\le
 Ch^2\sqrt{\eta d/J}
 +(1+h^2)\,B\,.
\end{align*}

The position and momentum components of
\(\bar\Phi_z(\zeta)-\Phi_z(\zeta)\) are obtained by weighting these
gradient differences by \(\omega_{J,j}\) and \(a\omega_j\), respectively.
Since
\[
 \sum_{j\in[J]}|\omega_{J,j}|
 \le h^2\,,
 \qquad
 a\sum_{j\in[J]}\omega_j=ah\le h\,,
\]
we conclude that
\begin{align*}
 \norm{\bar\Phi_z(\zeta)-\Phi_z(\zeta)}
 &\le (h+h^2)\,
       \bigl\{
        Ch^2\sqrt{\eta d/J}
        +(1+h^2)\,B
       \bigr\}\\
 &\le C h\,\bigl(B+Ch^2\sqrt{\eta d/J}\bigr)\\
 &\le C\sqrt d\,
 \bigl(\eta^{3/2} h+ \eta^{1/2} h^3/\sqrt J\bigr)\,.
\end{align*}
The bound is uniform in \(z\) and \(\zeta\), proving
\eqref{eq:rhmc-proof-conditional-mean-error}.
\end{proof}

\subsubsection{Variance bound}
\label{sec:rhmc-proof-variance-bound}

Fix a phase-space input \(z\deq(x_{\rm init},p_{\rm init})\), and put
\(\sigma\deq(1-e^{-h})^{1/2}\).  Collect the OU
variables as
\(\zeta\deq(\zeta_0,\zeta_1)\sim\cN(0,\Id_{2d})\) and all
Gaussian variables used by the stochastic gradients as \(G_{\rm all}\).
Define
\[
 \mathcal G\deq h^{-1}\sum_{j\in[J]}\omega_j
   \widehat g_\eta(X_{t_j}^{[1]};G_j^{[1]})\,,
 \qquad
 \mathcal G'\deq h^{-2}\sum_{j\in[J]}\omega_{J,j}
   \widehat g_\eta(X_{t_j}^{[1]};G_j^{[1]})\,.
\]
Direct substitution in \cref{alg:rhmc-prefix} gives
\begin{equation*}
    \begin{bmatrix}X_{\rm init}^+\\[0.25em] P_{\rm init}^+\end{bmatrix}
 =u+B_h\{\zeta+F(\zeta,G_{\rm all})\}\,,
\end{equation*}
where
\begin{align*}
 u&\deq\begin{bmatrix}x_{\rm init}+ah\, p_{\rm init}\\a^2\,p_{\rm init}\end{bmatrix}\,,\qquad
 B_h\deq\sigma\begin{bmatrix}h\Id&0\\a\Id&\Id\end{bmatrix}\,,\qquad
 F\deq\begin{bmatrix}-h\mathcal G'/\sigma\\
              ah\,(\mathcal G'-\mathcal G)/\sigma\end{bmatrix}\,.
\end{align*}
Moreover, \(h/\sigma\asymp\sqrt h\) and
\(\norm{M_\kappa^{1/2}B_h}_{\op}\le C\sqrt h\).
The purpose of this decomposition is to apply the refined second-order $W_2$ estimate in~\cref{lem:rhmc-proof-random-map-l2} below.
First, we record some Lipschitz estimates.

\begin{lemma}[Lipschitz bounds]
\label{lem:rhmc-proof-phase-lipschitz}
If \(h^2\le1\), then, uniformly in the input,
\begin{align}
 \norm{F(\zeta,G)-F(\zeta',G)}
 &\le Ch^2\,\norm{\zeta-\zeta'}\,,
 \label{eq:rhmc-proof-f-zeta-lip}\\
 \norm{F(\zeta,G)-F(\zeta,G')}
 &\le C\sqrt{\eta h/J}\,\norm{G-G'}\,.
 \notag
\end{align}
\end{lemma}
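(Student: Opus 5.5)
We trace how $\zeta$ and $G$ enter $F$ through the phase, track how perturbations of the node positions propagate to $\mathcal G$ and $\mathcal G'$, and read off the Lipschitz constants. Throughout, $\sigma\asymp\sqrt h$ and $a\le 1$. The inputs are: the weight bounds of \cref{prop:chebyshev-lobatto-bounds}, namely $\sum_j|\omega_{i,j}|\le h^2$, $\omega_j\ge0$, $\sum_j\omega_j=h$, and $\sum_j\omega_{i,j}^2\le Ch^4/J$ (the same square-sum bound used in \eqref{eq:rhmc-proof-centered-node-fluctuation}); the Lipschitz bounds \eqref{eq:rhmc-proof-stochastic-gradient-y-lip} and \eqref{eq:rhmc-proof-stochastic-gradient-g-lip}; and the fact that the analogous square-sum bound $\sum_j\omega_j^2\le Ch^2/J$ holds for the quadrature weights.

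\emph{The $\zeta$-dependence.} $F$ depends on $\zeta$ only through $\zeta_0$, via $P_0=ap_{\rm init}+\sigma\zeta_0$. Changing $\zeta_0$ shifts the zeroth-level nodes by $\Delta X^{[0]}_{t_j}=\sigma t_j\Delta\zeta_0$, so $\norm{\Delta X^{[0]}_{t_j}}\le\sigma h\norm{\Delta\zeta}$. The first Picard update then gives
\[
\Delta X^{[1]}_{t_i}=\sigma t_i\Delta\zeta_0+O(h^2\cdot\sigma h\norm{\Delta\zeta}),
\]
so $\norm{\Delta X^{[1]}_{t_i}}\le C\sigma h\norm{\Delta\zeta}$. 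Since $\widehat g_\eta(\cdot;G)$ is $1$-Lipschitz, and $\mathcal G,\mathcal G'$ are averages with total absolute weight $O(1)$ (for $\mathcal G'$, $h^{-2}\sum_j|\omega_{J,j}|\le1$), both move by at most $C\sigma h\norm{\Delta\zeta}$. Multiplying by the prefactor $h/\sigma$ in $F$ gives $\norm{\Delta F}\le Ch^2\norm{\Delta\zeta}$, which is \eqref{eq:rhmc-proof-f-zeta-lip}. No cancellation is needed here.

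\emph{The $G$-dependence.} This part must be handled carefully, because the bound has to carry the factor $1/\sqrt J$. Split $G=(G^{[0]},G^{[1]})$.

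The direct contribution comes from $G^{[1]}$ entering $\mathcal G$ and $\mathcal G'$. We have
\[
\norm{\Delta\mathcal G}\le h^{-1}\sum_j\omega_j\sqrt\eta\,\norm{\Delta G^{[1]}_j}\le h^{-1}\sqrt\eta\,\Bigl(\sum_j\omega_j^2\Bigr)^{1/2}\norm{\Delta G^{[1]}},
\]
by Cauchy--Schwarz across the independent blocks. Using $\sum_j\omega_j^2\le Ch^2/J$, this is at most $C\sqrt{\eta/J}\,\norm{\Delta G}$. The same argument with $\sum_j\omega_{J,j}^2\le Ch^4/J$ bounds $\Delta\mathcal G'$ identically.

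The indirect contribution comes from $G^{[0]}$ moving the nodes. Here
\[
\norm{\Delta X^{[1]}_{t_i}}\le\sqrt\eta\,\Bigl(\sum_j\omega_{i,j}^2\Bigr)^{1/2}\norm{\Delta G^{[0]}}\le Ch^2\sqrt{\eta/J}\,\norm{\Delta G},
\]
and this propagates to $\mathcal G,\mathcal G'$ with an $O(1)$ factor, which is even smaller.

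Multiplying both contributions by $h/\sigma\asymp\sqrt h$ yields $C\sqrt{\eta h/J}\,\norm{G-G'}$, as claimed.

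\emph{The main obstacle} is the weight bound $\sum_j\omega_j^2\lesssim h^2/J$ and its companion $\sum_j\omega_{J,j}^2\lesssim h^4/J$. These are the only places where a $1/\sqrt J$ gain enters; a naive $\ell^1$ bound loses it. I would establish them from the explicit Clenshaw--Curtis-type weight asymptotics, $\omega_j\lesssim h/J$ uniformly, if these are not already included in \cref{prop:chebyshev-lobatto-bounds}; the paper's prior use of $5\uppi^2h^4/(128(J-1))$ suggests they are.
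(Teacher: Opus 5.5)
Your argument is correct and matches the paper's proof essentially step for step. For the $\zeta$-bound, you use the shift $\sigma t_j\,\Delta\zeta_0$ of the ballistic nodes, propagate it through the $\ell^1$ weight bounds, and multiply by $h/\sigma$. For the $G$-bound, you apply Cauchy--Schwarz against the square-sum weight estimates, split into the direct $G^{[1]}$ term and the indirect $G^{[0]}$ term of size $h^2\sqrt{\eta/J}$, and multiply by $h/\sigma\le C\sqrt h$. The square-sum bounds you flag as the main obstacle are exactly \cref{lem:integrated-weights-l2}, which the paper already proves for both $\sum_j\omega_{i,j}^2$ and $\sum_j\omega_j^2$, so you do not need a separate Clenshaw--Curtis asymptotic argument.
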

\begin{proof}
Consider first two OU inputs \(\zeta\deq(\zeta_0,\zeta_1)\) and
\(\zeta'\deq(\zeta_0',\zeta_1')\), with the stochastic gradient variables
fixed.  The map \(F\) does not depend on \(\zeta_1\).  Moreover,
\[
 \norm{X_{t_j}^{[0]}(\zeta_0)
       -X_{t_j}^{[0]}(\zeta_0')}
 =\sigma t_j\,\norm{\zeta_0-\zeta_0'}
 \le \sigma h\,\norm{\zeta_0-\zeta_0'}\,.
\]
Using \eqref{eq:rhmc-proof-stochastic-gradient-y-lip} and then \eqref{eq:rhmc-chebyshev-bounds} gives
\begin{align*}
 &\max_{j\in[J]}\,
 \norm{X_{t_j}^{[1]}(\zeta_0,G^{[0]})
       -X_{t_j}^{[1]}(\zeta_0',G^{[0]})}
 \le \sigma h\,\norm{\zeta_0-\zeta_0'}
 +h^2\,
      \sigma h\,\norm{\zeta_0-\zeta_0'}
 \le 2\sigma h\,\norm{\zeta_0-\zeta_0'}\,.
\end{align*}
Since \(\omega_j\ge0\), \(\sum_{j\in[J]}\omega_j=h\), and
\(\sum_{j\in[J]}|\omega_{J,j}|\le h^2\), it follows that
\[
 \norm{\Delta\mathcal G}
 \le C\sigma h\,\norm{\Delta\zeta_0}\,,
 \qquad
 \norm{\Delta\mathcal G'}
 \le C \sigma h\,\norm{\Delta\zeta_0}\,.
\]
Multiplying by the factor \(h/\sigma\) in the definition of \(F\) proves
\eqref{eq:rhmc-proof-f-zeta-lip}.

Now fix $\zeta$ and compare two collections
$G=(G^{[0]},G^{[1]})$ and $G'=(G^{\prime[0]},G^{\prime[1]})$.
Cauchy--Schwarz, \eqref{eq:rhmc-proof-stochastic-gradient-g-lip}, and
\cref{lem:integrated-weights-l2} give
\[
 \max_{j\in[J]}\,\norm{\Delta X_{t_j}^{[1]}}
 \le Ch^2\sqrt{\eta/J}\,\norm{\Delta G^{[0]}}\,.
\]
Use the Chebyshev--Lobatto weight bounds.
Since the normalizations of $\mathcal G$ and $\mathcal G'$ are $h^{-1}$
and $h^{-2}$, respectively, both satisfy
\[
 \norm{\Delta\mathcal G}\vee\norm{\Delta\mathcal G'}
 \le C\sqrt{\eta/J}\,\bigl\{
 h^2\,\norm{\Delta G^{[0]}}+\norm{\Delta G^{[1]}}\bigr\}\,.
\]
Multiplying by $h/\sigma\le C\sqrt h$ proves the second bound.
\end{proof}

The key to our analysis is the following refined bound.

\begin{lemma}[Second-order \(W_2\) estimate]
\label{lem:rhmc-proof-random-map-l2}
Let \(\EuScript H\) be a Hilbert space, let \(Z\sim\cN(0,\Id_m)\) and
\(G\sim\cN(0,\Id_n)\) be independent, and let
\(f:\R^m\times\R^n\to\R^m\) satisfy
\[
 \norm{f(z,g)-f(z',g)}\le A\,\norm{z-z'}\,,
 \qquad
 \norm{f(z,g)-f(z,g')}\le B\,\norm{g-g'}\,.
\]
Assume that \(f(z,g)-f(z,g')\) belongs to a fixed subspace
\(E\subseteq\R^m\), and let \(\Pi_E\) denote the orthogonal projection
onto \(E\).  If \(L:\R^m\to\EuScript H\) is linear and
\(\bar f(Z)\deq\E[f(Z,G)\mid Z]\), then
\begin{equation}
 W_2\bigl(\law\{L(Z+f(Z,G))\},
           \law\{L(Z+\bar f(Z))\}\bigr)
 \le \sqrt m\,\Bigl\{
 \frac12\,\norm{L\Pi_E}_{\op}\,B^2
 +5\,\norm L_{\op}\,AB
 \Bigr\}\,.
 \label{eq:rhmc-proof-random-map-l2-result}
\end{equation}
\end{lemma}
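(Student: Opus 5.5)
The approach is a heat-flow / Benamou--Brenier interpolation in the fluctuation variable. Write $D(z,g)\deq f(z,g)-\bar f(z)$. Since $\bar f(z)=\E f(z,G)$ is an average of the values $f(z,\cdot)$, the hypothesis that $f(z,g)-f(z,g')\in E$ gives $D(z,g)\in E$ for all $(z,g)$, and $\E[D(z,G)]=0$ for every $z$. For $s\in[0,1]$, set
\[
 X_s\deq Z+\bar f(Z)+s\,D(Z,G)\,,\qquad \mu_s\deq\law(LX_s)\,,
\]
so that $\mu_0$ and $\mu_1$ are the two laws in \eqref{eq:rhmc-proof-random-map-l2-result}. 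The curve $(\mu_s)_s$ is driven by the velocity field $v_s(w)\deq\E[LD(Z,G)\mid LX_s=w]$. Benamou--Brenier (or simply the coupling $(LX_0,LX_1)$ with Minkowski's inequality along the flow) then gives
\[
 W_2(\mu_0,\mu_1)\le\int_0^1\norm{v_s}_{L^2(\mu_s)}\,\dd s\,.
\]
The naive bound $\norm{v_s}_{L^2}\le\norm{LD}_{L^2}\lesssim B\sqrt n$ is only first order. The whole point is to show that the conditional mean of $D$ given $X_s$ is small, because $D$ is centered given $Z$ and $Z$ is Gaussian.

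To bound $\norm{v_s}_{L^2(\mu_s)}$, I would use duality: it equals the supremum of $\E\ip{LD}{\psi(LX_s)}$ over test fields $\psi$ with $\norm{\psi(LX_s)}_{L^2}\le1$. Take $\psi$ smooth by density, and write $\Psi\deq L^*\psi\circ L$ on $\R^m$. Conditioning on $Z$ and using that $D$ is centered in $G$, the Gaussian covariance (Mehler/Ornstein--Uhlenbeck) identity in the variable $g$ gives
\[
 \E\ip{D}{\Psi(X_s)}=\E\,\mathrm{Cov}_G\bigl(D,\Psi(X_s)\bigr)
 = s\int_0^1\E\,\bigl\langle \nabla_gD(Z,G),\,\nabla\Psi(X_s)\,\nabla_gD(Z,G_\theta)\bigr\rangle\,\dd\theta\,,
\]
where $G_\theta$ is the Mehler-correlated copy of $G$. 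Since $\nabla_gD$ takes values in $E$ and has operator norm at most $B$, this is of size $sB^2$ times a contraction of $\Pi_E L^*\nabla\psi\,L\Pi_E$. This is the term that should produce the factor $\tfrac12\norm{L\Pi_E}_{\op}B^2$ after integrating $s$ over $[0,1]$.

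The main obstacle is that this identity involves $\nabla\psi$, whereas we only control $\psi$ in $L^2$. My plan is to trade the derivative for a Gaussian factor by integrating by parts in $Z$ (Stein's identity $\E\,\partial_z\Phi(Z)=\E[Z\Phi(Z)]$). The chain rule gives $\partial_z[\psi(LX_s)]=\nabla\psi(LX_s)\,L\,(\Id+\nabla\bar f+s\nabla_zD)$. The last factor is $\Id+O(A)$, with the $O(A)$ part coming from the $z$-Lipschitz constant of $f$. I would therefore write $\nabla\psi(LX_s)\,L=\partial_z[\psi(LX_s)]-\nabla\psi(LX_s)\,L\,(\nabla\bar f+s\nabla_zD)$:
\begin{itemize}
 \item Applying Stein's identity to the first piece produces the factor $Z$, whose $L^2$ norm is $\sqrt m$, together with $\norm{\psi}_{L^2}\le1$. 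The $\partial_z$ landing on the $\nabla_gD$ factors costs $A$.
 \item The second piece is lower order because it carries an extra factor of $A$.
\end{itemize}
If this absorption is done carefully (or replaced by a cruder splitting using Cauchy--Schwarz), it should give $\norm{v_s}_{L^2(\mu_s)}\le\sqrt m\,\{s\,\norm{L\Pi_E}_{\op}B^2+C\,\norm L_{\op}AB\}$, and integrating over $s\in[0,1]$ yields \eqref{eq:rhmc-proof-random-map-l2-result}. Where the direct inversion is delicate, I would first try bounding the $A$-dependent contributions separately. These are the terms where $\partial_z$ hits $D$ or $\bar f$ rather than $\psi$; each is of the form (derivative in $z$, cost $A$) times (fluctuation in $g$, cost $B$), measured against $\norm{L}_{\op}$ and $\sqrt m$. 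Tracking the constants from this step gives the $5\,\norm L_{\op}AB$ term.
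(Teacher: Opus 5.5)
Your set-up is sound as far as it goes. The linear interpolation $X_s=Z+\bar f(Z)+sD(Z,G)$ with velocity $v_s=\E[LD\mid LX_s]$, together with the covariance identity in $g$, gives the $\frac12\,\norm{L\Pi_E}_{\op}B^2$ term via $\int_0^1 s\,\dd s$ when $A=0$. In that case $\nabla\bar f=\nabla_zD=0$, so Stein's identity in $Z$ removes $\nabla\psi$ exactly. (Note that you really need this conditional velocity: the synchronous coupling of $LX_0$ and $LX_1$ only gives the first-order bound.)

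The gap is the case $A>0$, which is the entire content of the $AB$ term.

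\begin{itemize}
\item In your splitting $\nabla\psi(LX_s)L=\partial_z[\psi(LX_s)]-\nabla\psi(LX_s)L(\nabla\bar f+s\nabla_zD)$, the second piece still contains $\nabla\psi$. The extra factor $A$ does not help: the duality normalization controls only $\norm{\psi(LX_s)}_{L^2}$, and this term is a different functional of $\psi$, so there is nothing to absorb it into.
\item Iterating the splitting amounts to inverting $\Id+\nabla\bar f+s\nabla_zD$, which needs $A<1$. The subsequent integration by parts in $z$ then differentiates this inverse and produces second $z$-derivatives of $f$, on which you have no control.
\item The claim that $\partial_z$ falling on the $\nabla_gD$ factors ``costs $A$'' is false pointwise. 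It produces the mixed derivative $\nabla_z\nabla_gf$, which the hypotheses do not bound. For $m=n=1$, $f(z,g)=c\sin(\lambda z)\sin(\mu g)$ with $c\lambda=A$, $c\mu=B$ has mixed derivative of size $AB/c$.
\end{itemize}

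The missing idea is the paper's martingale (heat flow) interpolation $X_t=Z+u_t(Z,W_t)$, where $u_t\deq P^{(2)}_{1-t}f$ and $W$ is a Brownian motion in $\R^n$. Its generator is $\frac12\,\langle\Sigma_t,D^2\varphi\rangle$ with $\Sigma_t=D_2u_t\,D_2u_t^\T$. Because the same factor $D_2u_t$ appears on both sides, symmetry of $\Sigma_t$ and $D^2\varphi$ gives the exact identity
\[
 \langle\Sigma_t,D^2\varphi(X_t)\rangle
 =\langle\Sigma_t,D_1\nabla\varphi(X_t)\rangle
 -\langle D_1u_t\,D_2u_t,D_2\nabla\varphi(X_t)\rangle\,.
\]
So the $A$-dependent remainder is not a bare Hessian but a $w$-derivative of $\nabla\varphi(X_t)$. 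It is integrated by parts in $W_t\sim\cN(0,t\Id)$, which is the source of the $t^{-1/2}$, while the first term is integrated by parts in $Z$. No inversion and no second $z$-derivatives are needed. The mixed derivatives $D_1D_2u_t$ and $D_2^2u_t$ are controlled by the heat smoothing bound \eqref{eq:gaussian-heat-bessel}, with integrable singularity $(1-t)^{-1/2}$.

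In your linear interpolation the analogous matrix is $\nabla_g(-\mathcal L_{\msf{OU}})^{-1}D\,(\nabla_gD)^\T$, or, in Mehler form, $\nabla_gD$ evaluated at $G$ and at $G_\theta$. It lacks this symmetry, so the Hessian always ends up adjacent to the wrong factor and cannot be rewritten as a $g$-derivative of $\nabla\varphi(X_s)$. Moreover, the unsmoothed factor $\nabla_gD(Z,G)$ has no control on its $z$-derivative.
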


It is instructive to compare this to a more trivial bound.
The obvious coupling, together with the Gaussian Poincar\'e inequality applied conditionally on $Z$ and over the subspace $E$, yields
\begin{align*}
    \E[\norm{L(Z+f(Z,G)) - L(Z+\bar f(Z))}^2]
    &= \E[\norm{L\Pi_E (f(Z,G) - \bar f(Z))}^2] \\
    &\le \norm{L\Pi_E}_{\op}^2\,\E[\norm{f(Z,G) - \bar f(Z)}^2]
    \le m\,\norm{L\Pi_E}_{\op}^2\,B^2\,.
\end{align*}
This yields
\begin{equation*}
 W_2\bigl(\law\{L(Z+f(Z,G))\},
           \law\{L(Z+\bar f(Z))\}\bigr)
 \le \sqrt m\,
\norm{L\Pi_E}_{\op}\,B\,.
\end{equation*}
This argument does not use any assumption on the distribution of $Z$.
In contrast,~\cref{lem:rhmc-proof-random-map-l2} uses the fact that $Z$ is Gaussian, together with the fact that the difference $f(Z,G) - \bar f(Z)$ is conditionally centered, to obtain a second-order bound which depends quadratically on $A$ and $B$.
The proof, given in
\cref{app:second-order-w2-estimate}, is based on heat flow interpolation and Gaussian integration by parts.
We remark that for the purpose of this paper, we only need the case $E = \R^m$.

\begin{remark}
Related second-order estimates are known for centered perturbations
independent of the Gaussian variable being perturbed: see
\cite[Lemma~1]{SN26} for bounded perturbations on a one-dimensional
subspace, \cite[Theorems~2.1 and~2.7]{PW26} for more general Gaussian
convolution inequalities, and the Ingster--Suslina second moment method \cite{IS03} for divergence bounds.
A complementary heat flow perspective is given by
\cite[Theorem~2.1]{CNW22}: the first unmatched moment determines the
asymptotic decay of \(W_2\) under Gaussian smoothing.
These independent perturbation results do not apply directly when
\(f(Z,G)-\bar f(Z)\) depends on the Gaussian \(Z\) providing the
smoothing; the \(AB\) term in \cref{lem:rhmc-proof-random-map-l2}
accounts for this dependence.
\end{remark}

With this in hand, we can now bound the variance term.

\begin{proposition}[Variance bound]
\label{prop:rhmc-proof-random-error-l2}
Under \(h^2\le1\),
\begin{equation*}
 \Evar
 \le C\sqrt d\,
 \bigl(\eta h^{3/2}/J+\eta^{1/2} h^3/\sqrt J\bigr)\,.
\end{equation*}
\end{proposition}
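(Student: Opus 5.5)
We bound $\Evar$ by applying \cref{lem:rhmc-proof-random-map-l2} directly to the representation
\[
 \begin{bmatrix}X_{\rm init}^+\\ P_{\rm init}^+\end{bmatrix}=u+B_h\{\zeta+F(\zeta,G_{\rm all})\}\,.
\]
Here $Z\deq\zeta\sim\cN(0,\Id_{2d})$ plays the Gaussian variable, so $m=2d$. The stochastic gradient variables $G_{\rm all}$ play the independent Gaussian $G$. We take $E=\R^{2d}$ and $f\deq F$. For the linear map we take $L\deq M_\kappa^{1/2}B_h$, or simply $B_h$ if we work in Euclidean norm, since $\Evar$ is defined in the Euclidean $W_2$ and the norms are uniformly equivalent. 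The shift $u$ is deterministic given $z$ and cancels in the coupling. The conditional-mean kernel $\bar K$ is exactly $(\bar\Phi_z)_\#\cN(0,\Id_{2d})$ with $\bar\Phi_z(\zeta)=u+B_h\{\zeta+\bar F(\zeta)\}$, where $\bar F(\zeta)=\E[F(\zeta,G)\mid\zeta]$. This holds because $F$ is affine in $\mathcal G,\mathcal G'$, and these are the only places $G$ enters. Hence $W_2(\delta_z\widehat K,\delta_z\bar K)$ is precisely the left-hand side of \eqref{eq:rhmc-proof-random-map-l2-result}.

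We then read off the constants. \Cref{lem:rhmc-proof-phase-lipschitz} gives $A\le Ch^2$ and $B\le C\sqrt{\eta h/J}$, and $\norm{L}_{\op}=\norm{B_h}_{\op}\le C\sqrt h$, using $h/\sigma\asymp\sqrt h$, i.e.\ $\sigma\asymp\sqrt h$. Plugging in,
\[
 \Evar\le\sqrt{2d}\,\Bigl\{\tfrac12\,C\sqrt h\cdot C\frac{\eta h}{J}+5\,C\sqrt h\cdot Ch^2\sqrt{\eta h/J}\Bigr\}
 \le C\sqrt d\,\bigl(\eta h^{3/2}/J+\eta^{1/2}h^3/\sqrt J\bigr)\,,
\]
which is exactly the claimed bound. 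The supremum over $z$ is harmless because every constant above is uniform in the input.

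The only point needing care is checking the hypotheses of \cref{lem:rhmc-proof-random-map-l2} for $F$ as a function of all its arguments.

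First, $F$ depends on $\zeta$ only through $\zeta_0$, via the nodes $X^{[0]}_{t_j}$ and hence $X^{[1]}_{t_j}$. Its $\zeta$-Lipschitz constant must hold uniformly in $G_{\rm all}$, which \cref{lem:rhmc-proof-phase-lipschitz} provides.

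Second, the $G$-Lipschitz bound must hold uniformly in $\zeta$. This requires that the dependence on $G^{[0]}$ propagated through $X^{[1]}$ is included, which is the $h^2\norm{\Delta G^{[0]}}$ term already absorbed in that lemma.

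Third, the lemma's $\bar f$ must coincide with the conditional mean over all of $G_{\rm all}$, both blocks jointly. This matches the definition $\bar\Phi_z(\zeta)=\E[\widehat\Phi_z(\zeta,G)\mid\zeta]$.

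I expect this identification of $\bar K$ with $L(Z+\bar f(Z))$, together with the uniformity of the Lipschitz constants, to be the only real obstacle. The remainder is substitution.
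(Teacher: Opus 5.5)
Your proposal is correct and is the paper's argument: apply \cref{lem:rhmc-proof-random-map-l2} with $m=2d$, $Z=\zeta$, $G=G_{\rm all}$, $f=F$, $L=B_h$, $E=\R^{2d}$, take $A=Ch^2$ and $B=C\sqrt{\eta h/J}$ from \cref{lem:rhmc-proof-phase-lipschitz}, and use $\norm{B_h}_{\op}\le C\sqrt h$. Your extra checks are details the paper leaves implicit, namely that the shift $u$ cancels, that $\bar K$ is identified with $L(Z+\bar f(Z))$ by linearity of conditional expectation, and that the Lipschitz constants are uniform in $z$ and in the other argument.
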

\begin{proof}
Apply \cref{lem:rhmc-proof-random-map-l2} with
\[
 m=2d\,,\qquad Z=\zeta\,,\qquad G=G_{\rm all}\,,\qquad
 f=F\,,\qquad L=B_h\,,\qquad E=\R^{2d}\,.
\]
By \cref{lem:rhmc-proof-phase-lipschitz}, we may take
\[
 A=Ch^2
 \qquad\text{and}\qquad
 B=C\sqrt{\eta h/J}\,.
\]
Since \(\norm{B_h}_{\op}\le C\sqrt h\),
\cref{eq:rhmc-proof-random-map-l2-result} yields
\begin{align*}
 \Evar
 &\le C\sqrt d\,\norm{B_h}_{\op}\,(B^2+AB)
 \le C\sqrt d\,
 (\eta h^{3/2}/J+\eta^{1/2} h^3/\sqrt J)\,. \qedhere
\end{align*}
\end{proof}

\subsubsection{Discretization error bound}
\label{sec:rhmc-proof-discretization-bound}

Consider the Hamiltonian generator.
\[
 \LHam\deq \ip p{\grad_x}-\ip{g_\eta(x)}{\grad_p}
\]
For a position vector field \(W=(W_i)_{i\in[d]}\), put
\(\delta_{\pi_\eta}W\deq\ip{g_\eta}W-\operatorname{div}_xW\).  Integration by
parts under \(e^{-V_\eta}\) gives the identity
\begin{equation}
 \norm{\delta_{\pi_\eta}W}_{L^2(\pi_\eta)}^2
 \le\norm W_{L^2(\pi_\eta;\HS)}^2
 +\norm{D_xW}_{L^2(\pi_\eta;\HS)}^2\,.
 \label{eq:rhmc-proof-witten-l2}
\end{equation}
Indeed, expanding the square and integrating the two cross terms produces a
contraction of the derivative arrays and the quadratic form generated by
\(\Hess V_\eta\); Cauchy--Schwarz and
\(0\preceq\Hess V_\eta\preceq\Id\) give the displayed bound.

The following lemma relies on Gaussian polynomial bounds collected in
\cref{lem:gaussian-polynomial-estimates}.  In particular, if a
Hilbert-valued momentum polynomial \(F(p)\) and a polynomial vector field
\(W(p)\) have degree at most \(r\), then
\begin{equation}
 \norm{D_pF}_{L^2(\gamma;\HS)}\le\sqrt r\,\norm F_{L^2(\gamma)}\,,
 \qquad
 \norm{\delta_\gamma W}_{L^2(\gamma)}
 \le\sqrt{r+1}\,\norm W_{L^2(\gamma;\HS)}\,.
 \label{eq:rhmc-proof-momentum-l2}
\end{equation}

\begin{lemma}[Stationary iterated derivatives in \(L^2\)]
\label{lem:rhmc-proof-iterated-derivative-l2}
For every integer \(k\ge0\),
\begin{equation*}
 \norm{\LHam^kg_\eta}_{L^2(\Pi_\eta)}
 \le C^{k+1}(k!)^{3/2}\sqrt d\,\eta^{-k/2}\,.
\end{equation*}
\end{lemma}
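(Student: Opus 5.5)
We first describe the structure of $\LHam^k g_\eta$ and then bound it term by term. Since $g_\eta$ depends only on $x$, repeated application of $\LHam=\ip p{\grad_x}-\ip{g_\eta(x)}{\grad_p}$ gives a finite sum of terms, each of the form
\begin{equation*}
 D^{j+1}V_\eta(x)\bigl[\,\cdot\,,p^{\otimes a},\,D^{j_1+1}V_\eta(x)[p^{\otimes a_1}],\ldots\bigr]\,.
\end{equation*}
Each application either differentiates in $x$ (raising the order of some derivative tensor and adding a $p$ slot) or removes one $p$ slot and inserts a copy of $g_\eta$. Estimating these terms one at a time with \cref{lem:rhmc-proof-potential-gradient-derivatives} would lose factors of $d$: the full HS bound \eqref{eq:rhmc-proof-derivative-full} carries $\sqrt d$, and contracting with $p$ (with $\norm p\approx\sqrt d$) costs more. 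The plan is therefore to avoid explicit expansion and argue inductively in $L^2(\Pi_\eta)$ using the Witten/Gaussian identities \eqref{eq:rhmc-proof-witten-l2}--\eqref{eq:rhmc-proof-momentum-l2}.

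For the inductive bookkeeping, we track a family of Hilbert-valued functions $F(x,p)$ that are polynomials in $p$ of degree at most $r$ with $x$-dependent coefficients. For such $F$ we write
\begin{equation*}
 \LHam F=\ip p{\grad_xF}-\ip{g_\eta}{\grad_pF}\,.
\end{equation*}
The key observation is that, under $\Pi_\eta=\pi_\eta\otimes\gamma$, the adjoints of the pieces are $\delta_\gamma$ in $p$ and $\delta_{\pi_\eta}$ in $x$. This lets us bound each piece as follows.
\begin{itemize}
 \item The momentum piece satisfies $\norm{\ip p{\grad_xF}}_{L^2}=\norm{\delta_\gamma(\grad_xF\text{ as a }p\text{-field})}$. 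By \eqref{eq:rhmc-proof-momentum-l2} this is at most $\sqrt{r+1}\,\norm{D_xF}_{L^2(\HS)}$, which converts the $p$ factor into a $\sqrt r$ loss with no dimension cost.
 \item The force piece $\ip{g_\eta}{\grad_pF}$ is handled symmetrically with \eqref{eq:rhmc-proof-witten-l2}. It is bounded by $\norm{D_pF}+\norm{D_xD_pF}$ in $L^2(\HS)$, and \eqref{eq:rhmc-proof-momentum-l2} then bounds $\norm{D_pF}$ by $\sqrt r\,\norm F$.
\end{itemize}

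The recursion therefore needs bounds on mixed quantities $\norm{D_x^i D_p^j \LHam^{k}g_\eta}_{L^2(\HS)}$, and we propose the inductive hypothesis
\begin{equation*}
 \norm{D_x^i\LHam^kg_\eta}_{L^2(\Pi_\eta;\HS)}\le C^{i+k+1}\,((i+k)!)^{3/2}\,\sqrt d\,\eta^{-(i+k)/2}\,.
\end{equation*}
Since $D_x$ and $\LHam$ do not commute, commutators appear:
\begin{itemize}
 \item $[D_x,\LHam]$ produces $-D^2V_\eta\,\grad_p$. This is bounded in operator norm by $1$ (\cref{lem:rhmc-proof-smoothed-curvature}), so it costs only a degree factor $\sqrt r$ with $r\le k$.
 \item Higher $x$-derivatives of the coefficient $g_\eta$ produce $D^{m}V_\eta$ contracted in a single slot. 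These are controlled by the slot bound \eqref{eq:rhmc-proof-derivative-slot}, which has no $\sqrt d$ and gives $(m-1)!\,\eta^{-(m-2)/2}$.
\end{itemize}
Only the single base term $D_x^{i}g_\eta$ uses the full bound \eqref{eq:rhmc-proof-derivative-full}, and that is the source of the one factor $\sqrt d$. Every other tensor appears in operator/slot form, so $\sqrt d$ appears exactly once.

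Counting the factors, each step gains $\eta^{-1/2}$: a $p$ factor costs $O(1)$, while each $x$-derivative costs $\eta^{-1/2}$ from \eqref{eq:rhmc-proof-derivative-slot}. The Gaussian degree factor contributes $\sqrt{r}\le\sqrt k$ per step, and the combinatorial factor from the Leibniz rule contributes $k$ per step. Together these give $k\cdot\sqrt k$ per step, i.e. $(k!)^{3/2}$ overall, with the binomial sums absorbed into $C^{k}$.

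The main obstacle is making this induction close. Applying $\LHam$ raises the $x$-derivative count needed at the next level, so we must induct on $i+k$ jointly, and the Leibniz expansion of $D_x^i(\ip{g_\eta}{\grad_pF})$ has to be summed with factorial weights. If the stated weights fail to close, the first fallback is to use generating-function weights $\sum \lambda^{i+k}/((i+k)!)^{3/2}$. If that also fails, an alternative is to bound $\LHam^k g_\eta$ via the exact Hamiltonian flow, $\partial_t^k g_\eta(X_t)$ at $t=0$, using Cauchy-type estimates on the analytic-in-time trajectory. That route yields only $k!$-type growth, but it needs pointwise control in $p$, which reintroduces $d$ dependence. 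We would therefore try the $L^2$ adjoint induction first.
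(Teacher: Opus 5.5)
You have the paper's ingredients: the adjoint structure, the Gaussian polynomial bounds \eqref{eq:rhmc-proof-momentum-l2}, the slot bound \eqref{eq:rhmc-proof-derivative-slot} for commutator terms, and a single use of \eqref{eq:rhmc-proof-derivative-full} as the source of $\sqrt d$. However, one identity is misstated and, more seriously, the induction you set up cannot close.

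\textbf{The misstated identity.} The two pieces of $\LHam F$ are not separately divergences. In fact
\[
 \ip p{\grad_xF}=\delta_\gamma(D_xF)+\textstyle\sum_i\partial_{p_i}\partial_{x_i}F\,,
 \qquad
 \ip{g_\eta}{\grad_pF}=\delta_{\pi_\eta}(D_pF)+\textstyle\sum_i\partial_{x_i}\partial_{p_i}F\,.
\]
Only the difference is clean, $\LHam F=\delta_\gamma(D_xF)-\delta_{\pi_\eta}(D_pF)$, because the mixed traces cancel. If you estimate the pieces separately, you are left with a trace term that is only bounded by $\sqrt d\,\norm{D_xD_pF}_{L^2(\HS)}$ at every step. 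Your two bounds are exactly those for the two divergences, so you must use the combined factorization.

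\textbf{Why your induction does not close.} No weight depending only on $i+k$ can work. For $F=\LHam^kg_\eta$, whose momentum degree is $r\le k$, the factorization gives
\[
 \norm{D_x^i\LHam^{k+1}g_\eta}_{L^2(\HS)}
 \le\sqrt{k+1}\,\norm{D_x^{i+1}\LHam^kg_\eta}_{L^2(\HS)}+\cdots\,.
\]
This trades one $x$-derivative for one application of $\LHam$ at the same total order $i+k+1$, at the price of an extra factor $\sqrt{k+1}$. Your hypothesis $C^{i+k+1}((i+k)!)^{3/2}\sqrt d\,\eta^{-(i+k)/2}$ assigns identical bounds to $(i+1,k)$ and $(i,k+1)$. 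So this single term already exceeds the target by a factor $\sqrt{k+1}$. The generating-function fallback has the same symmetry in $i+k$ and fails for the same reason.

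\textbf{The missing idea.} Weight $x$-derivatives only by $i!\,\eta^{-i/2}$, which is their true growth by \eqref{eq:rhmc-proof-derivative-full}. Attach the extra $\sqrt{k!}$ to the count of $\LHam$ applications alone.

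The paper implements this with the weighted Sobolev norm $\norm F_{\mathsf W^{N,2}}=\sum_{j\le N}\frac{\eta^{j/2}}{j!}\norm{D_x^jF}_{L^2(\Pi_\eta;\HS)}$. It has three properties:
\[
 \norm{D_xF}_{\mathsf W^{N-1,2}}\le N\eta^{-1/2}\norm F_{\mathsf W^{N,2}}\,,
 \qquad
 \norm{\delta_{\pi_\eta}W}_{\mathsf W^{N-1,2}}\le CN\eta^{-1/2}\norm W_{\mathsf W^{N,2}}\,,
 \qquad
 \norm{\LHam F}_{\mathsf W^{N-1,2}}\le CN\sqrt{r+1}\,\eta^{-1/2}\norm F_{\mathsf W^{N,2}}\,.
\]
The second follows from the Leibniz rule and the slot bound; the third then follows from the factorization. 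Iterating from $N=k$ down to $N=0$ produces the factor $\prod_{j<k}(k-j)\sqrt{j+1}=k!\sqrt{k!}$, multiplied by $\norm{g_\eta}_{\mathsf W^{k,2}}\le(k+1)\sqrt d$.

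Alternatively, staying closer to your plan, the asymmetric pointwise hypothesis
\[
 \norm{D_x^i\LHam^kg_\eta}_{L^2(\HS)}\le A^iB^k\,(i+k+1)!\,\sqrt{k!}\,\sqrt d\,\eta^{-(i+k)/2}
\]
closes by induction on $k$ for $B\gg A\ge2$, for instance $A=2$, $B=12$.

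\textbf{A missing base case.} The case $i=0$ is not covered by \eqref{eq:rhmc-proof-derivative-full}, which only applies to $D_x^ig_\eta$ with $i\ge1$. It comes instead from $\norm{g_\eta}_{L^2(\pi_\eta)}^2=\E_{\pi_\eta}\Delta V_\eta\le d$.
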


\begin{proof}
For a tensor-valued function \(F\), define the weighted Sobolev norm
\[
 \norm F_{\mathsf W^{N,2}}
 \deq\sum_{j=0}^N\frac{\eta^{j/2}}{j!}\,
       \norm{D_x^jF}_{L^2(\Pi_\eta;\HS)}\,.
\]
The weight ratio gives
\(\norm{D_xF}_{\mathsf W^{N-1,2}}
 \le N\eta^{-1/2}\,\norm F_{\mathsf W^{N,2}}\).  We next establish the
corresponding bound for \(\delta_{\pi_\eta}\).  For \(0\le j\le N-1\), the
notation
\[
 \operatorname{contr}(D_x^\ell g_\eta,D_x^{j-\ell}W)
 \deq\operatorname{Sym}\Bigl(
   \sum_{a\in[d]}D_x^\ell(g_\eta)_a
   \otimes D_x^{j-\ell}W_a
 \Bigr)
\]
denotes contraction over the position index \(a\), followed by
normalized symmetrization over the \(j\) derivative indices.  The
higher-order Leibniz rule \cite[Exercise~1.2.13]{N06} gives
\[
 D_x^j(\delta_{\pi_\eta}W)
 =\delta_{\pi_\eta}(D_x^jW)
  +\sum_{\ell=1}^j\binom j\ell
    \operatorname{contr}(D_x^\ell g_\eta,D_x^{j-\ell}W)\,.
\]
By \eqref{eq:rhmc-proof-witten-l2}, the weighted sum of the first terms is
bounded by
\begin{align*}
 \sum_{j=0}^{N-1}\frac{\eta^{j/2}}{j!}
   \,\norm{\delta_{\pi_\eta}(D_x^jW)}_{L^2(\Pi_\eta)}
 &\le
 \sum_{j=0}^{N-1}\frac{\eta^{j/2}}{j!}
 \,\bigl(
   \norm{D_x^jW}_{L^2(\Pi_\eta;\HS)}
   +\norm{D_x^{j+1}W}_{L^2(\Pi_\eta;\HS)}
 \bigr)\\
 &\le \frac{2N}{\sqrt\eta}\,
   \norm W_{\mathsf W^{N,2}}\,.
\end{align*}
For the remaining terms, \eqref{eq:rhmc-proof-derivative-slot} implies
\[
 \norm{\operatorname{contr}
   (D_x^\ell g_\eta,D_x^{j-\ell}W)}_{\HS}
 \le \ell!\,\eta^{-(\ell-1)/2}\,
      \norm{D_x^{j-\ell}W}_{\HS}\,.
\]
Relative to the weight of \(D_x^{j-\ell}W\), the coefficient of the term
with \(\ell\) derivatives on \(g_\eta\) is
\[
 \frac{\eta^{j/2}}{j!}\,\binom j\ell\,\ell!\,
 \eta^{-(\ell-1)/2}
 \bigg/\frac{\eta^{(j-\ell)/2}}{(j-\ell)!}
 =\sqrt\eta\,.
\]
After setting \(s=j-\ell\), each fixed position derivative \(D_x^sW\)
appears for at most \(N-1-s\le N\) choices of \(\ell\).  Thus, the
commutator contribution is at most
\(N\sqrt\eta\,\norm W_{\mathsf W^{N,2}}\).
Combining the two contributions, using \(0<\eta\le1\) and \(N\ge1\), yields
\[
 \norm{\delta_{\pi_\eta}W}_{\mathsf W^{N-1,2}}
 \le \frac{CN}{\sqrt\eta}\,\norm W_{\mathsf W^{N,2}}\,.
\]
It remains to control the Hamiltonian generator. One can check that
\begin{equation}
 \LHam F=\delta_\gamma(D_xF)-\delta_{\pi_\eta}(D_pF)\,.
 \label{eq:Lham-factorization}
\end{equation}
Suppose that \(F\) has momentum
degree at most \(r\).  Since \(D_x\) commutes with \(\delta_\gamma\), the
Gaussian divergence estimate in \eqref{eq:rhmc-proof-momentum-l2} gives
\begin{align*}
 \norm{\delta_\gamma(D_xF)}_{\mathsf W^{N-1,2}}
 &\le\sqrt{r+1}\,\norm{D_xF}_{\mathsf W^{N-1,2}}
 \le\frac{N\sqrt{r+1}}{\sqrt\eta}\,
       \norm F_{\mathsf W^{N,2}}\,.
\end{align*}
For the second term, the Sobolev divergence estimate above and the Gaussian
gradient estimate in \eqref{eq:rhmc-proof-momentum-l2} yield
\begin{align*}
 \norm{\delta_{\pi_\eta}(D_pF)}_{\mathsf W^{N-1,2}}
 &\le\frac{CN}{\sqrt\eta}\,
       \norm{D_pF}_{\mathsf W^{N,2}}
 \le\frac{CN\sqrt r}{\sqrt\eta}\,
       \norm F_{\mathsf W^{N,2}}\,.
\end{align*}
Combining the last two displays proves
\begin{align}\label{eq:Lham_bd}
 \norm{\LHam F}_{\mathsf W^{N-1,2}}
 \le\frac{CN\sqrt{r+1}}{\sqrt\eta}\,
      \norm F_{\mathsf W^{N,2}}\,.
\end{align}
Finally, integration by parts gives
\(\norm{g_\eta}_{L^2(\pi_\eta)}^2=\E_{\pi_\eta}\Delta V_\eta\le d\), while
\eqref{eq:rhmc-proof-derivative-full} gives
\[
 \norm{g_\eta}_{\mathsf W^{N,2}}
 \le \sqrt d+N\sqrt{\eta d}
 \le (N+1)\sqrt d\,.
\]
Since
\(\LHam^jg_\eta\) has momentum degree at most \(j\), iterating the
bound~\eqref{eq:Lham_bd} for \(j=0,\ldots,k-1\) produces
\(k!\sqrt{k!}\).  The extra factor \(k+1\) from the initialization is
absorbed into \(C^{k+1}\), proving the claim.
\end{proof}

Let \((X_t,P_t)_{t\ge 0}\) be the exact Hamiltonian flow and put
\(g(t)\deq g_\eta(X_t)\) when the initial state has law \(\Pi_\eta\).  Invariance
and \cref{lem:rhmc-proof-iterated-derivative-l2} give
\begin{equation}
 \sup_{0\le t\le h}\,\norm{g^{(k)}(t)}_{L^2}
 \le C^{k+1}(k!)^{3/2}\sqrt d\,\eta^{-k/2}\,.
 \label{eq:rhmc-proof-gradient-derivatives-l2}
\end{equation}

\begin{proposition}[Discretization error bound]
\label{prop:rhmc-proof-deterministic-defect-l2}
If \(h^2\le1\), then
\begin{equation}
 \Edisc
 \le C\sqrt d\,\Bigl[
 B_{J,2} h\,\Bigl(\frac h{\sqrt\eta}\Bigr)^J
 +h^5
 \Bigr]\,.
 \label{eq:rhmc-proof-det-error-l2}
\end{equation}
Here, one may take
\(B_{J,2}=C_0^{J+1}\sqrt{J!}\), where \(C_0\) is universal.
\end{proposition}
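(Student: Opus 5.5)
Since $\Pi_\eta$ is invariant for the exact Hamiltonian flow and for the OU momentum refreshments, I would bound $\Edisc=W_2(\Pi_\eta K,\Pi_\eta)$ with the synchronous coupling. Start from $(X_{\rm init},P_{\rm init})\sim\Pi_\eta$ and use the same $\zeta_0,\zeta_1$. After the first half-refreshment, $(X_{\rm init},P_0)\sim\Pi_\eta$. Run the exact Hamiltonian flow for time $h$ and refresh again; the result has law $\Pi_\eta$. The numerical kernel differs from this only in the Hamiltonian part, and the final refreshment multiplies the momentum error by $e^{-h/2}\le1$. It therefore suffices to bound $\norm{X_h^{[2]}-X_h}_{L^2}+\norm{P_h^{[2]}-P_h}_{L^2}$, where the initial state has law $\Pi_\eta$ and the exact trajectory satisfies \eqref{eq:overview-hamiltonian-volterra}.

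I would split the error into quadrature error along the exact path and Picard contraction. Write $g(t)=g_\eta(X_t)$. Then
\[
X_h-X_h^{[2]}=\sum_j\omega_{J,j}\bigl(g_\eta(X_{t_j}^{[1]})-g(t_j)\bigr)+\Bigl[\int_0^h(h-s)\,g(s)\,\dd s-\sum_j\omega_{J,j}\,g(t_j)\Bigr],
\]
and there is an analogous formula for the momentum with weights $\omega_j$. Similarly, at each node,
\[
X^{[1]}_{t_i}-X_{t_i}=-\sum_j\omega_{i,j}\bigl(g_\eta(X^{[0]}_{t_j})-g(t_j)\bigr)-Q_i,
\]
where $Q_i$ is the quadrature defect for $\int_0^{t_i}(t_i-s)\,g(s)\,\dd s$.

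The quadrature defects come from standard Chebyshev--Lobatto interpolation theory (\cref{app:chebyshev-lobatto}). The interpolation error is controlled by $h^J\sup_t\norm{g^{(J)}(t)}/J!$, with constant factors $C^J$. Taking $L^2$ norms and using \eqref{eq:rhmc-proof-gradient-derivatives-l2} at $k=J$ gives a defect of order
\[
h\cdot h^J\,C^J(J!)^{3/2}\sqrt d\,\eta^{-J/2}/J!=C^J\sqrt{J!}\,\sqrt d\,h\,(h/\sqrt\eta)^J .
\]
The extra factor $h$ comes from the weight mass $h$ or $h^2$. This is the first term in \eqref{eq:rhmc-proof-det-error-l2} with $B_{J,2}=C_0^{J+1}\sqrt{J!}$.

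For the Picard contraction, $g_\eta$ is $1$-Lipschitz and $\sum_j|\omega_{i,j}|\le h^2$, so the errors propagate with a factor $h^2$ per level. The initial guess has error
\[
X_t-X^{[0]}_t=-\int_0^t(t-s)\,g(s)\,\dd s ,
\]
whose $L^2$ norm is at most $h^2\sup_t\norm{g(t)}_{L^2}\le h^2\sqrt d$. After level one the error is $h^4\sqrt d$ plus the quadrature term. The final update multiplies by the weight mass: $h^2$ for the position and $h$ for the momentum. This gives $O(\sqrt d\,h^5)$ from the momentum component, plus quadrature terms.

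\textbf{Main obstacle.} I expect the main obstacle to be justifying the $L^2$ quadrature estimate. One needs a Peano-kernel or Hermite-remainder bound for Chebyshev--Lobatto interpolation of the form $\norm{f-\mathcal I_{J,h}f}_\infty\le C^J h^J\sup\norm{f^{(J)}}/J!$. This must hold pathwise, so that we can take $L^2(\Pi_\eta)$ norms afterwards and then exchange the supremum over $t$ with the $L^2$ norm. I would handle this by writing the remainder as an integral against a bounded kernel in $s$. Minkowski's inequality then bounds its $L^2$ norm by $\sup_s\norm{g^{(J)}(s)}_{L^2}$, and stationarity gives \eqref{eq:rhmc-proof-gradient-derivatives-l2} uniformly in $s$. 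The bounded Lebesgue constant of Chebyshev--Lobatto nodes keeps the constants of the form $C^J$.
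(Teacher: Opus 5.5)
Your proposal is correct and follows essentially the same route as the paper: synchronous coupling with the exact stationary Hamiltonian phase, and an interpolation remainder of size $h^J\sup_t\norm{g^{(J)}(t)}_{L^2}/J!$ along the exact path, followed by Picard contraction by a factor $h^2$ per level starting from the ballistic error $\sqrt d\,h^2/2$. (The paper gets the remainder by applying the Hermite--Genocchi formula to $g$ as an $L^2(\Pi_\eta;\R^d)$-valued curve via duality, which is equivalent to your pathwise kernel-plus-Minkowski argument.) The one inaccuracy is incidental: the Chebyshev--Lobatto Lebesgue constant is not bounded (it grows like $\log J$), but it is not needed here, since $\prod_j|t-t_j|\le h^J$ already gives the remainder bound with no $C^J$ loss.
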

\begin{proof}
For an \(L^2(\Pi_\eta;\R^d)\)-valued curve \(f\) on \([0,h]\), write
\[
 \norm f_{\infty,2}
 \deq\sup_{0\le t\le h}\,\norm{f(t)}_{L^2}\,.
\]
In particular, we can identify the position coordinates $X$, $X^{[0]}$, $X^{[1]}$ of the Hamiltonian flow and Picard iterations, respectively, as well as $g$, as $L^2(\Pi_\eta;\R^d)$-valued curves.

Apply the Banach-valued interpolation remainder in
\cref{lem:chebyshev-banach-remainder} to the curve
\(g:[0,h]\to L^2(\Pi_\eta;\R^d)\).  Together with
\eqref{eq:rhmc-proof-gradient-derivatives-l2}, it gives
\begin{align*}
 \mathcal E_{J,2}
 &\deq\sup_{0\le t\le h}\,
 \norm{g(t)-\mathcal I_{J,h}g(t)}_{L^2}
 \le\frac{h^J}{J!}\sup_{0\le t\le h}\,
       \norm{g^{(J)}(t)}_{L^2}\\
 &\le C^{J+1}\sqrt{J!}\,\sqrt d\,
       \Bigl(\frac h{\sqrt\eta}\Bigr)^J
 \le B_{J,2}\sqrt d\,\Bigl(\frac h{\sqrt\eta}\Bigr)^J\,.
\end{align*}
Since
the exact flow is stationary under \(\Pi_\eta\), integration by parts gives
\[
 \norm{g_\eta\circ X}_{\infty,2}^2
 =\E_{\pi_\eta}\norm{g_\eta}^2
 =\E_{\pi_\eta}\Delta V_\eta
 \le d\,.
\]
Therefore, the integral equation for $X$ shows that\begin{equation}
 \norm{X^{[0]}-X}_{\infty,2}
 \le\sqrt d\,\frac{h^2}{2}\,.
 \label{eq:w2-ballistic-exact-error}
\end{equation}
To control the first Picard iteration, insert the interpolant of the force along
the exact trajectory:
\begin{align*}
 X_t^{[1]}-X_t
 &=\int_0^t(t-s)\,
   \bigl\{g_\eta(X_s)-\mathcal I_{J,h}[g_\eta\circ X^{[0]}](s)\bigr\}
   \,\dd s\\
 &=\int_0^t(t-s)\,
   \bigl\{g_\eta(X_s)-\mathcal I_{J,h}[g_\eta\circ X](s)\bigr\}
   \,\dd s
+
   \int_0^t(t-s)\,
   \mathcal I_{J,h}[g_\eta\circ X-g_\eta\circ X^{[0]}](s)\,\dd s\,.
\end{align*}
The proof of \cref{lem:integrated-weights-l2} applies to
$k=(t-\cdot)_+$ for every $t\in[0,h]$, so its consequence is
uniform in $t$.  Since $g_\eta$ is $1$-Lipschitz,
\eqref{eq:w2-ballistic-exact-error} gives
\[
 \norm{X^{[1]}-X}_{\infty,2}
 \le \frac{h^2}{2}\,\mathcal E_{J,2}
      +h^2\,\norm{X^{[0]}-X}_{\infty,2}
 \le \frac{h^2}{2}\,\mathcal E_{J,2}+\frac{\sqrt d\,h^4}{2}\,.
\]
Set $(\widehat X_h,\widehat P_h)=(X_h^{[2]},P_h^{[2]})$.
At the momentum endpoint, positivity and the sum of the weights give
\begin{align*}
 \norm{\widehat P_h-P_h}_{L^2}
 &\le h\mathcal E_{J,2}+h\,\norm{X^{[1]}-X}_{\infty,2}\\
 &\le (1+h^2/2)\,h\mathcal E_{J,2}+\sqrt d\,h^5/2
 \le C\mathcal E_{J,2} h+C\sqrt d\,h^5\,.
\end{align*}
Similarly,
\begin{align*}
 \norm{\widehat X_h-X_h}_{L^2}
 &\le \frac{h^2}{2}\,\mathcal E_{J,2}
       +h^2\,\norm{X^{[1]}-X}_{\infty,2}\\
 &\le \frac{h^2}{2}\,(1+h^2)\,\mathcal E_{J,2}+\sqrt d\,h^6/2
 \le C\mathcal E_{J,2}h^2 +C\sqrt d\,h^6\,.
\end{align*}
In the last lines, we used \(h^2\le1\).

Finally, start both the numerical phase and an exact Hamiltonian phase from
\(\Pi_\eta\), and use the same Gaussian variables in the two OU refreshes.
The first refresh preserves \(\Pi_\eta\), so the preceding stationary bounds
apply.  The exact Hamiltonian flow and the final refresh also preserve
\(\Pi_\eta\).  In the final refresh the common Gaussian increment cancels,
while the momentum difference is multiplied by \(e^{-h/2}\).  Consequently,
using also \(h\le1\), which follows from \(h^2\le1\) and
\(h>0\),
\begin{align*}
 \Edisc
 &\le\norm{\widehat X_h-X_h}_{L^2}
      +\norm{\widehat P_h-P_h}_{L^2}
 \le C \mathcal E_{J,2}\,h+C\sqrt d\,h^5
 \le C\sqrt d\,\Bigl[
   B_{J,2}h\,\Bigl(\frac h{\sqrt\eta}\Bigr)^J
   +h^5
 \Bigr]\,.
\end{align*}
This proves \eqref{eq:rhmc-proof-det-error-l2}.
\end{proof}

\subsection{Proof of
  \texorpdfstring{\hyperref[thm:w2-main]{Theorem~\ref*{thm:w2-main}}}
                 {Theorem 1.1}}
\label{sec:rhmc-proof-main-theorem}

We record the following standard initialization lemma.

\begin{lemma}[Initialization]
\label{lem:rhmc-proof-initial-radius}
Let \(\widehat\mu_0\deq\delta_{x_{\rm ref}}\otimes\cN(0,\Id)\).  For every
\(q\ge2\),
\begin{equation*}
 W_{q,M_\kappa}(\widehat\mu_0,\Pi_\eta)
 \le C\sqrt{\kappa\,(d+q)}\,.
\end{equation*}
\end{lemma}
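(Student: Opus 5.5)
We bound $W_{q,M_\kappa}(\widehat\mu_0,\Pi_\eta)$ by comparing both laws with a single explicit coupling. Since $M_\kappa\preceq 2\Id_{2d}$ (its $2\times2$ block has entries at most $1$ and trace at most $2$), it suffices to bound the Euclidean $W_q$ distance up to a factor $\sqrt2$. The momentum marginals of $\widehat\mu_0$ and $\Pi_\eta$ agree, both being $\cN(0,\Id)$. We therefore couple the momenta identically, which reduces the problem to bounding $W_q(\delta_{x_{\rm ref}},\pi_\eta)=(\E_{\pi_\eta}\norm{Y-x_{\rm ref}}^q)^{1/q}$. Writing $Y=X+\sqrt\eta\,G$ with $X\sim\pi$ and $G\sim\cN(0,\Id)$ independent, Minkowski's inequality gives
\[
 \norm{Y-x_{\rm ref}}_{L^q}\le \norm{X-x^\star}_{L^q}+\norm{x^\star-x_{\rm ref}}+\sqrt\eta\,\norm{G}_{L^q}\,,
\]
where $x^\star$ denotes the minimizer of $V$.

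We bound the three terms in turn, using the normalization $\beta=1$, so that $\alpha=1/\kappa$ and \eqref{eq:reference} reads $\norm{\grad V(x_{\rm ref})}\le\sqrt{d/\kappa}$.

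First, $\kappa^{-1}$-strong convexity of $V$ gives $\norm{x_{\rm ref}-x^\star}\le\kappa\,\norm{\grad V(x_{\rm ref})}\le\sqrt{\kappa d}$.

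Second, $G$ satisfies the standard moment bound $\norm{\norm G}_{L^q}\le\sqrt d+C\sqrt q$, which follows from Gaussian concentration of the $1$-Lipschitz function $\norm\cdot$. Since $\eta\le1$, this term is at most $C\sqrt{d+q}$.

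Third, $\pi$ is $\kappa^{-1}$-strongly log-concave, so it satisfies a log-Sobolev inequality with constant $\kappa$ and hence has sub-Gaussian Lipschitz concentration. Applying this to $\norm{X-x^\star}$ gives $\norm{\norm{X-x^\star}}_{L^q}\le\E\norm{X-x^\star}+C\sqrt{\kappa q}$. For the mean, the standard bound $\E_\pi\norm{X-x^\star}^2\le d\kappa$ holds: integration by parts gives $\E\ip{\grad V(X)}{X-x^\star}=d$, and strong convexity gives $\ip{\grad V(x)}{x-x^\star}\ge\kappa^{-1}\norm{x-x^\star}^2$. Hence this term is at most $C\sqrt{\kappa(d+q)}$.

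Summing the three bounds, and using $\kappa\ge1$ to absorb the second term, yields $W_{q,M_\kappa}(\widehat\mu_0,\Pi_\eta)\le C\sqrt{\kappa(d+q)}$. No step is a real obstacle. The only point needing care is the $q$-dependence in the third bound, which requires concentration rather than merely the second moment; we take it from the log-Sobolev inequality (Herbst argument) for strongly log-concave measures.
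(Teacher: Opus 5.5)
Your proposal is correct and follows essentially the same route as the paper. Both arguments couple the momenta identically, apply the triangle inequality through the minimizer $x^\star$, use strong monotonicity to get $\norm{x_{\rm ref}-x^\star}\le\sqrt{\kappa d}$, combine strongly log-concave concentration with the integration-by-parts bound $\E_\pi\norm{X-x^\star}^2\le\kappa d$, bound the Gaussian moment using $\eta\le1\le\kappa$, and finish with $M_\kappa\preceq C\Id$.
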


\begin{proof}
Let \(x_\star\) minimize \(V\).  Strong monotonicity and the assumption
on \(\norm{\grad V(x_{\rm ref})}\) give
\[
 \norm{x_{\rm ref}-x_\star}
 \le\kappa\,\norm{\grad V(x_{\rm ref})}
 \le\sqrt{\kappa d}\,.
\]
If \(X\sim\pi\), concentration from strong log-concavity
\cite[Chapter~5]{BGL14} and integration by parts give
\(\norm{X-x_\star}_{L^q}\le C\sqrt{\kappa\,(d+q)}\).  A draw from $\pi_\eta$ is given by \(X+\sqrt\eta\, G\), and
\(\sqrt\eta\,\norm G_{L^q}\le C\sqrt{\kappa\,(d+q)}\) because
\(\eta\le1\le\kappa\).  Coupling the initial and stationary momenta
identically, the claim follows from the triangle inequality and \(M_\kappa\preceq C\Id\).
\end{proof}

The following lemma handles the terminal step.

\begin{lemma}[Terminal RGO step in \(W_2\)]
\label{lem:w2-rgo-lift}
Suppose \(\kappa^{-1}\Id\preceq\Hess V\preceq\Id\).  For every
\(\eta>0\), write
\(\pi_\eta\deq\pi*\cN(0,\eta\Id)\) and
\(R_\eta(y,\dd x)\deq R_{\eta,y}(\dd x)\).  Then, for every
\(y,y'\in\R^d\),
\begin{equation*}
 W_2(R_{\eta,y},R_{\eta,y'})
 \le\frac{\kappa}{\kappa+\eta}\,\norm{y-y'}
 \le\norm{y-y'}\,.
\end{equation*}
Moreover, \(\pi_\eta R_\eta=\pi\).  Consequently, for any center laws
\(\mu,\mu'\),
\begin{equation*}
 W_2(\mu R_\eta,\mu'R_\eta)
 \le W_2(\mu,\mu')\,.
\end{equation*}
\end{lemma}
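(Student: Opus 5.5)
The plan is to prove the contraction of the RGO kernel via a synchronous (shared-noise) coupling of Langevin-type dynamics targeting the two RGOs. The identity \(\pi_\eta R_\eta=\pi\) is separate and elementary. The final \(W_2\) statement then follows by gluing couplings.

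\emph{Step 1 (contraction).} For fixed \(y\), the RGO potential is \(U_y(x)\deq V(x)+\norm{x-y}^2/(2\eta)\), which is \(\lambda\deq\kappa^{-1}+\eta^{-1}\) strongly convex. Note that
\[
 \grad U_y(x)-\grad U_{y'}(x)=-\frac{y-y'}{\eta}\,.
\]
Run overdamped Langevin diffusions \(X_t,X'_t\) targeting \(R_{\eta,y}\) and \(R_{\eta,y'}\), driven by the same Brownian motion. Write \(\Delta_t\deq X_t-X_t'\). Then
\[
 \frac{\dd}{\dd t}\Delta_t=-\bigl(\grad V(X_t)-\grad V(X_t')\bigr)-\eta^{-1}\Delta_t+\eta^{-1}(y-y')\,.
\]
Strong monotonicity (\(\grad V\) is \(\kappa^{-1}\)-strongly monotone) gives
\[
 \frac12\,\frac{\dd}{\dd t}\norm{\Delta_t}^2\le-\lambda\norm{\Delta_t}^2+\eta^{-1}\norm{\Delta_t}\,\norm{y-y'}\,,
 \qquad\text{so}\qquad
 \frac{\dd}{\dd t}\norm{\Delta_t}\le-\lambda\norm{\Delta_t}+\eta^{-1}\norm{y-y'}\,.
\]
By Grönwall, \(\limsup_t\norm{\Delta_t}\le\norm{y-y'}/(\eta\lambda)\), and
\[
 \frac{1}{\eta\lambda}=\frac{\kappa}{\kappa+\eta}\,.
\]
Since both diffusions converge in \(W_2\) to their targets, lower semicontinuity of \(W_2\) gives the claimed bound. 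An equivalent and cleaner route avoids diffusions: treat \(R_{\eta,y}\) as an exponential tilt in \(y\), and bound \(\partial_y\) of the optimal map. But the coupling argument is robust and essentially routine, and it is the one I would write.

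\emph{Step 2 (\(\pi_\eta R_\eta=\pi\)).} Take the joint law \(\propto e^{-V(x)-\norm{x-y}^2/(2\eta)}\). Its \(y\)-marginal is \(\pi_\eta\), and its \(x\)-conditional given \(y\) is \(R_{\eta,y}\). Hence its \(x\)-marginal is both \(\pi_\eta R_\eta\) and \(\pi\).

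\emph{Step 3 (lifting).} Take an optimal coupling \((Y,Y')\) of \(\mu,\mu'\). Conditionally on \((Y,Y')\), use the optimal coupling of \(R_{\eta,Y},R_{\eta,Y'}\), which can be chosen measurably by the standard measurable-selection result for optimal plans. Then Step 1 gives
\[
 W_2^2(\mu R_\eta,\mu'R_\eta)\le\E\,W_2^2(R_{\eta,Y},R_{\eta,Y'})\le\E\norm{Y-Y'}^2=W_2^2(\mu,\mu')\,.
\]

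The only mildly delicate point is the limit argument in Step 1: justifying convergence and finite second moments. Both follow from strong log-concavity of the two RGOs.
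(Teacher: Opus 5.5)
Your proof is correct. The paper gives no argument of its own for this lemma: it only cites the invariance and contraction of the restricted Gaussian oracle as standard facts from \cite{LST21,CCSW22}. Your write-up therefore replaces that citation with a self-contained proof.

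Your Step~1 is the classical perturbation lemma for strongly log-concave measures. $U_y$ and $U_{y'}$ are both $(\kappa^{-1}+\eta^{-1})$-strongly convex, and their gradients differ by the constant vector $(y-y')/\eta$. Synchronous Langevin coupling then gives $W_2(R_{\eta,y},R_{\eta,y'})\le \norm{y-y'}/(1+\eta/\kappa)$. Only the lower curvature bound enters this constant; the upper bound just makes the SDEs well posed.

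Another standard route uses the time-reversed heat flow representation of the RGO from \cite{CCSW22}. There, $R_{\eta,y}$ is the time-$\eta$ law of the reverse SDE $\dd\bar X_t=\grad\log\pi_{\eta-t}(\bar X_t)\,\dd t+\dd\bar B_t$ started at $y$. Since $\pi_{\eta-t}$ is $(\kappa+\eta-t)^{-1}$-strongly log-concave, synchronous coupling over the finite horizon $[0,\eta]$ contracts by $\exp\bigl(-\int_0^\eta\frac{\dd t}{\kappa+\eta-t}\bigr)=\frac{\kappa}{\kappa+\eta}$. That route gives the same constant with no $t\to\infty$ limit, but it requires justifying the time reversal and the conditioning on the starting point. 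Yours needs only elementary facts about Langevin diffusions plus a limit. Steps~2 and~3 (the joint density $e^{-V(x)-\norm{x-y}^2/(2\eta)}$, and gluing via a measurable selection of optimal plans) are exactly the standard arguments.

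Two cosmetic remarks:
\begin{itemize}
\item Since you divide by $\norm{\Delta_t}$, read the bound on $\frac{\dd}{\dd t}\norm{\Delta_t}$ almost everywhere, or run Gr\"onwall on $\norm{\Delta_t}^2$ instead.
\item You can skip the convergence step by starting the two diffusions from $R_{\eta,y}$ and $R_{\eta,y'}$, optimally coupled, and using stationarity. This gives $W_2\,(1-e^{-\lambda t})\le(1-e^{-\lambda t})\,\norm{y-y'}/(\eta\lambda)$, where $W_2$ is finite by strong log-concavity.
\end{itemize}
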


\begin{proof}
These are the standard invariance and contraction properties of the
restricted Gaussian oracle; see \cite{LST21,CCSW22}.
\end{proof}

The proof of~\cref{thm:w2-main} is completed by
plugging the bias, variance, and
discretization error bounds from \cref{sec:rhmc-proof-local-bounds} into
\eqref{eq:rhmc-proof-global-recurrence-l2} and performing some bookkeeping.

\begin{proof}[Proof of \cref{thm:w2-main}]
    Throughout, we assume access to exact proximal queries; this assumption can be removed via the arguments in in \cref{app:gradient-only-prox}. After the change of variables \(z=\sqrt\beta\,x\), it suffices to work
under \(\beta=1\) and prove \(W_2\le\sqrt\kappa\,\eps\).
Choose \begin{align*}
 \mathfrak L&\deq \log\frac{C\kappa d}{\eps}\,,
 \qquad
 J\deq \lceil C\mathfrak L^2\rceil\,,
\qquad
 h\deq \frac c{\mathfrak L}
 \min\Bigl\{\frac1\kappa,
   \frac{\eps^{1/3}}{\kappa^{1/6}d^{1/6}}\Bigr\}\,,
 \qquad
 N\deq \Bigl\lceil\frac{C\kappa\mathfrak L}{h}\Bigr\rceil\,,
\end{align*}
where \(c,C>0\) are universal constants; the following assertions hold after adjusting these constants appropriately.
Here, $\mathfrak L$ is introduced to streamline the bookkeeping of logarithmic terms.

Let \(\theta>0\),
to be chosen below, and choose \(\eta>0\) so that
\(h^2/\eta=\theta\).
The explicit choice of \(B_{J,2}\) in
\cref{prop:rhmc-proof-deterministic-defect-l2} gives
\begin{align}
 B_{J,2}\theta^{J/2}
 &=C_0^{J+1}\sqrt{J!}\,\theta^{J/2}
 \le C_0^{J+1}(J\theta)^{J/2}
 \le e^{-c_1J}
 \le h^4\,,
 \label{eq:w2-interpolation-small}
\end{align}
provided that \(\theta\le c\mathfrak L^{-2}\). We henceforth choose \(\theta\deq c/\mathfrak L^2\).
Then, \(J\ge2\) and
\(h\le c/\kappa\),
so the restrictions in
\cref{prop:rhmc-proof-deterministic-defect-l2,prop:rhmc-proof-contraction}
hold, and moreover $\eta \le 1$.

Take square roots in \eqref{eq:rhmc-proof-global-recurrence-l2} and substitute
the local bounds from
\cref{prop:rhmc-proof-random-error-l2,prop:rhmc-proof-conditional-mean-error,prop:rhmc-proof-deterministic-defect-l2}.
Together with \cref{lem:rhmc-proof-initial-radius}, this gives
\begin{align*}
 W_{2,M_\kappa}(\widehat\mu_N^{\rm ph},\Pi_\eta)
 \le{}&C\sqrt{\kappa d}\,e^{-cNh/\kappa}
 +C\sqrt{\kappa d}\,
 (\eta h/J+\eta^{1/2} h^{5/2}/\sqrt J)
 \nonumber\\
 &\qquad{} +C\kappa\sqrt d\,
 (\eta^{3/2}+ \eta^{1/2} h^2/\sqrt J
       +h^4)\,,
\end{align*}
where \(\widehat\mu_N^{\rm ph}\) is the final phase-space law.  The Picard iteration error
term is included in the last monomial by
\eqref{eq:w2-interpolation-small}. Since
\(\eta=h^2/\theta\), the preceding bounds on
\(h,\eta\) show that the sum of the last five terms is at most \(\sqrt\kappa\,\eps/4\) for $c$ appropriately small.
This bookkeeping is uniform over \(0<\eps\le1\). The definition of \(N\) makes the initial
term at most \(\sqrt\kappa\,\eps/4\).

By \cref{lem:w2-rgo-lift},
\[
 W_2(\widehat \mu_NR_\eta,\pi)
 =W_2(\widehat \mu_NR_\eta,\pi_\eta R_\eta)
 \le W_2(\widehat \mu_N,\pi_\eta)
 \le CW_{2,M_\kappa}(\widehat\mu_N^{\rm ph},\Pi_\eta)
 \le\frac{\sqrt\kappa\,\eps}2\,,
\]
after adjusting universal constants.  Let \(\widehat R_{\eta,y}\) be the
conditional output law of the terminal FORS routine in
\cref{alg:w2-complete}, and set
\[
 \delta_{\rm term}
 \deq
 \min\Bigl\{\frac12,\frac{\kappa\eps^2}{8\eta}\Bigr\}\,.
\]
By \cref{thm:fors-implementation} with \(q=2\) and
\(\varepsilon^2\deq\min\{1/4,\delta_{\rm term}\}\), uniformly in \(y\),
we can ensure that
\[
 \KL(\widehat R_{\eta,y} \mmid R_{\eta,y})
 \leq\Ren_2(\widehat R_{\eta,y} \mmid R_{\eta,y})
 \leq\delta_{\rm term}\,.
\]
The RGO potential is
\((\kappa^{-1}+\eta^{-1})\)-strongly convex.
Talagrand's inequality therefore gives
\begin{equation*}
 W_2^2(\widehat R_{\eta,y},R_{\eta,y})
 \le \frac{2\delta_{\rm term}}{\kappa^{-1}+\eta^{-1}}
 \le 2\eta\delta_{\rm term}
 \le\frac{\kappa\eps^2}{4}\,.
\end{equation*}
The triangle inequality then yields
\(W_2(\widehat\mu_N\widehat R_\eta,\pi)\le\sqrt\kappa\,\eps\).

Each phase uses \(2J\) gradients and \(2J\) proximal calls. Substitution
of \(N\) gives
\begin{equation*}
 2JN
 \le C\,\Bigl\{\kappa^2
       +\frac{\kappa^{7/6}d^{1/6}}{\eps^{1/3}}\Bigr\}\,
       \mathfrak L^4
\end{equation*}
calls of each type.  By \cref{thm:fors-implementation}, the terminal routine
has expected cost at most $C\mathfrak L^{5/3}\,
(1+\eps^{4/9} d^{1/9}/\kappa^{2/9})$.
Any fixed power of
\(\mathfrak L\) is dominated by a positive power of
\(\kappa d/\eps\).  Hence this cost is dominated by the Picard
HMC bound for \(\kappa,d\ge1\) and \(0<\eps\le1\).  This proves the theorem.
\end{proof}

\section{\texorpdfstring{\(W_q\)}{W\_q} analysis}
\label{sec:higher-moment-analysis}

For our warm start application, we need to upgrade the $W_2$ guarantee to a $W_q$ guarantee, for $q\ge 2$.
We organize the proof in parallel with \cref{sec:w2-proof}, and along the way, we describe the new ingredients needed for this part of the analysis.

\subsection{\texorpdfstring{\(W_q\)}{W\_q} guarantee}
\label{sec:rhmc-higher-moment-statement}

\begin{theorem}[$W_q$ guarantee]
\label{thm:rhmc-module}
Fix any \(c_0>0\).  There is a constant \(C=C(c_0)>0\) with the following
property.  Suppose
\(\kappa\ge1\), \(V\in C^2(\R^d)\), and
\begin{equation*}
 \kappa^{-1}\Id\preceq\Hess V\preceq\Id\,,
 \qquad
 \norm{\grad V(x_{\rm ref})}\le\sqrt{d/\kappa}\,.
\end{equation*}
For every \(q\ge2\) and \(0<\eps\le1\), put
\begin{equation*}
 \mathfrak L_q\deq q+\log\frac{e\kappa d}{\eps}\,.
\end{equation*}
Then, there is a smoothing variance \(0<\eta\le c_0\) for which a gradient-only
implementation of Picard HMC returns a sample whose output law \(\widehat\pi_\eta\) satisfies
\begin{equation*}
 W_q(\widehat\pi_\eta,\pi_\eta)\le\eps\,.
\end{equation*}
Its expected number of gradient queries is at most
\begin{equation*}
 C\,\Bigl\{
   \kappa^2
   +\frac{\kappa^{4/3}\,(d+q)^{1/6}}{\eps^{1/3}}
 \Bigr\}\,
 \mathfrak L_q^{9/2}\,.
\end{equation*}
\end{theorem}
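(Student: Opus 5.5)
We mirror the structure of the $W_2$ proof, replacing every $L^2$ estimate by its $L^q$ analogue and tracking $q$-dependence polynomially.

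\textbf{Step 1: an $L^q$ local error framework.} We prove an $L^q$ version of \cref{lem:rhmc-proof-local-error} (the centered kernel perturbation lemma). Given contraction and increment couplings in $L^q$ with constants $\rho,\chi$, a strong error $\mathcal E_{\rm strong}$ in $W_q$, and a weak (conditional mean) error $\mathcal E_{\rm weak}$, we aim for the recursion
\[
 W_{q,M_\kappa}(\mu\widehat K,\nu K)\le \rho\, W_{q,M_\kappa}(\mu,\nu)+C\sqrt q\,\mathcal E_{\rm strong}+\frac{C\,(\mathcal E_{\rm weak}+\chi\mathcal E_{\rm strong})}{1-\rho}\,.
\]
The centered parts accumulate over $N\asymp\kappa/h$ phases like a martingale. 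Burkholder--Rosenthal applied along the synchronous coupling should give a $\sqrt{q\kappa/h}$ prefactor, instead of the $\kappa/h$ prefactor that multiplies the bias and discretization terms. Because \cref{prop:rhmc-proof-contraction} and \cref{lem:rhmc-proof-increment-coupling} are pointwise (almost sure) statements, they transfer to $L^q$ for free. I expect this martingale bookkeeping to be the main obstacle. The difficulty is that we only control the strong error in $W_q$, not pathwise, so the martingale increments must come from optimal couplings glued along the chain. I would first build the coupling phase by phase: at each step, take the synchronous coupling of $K$ and an optimal $W_q$ coupling of $\delta_z\widehat K$ with $\delta_z\bar K$, then apply Burkholder to the centered differences $\widehat\Phi-\bar\Phi$.

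\textbf{Step 2: $L^q$ local errors.}
\begin{itemize}
\item \emph{Bias.} The bias bound is deterministic, so \cref{prop:rhmc-proof-conditional-mean-error} carries over. Its only stochastic input is the node fluctuation, which by the sub-Gaussian bound \eqref{eq:rhmc-proof-stochastic-gradient-subg} gains a factor $\sqrt{(d+q)/d}$.
\item \emph{Variance.} We need a $W_q$ version of \cref{lem:rhmc-proof-random-map-l2}. I would rerun the heat flow interpolation, bounding the velocity field in $L^q$ instead of $L^2$. Gaussian hypercontractivity and Poincar\'e-type estimates replace $\sqrt m$ by $\sqrt{m+q}$, up to $\mathrm{poly}(q)$ factors.
\item \emph{Discretization.} Redo \cref{lem:rhmc-proof-iterated-derivative-l2} in $L^q(\Pi_\eta)$. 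The Gaussian polynomial estimates become hypercontractive, costing $(q-1)^{r/2}$ for momentum degree $r$. The Witten-type identity \eqref{eq:rhmc-proof-witten-l2} has to be replaced by a moment bound for $\delta_{\pi_\eta}W$; here I would use strong log-concavity of $\pi_\eta$ (\cref{lem:rhmc-proof-smoothed-curvature}), which yields $L^q$ bounds losing a factor $\sqrt q$ per application. This only inflates $B_{J,q}$ by $q^{O(J)}$, which the choice $\theta\asymp\mathfrak L_q^{-2}$ absorbs. The Picard term becomes $\sqrt{d+q}\,h^5$.
\end{itemize}

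\textbf{Step 3: bookkeeping.} Use \cref{lem:rhmc-proof-initial-radius}, already stated for general $q$. Choose $h\asymp\mathfrak L_q^{-1}\min\{\kappa^{-1},\eps^{1/3}/(\kappa^{1/3}(d+q)^{1/6})\}$. The exponent of $\kappa$ worsens here, $4/3$ versus $7/6$, because we now target $W_q\le\eps$ in unscaled units rather than $\sqrt\kappa\,\eps$. Then set $\eta=h^2\mathfrak L_q^2/c$, which makes $\eta\le c_0$ after shrinking $c$. Take $J\asymp\mathfrak L_q$ up to logs, enough to make \eqref{eq:w2-interpolation-small} hold, and $N\asymp\kappa\mathfrak L_q/h$. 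The cost is $JN$ times the overhead of the gradient-only prox from \cref{app:gradient-only-prox}, which is polylogarithmic per call. Choosing $J$ and the prox accuracy carefully yields the stated $\mathfrak L_q^{9/2}$ power. No terminal RGO step is needed, since the guarantee is for $\pi_\eta$.
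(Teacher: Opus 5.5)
\textbf{There is a genuine gap at the central step.} The recursion you display,
$W_q(\mu\widehat K,\nu K)\le\rho\,W_q(\mu,\nu)+C\sqrt q\,\mathcal E_{\rm strong}+C(\mathcal E_{\rm weak}+\chi\mathcal E_{\rm strong})/(1-\rho)$,
would not suffice even if proved. Iterating it with $1-\rho\asymp h/\kappa$ gives $(\kappa/h)\sqrt q\,\mathcal E_{\rm strong}+(\kappa/h)^2\mathcal E_{\rm weak}$. Compared with \eqref{eq:rhmc-proof-global-recurrence}, this loses $\sqrt{\kappa/h}$ on the variance and $\kappa/h$ on the bias. At the scales $\Evar\asymp\sqrt d\,h^{7/2}$ and $\Ebias\asymp\sqrt d\,h^4$ (with $\eta\asymp h^2$), it yields only $d^{1/4}$ dimension dependence.

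You recognize that the centered errors must accumulate like a martingale, but you leave the Burkholder--Rosenthal argument open. It is not straightforward: each local error is afterwards pushed through the random nonlinear maps $\Phi_z(\zeta)$, so the terminal error is not a sum of martingale differences.

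The missing idea is the $2$-uniform smoothness of $L^q$ (\cref{lem:rhmc-proof-lr-tools}): for centered $Z$ and deterministic $a$, $\norm{a+Z}_{L^q}^2\le\norm a^2+(q-1)\,\norm Z_{L^q}^2$.
\begin{itemize}
\item Apply it conditionally on the current pair of states, with $a$ the conditional mean of the coupled difference. This replaces ``expanding the square'' in the $L^2$ local error theorem and gives the squared one-step recursion of \cref{lem:rhmc-proof-centered-kernel-perturbation}, in which centered errors enter only through their squares.
\item After iteration this yields the $\sqrt{\kappa q/h}$ and $\kappa/h$ prefactors.
\item The price is the condition $4\,(q-1)\,\chi_q^2\le\rho\,(1-\rho)$, i.e.\ $h\le c/(\kappa q)$. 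You never identify this, although your step size happens to satisfy it.
\end{itemize}

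\textbf{The discretization step rests on the wrong ingredient.} What is needed is an $L^q(\Pi_\eta)$ bound on $\delta_{\pi_\eta}W$ with constants free of $d$ and $\kappa$.
\begin{itemize}
\item Strong log-concavity of $\pi_\eta$ cannot give this. Its constant is $1/(\kappa+\eta)$, so inequalities derived from it cost $\sqrt\kappa$ per application, hence $\kappa^{J/2}$ over $J\asymp\mathfrak L_q$ iterations. Even \eqref{eq:rhmc-proof-witten-l2} uses the upper bound $\Hess V_\eta\preceq\Id$, not the lower one.
\item The paper instead uses the convolution structure. Writing $Y=X+\sqrt\eta\,G$ and $g_\eta(Y)=\eta^{-1/2}\,\E[G\mid Y]$, one has
$\delta_{\pi_\eta}W(Y,P)=\E[\eta^{-1/2}\,\delta_\gamma\{W(X+\sqrt\eta\,\cdot,P)\}(G)\mid Y,P]$.
\item The $L^q$ Gaussian divergence inequality \eqref{eq:rhmc-proof-gaussian-div-q} then gives $\norm{\delta_{\pi_\eta}W}_{L^q}\le C\sqrt{q/\eta}\,\norm W_{L^q}+Cq\,\norm{DW}_{L^q}$ (\cref{lem:rhmc-proof-convolution-divergence}).
\end{itemize}
The same divergence inequality is also what the heat-flow argument needs to bound the velocity field in $L^q$ for the variance term. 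Hypercontractivity only controls polynomial chaoses and does not do this job.

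\textbf{Your parameter choice $\theta=h^2/\eta\asymp\mathfrak L_q^{-2}$ does not absorb $B_{J,q}=C_0^{J+1}q^J\sqrt{J!}$.} With $J\asymp\mathfrak L_q$ one gets $B_{J,q}\theta^{J/2}\approx(Cq/\sqrt{\mathfrak L_q})^J$, which is not small once $q\gtrsim\sqrt{\mathfrak L_q}$. The paper takes $\eta\asymp\mathfrak L_q^3h^2$. The resulting bias contribution $\frac\kappa h\,\sqrt d\,\eta^{3/2}h\asymp\kappa\sqrt d\,\mathfrak L_q^{9/2}h^3$ forces $h\asymp\mathfrak L_q^{-3/2}\min\{\kappa^{-1},(\eps/(\kappa\sqrt{d+q}))^{1/3}\}$. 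Combined with the $\kappa\mathfrak L_q^3/h$ query count, this produces the $\mathfrak L_q^{9/2}$ in the statement; your $h\asymp\mathfrak L_q^{-1}\min\{\cdot\}$ is not justified.
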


Note that unlike~\cref{thm:w2-main}, this guarantee targets the smoothed distribution $\pi_\eta$; that is, we omit the terminal FORS step.
This is because the terminal FORS guarantee does not immediately hold in $W_q$; moreover, the guarantee for sampling from $\pi_\eta$ will be enough for our eventual warm start application.
Indeed, the guarantee is intended to implement the smooth sampler \ref{ass:smoothed-wp} in \cref{sec:recursive-warm-start}.

\subsection{\texorpdfstring{$W_q$}{W\_q} local error analysis}
\label{sec:rhmc-higher-moment-local-error}

In the \(W_2\) proof, the local error framework (\cref{lem:rhmc-proof-local-error}) separates a weak error from a
strong error by expanding a square.  For \(q>2\), we replace
it with the following smoothness inequality.

\begin{lemma}[$L^q$ smoothness]
\label{lem:rhmc-proof-lr-tools}
Let \(q\ge2\).  If \(Z\) is a centered random vector in a Hilbert space and
\(a\) is deterministic, then
\begin{equation}
 (\E\norm{a+Z}^q)^{2/q}
 \le\norm a^2+(q-1)\,(\E\norm Z^q)^{2/q}\,.
 \label{eq:rhmc-proof-hilbert-smoothness}
\end{equation}
\end{lemma}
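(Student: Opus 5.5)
The plan is to prove \eqref{eq:rhmc-proof-hilbert-smoothness} via the classical $2$-uniform smoothness of $L^q$ spaces of Hilbert-valued functions. It reduces to a one-variable interpolation argument. Set $\phi(t)\deq\E\norm{a+tZ}^q$ for $t\in[0,1]$ and $\psi(t)\deq\phi(t)^{2/q}$. We aim to show
\[
 \psi(1)\le\psi(0)+(q-1)\,(\E\norm Z^q)^{2/q}
 =\norm a^2+(q-1)\,(\E\norm Z^q)^{2/q}\,.
\]
If $\E\norm Z^q=\infty$ there is nothing to prove, so assume the $q$-th moment is finite. Differentiation under the expectation is then justified by dominated convergence, using $\norm{a+tZ}^{q-1}\norm Z\le C(\norm a^q+\norm Z^q)$.

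\textbf{First derivative.} Compute
\[
 \psi'(t)=2\,\phi(t)^{2/q-1}\,\E\bigl[\norm{a+tZ}^{q-2}\ip{a+tZ}{Z}\bigr]\,.
\]
At $t=0$ this equals $2\norm a^{2-q}\norm a^{q-2}\ip a{\E Z}=0$ because $Z$ is centered. When $a=0$ the claim is trivial since $q-1\ge1$. So $\psi'(0)=0$, and by Taylor's theorem it suffices to show $\psi''(t)\le 2(q-1)(\E\norm Z^q)^{2/q}$ for $t\in(0,1)$. The identity $\psi(1)=\psi(0)+\int_0^1(1-t)\psi''(t)\,\dd t$ then gives the result.

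\textbf{Second derivative.} Write $N\deq\norm{a+tZ}$. We have
\[
 \frac{\dd^2}{\dd t^2}N^q=q N^{q-2}\norm Z^2+q(q-2)N^{q-4}\ip{a+tZ}Z^2\le q(q-1)N^{q-2}\norm Z^2\,.
\]
Hence $\phi''\le q(q-1)\E[N^{q-2}\norm Z^2]$. Moreover
\[
 \psi''=\frac2q\,\phi^{2/q-1}\phi''+\frac2q\Bigl(\frac2q-1\Bigr)\phi^{2/q-2}(\phi')^2\,.
\]
Since $q\ge2$, the second term is nonpositive. Hölder's inequality with exponents $q/(q-2)$ and $q/2$ gives
\[
 \E[N^{q-2}\norm Z^2]\le\phi^{1-2/q}\,(\E\norm Z^q)^{2/q}\,.
\]
Combining, $\psi''\le 2(q-1)(\E\norm Z^q)^{2/q}$, which is what we need.

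\textbf{Main obstacle.} The only delicate point is regularity where $N$ vanishes, which matters for $2\le q<4$ because of the factor $N^{q-4}$. We would handle it by replacing $N$ with $\sqrt{N^2+\epsilon^2}$. Every step above survives with the same constants; in particular the bound on the second derivative uses only $\ip{a+tZ}Z^2\le N^2\norm Z^2\le(N^2+\epsilon^2)\norm Z^2$. We then let $\epsilon\to0$ by monotone or dominated convergence. Alternatively, we could cite the known sharp $2$-uniform smoothness constant $q-1$ of $L^q$ (Ball--Carlen--Lieb), applied with $\pm Z$ and averaged using centering. The direct calculus argument is self-contained, so we would present that one.
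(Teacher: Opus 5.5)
Your proposal is correct and is essentially the paper's own direct argument. You differentiate $t\mapsto(\E\norm{a+tZ}^q)^{2/q}$, use centeredness to kill the first derivative at $t=0$, and use H\"older's inequality to bound the second derivative by $2(q-1)\,(\E\norm Z^q)^{2/q}$ (the paper also points to the smoothness of $L^q$ as an alternative).

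Your $\epsilon$-regularization is harmless but not needed. The map $x\mapsto\norm x^q$ is already $C^2$ for $q\ge2$, and the term $N^{q-4}\ip{a+tZ}{Z}^2$ is dominated by $N^{q-2}\norm Z^2$.
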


\begin{proof}
This follows from the smoothness of $L^q$; see \cite{P94}.  It also
follows directly by differentiating
\(t\mapsto(\E\norm{a+tZ}^q)^{2/q}\): centeredness gives zero derivative at
the origin and H\"older's inequality bounds the second derivative by
\(2\,(q-1)\,\norm Z_{L^q}^2\).
\end{proof}

\begin{lemma}[Local error framework in \(W_q\)]
\label{lem:rhmc-proof-centered-kernel-perturbation}
Let \(P,\widehat P\) be Markov kernels on \(\R^m\), and let \(q\ge2\).
Suppose that, for every \(x,y\in\R^m\), there is a coupling
\(X\sim\delta_xP\), \(Y\sim\delta_yP\) satisfying
\begin{equation*}
 \norm{\E(X-Y)}
 \le\rho\,\norm{x-y}\,,
 \qquad
 \norm{(X-Y)-\E(X-Y)}_{L^q}
 \le\chi_q\,\norm{x-y}\,.
\end{equation*}
Suppose also that, writing \(\widehat X\sim\delta_x\widehat P\) and
\(X\sim\delta_xP\),
\begin{equation*}
 W_q(\delta_x\widehat P,\delta_xP)
 \le\mathcal E_{\rm strong}\,,
 \qquad
 \norm{\E\widehat X-\E X}
 \le\mathcal E_{\rm weak}\,.
\end{equation*}
Assume $0<\rho<1$ and $4\,(q-1)\,\chi_q^2\le\rho\,(1-\rho)$.
Then, for all probability laws \(\mu,\nu\) on \(\R^m\),
\begin{equation*}
 W_q^2(\mu\widehat P,\nu P)
 \le\rho W_q^2(\mu,\nu)
    +\frac{16\,(q-1)}{1+\rho}\,\mathcal E_{\rm strong}^2
    +\frac2{1-\rho}\,\mathcal E_{\rm weak}^2\,.
\end{equation*}
\end{lemma}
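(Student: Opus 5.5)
The plan is to build one explicit coupling of $\mu\widehat P$ with $\nu P$, bound its $q$-th moment conditionally on the starting points using \cref{lem:rhmc-proof-lr-tools}, and then average over the starting points with Minkowski's inequality in $L^{q/2}$.

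\textbf{Coupling.} Let $(x,y)$ be an optimal $W_q$ coupling of $(\mu,\nu)$. Conditionally on $(x,y)$, take three variables:
\begin{itemize}
\item $X\sim\delta_xP$ and $Y\sim\delta_yP$, coupled as in the hypothesis;
\item $\widehat X\sim\delta_x\widehat P$, coupled with $X$ optimally for $W_q(\delta_x\widehat P,\delta_xP)$.
\end{itemize}
These are combined by the gluing lemma. Measurable dependence of the kernels on $(x,y)$ is handled by a measurable selection of optimal couplings, which is standard. Then $\widehat X\sim\mu\widehat P$ and $Y\sim\nu P$, so $W_q^q(\mu\widehat P,\nu P)\le\E\norm{\widehat X-Y}^q$.

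\textbf{Conditional bound.} Fix $(x,y)$ and write $\widehat X-Y=a+Z$, where
\[
 a\deq \E[\widehat X-X]+\E[X-Y]\,,\qquad Z\deq\{(\widehat X-X)-\E(\widehat X-X)\}+\{(X-Y)-\E(X-Y)\}\,.
\]
Here $a$ is deterministic given $(x,y)$, and $Z$ is conditionally centered. By hypothesis, $\norm a\le\rho\norm{x-y}+\mathcal E_{\rm weak}$. Centering at most doubles an $L^q$ norm, so
\[
 \norm Z_{L^q}\le\chi_q\norm{x-y}+2\mathcal E_{\rm strong}\,.
\]
Applying \eqref{eq:rhmc-proof-hilbert-smoothness} conditionally gives
\[
 \bigl(\E[\norm{\widehat X-Y}^q\mid x,y]\bigr)^{2/q}\le\norm a^2+(q-1)\,\norm Z_{L^q}^2\,.
\]

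\textbf{Young's inequality.} With $\lambda\deq(1-\rho)/(2\rho)$,
\[
 \norm a^2\le(1+\lambda)\rho^2\norm{x-y}^2+(1+\lambda^{-1})\,\mathcal E_{\rm weak}^2=\tfrac{\rho(1+\rho)}2\,\norm{x-y}^2+\tfrac{1+\rho}{1-\rho}\,\mathcal E_{\rm weak}^2\,.
\]
Also
\[
 (q-1)\,\norm Z_{L^q}^2\le2\,(q-1)\,\chi_q^2\norm{x-y}^2+8\,(q-1)\,\mathcal E_{\rm strong}^2\,.
\]
The assumption $4(q-1)\chi_q^2\le\rho(1-\rho)$ makes the $\chi_q$ coefficient at most $\rho(1-\rho)/2$. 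Adding the two $\norm{x-y}^2$ coefficients gives $\rho(1+\rho)/2+\rho(1-\rho)/2=\rho$. The remaining constants satisfy $(1+\rho)/(1-\rho)\le2/(1-\rho)$ and $8(q-1)\le16(q-1)/(1+\rho)$. Hence, conditionally on $(x,y)$,
\[
 \bigl(\E[\norm{\widehat X-Y}^q\mid x,y]\bigr)^{2/q}\le\rho\norm{x-y}^2+K\,,\qquad K\deq\tfrac{16(q-1)}{1+\rho}\,\mathcal E_{\rm strong}^2+\tfrac2{1-\rho}\,\mathcal E_{\rm weak}^2\,.
\]

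\textbf{Averaging.} Raise both sides to the power $q/2$ and take expectations over $(x,y)$. Since $q/2\ge1$, the triangle inequality in $L^{q/2}$ gives
\[
 \bigl(\E(\rho\norm{x-y}^2+K)^{q/2}\bigr)^{2/q}\le\rho\,(\E\norm{x-y}^q)^{2/q}+K=\rho W_q^2(\mu,\nu)+K\,,
\]
which is the claim.

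The only delicate point is the first step, namely the measurable gluing that makes $(\widehat X,X,Y)$ jointly defined. The analytic core is the conditional application of \cref{lem:rhmc-proof-lr-tools}. That lemma is what lets the weak (mean) error enter with the factor $1/(1-\rho)$, while the fluctuations enter only through the factor $q-1$.
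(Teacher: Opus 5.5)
Your proof is correct and follows the paper's argument: the same glued coupling, the conditional $L^q$ smoothness inequality \eqref{eq:rhmc-proof-hilbert-smoothness}, Young's inequality, and the triangle inequality in $L^{q/2}$ over the starting points. The only difference is the order of the steps. You keep the weak error inside the deterministic mean $a$ and apply Young to $\norm a^2$. The paper instead first peels off $\E[\Delta\mid Z,Z']$ using Minkowski and Young, and only then applies smoothness. As a result, your intermediate constants $8(q-1)$ and $(1+\rho)/(1-\rho)$ are slightly sharper before you relax them to the stated ones.
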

\begin{proof}
Let \((Z,Z')\) be an optimal $W_q$ coupling of $\mu$ and $\nu$.  Given
\((Z,Z')\), let \(\widehat X\sim\widehat P(Z,\cdot)\) and \(X\sim P(Z,\cdot)\) be optimally coupled in $W_q$, and let $Y \sim P(Z',\cdot)$ be coupled to $X$ via the coupling in the theorem statement.
Write $\Delta \deq \widehat X - X$.
Conditionally on \((Z,Z')\), Jensen's inequality gives
\begin{equation*}
 \norm{\E[\Delta\mid Z,Z']}
 \le\mathcal E_{\rm weak}\,,
 \qquad
 \norm{\Delta-\E[\Delta\mid Z,Z']}_{L^q\mid Z,Z'}
 \le2\mathcal E_{\rm strong}\,.
\end{equation*}
By Minkowski's inequality and Young's inequality, for $\lambda > 0$,
\begin{align*}
    \norm{\widehat X - Y}_{L^q\mid Z,Z'}^2
    &\le (1+\lambda)\,\norm{\Delta - \E[\Delta \mid Z,Z'] + X-Y}_{L^q\mid Z,Z'}^2 + (1+\lambda^{-1})\,\norm{\E[\Delta \mid Z,Z']}_{L^q\mid Z,Z'}^2\,.
\end{align*}
For the first term, by~\eqref{eq:rhmc-proof-hilbert-smoothness},
\begin{align*}
    &\norm{\Delta-\E[\Delta\mid Z,Z']+X-Y}_{L^q\mid Z,Z'}^2 \\
    &\qquad \le \norm{\E[X-Y\mid Z,Z']}^2 + (q-1)\,\norm{\Delta-\E[\Delta \mid Z,Z']+X-Y -\E[X-Y\mid Z,Z']}_{L^q\mid Z,Z'}^2 \\
              &\qquad \le \rho^2\,\norm{Z-Z'}^2+
 (q-1)\,\bigl(2\mathcal E_{\rm strong}
             +\chi_q\,\norm{Z-Z'}\bigr)^2\,.
\end{align*}
Using \((u+v)^2\le2u^2+2v^2\) and the assumption on \(\chi_q\), the
right-hand side is at most
\[
 \frac{\rho\,(1+\rho)}2\,\norm{Z-Z'}^2
 +8\,(q-1)\,\mathcal E_{\rm strong}^2\,.
\]
Take the \(L^{q/2}\) norm over $(Z,Z')$.
Then, choosing $\lambda = \frac{1-\rho}{1+\rho}$,
\begin{align*}
    \norm{\widehat X-Y}_{L^q}^2
    &= \bigl\lVert \norm{\widehat X-Y}_{L^q\mid Z,Z'}^2 \bigr\rVert_{L^{q/2}} \\[0.25em]
    &\le \Bigl\lVert (1+\lambda)\,\Bigl( \frac{\rho\,(1+\rho)}{2}\,\norm{Z-Z'}^2 + 8\,(q-1)\,\mathcal E_{\rm strong}^2\Bigr) + (1+\lambda^{-1})\,\mathcal E_{\rm weak}^2 \Bigr\rVert_{L^{q/2}} \\[0.25em]
    &= \rho\,\norm{Z-Z'}_{L^q}^2 + \frac{16\,(q-1)}{1+\rho}\,\mathcal E_{\rm strong}^2 + \frac{2}{1-\rho}\,\mathcal E_{\rm weak}^2\,. \qedhere
\end{align*}
\end{proof}

Recall the kernels
\(\widehat K\), \(\bar K\), and \(K\) from \cref{sec:rhmc-proof}.  For
\(q\ge2\), define the centered variance and bias errors
\begin{align*}
 \Evar[q]
 &\deq\sup_{z\in\R^{2d}}
   W_q(\delta_z\widehat K,\delta_z\bar K)\,,
 &
 \Ebias
 &\deq\sup_{z,\zeta\in\R^{2d}}\,
   \norm{\bar\Phi_z(\zeta)-\Phi_z(\zeta)}\,.
\end{align*}
Take \(\widehat P\deq\widehat K\) and \(P\deq K\).  By the definition of
\(\bar\Phi_z\), the laws \(\delta_z\widehat K\) and
\(\delta_z\bar K\) have the same mean.  The triangle inequality yields
\begin{equation*}
 \mathcal E_{\rm strong}
 \le\Evar[q]+\Ebias\,,
 \qquad
 \mathcal E_{\rm weak}
 \le\Ebias\,.
\end{equation*}
Also, define
\begin{equation*}
 \Edisc[q]\deq W_q(\Pi_\eta K,\Pi_\eta)\,.
\end{equation*}
The analogue of \cref{prop:rhmc-proof-global-recurrence-l2} is the
following theorem.

\begin{theorem}[Local error analysis in \(W_q\)]
\label{prop:rhmc-proof-global-recurrence}
Let \(J\ge2\), and assume $h \le c/(\kappa q)$.
Then, for every phase-space law \(\widehat\mu\),
\begin{equation}
 W_{q,M_\kappa}^2(\widehat\mu\widehat K,\Pi_\eta)
 \le(1-ch/\kappa)\,W_{q,M_\kappa}^2(\widehat\mu,\Pi_\eta)
 +Cq \,\Evar[q]^2
 +\frac{C\kappa}{h}\,(\Ebias^2+\Edisc[q]^2)\,.
 \label{eq:rhmc-proof-one-step-recurrence}
\end{equation}
Consequently,
\begin{align}
 W_{q,M_\kappa}^2(\widehat\mu\widehat K^N,\Pi_\eta)
 &\le e^{-cNh/\kappa}\,W_{q,M_\kappa}^2(\widehat\mu,\Pi_\eta)
 +\frac{C\kappa q}{h}\,\Evar[q]^2
 +\frac{C\kappa^2}{h^2}\,(\Ebias^2+\Edisc[q]^2)\,.
 \label{eq:rhmc-proof-global-recurrence}
\end{align}
\end{theorem}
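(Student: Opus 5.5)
The plan mirrors the proof of \cref{prop:rhmc-proof-global-recurrence-l2}, with \cref{lem:rhmc-proof-centered-kernel-perturbation} in place of \cref{lem:rhmc-proof-local-error}. We work in the coordinates \(z\mapsto M_\kappa^{1/2}z\) and take \(P\deq K\), \(\widehat P\deq\widehat K\).

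\textbf{Step 1: coupling hypotheses.} For two inputs \(z,z'\) we couple \(K\) synchronously, \(X\deq\Phi_z(\zeta)\) and \(Y\deq\Phi_{z'}(\zeta)\), using the same \(\zeta\). By \cref{prop:rhmc-proof-contraction}, \(\norm{X-Y}_{M_\kappa}\le(1-ch/\kappa)\,\norm{z-z'}_{M_\kappa}\) pointwise. Jensen's inequality then gives \(\norm{\E(X-Y)}_{M_\kappa}\le\rho\,\norm{z-z'}_{M_\kappa}\) with \(\rho\deq1-ch/\kappa\).

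For the centered part we write \((X-Y)-\E(X-Y)=\{(X-z)-(Y-z')\}-\E\{(X-z)-(Y-z')\}\), since \(z-z'\) is deterministic. \cref{lem:rhmc-proof-increment-coupling} bounds the bracket pointwise by \(Ch\,\norm{z-z'}_{M_\kappa}\). Hence \(\chi_q\le 2Ch\).

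The side condition \(4\,(q-1)\,\chi_q^2\le\rho\,(1-\rho)\) reads \(Cqh^2\lesssim h/\kappa\), that is, \(h\le c/(\kappa q)\). This is exactly the assumed step size restriction, and it is the only place where \(q\) enters the step size.

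\textbf{Step 2: local errors.} As recorded just before the statement, \(\mathcal E_{\rm strong}\le\Evar[q]+\Ebias\) and \(\mathcal E_{\rm weak}\le\Ebias\). The \(M_\kappa\)-norms cost only constants, by uniform norm equivalence. \cref{lem:rhmc-proof-centered-kernel-perturbation} then yields
\[
 W_{q,M_\kappa}^2(\widehat\mu\widehat K,\Pi_\eta K)
 \le\rho\,W_{q,M_\kappa}^2(\widehat\mu,\Pi_\eta)
 +Cq\,(\Evar[q]^2+\Ebias^2)+\frac{C\kappa}{h}\,\Ebias^2\,.
\]
Since \(q\le c/(\kappa h)\le C\kappa/h\), the term \(q\,\Ebias^2\) is absorbed into \((C\kappa/h)\,\Ebias^2\).

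\textbf{Step 3: triangle inequality and iteration.} We use the triangle inequality in \(W_{q,M_\kappa}\) together with Young's inequality with parameter \(\lambda\asymp h/\kappa\):
\[
 W_{q,M_\kappa}^2(\widehat\mu\widehat K,\Pi_\eta)\le(1+\lambda)\,W_{q,M_\kappa}^2(\widehat\mu\widehat K,\Pi_\eta K)+(1+\lambda^{-1})\,C\,\Edisc[q]^2\,.
\]
Choosing \(\lambda=c'h/\kappa\) with \(c'<c/2\) keeps the contraction factor \((1+\lambda)\rho\le1-ch/(2\kappa)\). The cost is a factor \(C\kappa/h\) in front of \(\Edisc[q]^2\). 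This proves \eqref{eq:rhmc-proof-one-step-recurrence}.

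Iterating the recurrence \(N\) times and summing the geometric series \(\sum_k(1-ch/\kappa)^k\le C\kappa/h\) gives \eqref{eq:rhmc-proof-global-recurrence}, using \(1-x\le e^{-x}\).

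\textbf{Main obstacle.} The only delicate point is Step 1: verifying the \(\chi_q\) hypothesis and the compatibility condition, which is what forces \(h\le c/(\kappa q)\). This is easy here because \cref{lem:rhmc-proof-increment-coupling} is a pointwise (almost sure) bound, so its \(L^q\) version is free.
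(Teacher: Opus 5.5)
Your proposal is correct and follows the paper's own proof. The paper likewise reruns the $W_2$ argument with \cref{lem:rhmc-proof-centered-kernel-perturbation} in place of \cref{lem:rhmc-proof-local-error}, reads off $\rho=1-ch/\kappa$ and $\chi_q\le 2Ch$ from \cref{prop:rhmc-proof-contraction,lem:rhmc-proof-increment-coupling}, and then handles $\Edisc[q]$ via Young's inequality with $\lambda\asymp h/\kappa$ before iterating. Your explicit checks that the side condition $4(q-1)\chi_q^2\le\rho(1-\rho)$ is what forces $h\le c/(\kappa q)$, and that $q\,\Ebias^2$ is absorbed into $(C\kappa/h)\,\Ebias^2$, are details the paper leaves implicit.
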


\begin{proof}
Repeat the proof of \cref{prop:rhmc-proof-global-recurrence-l2}, using
\cref{lem:rhmc-proof-centered-kernel-perturbation} in place of the \(W_2\)
local error framework; \cref{prop:rhmc-proof-contraction,lem:rhmc-proof-increment-coupling}
still apply and give \(\rho=1-ch/\kappa\) and \(\chi_q\le2Ch\).
\end{proof}

\subsection{\texorpdfstring{$W_q$}{W\_q} local error bounds}
\label{sec:rhmc-higher-moment-local-bounds}

The bias term $\Ebias$ is the same as before, and is controlled by \cref{prop:rhmc-proof-conditional-mean-error}.
Thus, we focus on controlling the other two terms.

\subsubsection{Variance bound}

For the variance bound, we must extend the second-order $W_2$ estimate in~\cref{lem:rhmc-proof-random-map-l2} to $W_q$.

\begin{lemma}[Second-order $W_q$ estimate]
\label{lem:rhmc-proof-random-map}
In the setting of~\cref{lem:rhmc-proof-random-map-l2}, for all $q\ge 2$, it holds that
\begin{equation}
 W_q\bigl(\law\{L(Z+f(Z,G))\}\,,
           \law\{L(Z+\bar f(Z))\}\bigr)
 \le C\sqrt m\,\bigl(
 \sqrt q\,\norm{L\Pi_E}_{\op}\,B^2
 +q\,\norm L_{\op}\,AB\bigr)\,.
 \label{eq:rhmc-proof-random-map-result}
\end{equation}
\end{lemma}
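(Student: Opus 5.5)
We will follow the heat flow interpolation behind \cref{lem:rhmc-proof-random-map-l2} (proved in \cref{app:second-order-w2-estimate}), replacing $L^2$ energy estimates with $L^q$ ones. Write $D(Z,G)\deq f(Z,G)-\bar f(Z)$. This is conditionally centered given $Z$ and takes values in $E$. For $s\in[0,1]$, consider the interpolating random vectors
\[
 Y_s\deq L\bigl(Z+\bar f(Z)+sD(Z,G)\bigr)\,.
\]
Since $W_q$ is controlled by the $L^q$ length of any velocity field transporting $\law(Y_0)$ to $\law(Y_1)$, it suffices to build, for each $s$, a vector field $v_s$ solving the continuity equation for $\law(Y_s)$, and to bound $\norm{v_s(Y_s)}_{L^q}$ uniformly in $s$. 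The natural choice is $v_s(y)=\E[LD\mid Y_s=y]$. The first-order term vanishes on average because $D$ is conditionally centered, and the task is to show that this conditional expectation is of second order.

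\textbf{Step 1: represent $v_s$ using Gaussian integration by parts in $Z$.} Following the $W_2$ proof, we write the conditional expectation $\E[LD\mid Y_s]$ via a Gaussian smoothing in the direction of $Z$. This yields two contributions. The first comes from the $G$-fluctuation: after a Gaussian integration by parts in $G$, it has size $\norm{L\Pi_E}_{\op}\,\norm{D_G D}\,\norm{D}$, that is, of order $B\cdot B$ times a Gaussian chaos factor. The second comes from the $Z$-dependence of $D$, with size $\norm L_{\op}\,\norm{D_ZD}\,\norm D$, that is, of order $A\cdot B$. In the $W_2$ proof these were bounded in $L^2$, yielding $\sqrt m(\tfrac12\norm{L\Pi_E}B^2+5\norm L AB)$. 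Here we keep the pointwise (pre-expectation) representation of $v_s$ as a conditional expectation of explicit Gaussian functionals, such as $\ip{\nabla_Z D}{Z}$-type divergence terms and $\delta_G(\cdot)$ terms.

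\textbf{Step 2: $L^q$ moment bounds.} Since conditional expectation contracts $L^q$, we need only bound the $L^q$ norms of these functionals of $(Z,G)$. They are Gaussian divergences (Skorokhod integrals) of fields of norm at most $B\cdot D$ or $A\cdot D$. We will use two facts. First, $D(Z,\cdot)$ is $2B$-Lipschitz in $G$ and conditionally centered, so Gaussian concentration gives $\norm{\norm D}_{L^q}\lesssim B(\sqrt m+\sqrt q)$. Second, the divergence $\delta(W)$ of a Lipschitz field satisfies an $L^q$ Meyer-type inequality, $\norm{\delta W}_{L^q}\lesssim\sqrt q\,(\ldots)$, for which Gaussian hypercontractivity or Pisier's inequality gives constants $\sqrt q$ per chaos level.

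Tracking powers of $q$: the $B^2$ term involves one concentration factor together with a norm bound, giving $\sqrt{m}\sqrt q$ once we absorb $\sqrt{m}+\sqrt{q}\le\sqrt{2mq}$ appropriately. The $AB$ term involves both a divergence in $Z$ and a concentration in $G$, which is where the extra factor $q$ arises. Finally, integrating over $s\in[0,1]$ gives \eqref{eq:rhmc-proof-random-map-result}.

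\textbf{Main obstacle.} We expect the main obstacle to be the $L^q$ bound on the divergence term coming from the $Z$-dependence, which produces the $q\,AB$ term. In $L^2$ this is an exact isometry, but in $L^q$ one needs a Pisier/Meyer inequality for a vector-valued field that is only Lipschitz. We would first try Pisier's inequality $\norm{\delta W}_{L^q}\le C\sqrt q\,\norm{W}_{L^q}+C\sqrt q\,\norm{DW}_{L^q}$ combined with the Lipschitz bounds $A$ and $B$. If the dimension factor becomes $m$ rather than $\sqrt m$, we would instead bound the conditional expectation by a direct rotation (Maurey--Pisier) coupling argument.
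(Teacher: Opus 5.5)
\paragraph{Gap: the second-order representation of the velocity field is missing.}
Reducing the problem to bounding $\norm{v_s(Y_s)}_{L^q}$ for some velocity field is fine. The gap is Step~1, which carries the whole content of the lemma. Your path $Y_s=L(Z+\bar f(Z)+s\,D(Z,G))$ is a linear interpolation. It is not the heat flow interpolation used for \cref{lem:rhmc-proof-random-map-l2}, so ``following the $W_2$ proof'' does not apply. You never produce an identity showing that $v_s=\E[LD\mid Y_s]$ is of second order: the phrase ``write the conditional expectation via a Gaussian smoothing in the direction of $Z$'' is a placeholder.

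On the linear path, the natural attempt does not close. Take $L=\Id$ and write $D=\delta_G\Psi$ with $\Psi\deq D_G(-\mathcal L_{\msf{OU}})^{-1}D$. Integrating by parts in $G$ gives $\partial_s\E\varphi(Y_s)=s\,\E\,\mathrm{tr}\bigl(\Psi^\T D^2\varphi(Y_s)\,D_GD\bigr)$. This is quadratic in $B$, but it is still tested against $D^2\varphi$. To return to $\grad\varphi$ you must integrate by parts in $Z$, where $D_Z[\grad\varphi(Y_s)]=D^2\varphi(Y_s)\,(\Id+N)$ with $N\deq D_Z\bar f+s\,D_ZD$. Two problems follow.
\begin{itemize}
\item The leftover term $\mathrm{tr}\bigl(\Psi^\T D^2\varphi\,N\,D_GD\bigr)$ cannot be rewritten through $D_G[\grad\varphi(Y_s)]=s\,D^2\varphi(Y_s)\,D_GD$. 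The quadratic form pairs $D_GD$ with a \emph{different} matrix $\Psi$, so $D^2\varphi$ never becomes adjacent to $D_GD$, and you would have to invert $\Id+N$.
\item The $Z$-divergence then involves $D_ZD_Gf$, and after inversion also $D_Z^2f$. These are second derivatives of $f$, whereas the lemma assumes only the first-order Lipschitz bounds $A,B$. Nothing on a linear path smooths $f$ to control them.
\end{itemize}

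\paragraph{What the paper uses instead.}
The missing idea is the martingale (heat flow) interpolation in the $G$-variable. Let $W$ be a Brownian motion in $\R^n$ and $u_t(z,w)\deq P_{1-t}^{(2)}f(z,\cdot)(w)$. Then $X_t\deq Z+u_t(Z,W_t)$ runs from $Z+\bar f(Z)$ to a copy of $Z+f(Z,G)$, with $\dd X_t=D_2u_t\,\dd W_t$.
\begin{itemize}
\item It\^o's formula directly produces the second-order term $\frac12\,\E\langle\Sigma_t,D^2\varphi(X_t)\rangle$, where $\Sigma_t=D_2u_tD_2u_t^\T$.
\item Both factors of $\Sigma_t$ equal the matrix appearing in $D_2\grad\varphi(X_t)=D^2\varphi(X_t)\,D_2u_t$. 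Hence the chain rule gives the exact identity $\langle\Sigma_t,D^2\varphi\rangle=\langle\Sigma_t,D_1\grad\varphi\rangle-\langle D_1u_tD_2u_t,D_2\grad\varphi\rangle$, with no inversion.
\item Gaussian integration by parts in $z$ (under $\gamma$) and in $w$ (under $\cN(0,t\Id)$) yields the explicit velocity $V_t=\frac12\{\delta_1\Sigma_t-\delta_{2,t}(D_1u_tD_2u_t)\}$.
\item The only second derivatives that appear are $D_1D_2u_t$ and $D_2^2u_t$. Heat smoothing controls them via \eqref{eq:gaussian-heat-bessel}, at the integrable cost $(1-t)^{-1/2}$.
\end{itemize}
The $W_q$ bound then follows from the $L^q$-contractivity of conditional expectation together with \eqref{eq:rhmc-proof-gaussian-div-q}, applied conditionally in each variable. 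The mean terms give $\sqrt q\,\norm{L\Pi_E}_{\op}B^2$ and $\sqrt q\,t^{-1/2}\norm L_{\op}AB$. The derivative terms give $q\,(1-t)^{-1/2}\norm L_{\op}AB$.

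\paragraph{The $q$-dependence.}
Your account of the powers of $q$ through concentration of $\norm D$ is not the mechanism. The inequality you propose to try, $\norm{\delta W}_{L^q}\le C\sqrt q\,\norm W_{L^q}+C\sqrt q\,\norm{DW}_{L^q}$, does not hold with a universal constant. Take $d=1$ and $W=H_n$, so $\delta_\gamma W=H_{n+1}$, and let $q\to\infty$ with $n$ fixed: the ratio of the two sides tends to a limit of order $\sqrt n/C$, which exceeds $1$ for large $n$. The factor $q$ on the derivative term in \eqref{eq:rhmc-proof-gaussian-div-q} is genuinely needed, and it is exactly the source of the $q\,AB$ term in \eqref{eq:rhmc-proof-random-map-result}.
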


The proof, which is given in \cref{app:second-order-wasserstein-estimates}, relies on the same strategy as \cref{lem:rhmc-proof-random-map-l2}, but relies on a more involved Gaussian divergence inequality (\cref{lem:gaussian-sobolev-inequalities}).
Using this, we can control the variance term in $W_q$.

\begin{proposition}[Variance bound in $W_q$]
\label{prop:rhmc-proof-random-error}
If $h\le1$, then, for every \(q\ge2\),
\begin{equation*}
 \Evar[q]
 \le C\sqrt d\,
 \bigl(\eta h^{3/2} q^{1/2}/J
 +\eta^{1/2} h^3 q/\sqrt J\bigr)\,.
\end{equation*}
\end{proposition}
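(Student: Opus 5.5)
The plan is to repeat the proof of \cref{prop:rhmc-proof-random-error-l2} verbatim, replacing the second-order $W_2$ estimate by its $W_q$ analogue, \cref{lem:rhmc-proof-random-map}. Fix a phase-space input $z$. As in \cref{sec:rhmc-proof-variance-bound}, write the output of one phase as
\[
 u+B_h\{\zeta+F(\zeta,G_{\rm all})\}\,,
\]
with $u$ deterministic given $z$. Conditioning on $\zeta$ and averaging over $G_{\rm all}$ replaces $F$ by $\bar F(\zeta)\deq\E[F(\zeta,G_{\rm all})\mid\zeta]$. Since $u$ is a deterministic translate common to both laws, we get
\[
 W_q(\delta_z\widehat K,\delta_z\bar K)
 =W_q\bigl(\law\{B_h(\zeta+F(\zeta,G_{\rm all}))\},\,\law\{B_h(\zeta+\bar F(\zeta))\}\bigr)\,.
\]
Here one checks that $\bar\Phi_z(\zeta)=u+B_h\{\zeta+\bar F(\zeta)\}$ by linearity of the map $F\mapsto u+B_h(\zeta+F)$.

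Next, apply \cref{lem:rhmc-proof-random-map} with $m=2d$, $Z=\zeta$, $G=G_{\rm all}$, $f=F$, $L=B_h$ and $E=\R^{2d}$, so that $\Pi_E=\Id$. The Lipschitz constants come from \cref{lem:rhmc-proof-phase-lipschitz}: $A=Ch^2$ and $B=C\sqrt{\eta h/J}$. That lemma only needs $h^2\le1$, which follows from $h\le1$. We also use $\norm{B_h}_{\op}\le C\sqrt h$, which holds because $\sigma\asymp\sqrt h$ and the block matrix has bounded operator norm for $h\le1$. Substituting into \eqref{eq:rhmc-proof-random-map-result} gives
\[
 \Evar[q]\le C\sqrt d\,\sqrt h\,\Bigl(\sqrt q\,\frac{\eta h}{J}+q\,h^2\sqrt{\frac{\eta h}{J}}\Bigr)
 =C\sqrt d\,\bigl(\eta h^{3/2}q^{1/2}/J+\eta^{1/2}h^3q/\sqrt J\bigr)\,.
\]
The bound is uniform in $z$, so taking the supremum over $z$ finishes the proof.

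The only step needing care is confirming that \cref{lem:rhmc-proof-random-map} applies exactly as the $W_2$ version did. Its hypotheses are that $\zeta$ and $G_{\rm all}$ are independent standard Gaussians, and that the Lipschitz bounds hold uniformly in the other argument. \Cref{lem:rhmc-proof-phase-lipschitz} states them uniformly in the input, so this is satisfied. All the genuinely new analytic content, the $L^q$ Gaussian divergence inequality, sits inside \cref{lem:rhmc-proof-random-map}, which we may assume. Nothing else is expected to be an obstacle.
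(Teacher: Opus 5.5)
Your proposal is correct and matches the paper's proof: apply \cref{lem:rhmc-proof-random-map} with $m=2d$, $L=B_h$, $E=\R^{2d}$, $\norm{B_h}_{\op}\le C\sqrt h$, and the Lipschitz constants $A=Ch^2$, $B=C\sqrt{\eta h/J}$ from \cref{lem:rhmc-proof-phase-lipschitz}. Your arithmetic and your translation-invariance remark (the common deterministic shift $u$ does not change $W_q$) are both right.
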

\begin{proof}
Apply \cref{lem:rhmc-proof-random-map} with the constants in
\cref{lem:rhmc-proof-phase-lipschitz}.
\end{proof}

\subsubsection{Discretization error bound}
\label{sec:rhmc-proof-deterministic}

Next, we need to replace the $L^2$ bound~\eqref{eq:rhmc-proof-witten-l2} with an $L^q$ version.
The following lemma reduces this to an $L^q$ bound for the Gaussian divergence operator, which in turn is handled in~\cref{lem:gaussian-sobolev-inequalities}.

\begin{lemma}[Divergence bound]
\label{lem:rhmc-proof-convolution-divergence}
Let \(X\sim\pi\), \(G\sim\cN(0,\Id)\), and
\(P\sim\cN(0,\Id)\) be independent, and set \(Y\deq X+\sqrt\eta\,G\).  For a smooth
tensor-valued position vector field \(W=(W_i)_{i\in[d]}\), we have the representation
\begin{equation}
 \delta_{\pi_\eta}W(Y,P)
 =\E\bigl[
     \eta^{-1/2}\,\delta_\gamma\{W(X+\sqrt\eta\,\cdot,P)\}(G)
   \bigm\vert Y, P\bigr]\,.
 \label{eq:rhmc-proof-convolution-divergence-identity}
\end{equation}
Consequently, for every \(q\ge2\),
\begin{equation}
 \norm{\delta_{\pi_\eta}W}_{L^q(\Pi_\eta)}
 \le C\sqrt{q/\eta}\,
       \norm W_{L^q(\Pi_\eta;\HS)}
   +Cq\,\norm{DW}_{L^q(\Pi_\eta;\HS)}\,.
 \label{eq:rhmc-proof-convolution-divergence-bound}
\end{equation}
The constant is independent of all tensor dimensions.
\end{lemma}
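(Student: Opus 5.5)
The plan has two parts: first prove the representation \eqref{eq:rhmc-proof-convolution-divergence-identity} by a Gaussian integration by parts, then get \eqref{eq:rhmc-proof-convolution-divergence-bound} from it via Jensen and a Gaussian $L^q$ divergence inequality.

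\emph{Step 1: the identity.} Recall $\delta_{\pi_\eta}W=\ip{g_\eta}W-\operatorname{div}_xW$ and $\delta_\gamma U(G)=\ip GU(G)-\operatorname{div}U(G)$. Since $P$ is an inert parameter, it suffices to verify the identity in weak form. Fix a test function $\phi(Y,P)$ and compare
\[
\E[\phi(Y,P)\,\delta_{\pi_\eta}W(Y,P)] \quad\text{with}\quad \E[\phi\,\eta^{-1/2}\,\delta_\gamma\{W(X+\sqrt\eta\,\cdot,P)\}(G)]\,.
\]
For the left side, integration by parts under $\pi_\eta\propto e^{-V_\eta}$ gives $\E[\ip{\grad_x\phi}{W}(Y,P)]$, since $\delta_{\pi_\eta}$ is the adjoint of $\grad_x$ in $L^2(\pi_\eta)$. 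For the right side, condition on $(X,P)$ and apply Gaussian integration by parts in $G$ to the map $G\mapsto W(X+\sqrt\eta\,G,P)$. The derivative of $\phi(X+\sqrt\eta\,G,P)$ in $G$ equals $\sqrt\eta\,\grad_x\phi$, so this also gives $\E[\ip{\grad_x\phi}{W}(Y,P)]$. The two sides agree for every $\phi$, so the conditional-expectation identity holds. Tensor-valued $W$ is handled componentwise in the extra indices.

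\emph{Step 2: the bound.} By conditional Jensen,
\[
\norm{\delta_{\pi_\eta}W}_{L^q}\le\eta^{-1/2}\,\norm{\delta_\gamma U_{X,P}(G)}_{L^q},\qquad U_{X,P}(g)\deq W(X+\sqrt\eta\,g,P)\,,
\]
where the $L^q$ norm on the right is over $(X,G,P)$. Conditionally on $(X,P)$, apply the $L^q$ Gaussian divergence inequality of \cref{lem:gaussian-sobolev-inequalities}. I expect it to take the form
\[
\norm{\delta_\gamma U}_{L^q(\gamma)}\le C\sqrt q\,\norm U_{L^q(\gamma;\HS)}+Cq\,\norm{DU}_{L^q(\gamma;\HS)}\,,
\]
a Meyer-type inequality with explicit $q$-dependence and constants independent of the tensor dimension. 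Now $DU(g)=\sqrt\eta\,(D_xW)(X+\sqrt\eta\,g,P)$, so the second term picks up a factor $\sqrt\eta$. Take the $q$-th power and integrate over $(X,P)$. Since $X+\sqrt\eta\,G\sim\pi_\eta$ and $P\sim\gamma$ are independent, the integrated norms of $W$ and $D_xW$ become their norms in $L^q(\Pi_\eta)$. Multiplying by $\eta^{-1/2}$ gives exactly $C\sqrt{q/\eta}\,\norm W+Cq\,\norm{DW}$.

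\emph{Main obstacle.} The substantive input is the dimension-free $L^q$ Gaussian divergence estimate with the stated $\sqrt q$ and $q$ dependence. Here I would rely on \cref{lem:gaussian-sobolev-inequalities}. If that lemma were not already available, I would try to prove it by writing $\delta_\gamma U$ through the Ornstein--Uhlenbeck semigroup (via $\delta_\gamma=D^*$ and $L=-\delta_\gamma D$). A more elementary alternative is to split $\delta_\gamma U=\ip GU(G)-\operatorname{tr}DU$ and bound $\norm{\ip GU(G)}_{L^q}$ by a $\sqrt q$ decoupling-type Gaussian moment argument. The delicate part of that route is avoiding a $\sqrt d$ factor from the trace term, which requires keeping the conditioned form rather than bounding $\operatorname{tr}DU$ crudely.
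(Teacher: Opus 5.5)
Your proposal is correct and follows essentially the paper's route. The bound is obtained exactly as in the paper, by conditional Jensen followed by the Gaussian $L^q$ divergence inequality \eqref{eq:rhmc-proof-gaussian-div-q}; its mean term $C\sqrt q\,\norm{\E U}_{\HS}$ is even smaller than the $C\sqrt q\,\norm U_{L^q}$ you anticipated, so your form follows by Jensen. The only difference is in the identity: the paper checks it pointwise by expanding $\delta_\gamma\{W(X+\sqrt\eta\,\cdot,P)\}(G)=\ip G{W(Y,P)}-\sqrt\eta\,\operatorname{div}_xW(Y,P)$ and using $g_\eta(Y)=\eta^{-1/2}\,\E[G\mid Y]$, whereas your weak-form test-function argument reaches the same conclusion and implicitly reproves that Tweedie-type formula.
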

\begin{proof}
The identity in \eqref{eq:rhmc-proof-potential-gradient-covariance} is equivalent to $g_\eta(Y)=\eta^{-1/2}\,\E[G\mid Y]$.
The Gaussian divergence gives
\begin{align*}
    \delta_\gamma\{W(X+\sqrt \eta\,\cdot, P)\}(G)
    &= \langle G, W(Y,P)\rangle - \eta^{1/2} \operatorname{div}_x W(Y,P)\,.
\end{align*}
Here \(\operatorname{div}_xW\deq\sum_{i\in[d]}\partial_{x_i}W_i\) is the divergence in the position argument; the \(\eta^{1/2}\) factor comes from the chain rule.
Conditioning on \((Y,P)\) proves
\eqref{eq:rhmc-proof-convolution-divergence-identity}.  Conditional expectation is an
\(L^q\) contraction.  Apply Jensen's inequality and \eqref{eq:rhmc-proof-gaussian-div-q} to obtain~\eqref{eq:rhmc-proof-convolution-divergence-bound}.
\end{proof}

\begin{lemma}[Stationary iterated derivatives in $L^q$]
\label{lem:rhmc-proof-iterated-derivative}
For \(q\ge2\) and \(k\ge0\),
\begin{equation}
 \norm{\LHam^kg_\eta}_{L^q(\Pi_\eta)}
 \le C^{k+1}q^k\,(k!)^{3/2}\,\sqrt{d+q}\,\eta^{-k/2}\,.
 \label{eq:rhmc-proof-iterated-derivative}
\end{equation}
\end{lemma}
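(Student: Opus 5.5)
We follow the proof of \cref{lem:rhmc-proof-iterated-derivative-l2}, replacing each $L^2$ ingredient by its $L^q$ analogue. For tensor-valued $F$ on phase space, define the weighted $L^q$ Sobolev norm
\[
 \norm F_{\mathsf W^{N,q}}
 \deq\sum_{j=0}^N\frac{\eta^{j/2}}{j!}\,
       \norm{D_x^jF}_{L^q(\Pi_\eta;\HS)}\,.
\]
As before, $\norm{D_xF}_{\mathsf W^{N-1,q}}\le N\eta^{-1/2}\norm F_{\mathsf W^{N,q}}$. The goal is the one-step estimate
\[
 \norm{\LHam F}_{\mathsf W^{N-1,q}}
 \le\frac{CNq\sqrt{r+1}}{\sqrt\eta}\,\norm F_{\mathsf W^{N,q}}
\]
for $F$ of momentum degree at most $r$. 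Iterating it $k$ times starting from $N=k$ produces $q^k k!\sqrt{k!}\,\eta^{-k/2}$ times $\norm{g_\eta}_{\mathsf W^{k,q}}$.

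\textbf{Step 1: the $\delta_{\pi_\eta}$ term.} We again use the Leibniz expansion
\[
 D_x^j(\delta_{\pi_\eta}W)=\delta_{\pi_\eta}(D_x^jW)+\sum_{\ell\ge1}\binom j\ell\operatorname{contr}(D_x^\ell g_\eta,D_x^{j-\ell}W)\,.
\]
The commutator terms are handled pointwise by \eqref{eq:rhmc-proof-derivative-slot}, exactly as in the $L^2$ proof. This part is insensitive to the exponent and contributes $N\sqrt\eta\,\norm W_{\mathsf W^{N,q}}$.

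For the principal terms we replace \eqref{eq:rhmc-proof-witten-l2} by \cref{lem:rhmc-proof-convolution-divergence}:
\[
 \norm{\delta_{\pi_\eta}(D_x^jW)}_{L^q}\le C\sqrt{q/\eta}\,\norm{D_x^jW}_{L^q}+Cq\,\norm{D_x^{j+1}W}_{L^q}\,.
\]
Weighting by $\eta^{j/2}/j!$, the second piece is bounded by $Cq(j+1)\eta^{-1/2}$ times the weight of $D_x^{j+1}W$. Summing gives
\[
 \norm{\delta_{\pi_\eta}W}_{\mathsf W^{N-1,q}}\le CNq\,\eta^{-1/2}\norm W_{\mathsf W^{N,q}}\,,
\]
using $\sqrt q\le q$ and $\eta\le1$.

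\textbf{Step 2: the Gaussian momentum estimates.} We use the factorization \eqref{eq:Lham-factorization} and need $L^q$ replacements for \eqref{eq:rhmc-proof-momentum-l2} for polynomials of degree at most $r$ in $p$, uniformly in $x$. The tools are Gaussian hypercontractivity on the finite chaos, or the Gaussian divergence inequality of \cref{lem:gaussian-sobolev-inequalities}. The bound I would aim for is
\[
 \norm{\delta_\gamma W}_{L^q}\le C\sqrt{q}\sqrt{r+1}\,\norm W_{L^q}\,,
\]
together with a matching bound for $D_p$. A naive hypercontractive estimate would instead lose $(q-1)^{r/2}$, i.e.\ exponentially in the degree. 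If that is all one can get, my fallback is to avoid hypercontractivity by applying \cref{lem:gaussian-sobolev-inequalities} directly, which controls $\delta_\gamma W$ by $\sqrt q\,\norm W_{L^q}+q\,\norm{D_pW}_{L^q}$. One then bounds $\norm{D_pW}_{L^q}$ through the degree, exploiting the fact that derivatives lower the degree. The bookkeeping should allow a factor of $q$ per application of $\LHam$, which is exactly what the target bound permits.

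\textbf{Step 3: initialization.} Here
\[
 \norm{g_\eta}_{\mathsf W^{k,q}}\le\norm{g_\eta}_{L^q(\pi_\eta)}+k\sqrt{\eta d}\,,
\]
where the second term uses the pointwise bound \eqref{eq:rhmc-proof-derivative-full}. The first term is no longer available via integration by parts. Instead, write $g_\eta(Y)=\eta^{-1/2}\E[G\mid Y]$, or use that $\pi_\eta$ is strongly log-concave with $g_\eta$ $1$-Lipschitz, and apply Gaussian-type concentration to $\norm{g_\eta(Y)}$ around its $L^2$ norm $\le\sqrt d$. I would take the first route: by Jensen, $\norm{g_\eta}_{L^q}\le\eta^{-1/2}\norm{G}_{L^q}$, but this carries a bad factor $\eta^{-1/2}$. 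So I would use Lipschitz concentration under the $\kappa$-dependent log-Sobolev constant and refine it using the reference point, or, more cleanly, the $L^q$ divergence identity with $W\equiv$ a constant unit vector. In either case the goal is $C\sqrt{d+q}$, with any $\kappa$-dependence absorbed into the $q^k$ slack if necessary. I expect this step, together with the sharp degree dependence in Step 2, to be the main obstacle.
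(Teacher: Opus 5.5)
You have the same architecture as the paper: the weighted norm $\mathsf W^{N,q}$, the Leibniz/commutator step, replacing \eqref{eq:rhmc-proof-witten-l2} by \cref{lem:rhmc-proof-convolution-divergence} to get $\norm{\delta_{\pi_\eta}W}_{\mathsf W^{N-1,q}}\le CqN\eta^{-1/2}\norm W_{\mathsf W^{N,q}}$, the factorization \eqref{eq:Lham-factorization}, and iteration from $N=k$. Step 1 is correct. However, the two steps you flag as obstacles are genuinely unresolved, and as written neither gives the stated bound.

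\textbf{Step 2.} The missing ingredient is a dimension-free $L^q$ Bernstein--Markov inequality for Gaussian polynomials with polynomial dependence on the degree. For $F$ of degree at most $r$ in $p$, it reads $\norm{D_pF}_{L^q(\gamma)}\le C\sqrt r\,\norm{F-\E F}_{L^q(\gamma)}$. You need it twice. First, it controls $\norm{D_pF}_{\mathsf W^{N,q}}$ in the $\delta_{\pi_\eta}(D_pF)$ term: apply it to $D_x^jF$ at fixed $x$ and integrate, since $\Pi_\eta$ is a product measure. Second, combined with \eqref{eq:rhmc-proof-gaussian-div-q}, it gives $\norm{\delta_\gamma W}_{L^q}\le C(\sqrt q+q\sqrt r)\,\norm W_{L^q}$ for the $\delta_\gamma(D_xF)$ term. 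Your fallback does not supply it. \cref{lem:gaussian-sobolev-inequalities} bounds $\delta_\gamma W$ by $\norm{\E W}_{\HS}$ and $\norm{D_pW}_{L^q}$. Closing the estimate then requires bounding $\norm{D_pW}_{L^q}$ by $\norm W_{L^q}$, which is the reverse direction and holds only on a finite chaos. ``Derivatives lower the degree'' gives no quantitative control of this, and hypercontractivity loses a factor exponential in $r$, hence $q^{ck^2}$ after $k$ steps. The paper imports exactly this ingredient from the sharp Gaussian Bernstein--Markov inequality, packaged as \cref{lem:rhmc-proof-gaussian-polynomial}. Your stronger target $C\sqrt{q(r+1)}$ is unjustified and also unnecessary. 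Since $\sqrt q+q\sqrt j\le 2q\sqrt{j+1}$, the weaker bound already yields $(2q)^k(k!)^{3/2}$.

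\textbf{Step 3.} Integration by parts is in fact available, and the alternatives you list fail. The conditional-expectation representation and \eqref{eq:rhmc-proof-convolution-divergence-bound} with a constant field both carry a factor $\eta^{-1/2}$. Lipschitz concentration under the log-Sobolev constant $\kappa+\eta$ of $\pi_\eta$ gives $\sqrt d+C\sqrt{\kappa q}$. A $\kappa$-dependence cannot be absorbed into $q^k$, because $C$ in the lemma is universal. Instead, test $\E_{\pi_\eta}\ip{g_\eta}{W}=\E_{\pi_\eta}\operatorname{div}W$ with $W=g_\eta\norm{g_\eta}^{q-2}$. Since $0\preceq\Hess V_\eta\preceq\Id$,
\[
 \operatorname{div}\bigl(g_\eta\norm{g_\eta}^{q-2}\bigr)
 =\Delta V_\eta\,\norm{g_\eta}^{q-2}
 +(q-2)\,\norm{g_\eta}^{q-4}\,\ip{g_\eta}{\Hess V_\eta\,g_\eta}
 \le(d+q-2)\,\norm{g_\eta}^{q-2}\,.
\]
Hence $\E\norm{g_\eta}^q\le(d+q)\,(\E\norm{g_\eta}^q)^{1-2/q}$, so $\norm{g_\eta}_{L^q(\pi_\eta)}\le\sqrt{d+q}$; this is the book lemma the paper cites. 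With these two ingredients, your iteration goes through exactly as in the paper.
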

\begin{proof}
For a tensor-valued \(F\), define the weighted Sobolev norm
\begin{equation}
 \norm F_{\mathsf W^{N,q}}
 \deq \sum_{j=0}^N\frac{\eta^{j/2}}{j!}\,
       \norm{D_x^jF}_{L^q(\Pi_\eta;\HS)}\,.
 \label{eq:rhmc-proof-weighted-Snr}
\end{equation}
Tracing through the proof of~\cref{lem:rhmc-proof-iterated-derivative-l2}, but using~\eqref{eq:rhmc-proof-convolution-divergence-bound} in place of~\eqref{eq:rhmc-proof-witten-l2}, we obtain
\begin{equation}
 \norm{\delta_{\pi_\eta}W}_{\mathsf W^{N-1,q}}
 \le\frac{Cq N}{\sqrt\eta}\,\norm W_{\mathsf W^{N,q}}\,.
 \label{eq:rhmc-proof-weighted-divergence}
\end{equation}
Recall the factorization $\LHam F=\delta_\gamma(D_xF)-\delta_{\pi_\eta}(D_pF)$ from \eqref{eq:Lham-factorization}. If \(F\) is polynomial of
degree at most \(r\) in \(p\), then
\cref{lem:rhmc-proof-gaussian-polynomial}, \eqref{eq:rhmc-proof-weighted-Snr}, and
\eqref{eq:rhmc-proof-weighted-divergence} yield the
dimension-free bound
\begin{equation}
 \norm{\LHam F}_{\mathsf W^{N-1,q}}
 \le\frac{CN\,(\sqrt q+q\sqrt r)}{\sqrt\eta}\,
       \norm F_{\mathsf W^{N,q}}\,.
 \label{eq:rhmc-proof-A-one-step}
\end{equation}

It remains to initialize the induction.
From~\cite[Lemma 6.2.7]{Chewi26Book}, we have \(\norm{g_\eta}_{L^q(\pi_\eta)}\le C\sqrt{d+q}\).  For \(j\ge1\), the uniform
bound \eqref{eq:rhmc-proof-derivative-full} and the weights in
\eqref{eq:rhmc-proof-weighted-Snr} give
\begin{equation*}
 \norm{g_\eta}_{\mathsf W^{N,q}}
 \le C(N+1)\sqrt{d+q}\,.
\end{equation*}
Finally, \(\LHam^jg_\eta\) is polynomial of degree at most \(j\) in
momentum.  Apply \eqref{eq:rhmc-proof-A-one-step} \(k\) times, starting at \(N=k\) and
using \(r=j\) at step \(j\).  Since
\(\sqrt q+q\sqrt j\le2q\sqrt{j+1}\) for \(q\ge2\), the product of the
one-step factors is bounded by
\[
    \prod_{j=0}^{k-1} [(k-j)\,(\sqrt q+q\sqrt j)]
 \le (2q)^k\,(k!)^{3/2}\,.
\]
For \(k\ge1\), the initialization factor satisfies \(k+1\le2^k\) and
can therefore be absorbed into \(C^{k+1}\); the case \(k=0\) follows
directly from the initialization bound.  This proves
\eqref{eq:rhmc-proof-iterated-derivative}.
\end{proof}

\begin{proposition}[Discretization error bound]
\label{prop:rhmc-proof-deterministic-defect}
If \(h^2\le1\), then, for every \(q\ge2\),
\begin{equation*}
 \Edisc[q]
 \le C\sqrt{d+q}\,\Bigl[
 B_{J,q}h\,\Bigl(\frac h{\sqrt\eta}\Bigr)^J
 +h^5
 \Bigr]\,.
\end{equation*}
Here, one may take
\(B_{J,q}=C_0^{J+1}q^J\sqrt{J!}\), where \(C_0\) is universal.
\end{proposition}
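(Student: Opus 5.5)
The plan is to follow the proof of \cref{prop:rhmc-proof-deterministic-defect-l2} line by line, replacing every $L^2(\Pi_\eta)$ norm by the $L^q(\Pi_\eta)$ norm. Define $\norm f_{\infty,q}\deq\sup_{0\le t\le h}\norm{f(t)}_{L^q}$ for $L^q(\Pi_\eta;\R^d)$-valued curves on $[0,h]$. Since $L^q(\Pi_\eta;\R^d)$ is a Banach space, the Banach-valued interpolation remainder of \cref{lem:chebyshev-banach-remainder} still applies to $g(t)\deq g_\eta(X_t)$ along the exact Hamiltonian flow started at $\Pi_\eta$.

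By stationarity and \cref{lem:rhmc-proof-iterated-derivative}, $\sup_t\norm{g^{(J)}(t)}_{L^q}\le C^{J+1}q^J(J!)^{3/2}\sqrt{d+q}\,\eta^{-J/2}$. Hence
\[
 \mathcal E_{J,q}\deq\sup_{0\le t\le h}\norm{g(t)-\mathcal I_{J,h}g(t)}_{L^q}
 \le C^{J+1}q^J\sqrt{J!}\,\sqrt{d+q}\,\Bigl(\frac h{\sqrt\eta}\Bigr)^J,
\]
which identifies $B_{J,q}=C_0^{J+1}q^J\sqrt{J!}$.

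The remaining Picard propagation is purely deterministic and Lipschitz-based, so it transfers verbatim once the $L^2$ norms become $L^q$ norms. I need $\norm{g_\eta\circ X}_{\infty,q}=\norm{g_\eta}_{L^q(\pi_\eta)}\le C\sqrt{d+q}$, which follows from stationarity and \cite[Lemma 6.2.7]{Chewi26Book}, as already used in the proof of \cref{lem:rhmc-proof-iterated-derivative}. This gives $\norm{X^{[0]}-X}_{\infty,q}\le C\sqrt{d+q}\,h^2$. For the first Picard update, I split exactly as before into the interpolation error along the exact trajectory and the interpolant of $g_\eta\circ X-g_\eta\circ X^{[0]}$. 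The integrated weight bound $\sum_j|\omega_{i,j}|\le h^2$ (uniform in $t$ via the kernel $(t-\cdot)_+$) together with $1$-Lipschitzness of $g_\eta$ and Minkowski's inequality in $L^q$ then yields
\[
 \norm{X^{[1]}-X}_{\infty,q}\le \frac{h^2}{2}\,\mathcal E_{J,q}+C\sqrt{d+q}\,h^4\,.
\]
The endpoint step gives $\norm{\widehat P_h-P_h}_{L^q}\le Ch\,\mathcal E_{J,q}+C\sqrt{d+q}\,h^5$, and the same bound with an extra factor $h$ for positions. One point to check is that the interpolation remainder, which was used for $L^2$-valued curves, also controls the integrated quantities: in $L^q$ I only ever use Minkowski's inequality with weights whose absolute sums are bounded, never orthogonality, so no step relies on Hilbert structure.

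Finally, couple one numerical phase and one exact phase from $\Pi_\eta$ using the same OU Gaussians. Stationarity of $\Pi_\eta$ under the first refresh, the exact flow, and the second refresh, together with the fact that the final refresh contracts the momentum difference by $e^{-h/2}$ and cancels the common noise, gives
\[
 \Edisc[q]\le\norm{\widehat X_h-X_h}_{L^q}+\norm{\widehat P_h-P_h}_{L^q}\,.
\]
Combining the displays proves the claim. I expect no genuine obstacle here, because the hard work sits in \cref{lem:rhmc-proof-iterated-derivative}. The only care needed is that the base moment bound now carries $\sqrt{d+q}$ rather than $\sqrt d$, and that the factor $q^J$ from the iterated derivative estimate is absorbed into $B_{J,q}$ rather than into $C_0$.
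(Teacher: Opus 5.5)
Your proposal is correct and follows the paper's proof essentially verbatim. The paper likewise reruns the $L^2$ argument with the uniform-in-time $L^q$ norm, obtains $\mathcal E_{J,q}\le C^{J+1}q^J\sqrt{J!}\,\sqrt{d+q}\,(h/\sqrt\eta)^J$ from \cref{lem:chebyshev-banach-remainder} and \cref{lem:rhmc-proof-iterated-derivative}, and replaces the stationary bound $\sqrt d$ by $\norm{g_\eta}_{L^q(\pi_\eta)}\le C\sqrt{d+q}$. Your remark that the Picard propagation uses only Minkowski's inequality with $\ell^1$-bounded quadrature weights, and never Hilbert structure, is exactly why this transfer from $L^2$ to $L^q$ is legitimate.
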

\begin{proof}
Repeat the proof of \cref{prop:rhmc-proof-deterministic-defect-l2} with
the uniform-in-time \(L^q\) norm in place of the uniform-in-time \(L^2\)
norm.  The interpolation estimate is replaced by
\begin{align}
 \mathcal E_{J,q}
 &\deq\sup_{0\le t\le h}\,
   \norm{g(t)-\mathcal I_{J,h}g(t)}_{L^q(\Pi_\eta)}
\le\frac{h^J}{J!}\sup_{0\le t\le h}\,
       \norm{g^{(J)}(t)}_{L^q(\Pi_\eta)}
 \le C^{J+1}q^J\sqrt{J!}\,\sqrt{d+q}\,
       \Bigl(\frac h{\sqrt\eta}\Bigr)^J\,.
 \label{eq:rhmc-proof-interpolation-error}
\end{align}
Also, stationarity and \cite[Lemma~6.2.7]{Chewi26Book} give
\[
 \sup_{0\le t\le h}\,\norm{g_\eta(X_t)}_{L^q(\Pi_\eta)}
 =\norm{g_\eta}_{L^q(\pi_\eta)}
 \le C\sqrt{d+q}
\]
in place of the corresponding \(L^2\) bound by \(\sqrt d\).
\end{proof}

\subsection{Proof of
  \texorpdfstring{\hyperref[thm:rhmc-module]{Theorem~\ref*{thm:rhmc-module}}}
                 {Theorem 5.1}}
\label{sec:rhmc-higher-moment-main-theorem}

\begin{proof}[Proof of \cref{thm:rhmc-module}]
We first assume exact proximal queries.  The modifications needed to remove
proximal access are deferred to \cref{app:gradient-only-prox}.

Fix \(q\ge2\) and \(0<\eps\le1\), and let \(\mathfrak L_q\) be as in
the theorem.  Choose
\begin{equation*}
  h
  \deq c\mathfrak L_q^{-3/2}
  \min\Bigl\{\frac1\kappa,
    \Bigl(\frac{\eps}{\kappa\sqrt{d+q}}\Bigr)^{1/3}\Bigr\}\,,
  \qquad
  \eta\deq C_\eta\mathfrak L_q^3h^2\,,
\end{equation*}
where \(C_\eta\ge1\) is universal and \(c > 0\) will be chosen
sufficiently small.  Set
\begin{equation}
 J\deq\lceil C\mathfrak L_q\rceil\,,
 \qquad
 N\deq\lceil C\kappa\mathfrak L_q/h\rceil\,.
 \label{eq:rhmc-proof-parameter-choices}
\end{equation}
With appropriately large $C$ and small $c$, the definitions
give
\begin{equation*}
  e^{-C\mathfrak L_q}\le h
  \le\frac{c}{\kappa\mathfrak L_q^{3/2}}\,,
  \qquad
  h\mathfrak L_q\le\frac{c}{\kappa\mathfrak L_q^{1/2}}
  \le\frac c\kappa\,,
  \qquad
  \eta\le\frac{C_\eta c^2}{\kappa^2}
  \le\min\{c_0,1/2\}\,,
\end{equation*}
and \(\log(h^{-1})\le C\mathfrak L_q\).
Since
\(B_{J,q}=C_0^{J+1}q^J\sqrt{J!}\), we have
\begin{equation*}
 B_{J,q}C_\eta^{-J/2}\mathfrak L_q^{-3J/2}
 \le e^{-cJ}\,.
\end{equation*}
A sufficiently large $C$ therefore gives
\begin{equation*}
 B_{J,q}\,\Bigl(\frac h{\sqrt\eta}\Bigr)^J
 \le h^4\,.
\end{equation*}
Tedious bookkeeping shows that
\begin{equation}
 W_{q,M_\kappa}(\widehat\mu_N,\Pi_\eta)
 \le C\mathfrak L_q^{9/2}\kappa\sqrt{d+q}\,h^3\,,
 \label{eq:rhmc-proof-exact-phase-bound}
\end{equation}
Each phase makes \(2J\) gradient queries and
\(2J\) exact proximal queries.  Since
\(J\le C\mathfrak L_q\) and
\(N\le C\kappa\mathfrak L_q/h+1
\le C'\kappa\mathfrak L_q/h\), the numbers of direct gradient queries
and exact proximal queries are each at most
\(C\kappa\mathfrak L_q^2/h\).  Taking position marginals in
\eqref{eq:rhmc-proof-exact-phase-bound} and
the definition of \(h\) gives
\begin{equation*}
  W_q(\widehat\pi_\eta,\pi_\eta)
  \le C\mathfrak L_q^{9/2}\kappa\sqrt{d+q}\,h^3
  \le\eps\,.
\end{equation*}
The modifications in \cref{app:gradient-only-prox} remove the proximal
queries altogether with expected gradient cost at most
\(C\kappa\mathfrak L_q^3/h\).  Substitution of \(h\) gives the query
bound in the theorem.
\end{proof}

\section{Recursive warm start generator}
\label{sec:recursive-warm-start}

In this section, we describe the recursive warm start generator and its guarantee.
See \cref{ssec:recursive_overview} for the algorithm and an overview.

\subsection{Main result}

For probability laws \(P,Q\) on \(\R^d\), define the sub-Gaussian
Orlicz--Wasserstein distance
\begin{equation*}
  W_{\psi_2}(P,Q)
  \deq
  \inf_{(X,Y)}\inf\Bigl\{
    \eps>0:
    \E\exp\Bigl(\frac{\norm{X-Y}^2}{\eps^2}\Bigr)\le2
  \Bigr\}\,,
\end{equation*}
where the outer infimum ranges over all couplings of \(P\) and \(Q\).

The recursive warm start generator assumes access to a smoothed sampler as follows.

\begin{assumption*}[Smoothed sampler]
There is a sampler \(\mathfrak A\) which, provided any normalized potential \(U\) satisfying $\kappa_U^{-1}\Id\preceq\Hess U\preceq\Id$,
and an admissible reference point $x_{\rm ref}$ with $\norm{\nabla U(x_{\rm ref})} \le \sqrt{d/\kappa_U}$, implements one of the interfaces below. Here,
\(\pi_{U,\eta}\deq\pi_U*\cN(0,\eta\Id)\) denotes the smoothed distribution.
\begin{enumerate}[
  label=\(\mathsf{A\arabic*}\),
  ref=\(\mathsf{A\arabic*}\),
  leftmargin=2.8em
]
\item\label{ass:smoothed-w2}
Given \(\eps>0\), the sampler returns
\(0<\eta\le c_0\) and a sample
with law \(\widehat\pi_{U,\eta}\) such that
\begin{equation*}
  \kappa_U^{-1/2}\,W_2(\widehat\pi_{U,\eta},\pi_{U,\eta})\le\eps
\end{equation*}
using at most \(\mathsf C_2(\kappa_U,d,\eps)\) expected gradient queries.
\item\label{ass:smoothed-wp}
Given \(p\ge2\) and \(\eps>0\), it returns \(0<\eta\le c_0\) and a
sample with law \(\widehat\pi_{U,\eta}\) such that
\begin{equation*}
  \kappa_U^{-1/2}\,W_p(\widehat\pi_{U,\eta},\pi_{U,\eta})\le\eps
\end{equation*}
using at most \(\mathsf C_p(\kappa_U,d,\eps)\) expected gradient queries.
\item\label{ass:smoothed-psi2}
Given \(\eps>0\), it returns \(0<\eta\le c_0\) and a sample with law
\(\widehat\pi_{U,\eta}\) such that
\begin{equation*}
  \kappa_U^{-1/2}\,W_{\psi_2}(\widehat\pi_{U,\eta},\pi_{U,\eta})\le\eps
\end{equation*}
using at most \(\mathsf C_{\psi_2}(\kappa_U,d,\eps)\) expected gradient
queries.
\end{enumerate}
Assume that each complexity function is non-decreasing in its condition number
argument and non-increasing in its accuracy argument, and that
\(p\mapsto\mathsf C_p\) is non-decreasing.
\end{assumption*}

We will apply this interface directly to potentials whose smoothness is not normalized to
$1$.  Concretely, if
\(\alpha_U\Id\preceq\Hess U\preceq\beta_U\Id\), apply \(\mathfrak A\) to
the normalized potential \( U(\cdot/\sqrt{\beta_U})\), and then divide
its output by \(\sqrt{\beta_U}\). If \(\eta\) is the normalized smoothing level, then the law of the rescaled output
approximates $\pi_U*\cN(0,\beta_U^{-1}\eta\Id)$.
The corresponding scale-invariant guarantee is
\(\sqrt{\alpha_U}\,W_\bullet\le\eps\), and the query cost is unchanged.

In \cref{sec:w2-proof,sec:higher-moment-analysis}, we have shown that Picard HMC implements the first two interfaces above.
We have not shown that it implements the third, but we include this case for completeness because, as shown below, \ref{ass:smoothed-psi2} leads to a genuine R\'enyi warm start.

We now present the main guarantee.

\begin{theorem}[Recursive warm start generator]
\label{thm:recursive-warm-generator}
Suppose that \(\pi\propto\exp(-V)\) is \(\beta\)-log-smooth, in the
sense that \(\grad V\) is \(\beta\)-Lipschitz, and satisfies the
logarithmic Sobolev inequality with constant $\alpha^{-1}$.
Write \(\kappa\deq\beta/\alpha\), and
suppose that we are given a point \(x_{\rm ref}\) satisfying $V(x_{\rm ref})-\inf V\le d$.

Let the smoothed sampler \(\mathfrak A\) satisfy one of
\ref{ass:smoothed-w2}, \ref{ass:smoothed-wp}, and
\ref{ass:smoothed-psi2}.  Fix \(0<\delta, \Delta\le 1/2\).  For
\(q\ge2\), write
\begin{equation*}
  \mathfrak L_q
  \deq q+\log\frac{d\kappa q}{\Delta}\,.
\end{equation*}
Then the proximal sampler wrapper in \cref{alg:outer-warm-overview}, with
each inner RGO sampled by \cref{alg:generic-warm-overview}, has the
following guarantees.
\begin{enumerate}[label=\textup{(\roman*)},leftmargin=2.4em]
\item Under \ref{ass:smoothed-w2}, the generator returns a
sample with law \(\widehat\pi\) satisfying
\begin{equation*}
  \KL(\widehat\pi\mmid\pi)\le\Delta^2\,.
\end{equation*}
The expected total number of gradient queries is at most
\begin{equation*}
  C\kappa\mathfrak L_2^2\,
  \mathsf C_2\Bigl(
    6,d,\frac{c\Delta}{\sqrt{\kappa\mathfrak L_2}}
  \Bigr)
  +C\kappa\mathfrak L_2^2\,.
\end{equation*}
\item Fix \( 2 \le q < \infty \). Under \ref{ass:smoothed-wp}, the generator
returns a sample with law \(\widehat\pi\) for which there exists a law
\(\widehat\pi^\dagger\) satisfying
\begin{equation*}
  \TV(\widehat\pi,\widehat\pi^\dagger)\le\delta\,,
  \qquad
  \Ren_q(\widehat\pi^\dagger\mmid\pi)\le\Delta^2\,.
\end{equation*}
The expected total number of gradient queries is at most
\begin{equation*}
  C\kappa\mathfrak L_q^2\,
  \mathsf C_p\Bigl(
    6,d,
    \frac{c\Delta}{
      \sqrt{q\kappa\log(e q)\,\mathfrak L_q}}
  \Bigr)
  +C\kappa\mathfrak L_q\,
    \Bigl(
      \mathfrak L_q+\log\frac{C\kappa\mathfrak L_q}{\delta}
    \Bigr)\,,
\end{equation*}
where
\begin{equation*}
  p\deq
  2\vee\Bigl\lceil
    \log\frac{C\kappa\mathfrak L_q^2}{\delta}
  \Bigr\rceil\,.
\end{equation*}
\item Fix \( 2\le q <\infty \).  Under \ref{ass:smoothed-psi2}, the generator returns
a sample with law \(\widehat\pi\) satisfying
\begin{equation*}
  \Ren_q(\widehat\pi\mmid\pi)\le\Delta^2\,.
\end{equation*}
The expected total number of gradient queries is at most
\begin{equation*}
  C\kappa\mathfrak L_q^2\,
  \mathsf C_{\psi_2}\Bigl(
    6,d,
    c\min\Bigl\{
      \frac1q,
      \frac{\Delta}{
        \sqrt{q\kappa\log(e q)\,\mathfrak L_q}}
    \Bigr\}
  \Bigr)
  +C\kappa\mathfrak L_q^2\,.
\end{equation*}
\end{enumerate}
\end{theorem}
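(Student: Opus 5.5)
The proof has three layers: a one-step regularization estimate for the inner recursion, an induction over recursion depth for \cref{alg:generic-warm-overview}, and error accumulation across the outer proximal sampler steps of \cref{alg:outer-warm-overview}.

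\textbf{Outer loop.} Choose $a_{\rm out}\asymp 1/\beta$. Then each target $R^V_{a_{\rm out},Y_n}$ has potential $V_{a_{\rm out},Y_n}$ with condition number at most $\beta_A/\alpha_A\le(\beta+1/a_{\rm out})/(1/a_{\rm out}-\beta)\le 6$ after fixing the constant; this explains the argument $6$ in $\mathsf C_\bullet$. Under LSI, the ideal proximal sampler contracts $\KL$ and $\Ren_q$ by a factor $(1+\alpha a_{\rm out})^{-c}$ per step \cite{CCSW22}. Hence $N\asymp\kappa\mathfrak L_q$ steps suffice from the initialization.

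For the initialization, I would take $X_0$ to be $x_{\rm ref}$ plus Gaussian noise of variance $1/\beta$. The assumption $V(x_{\rm ref})-\inf V\le d$ then gives $\Ren_\infty$ (hence $\Ren_q$) divergence of order $d\log\kappa$, so only $\log(d\kappa)$ extra factors appear.

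Errors introduced by approximate RGO steps propagate through the data-processing inequality. For KL I would use the chain rule, so per-step errors add: each inner call must achieve $\KL\lesssim\Delta^2/N$. This matches the accuracy $c\Delta/\sqrt{\kappa\mathfrak L}$. For $\Ren_q$, I would use the weak triangle inequality for Rényi divergence combined with the contraction. This costs a factor $q\log(eq)$ and explains the corresponding term in the accuracy. In case (ii), the TV parts add up to $N\cdot\delta/N$.

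\textbf{Recursion layer.} The key estimate is a reverse transport (dual Harnack) inequality. Suppose $\law(Z)$ is $\eps$-close in $W$ to $\mu*\cN(0,\eta/\beta_A)$, and we form $Y=Z+\sqrt{\tau/\beta_A}\,G$. Then
\[
\KL\bigl(\law(Y)\mmid \mu*\cN(0,aI)\bigr)\le \frac{\beta_A W_2^2}{2\tau},
\]
obtained by conditioning on the coupled pair and using $\KL$ between Gaussians with equal covariance. Analogously, $\Ren_q\lesssim q\,\beta_A\norm{X-Y}^2/\tau$ pointwise; averaging through a $\psi_2$-type exponential moment gives case (iii). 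With a $W_p$ bound instead, I would truncate on the event $\norm{X-Y}\le R$, which has probability at least $1-(W_p/R)^p$ by Markov. This event defines $\widehat\pi^\dagger$ and explains the choice $p\asymp\log(1/\delta)$ in (ii).

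Next, the closure identity (\cref{lem:rgo-calculus}) says that sampling $R^{V_{A,u}}_{a,Y}$ exactly gives $R^V_{A,u}$ when $Y$ has the exact smoothed law. Data processing then transfers the divergence of $Y$ to the output, plus the error of the recursive subcall.

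\textbf{Depth of the recursion.} With $\tau\asymp1$, each level replaces $1/A$ by $1/A+\beta_A/(\eta+\tau)$, which at least doubles $\beta_A$ relative to $1/A$ up to constants. So the effective RGO step decreases geometrically, and depth $O(\log(\kappa d))$ reaches $\underline A$. There FORS (\cref{thm:fors-implementation}) costs $\wtO(1)$, which gives the additive $C\kappa\mathfrak L^2$ term. The per-level errors again add, by the chain rule or the weak triangle inequality.

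\textbf{Main obstacle.} I expect the main difficulty in case (ii): pushing the mixed TV/Rényi guarantee through nested levels. The $\dagger$ laws must be constructed consistently, as a coupling in which the bad event at every level has small total probability, so that Rényi bounds on the good event compose under data processing. I would handle this by building all couplings on one probability space and defining $\widehat\pi^\dagger$ as the output law conditioned on the intersection of the good events. The normalization factor $1/(1-\delta)$ is absorbed into $\Ren_q$.
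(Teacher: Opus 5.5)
Your architecture matches the paper's: an outer proximal wrapper with $a_{\rm out}\asymp1/\beta$ so every inner target is $O(1)$-conditioned, fresh Gaussian noise followed by reverse transport, RGO closure with data processing, truncation for $W_p$, and a terminal FORS step. The gap is in the outer R\'enyi argument, which does not yield the stated complexity, and the tool you name for it would fail. The available fixed-order contraction under LSI is only $(1+\alpha a_{\rm out})^{-1/q}$ per forward heat step. Only the forward half is usable, since the backward RGO is approximate. Converging directly in $\Ren_q$ therefore takes $N\asymp q\kappa\log(d\kappa/\Delta)$ steps, not $\kappa\mathfrak L_q$, and accumulates error $\asymp q\kappa\Delta_0^2$. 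That puts $q^2$ instead of $q\log(eq)$ into the accuracy argument.

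The weak triangle inequality cannot repair this. Each application doubles the R\'enyi order, so it cannot be iterated over $N$ outer steps or over the recursion levels, where you also invoke it. Per-step errors must instead be added by R\'enyi composition, $\Ren_q(\mu\widehat K\mmid\nu K)\le\Ren_q(\mu\mmid\nu)+\sup_y\Ren_q(\widehat K(y,\cdot)\mmid K(y,\cdot))$, which keeps the order fixed.

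The missing idea is a two-phase outer argument. First converge in $\Ren_2$ in $N_0\asymp\kappa\log(ed\kappa/\Delta)$ steps; contraction makes the errors sum to $C\kappa\Delta_0^2$. Then use hypercontractivity of the simultaneous heat flow under LSI, $\Ren_{q_{n+1}}(\mu*\gamma_a\mmid\pi*\gamma_a)\le\Ren_{q_n}(\mu\mmid\pi)$ with $q_{n+1}-1=(1+\alpha a)(q_n-1)$. This raises the order from $2$ to $q$ in $N_1\asymp\kappa\log q$ steps, each adding only $\Delta_0^2$, which is why $\Delta_0^2=c\Delta^2/(\kappa\log(eq))$ suffices. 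In the KL case, likewise keep the contraction in the recursion $\KL_{n+1}\le(1+(2\kappa)^{-1})^{-1}\KL_n+\Delta_0^2$, so $\Delta_0^2\asymp\Delta^2/\kappa$ suffices. The factor $\mathfrak L_2$ in the stated accuracy comes from the $J\asymp\mathfrak L_2$ inner levels, and your $\Delta^2/N$ accounting loses an extra $\mathfrak L_2$.

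Case (ii) has two further gaps. First, you never construct the admissible reference points that the smoothed-sampler interface and the localized FORS call require. The paper obtains them by gradient descent, whose cost is logarithmic in the starting gradient norm, and controls the expected cost through second moments. Under \ref{ass:smoothed-wp}, however, the implementable chain has no moment control, because it is only TV-close to a law with R\'enyi control. The outer descent must therefore be capped at $\asymp\log(C\kappa N/\delta)$ iterations, with the capping probability charged to the TV budget. That cap is the source of the additive $C\kappa\mathfrak L_q(\mathfrak L_q+\log(C\kappa\mathfrak L_q/\delta))$ term, which you instead attribute to FORS; the localized FORS costs only $O(1)$ per call.

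Second, defining $\widehat\pi^\dagger$ by conditioning on the intersection of good events alters the ideal marginal of each coupling. Its renormalization also adds $\frac q{q-1}\log\frac1{1-\delta}\approx2\delta$ to the R\'enyi bound, which is not $\le\Delta^2$ when $\delta\gg\Delta^2$; the theorem fixes $\delta$ and $\Delta$ independently. The paper avoids both problems with the replacement construction of \cref{lem:wq-truncation}: keep $Y$ on the good event and switch to the ideal $Y^\star$ otherwise. This gives a genuine law at $W_\infty$ distance $\eps_p\delta^{-1/p}$ from the exact ideal law. The dagger kernels are then composed level by level, $\widehat R_j^\dagger=\widehat P_j^\dagger\widehat K_j^\dagger$, with TV errors adding through maximal couplings.

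A minor point on initialization: with variance exactly $1/\beta$, the quadratic terms cancel and $\log(\dd\widehat\mu_0/\dd\pi)$ retains the unbounded term $\ip{\grad V(x_{\rm ref})}{x-x_{\rm ref}}$, so $\Ren_\infty=\infty$ in general. Use variance $(2\beta)^{-1}$, and bound the normalizing constant via the LSI covariance bound and maximum entropy.
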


Although the statement of~\cref{thm:recursive-warm-generator} is cumbersome, it is quite powerful: it states that once we have a smoothed sampler with Wasserstein guarantees, we can immediately obtain a stronger guarantee with a negligible overhead.
Indeed, the cost of the warm start generation is essentially the cost of the original smoothed sampler, called with a slightly smaller accuracy \(\Delta/\sqrt{q \kappa}\) up to logarithmic factors.
Crucially, in case~\textup{(ii)}, we only need the smoothed sampler
guarantee in \(W_p\) for an order \(p\) which is logarithmic in
\(1/\delta\).

Due to the use of the outer proximal sampler wrapper, note that the smoothed sampler is only ever called on distributions with condition number bounded by a universal constant; above we take it to be $6$. This will ultimately improve the condition number dependence of our combined sampler.

\subsection{Technical tools}
\label{sec:warm-technical-tools}

\subsubsection{Regularization and reverse transport inequalities}
\label{sec:channels}

The key to the recursive warm start is the regularizing property of the heat flow, which enables proving reverse transport inequalities. We collect together the key inequalities here.

We begin with a simple lemma on truncation.

\begin{lemma}[Truncation]
\label{lem:wq-truncation}
Let \(P,Q\) be probability laws satisfying
\(W_p(P,Q)\le\eps_p\) for some \(p\ge2\).  For every \(0<\delta<1\),
there is a probability law \(P^\dagger\) such that
\begin{equation*}
  \TV(P,P^\dagger)\le\delta\,,
  \qquad
  W_\infty(P^\dagger,Q)
  \le\eps_p\delta^{-1/p}\,.
\end{equation*}
\end{lemma}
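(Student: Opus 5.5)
The approach is to take an optimal $W_p$ coupling and condition it on the event where the displacement is small. By Markov's inequality, the complement of this event has probability at most $\delta$.

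Let $(X,Y)$ be an optimal $W_p$ coupling of $P$ and $Q$, so $\E\norm{X-Y}^p=W_p^p(P,Q)\le\eps_p^p$. Set $r\deq\eps_p\delta^{-1/p}$ and let $E\deq\{\norm{X-Y}\le r\}$. Markov's inequality gives
\[
\Pr(E^c)\le\frac{\E\norm{X-Y}^p}{r^p}\le\delta\,.
\]

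The natural first attempt is $P^\dagger\deq\law(X\mid E)$, but then the comparison would be against $\law(Y\mid E)$ rather than $Q$. So instead I will modify $X$ on $E^c$ and define
\[
X^\dagger\deq X\,\mathbf 1_E+Y\,\mathbf 1_{E^c}\,,\qquad P^\dagger\deq\law(X^\dagger)\,.
\]
This construction has the following properties.
\begin{itemize}
\item The pair $(X^\dagger,Y)$ is a coupling of $P^\dagger$ and $Q$.
\item Almost surely $\norm{X^\dagger-Y}=\norm{X-Y}\,\mathbf 1_E\le r$, so $W_\infty(P^\dagger,Q)\le r=\eps_p\delta^{-1/p}$.
\item Since $X^\dagger=X$ on $E$, the pair $(X,X^\dagger)$ is a coupling of $P$ and $P^\dagger$ under which they disagree only on $E^c$. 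Hence $\TV(P,P^\dagger)\le\Pr(X\ne X^\dagger)\le\Pr(E^c)\le\delta$.
\end{itemize}

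There is no real obstacle. The only points to check are that an optimal coupling exists, which follows from standard compactness when $W_p<\infty$ (otherwise use a near-optimal coupling and a slightly enlarged $r$), and that the TV bound via coupling is the usual one.
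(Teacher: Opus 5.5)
Your proposal is correct and is essentially the paper's own proof: it too takes an optimal $W_p$ coupling, keeps the $P$-sample when the displacement is at most $\eps_p\delta^{-1/p}$ and replaces it by the $Q$-sample otherwise. Markov's inequality then gives the TV bound, and the construction gives the almost-sure $W_\infty$ bound. Your parenthetical fallback to a near-optimal coupling is unnecessary, since $W_p(P,Q)\le\eps_p<\infty$ guarantees an optimal coupling exists; the trivial case $\eps_p=0$, where Markov's inequality would divide by $r=0$, is immediate because then $X=Y$ almost surely.
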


\begin{proof}
Choose an optimal \(W_p\) coupling \((Y,Y^\star)\) of $P$ and $Q$.  Define
\begin{equation*}
  Y^\dagger
  \deq
  \begin{cases}
    Y\,,&\norm{Y-Y^\star}\le\eps_p\delta^{-1/p}\,,
    \\
    Y^\star\,,&\norm{Y-Y^\star}>\eps_p\delta^{-1/p}\,.
  \end{cases}
\end{equation*}
Markov's inequality shows that \(Y\ne Y^\dagger\) with probability at most
\(\delta\), whereas
\(\norm{Y^\dagger-Y^\star}\le\eps_p\delta^{-1/p}\) almost surely.  Taking
\(P^\dagger\deq\law(Y^\dagger)\) proves the result.
\end{proof}

For \(\tau>0\), let \(\gamma_\tau\deq\cN(0,\tau\Id)\).
The following lemma shows that after applying the heat flow, we can bound stronger information-theoretic divergences in terms of transport distances.

\begin{lemma}[Reverse transport along the heat flow]
\label{lem:gaussian-channel-reverse-transport}
Let \(P,Q\) be probability laws on \(\R^d\).
\begin{enumerate}[label=\textup{(\roman*)},leftmargin=2.4em]
\item If \(P,Q\in\mathcal P_2(\R^d)\), then
\begin{equation*}
  \KL(P*\gamma_\tau \mmid Q*\gamma_\tau)
  \le\frac{\W^2(P,Q)}{2\tau}\,.
\end{equation*}
\item If $P,Q \in \mathcal P_p(\R^d)$, then for every
\(0<\delta<1\), there is a probability law \(P^\dagger\) such that
\begin{equation*}
  \TV(P,P^\dagger)\le\delta\,,
  \qquad \Ren_q(P^\dagger*\gamma_\tau \mmid Q*\gamma_\tau)
  \le\frac{qW_p^2(P,Q)}{2\delta^{2/p}\tau}\,, \qquad\text{for all}~q > 1\,.
\end{equation*}
\item Let \(q\ge2\) and suppose that
\(W_{\psi_2}(P,Q)\le\eps_{\psi_2}\). Then,
\begin{equation*}
  \frac{q\,(q-1)\,\eps_{\psi_2}^2}{2\tau}\le1 \implies
  \Ren_q(P*\gamma_\tau \mmid Q*\gamma_\tau)
  \le \frac{q\log 2}{2\tau}\,\eps_{\psi_2}^2\,.
\end{equation*}
\end{enumerate}
\end{lemma}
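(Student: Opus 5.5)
The plan is to reduce all three parts to a single pointwise estimate for Gaussian translates and then average over a coupling. For \(x,y\in\R^d\), the laws \(\cN(x,\tau\Id)\) and \(\cN(y,\tau\Id)\) satisfy
\begin{equation*}
  \KL\bigl(\cN(x,\tau\Id)\mmid\cN(y,\tau\Id)\bigr)=\frac{\norm{x-y}^2}{2\tau}\,,
  \qquad
  \Ren_q\bigl(\cN(x,\tau\Id)\mmid\cN(y,\tau\Id)\bigr)=\frac{q\,\norm{x-y}^2}{2\tau}\,.
\end{equation*}
Fix a coupling \((X,Y)\) of \(P\) and \(Q\). Then \(P*\gamma_\tau\) and \(Q*\gamma_\tau\) are the mixtures of \(\cN(X,\tau\Id)\) and \(\cN(Y,\tau\Id)\) over this common coupling.

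\textbf{Part (i).} I will use joint convexity of KL, or equivalently the data processing inequality applied to the joint law of \((X,X+\sqrt\tau\,G)\) versus \((Y,Y+\sqrt\tau\,G)\) under the coupling. This gives \(\KL\le\E\norm{X-Y}^2/(2\tau)\). Choosing an optimal \(W_2\) coupling yields the claim.

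\textbf{Part (ii).} First apply \cref{lem:wq-truncation} to obtain \(P^\dagger\) with \(\TV(P,P^\dagger)\le\delta\) and a coupling \((X^\dagger,Y)\) with \(\norm{X^\dagger-Y}\le r\deq W_p(P,Q)\,\delta^{-1/p}\) almost surely. The quantity \(\exp\{(q-1)\Ren_q\}\) is jointly convex in the pair of measures, since \((a,b)\mapsto a^qb^{1-q}\) is jointly convex; this is the standard quasi-convexity of R\'enyi divergence. Writing the two convolved laws as mixtures over the coupling gives
\begin{equation*}
  \exp\bigl\{(q-1)\Ren_q(P^\dagger*\gamma_\tau\mmid Q*\gamma_\tau)\bigr\}
  \le\E\exp\Bigl\{\frac{q\,(q-1)\,\norm{X^\dagger-Y}^2}{2\tau}\Bigr\}
  \le\exp\Bigl\{\frac{q\,(q-1)\,r^2}{2\tau}\Bigr\}\,.
\end{equation*}
Taking logarithms and dividing by \(q-1\) gives \(q\,r^2/(2\tau)\), which is exactly the stated bound. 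The same \(P^\dagger\) works for every \(q>1\).

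\textbf{Part (iii).} The same convexity bound with an optimal \(W_{\psi_2}\) coupling gives
\begin{equation*}
  (q-1)\,\Ren_q\le\log\E\exp\{\lambda\,\norm{X-Y}^2/\eps_{\psi_2}^2\}\,,
  \qquad
  \lambda\deq\frac{q\,(q-1)\,\eps_{\psi_2}^2}{2\tau}\le1\,.
\end{equation*}
Jensen's inequality applied to the concave map \(t\mapsto t^\lambda\) gives \(\E e^{\lambda S}\le(\E e^S)^\lambda\le2^\lambda\), where \(S\deq\norm{X-Y}^2/\eps_{\psi_2}^2\). Hence \((q-1)\Ren_q\le\lambda\log2\), which rearranges to \(\Ren_q\le q\,\eps_{\psi_2}^2\log2/(2\tau)\).

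The only delicate point is the joint convexity of \(\exp\{(q-1)\Ren_q\}\) over mixtures indexed by a coupling. Pointwise convexity of \((a,b)\mapsto a^qb^{1-q}\) together with Jensen's inequality handles this. Infima over couplings are handled by approximating with near-optimal couplings if an optimal one is not attained.
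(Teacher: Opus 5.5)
Your proposal is correct. Parts (i) and (ii) follow the paper's proof. In (i) the paper also uses joint convexity of $\KL$. In (ii) it uses the truncation lemma followed by the bound $\Ren_q(P^\dagger*\gamma_\tau\mmid Q*\gamma_\tau)\le\frac{q}{2\tau}\,W_\infty^2(P^\dagger,Q)$, which the paper only asserts. You actually justify that bound, via joint convexity of $(P,Q)\mapsto\int(\dd P/\dd Q)^q\,\dd Q$ and the exact R\'enyi divergence between Gaussian translates.

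The genuine difference is part (iii). The paper gives no argument there; it cites \cite[Remark~3.8]{AC24}, specialized to zero regularization and zero initial shift. You derive it in two lines from the same mixture bound, $(q-1)\,\Ren_q\le\log\E e^{\lambda S}$ with $S=\norm{X-Y}^2/\eps_{\psi_2}^2$, followed by Jensen for the concave map $t\mapsto t^\lambda$ with $\lambda\le1$. Your route is self-contained and shows that all three parts come from one mechanism: convexity over a coupling plus exact divergences between Gaussian translates. It also gives (iii) for every $q>1$. The cited result is a more general statement with regularization and shift parameters, of which (iii) is a special case; that generality is not needed here.

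Two small points of rigor.

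\begin{itemize}
\item The data-processing variant in (i) must keep both coupled variables, i.e.\ compare $\law(X,Y,X+\sqrt\tau\,G)$ with $\law(X,Y,Y+\sqrt\tau\,G)$. Taken literally, comparing the laws of $(X,X+\sqrt\tau\,G)$ and $(Y,Y+\sqrt\tau\,G)$ only gives $\KL(P*\gamma_\tau\mmid Q*\gamma_\tau)\le\KL(P\mmid Q)$.
\item In (iii), near-optimal couplings do not cover the boundary case $\lambda=1$ with $W_{\psi_2}(P,Q)=\eps_{\psi_2}$. Any coupling level above $\eps_{\psi_2}$ pushes the exponent past $1$, and the Jensen step then fails. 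The fix is to note that an optimal $W_{\psi_2}$ coupling exists. This follows from tightness of the set of couplings, weak lower semicontinuity of $\E\exp\{\norm{X-Y}^2/\eps^2\}$ in the coupling, and monotone convergence as $\eps$ decreases to $W_{\psi_2}(P,Q)$.
\end{itemize}
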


\begin{proof}
    Statement (i) follows from the joint convexity of the KL divergence.

For (ii), choose \(P^\dagger\) as in \cref{lem:wq-truncation} and use the fact that
\begin{equation*}
    \Ren_q(P^\dagger*\gamma_\tau \mmid Q*\gamma_\tau)
    \le \frac{q}{2\tau}\,W_\infty^2(P^\dagger, Q)\,.
\end{equation*}

Part~\textup{(iii)} is \cite[Remark~3.8]{AC24}, with the regularization
parameter there specialized to zero, with zero initial shift,
the source displacement parameter equal to \(W_{\psi_2}(P,Q)\), and
\(\sigma^2=t\).
\end{proof}

\subsubsection{RGO calculus}
\label{sec:one-stage-warm}
\label{sec:geometry}

For $U \in C^2(\R^d)$, $A \in (0,\infty]$, and $u \in\R^d$, define the RGO potential
\begin{equation*}
  U_{A,u}\deq U+\frac{\norm{\cdot-u}^2}{2A}\,,
\end{equation*}
and let $R^U_{A,u} \propto \exp(-U_{A,u})$ be the corresponding RGO\@.

In the lemma below, we note that the RGO of an RGO is itself an RGO for the original distribution, and we record how the parameters change. The proof is an elementary calculation and is omitted.

\begin{lemma}[RGO calculus]
\label{lem:rgo-calculus}
Suppose that \(U\) satisfies
\(\kappa^{-1}\Id\preceq\Hess U\preceq\Id\).
Then, $U_{A,u}$ is $\alpha_A$-strongly convex and $\beta_A$-smooth, where
\begin{equation*}
  \alpha_A\deq\kappa^{-1}+A^{-1}\,,
  \qquad
  \beta_A\deq1+A^{-1}\,,
  \qquad
  \kappa_A\deq\frac{\beta_A}{\alpha_A}\,.
\end{equation*}
Moreover,
\begin{equation*}
  R^{U_{A,u}}_{a,y}=R^U_{A^+,u^+}\,,
\end{equation*}
where
\begin{equation}
  \begin{aligned}
    \frac1{A^+}&\deq\frac1A+\frac1a\,,
    \qquad u^+\deq A^+\,\Bigl(\frac uA+\frac ya\Bigr)\,,
    \qquad \kappa_{A^+}
    =\frac{a \beta_A+1}{a \beta_A+\kappa_A}\,\kappa_A\,.
  \end{aligned}
  \label{eq:family-update}
\end{equation}
\end{lemma}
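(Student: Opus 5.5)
The plan is to verify the three assertions of \cref{lem:rgo-calculus} directly from the definitions: the curvature bounds, the closure identity, and the formula for \(\kappa_{A^+}\).

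\emph{Curvature bounds.} Since \(\Hess U_{A,u}=\Hess U+A^{-1}\Id\), the assumption \(\kappa^{-1}\Id\preceq\Hess U\preceq\Id\) gives
\[
\alpha_A\Id\preceq\Hess U_{A,u}\preceq\beta_A\Id
\]
immediately, with \(A=\infty\) read as \(A^{-1}=0\).

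\emph{Closure identity.} Write out the density of \(R^{U_{A,u}}_{a,y}\):
\[
\exp\Bigl\{-U(x)-\frac{\norm{x-u}^2}{2A}-\frac{\norm{x-y}^2}{2a}\Bigr\}.
\]
I then complete the square in \(x\) for the two quadratic terms. The quadratic coefficient is \(\frac12(A^{-1}+a^{-1})\norm x^2=\norm x^2/(2A^+)\). The linear term is
\[
-\ip{x}{u/A+y/a}=-\ip{x}{u^+}/A^+ .
\]
Hence the exponent equals \(-U(x)-\norm{x-u^+}^2/(2A^+)\) plus a constant independent of \(x\). After normalization this is exactly the density of \(R^U_{A^+,u^+}\).

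\emph{Condition number.} By the first part, \(\kappa_{A^+}=\beta_{A^+}/\alpha_{A^+}\), where \(\beta_{A^+}=\beta_A+a^{-1}\) and \(\alpha_{A^+}=\alpha_A+a^{-1}\). Multiplying numerator and denominator by \(a\) and then by \(\kappa_A/\beta_A\), and using \(\alpha_A=\beta_A/\kappa_A\), gives
\[
\kappa_{A^+}=\frac{a\beta_A+1}{a\alpha_A+1}=\frac{a\beta_A+1}{a\beta_A+\kappa_A}\,\kappa_A .
\]

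There is no real obstacle here. The only care needed is the edge case \(A=\infty\), where \(A^+=a\) and \(u^+=y\) are consistent with the convention \(u/A=0\).
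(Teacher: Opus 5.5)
Your proposal is correct and is exactly the elementary calculation the paper omits: the curvature bounds follow from $\Hess U_{A,u}=\Hess U+A^{-1}\Id$, the closure identity follows by completing the square, and $\kappa_{A^+}=(\beta_A+a^{-1})/(\alpha_A+a^{-1})$ is rewritten via $\alpha_A=\beta_A/\kappa_A$. One cosmetic slip: $-\ip{x}{u/A+y/a}$ is the linear term of the potential, not of the exponent you displayed, and this does not affect the conclusion.
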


\subsection{Recursive RGO sampler}
\label{sec:adaptive}

The following result gives the formal guarantee for
\cref{alg:generic-warm-overview}.
This is essentially the same as \cref{thm:recursive-warm-generator}, but without the outer proximal sampler wrapper.

\begin{theorem}[Recursive RGO sampler]
\label{thm:recursive-rgo-sampler}
Assume \eqref{eq:curvature}, and suppose that the sampler \(\mathfrak A\)
satisfies one of
\ref{ass:smoothed-w2}, \ref{ass:smoothed-wp}, or
\ref{ass:smoothed-psi2}.  Fix $A_0\in(0,\infty]$, $u_0\in\R^d$, and correspondingly  a target \(R^V_{A_0,u_0}\) for which an admissible reference point $x_{\rm ref}$ is
supplied, let \(\kappa_0\deq\kappa_{A_0}\), and fix
\(0<\delta_0,\Delta_0\le1/2\).  For \(q\ge2\), write
\begin{equation*}
  \mathfrak L_q
  \deq q+\log\frac{\kappa_0dq}{\Delta_0}\,.
\end{equation*}
Then
\cref{alg:generic-warm-overview} has the following guarantees.
\begin{enumerate}[label=\textup{(\roman*)},leftmargin=2.4em]
\item Under \ref{ass:smoothed-w2}, it returns a sample with law
\(\widehat R^V_{A_0,u_0}\) satisfying
\begin{equation*}
  \KL(\widehat R^V_{A_0,u_0}\mmid R^V_{A_0,u_0})
  \le\Delta_0^2\,.
\end{equation*}
The expected total number of gradient queries is at most
\begin{equation*}
  C\mathfrak L_2\,
  \mathsf C_2\Bigl(
    2\kappa_0,d,\frac{c\Delta_0}{\sqrt{\mathfrak L_2}}
  \Bigr)
  +C\kappa_0\mathfrak L_2\,.
\end{equation*}
\item Fix \(q\ge2\).
Under \ref{ass:smoothed-wp}, it returns a sample with law
\(\widehat R^V_{A_0,u_0}\) for which there exists
\((\widehat R^V_{A_0,u_0})^\dagger\) satisfying
\begin{equation*}
  \TV\bigl(\widehat R^V_{A_0,u_0},
    (\widehat R^V_{A_0,u_0})^\dagger\bigr)
  \le\delta_0\,,
  \qquad
  \Ren_q\bigl((\widehat R^V_{A_0,u_0})^\dagger
    \bigm\Vert R^V_{A_0,u_0}\bigr)
  \le\Delta_0^2\,.
\end{equation*}
The expected total number of gradient queries is at most
\begin{equation*}
  C\mathfrak L_q\,
  \mathsf C_p\Bigl(
    2\kappa_0,d,\frac{c\Delta_0}{\sqrt{q\mathfrak L_q}}
  \Bigr)
  +C\kappa_0\mathfrak L_q\,,
\end{equation*}
where $p \deq 2 \vee \lceil \log(C\mathfrak L_q/\delta_0)\rceil$.
\item Fix \(q\ge2\).  Under \ref{ass:smoothed-psi2}, it returns a sample
with law \(\widehat R^V_{A_0,u_0}\) satisfying
\begin{equation*}
  \Ren_q(\widehat R^V_{A_0,u_0}\mmid R^V_{A_0,u_0})
  \le\Delta_0^2\,.
\end{equation*}
The expected total number of gradient queries is at most
\begin{equation*}
  C\mathfrak L_q\,
  \mathsf C_{\psi_2}\Bigl(
    2\kappa_0,d,
    c\min\Bigl\{\frac{1}{q},
      \frac{\Delta_0}{\sqrt{q\mathfrak L_q}}
    \Bigr\}
  \Bigr)
  +C\kappa_0\mathfrak L_q\,.
\end{equation*}
\end{enumerate}
\end{theorem}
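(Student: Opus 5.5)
The plan is to analyze \cref{alg:generic-warm-overview} level by level. Each level's error is a divergence between two RGOs sharing the same kernel $R^{V_{A,u}}_{a,\cdot}$. The errors then propagate through the recursion by the data processing inequality and a chain rule.

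\textbf{One level.} At a level with parameters $(A,u)$, the smoothed sampler is called on $V_{A,u}$, whose condition number is $\kappa_A\le\kappa_0\le2\kappa_0$. By monotonicity of $\mathsf C_\bullet$ we may charge each call at $2\kappa_0$. After rescaling, the output $Z$ satisfies $\sqrt{\alpha_A}\,W_\bullet(\law Z,R^V_{A,u}*\cN(0,\beta_A^{-1}\eta\Id))\le\eps$. Adding $\sqrt{\tau/\beta_A}\,G$ and applying \cref{lem:gaussian-channel-reverse-transport} with Gaussian variance $\tau/\beta_A$ gives the following bounds, with $\eps_{\rm abs}^2=\eps^2/\alpha_A$:
\begin{itemize}
\item case (i): $\KL(\law Y\mmid R^V_{A,u}*\cN(0,a\Id))\le \beta_A\eps^2/(2\tau\alpha_A)=\kappa_A\eps^2/(2\tau)$;
\item case (ii): the same bound times $q\,\delta^{-2/p}$, after a $\delta$-truncation;
\item case (iii): the analogous R\'enyi bound, under the side condition $q(q-1)\kappa_A\eps^2/\tau\lesssim1$.
\end{itemize}
Conditionally on $Y$, we then sample $R^{V_{A,u}}_{a,Y}=R^V_{A^+,u^+}$ by \cref{lem:rgo-calculus}, and exact sampling returns exactly the target $R^V_{A,u}$. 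By data processing, the level's contribution to the final divergence is at most the divergence on the $Y$-marginal. For KL we use the chain rule. For R\'enyi we use the weak triangle inequality / composition rule for $\Ren_q$, at the cost of increasing the order, say $q\mapsto 2q$, which is why $\mathfrak L_q$ contains $q$. For TV we simply sum.

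\textbf{Parameter choice and depth.} I would take $\tau\asymp\kappa_A$, or $\tau\asymp1$ in normalized units. By \eqref{eq:family-update}, with $a\beta_A=\eta+\tau\gtrsim\kappa_A$, the new condition number satisfies $\kappa_{A^+}\le \tfrac{a\beta_A+1}{a\beta_A+\kappa_A}\kappa_A$. This is at most a constant fraction of $\kappa_A$ once $\kappa_A$ is large, and when $\kappa_A$ is already $O(1)$ the ratio $A^+/A$ shrinks geometrically.

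I therefore expect $O(\log(\kappa_0 d/\Delta_0))\le\mathfrak L_q$ levels before $A\le\underline A$. The terminal level is chosen so that localized FORS (\cref{thm:fors-implementation}) achieves divergence $\Delta_0^2/2$ at cost $O(\kappa_0\mathfrak L_q)$. I would verify this against the FORS cost $\widetilde O(1+(A^2d)^{1/3})$ with $\underline A$ polylogarithmically small, which requires checking the FORS statement.

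Summing the level errors over $\mathfrak L_q$ levels forces per-level accuracy $\eps\asymp\Delta_0/\sqrt{\mathfrak L_q}$ in case (i), and $\eps\asymp\Delta_0/\sqrt{q\mathfrak L_q}$ in case (ii). In case (ii) we set $\delta=\delta_0/\mathfrak L_q$ and $p=\lceil\log(C\mathfrak L_q/\delta_0)\rceil$, so that $\delta^{-2/p}=O(1)$. In case (iii) we additionally need $\eps\lesssim1/q$, which accounts for the $\min\{1/q,\cdot\}$. This yields the stated query bounds: $\mathfrak L_q$ calls of $\mathsf C_\bullet$ at condition number $2\kappa_0$, plus the terminal cost.

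\textbf{Main obstacle.} The hardest step is the R\'enyi composition across levels in (ii)--(iii). The guarantee at each level holds for $\law Y$, while the downstream recursion runs from the approximate $Y$ rather than the exact one. So we need a chain rule of the form $\Ren_q(\widehat\mu\widehat K\mmid\mu K)\le \Ren_{q'}(\widehat\mu\mmid\mu)+\sup_y\Ren_{q''}(\widehat K_y\mmid K_y)$, where the inner bound must hold uniformly over the random parameters $(A^+,u^+)$. I would try to prove by induction on depth that the recursive sampler's error is uniform in $u$; this is plausible because none of the bounds depend on $u$. I would handle the order inflation by running the induction with a $q$ that halves per level, or else by using the Hölder-type chain rule with order $2q$ and absorbing the $\log(eq)$ loss.

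A secondary issue is maintaining an admissible reference point at each level. This could be produced by a few gradient steps on $V_{A^+,u^+}$ from the previous one, costing $O(\kappa_0\log)$ per level, which matches the $C\kappa_0\mathfrak L_q$ term.
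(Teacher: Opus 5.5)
\textbf{Overall.} Your architecture matches the paper's: heat-flow reverse transport at each stage via \cref{lem:gaussian-channel-reverse-transport}, the closure identity of \cref{lem:rgo-calculus}, backward induction by data processing and a chain rule, the choice $\tau\asymp\kappa_A$ with the resulting geometric progress, and the same per-stage accuracies. There are, however, two genuine gaps.

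\textbf{R\'enyi composition.} The step you call the main obstacle rests on a false premise, and the remedy you propose would break the theorem. R\'enyi composition needs no change of order:
\begin{equation*}
  \Ren_q(\widehat P\widehat K\mmid PK)
  \le\Ren_q(\widehat P\mmid P)
  +\sup_y\Ren_q\bigl(\widehat K(y,\cdot)\bigm\Vert K(y,\cdot)\bigr)\,.
\end{equation*}
The reason is that the $q$-th moment of the likelihood ratio between the joint laws $\widehat P(\dd y)\,\widehat K(y,\dd x)$ and $P(\dd y)\,K(y,\dd x)$ factorizes. You integrate out $x$ first and bound the inner integral by its supremum over $y$; data processing then passes to the $x$-marginal. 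This is the composition rule the paper uses, including for the comparison laws $\widehat P_j^\dagger\widehat K_j^\dagger$ in case (ii).

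If you really doubled the order at each stage (equivalently, started from a larger order and halved it), then after $J\asymp\mathfrak L_q$ stages the order would be $q2^{J}$, which is polynomial in $\kappa_0d/\Delta_0$. The bounds in \cref{lem:gaussian-channel-reverse-transport}(ii)--(iii) grow linearly in the order, and (iii) also needs $\eps\lesssim1/\text{order}$. So the requested accuracies would have to be polynomially small, instead of $c\Delta_0/\sqrt{q\mathfrak L_q}$ and $c/q$. In particular, the $q$ inside $\mathfrak L_q$ is not caused by order inflation.

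\textbf{Terminal stage.} The $q$ in $\mathfrak L_q$ comes from the terminal stage, which is where the second problem lies. If $\underline A$ is only polylogarithmically small, the applicable FORS result is part~1 of \cref{thm:fors-implementation}. Its cost $C\mathfrak L^{5/3}\{1+(\underline A^2d)^{1/3}\}$ is of order $d^{1/3}$ up to logarithms, and it needs a proximal query; neither fits the claimed gradient budget.

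You need $\underline A\asymp1/(\sqrt{d\mathfrak L_q}+\mathfrak L_q)$, which is the localization condition of part~2 at order $q$ and accuracy $\Delta_0/\sqrt{J+1}$. The terminal call is then gradient-only with $O(1)$ expected queries, given a point $x_+$ with $\norm{u_J-A_J\grad V(x_+)-x_+}\le\sqrt{dA_J}$ produced by gradient descent. Because $A_j$ contracts by a fixed factor once $\kappa_j<2$ (\cref{lem:recursive-geometric-progress}), this smaller $\underline A$ still needs only $J=O(\log(e\kappa_0/\underline A))\le C\mathfrak L_q$ stages.

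\textbf{Reference-point cost.} Charging $O(\kappa_0\log)$ for reference points at each of these $\asymp\mathfrak L_q$ stages gives $\kappa_0\mathfrak L_q\log$, not $C\kappa_0\mathfrak L_q$. The decay $\kappa_{j+1}\le\frac45\kappa_j$ in the ill-conditioned phase fixes this: those stages sum to $O(\kappa_0)$ times a logarithm, and the remaining stages are uniformly well conditioned.
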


Although \cref{alg:generic-warm-overview} is written recursively, it is convenient to analyze it iteratively.
Thus, we break down the algorithm into stages, so that the target at stage $j$ is \(V_{A_j,u_j}\).
Let \(\eta_j\)
be the reported smoothing level at stage $j$, and denote the rescaled output by
\(X_j\); its law approximates $R^V_{A_j,u_j}*
  \cN(0,\beta_{A_j}^{-1}\eta_j\Id)$
in a Wasserstein metric.  Draw an independent
\(G_j\sim\cN(0,\Id)\) and set
\begin{equation*}
  Y_j\deq X_j+\sqrt{\frac{\tau_j}{\beta_{A_j}}}\,G_j\,,
  \qquad
  a_j\deq\frac{\eta_j+\tau_j}{\beta_{A_j}}\,.
\end{equation*}
Thus, $\law(Y_j)$ is close to $R_{A_j,u_j}^V * \cN(0,a_j I)$.
The algorithm then updates \((A_{j+1},u_{j+1})\) by
\eqref{eq:family-update} with RGO variance \(a_j\) and center \(Y_j\), as
in \cref{alg:generic-warm-overview}.  At the terminal level \(J\),
it samples the remaining target \(R^V_{A_J,u_J}\) with the FORS subroutine.

At stage \(j\), write \(\kappa_j\deq\kappa_{A_j}\) and choose the heat flow
duration
\begin{equation}
  \tau_j
  \deq
  \begin{cases}
    \kappa_j\,,&\kappa_j\ge2\,,
    \\
    c_0\,,&\kappa_j<2\,.
  \end{cases}
  \label{eq:shield-choice}
\end{equation}
\begin{lemma}[Geometric progress of the recursive stages]
\label{lem:recursive-geometric-progress}
Suppose that \(c_0\in(0,1/4)\) and
\(0<\eta_j\le c_0\) at every stage.
\begin{enumerate}[label=\textup{(\roman*)},leftmargin=2.4em]
\item If \(\kappa_j\ge2\), then
\begin{equation}
  \frac12\,\kappa_j
  \le\kappa_{j+1}
  \le\frac45\,\kappa_j\,.
  \label{eq:high-condition-reduction}
\end{equation}
\item If \(\kappa_j<2\), then \(A_{j+1}\le2c_0\), and all later
well-conditioned stages obey
\begin{equation*}
  A_{j+1}\le\rho_0A_j\,,
  \qquad
  \rho_0\deq\frac{2c_0}{1+2c_0}<1\,.
\end{equation*}
\end{enumerate}
\end{lemma}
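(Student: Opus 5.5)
Everything reduces to the explicit formula in \cref{lem:rgo-calculus}. With $a_j=(\eta_j+\tau_j)/\beta_{A_j}$ we have $a_j\beta_{A_j}=\eta_j+\tau_j$. Hence
\begin{equation*}
  \kappa_{j+1}=\frac{\eta_j+\tau_j+1}{\eta_j+\tau_j+\kappa_j}\,\kappa_j\,.
\end{equation*}
The plan is to bound this ratio in each regime, using $0<\eta_j\le c_0<1/4$ and the choice \eqref{eq:shield-choice}.

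\textbf{Part (i).} Suppose $\kappa_j\ge2$, so $\tau_j=\kappa_j$. The ratio is then $r\deq(\kappa_j+\eta_j+1)/(2\kappa_j+\eta_j)$.

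For the lower bound, $r\ge1/2$ is equivalent to $2\kappa_j+2\eta_j+2\ge2\kappa_j+\eta_j$, which holds trivially.

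For the upper bound, $r\le4/5$ is equivalent to $5\kappa_j+5\eta_j+5\le8\kappa_j+4\eta_j$, that is, $3\kappa_j\ge5+\eta_j$. This holds since $\kappa_j\ge2$ and $\eta_j<1$, giving $3\kappa_j\ge6>5+\eta_j$.

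\textbf{Part (ii).} Suppose $\kappa_j<2$, so $\tau_j=c_0$ and $a_j=(\eta_j+c_0)/\beta_{A_j}\le 2c_0/\beta_{A_j}\le2c_0$, since $\beta_{A_j}=1+A_j^{-1}\ge1$. Because $1/A_{j+1}=1/A_j+1/a_j\ge1/a_j$, we get $A_{j+1}\le a_j\le2c_0$.

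For the later stages, first check that the condition number stays below $2$. The ratio formula shows $\kappa_{j+1}\le\kappa_j$ whenever $\kappa_j\ge1$, and $\kappa_A\ge1$ always, so once $\kappa_j<2$ all subsequent $\kappa$ remain below $2$. Hence every later stage also uses $\tau=c_0$, and the bound $a\le 2c_0/\beta_A$ continues to hold.

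Now write $\beta_A=(A+1)/A$, so that
\begin{equation*}
  a\le \frac{2c_0A}{1+A}\,.
\end{equation*}
Then $A^{+}=Aa/(A+a)$, and this is increasing in $a$. Substituting the upper bound on $a$ gives
\begin{equation*}
  A^+\le\frac{A\cdot 2c_0A/(1+A)}{A+2c_0A/(1+A)}=\frac{2c_0A}{1+A+2c_0}\le\frac{2c_0}{1+2c_0}\,A=\rho_0A\,,
\end{equation*}
using $A>0$ in the last step. This is the claimed geometric decay $A_{j+1}\le\rho_0A_j$, with $\rho_0<1$ because $c_0>0$.

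\textbf{Main obstacle.} The only step needing care is confirming that once $\kappa_j<2$, the algorithm never re-enters the $\tau=\kappa$ branch. The monotonicity of the ratio formula settles this, so the whole argument is elementary algebra.
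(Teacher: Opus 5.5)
Your proof is correct and follows essentially the paper's route: substitute $a_j\beta_{A_j}=\eta_j+\tau_j$ into the update formulas of \cref{lem:rgo-calculus} and bound the resulting ratios in each regime. Two of your steps make explicit what the paper leaves implicit: the check $3\kappa_j\ge 5+\eta_j$ shows that $c_0<1/4$ already suffices, so the paper's ``after decreasing $c_0$ if necessary'' is not needed, and the monotonicity $\kappa_{j+1}\le\kappa_j$ shows the recursion never re-enters the $\tau_j=\kappa_j$ branch.
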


\begin{proof}
In the first case, by
\eqref{eq:shield-choice}, $\kappa_j
  =\tau_j
  \le\eta_j + \tau_j
  \le\kappa_j+c_0$.
Substitution in
\eqref{eq:family-update} gives
\eqref{eq:high-condition-reduction} after decreasing \(c_0\) if necessary.

In the well-conditioned regime,
\(c_0\le\eta_j+\tau_j\le2c_0\).
By \eqref{eq:family-update}, the definition of \(a_j\), and
\(A_j\beta_{A_j}=A_j+1\),
\begin{equation*}
  A_{j+1}
  =\frac{A_ja_j}{A_j+a_j}
  =\frac{A_j\,(\eta_j + \tau_j)}
    {A_j\beta_{A_j}+ \eta_j + \tau_j}
  =\frac{A_j\,(\eta_j+\tau_j)}
    {A_j+1+\eta_j + \tau_j}\,.
\end{equation*}
This yields
\begin{align*}
  A_{j+1}
  &\le \tau_j+\eta_j
  \le2c_0\,,
  \qquad
  \frac{A_{j+1}}{A_j}
  =\frac{\eta_j+\tau_j}{A_j+\eta_j+\tau_j+1}
  \le\frac{2c_0}{1+2c_0}\,.\qedhere
\end{align*}
\end{proof}

\begin{proof}[Proof of \cref{thm:recursive-rgo-sampler}]
\noindent\textbf{Recursive error propagation.}
Here and below, take \(q=2\) under \ref{ass:smoothed-w2}.  For sufficiently small and
large universal constants \(c,C>0\), respectively, set
\begin{equation}
  \underline A
  \deq\frac{c}{\sqrt{d\mathfrak L_q}+\mathfrak L_q}\,,
  \qquad
  J
  \deq
  \Bigl\lceil
    C\log\frac{e\kappa_0}{\underline A}
  \Bigr\rceil
  \le C\mathfrak L_q\,.
  \label{eq:stage-count}
\end{equation}
At every stage
\(j\in\{0,\ldots,J-1\}\), call the smoothed sampler with accuracy
\begin{align}
  \eps_{2,j}^2
  &\deq \frac{2\tau_j\Delta_0^2}{\kappa_j\,(J+1)}
  &&\text{under \ref{ass:smoothed-w2}}\,,
  \label{eq:center-request-kl}
  \\
  \eps_{p,j}^2
  &\deq \frac{2e^{-2}\tau_j\Delta_0^2}{q\kappa_j\,(J+1)}\,,
  \qquad
  p\deq 2 \vee \Bigl\lceil \log \frac{C\mathfrak L_q}{\delta_0} \Bigr\rceil
  &&\text{under \ref{ass:smoothed-wp}}\,,
  \label{eq:center-request-proxy}
  \\
  \eps_{\psi_2,j}^2
  &\deq \frac{c\tau_j}{\kappa_j}\min\Bigl\{\frac{1}{q\,(q-1)},
    \frac{\Delta_0^2}{q\,(J+1)}
  \Bigr\}
  &&\text{under \ref{ass:smoothed-psi2}}\,.
  \label{eq:center-request-renyi}
\end{align}
Fix a stage \(j<J\) and condition on the complete history up to that stage.
Write \(\widehat P_j\) for the conditional law of \(Y_j\), and
write \(P_j\deq R^V_{A_j,u_j}*\cN(0,a_j\Id)\)
for its ideal law.
Also, let $K_j(y,\cdot) \deq R^{V_{A_j,u_j}}_{a_j,y}$,
and let \(\widehat K_j(y,\cdot)\) be the conditional law returned by the
remaining recursive call after the center \(y\) is selected.
Thus, $R_j \deq R^V_{A_j,u_j} = P_j K_j$, and the
output law from level \(j\) is $\widehat R_j = \widehat P_j \widehat K_j$.
We prove by backward induction that, conditional on every history,
\begin{align*}
  \KL(\widehat R_j\mmid R_j)
  &\le\frac{J-j+1}{J+1}\,\Delta_0^2
  &&\text{under \ref{ass:smoothed-w2}}\,,
  \\
  \Ren_q(\widehat R_j^\dagger\mmid R_j)
  &\le\frac{J-j+1}{J+1}\,\Delta_0^2\,,
  \qquad
  \TV(\widehat R_j,\widehat R_j^\dagger)
  \le\frac{J-j}{J}\,\delta_0
  &&\text{under \ref{ass:smoothed-wp}}\,,
  \\
  \Ren_q(\widehat R_j\mmid R_j)
  &\le\frac{J-j+1}{J+1}\,\Delta_0^2
  &&\text{under \ref{ass:smoothed-psi2}}\,.
\end{align*}
The terminal construction below establishes the base case \(j=J\), with
\(\widehat R_J^\dagger=\widehat R_J\) under
\ref{ass:smoothed-wp}.

Suppose that these inequalities are valid at stage $j+1$.  Under
\ref{ass:smoothed-w2}, after rescaling the output, the smoothed sampler provides the guarantee $W_2^2 \le\kappa_j\eps_{2,j}^2/\beta_{A_j}$.
After convolving with
\(\cN(0,\beta_{A_j}^{-1}\tau_j\Id)\), part~\textup{(i)} of
\cref{lem:gaussian-channel-reverse-transport} and
\eqref{eq:center-request-kl} yield
\begin{equation*}
  \KL(\widehat P_j\mmid P_j)
  \le
  \frac{\kappa_j\eps_{2,j}^2/\beta_{A_j}}
    {2\tau_j/\beta_{A_j}}
  =\frac{\kappa_j\eps_{2,j}^2}{2\tau_j}
  =\frac{\Delta_0^2}{J+1}\,.
\end{equation*}
Under \ref{ass:smoothed-wp}, the corresponding guarantee is
\(W_p^2\le\kappa_j\eps_{p,j}^2/\beta_{A_j}\).  Apply
part~\textup{(ii)} of the same lemma with TV budget \(\delta_0/J\).  Since
\(p\ge\log(J/\delta_0)\), we have
\((J/\delta_0)^{2/p}\le e^2\).  Hence there exists \(\widehat P_j^\dagger\) with
\begin{equation*}
  \TV(\widehat P_j,\widehat P_j^\dagger)\le\frac{\delta_0}{J}\,,
  \qquad
  \Ren_q(\widehat P_j^\dagger\mmid P_j)
  \le
  \frac{qe^2\kappa_j\eps_{p,j}^2/\beta_{A_j}}
    {2\tau_j/\beta_{A_j}}
  =\frac{qe^2\kappa_j\eps_{p,j}^2}{2\tau_j}
  =\frac{\Delta_0^2}{J+1}\,.
\end{equation*}
Under \ref{ass:smoothed-psi2}, the corresponding guarantee is
\(W_{\psi_2}^2
\le\kappa_j\eps_{\psi_2,j}^2/\beta_{A_j}\).
For a sufficiently small universal \(c\),
\eqref{eq:center-request-renyi} verifies both
\begin{equation*}
  \frac{q(q-1)\kappa_j\eps_{\psi_2,j}^2/\beta_{A_j}}
    {2\tau_j/\beta_{A_j}}
  =\frac{q(q-1)\kappa_j\eps_{\psi_2,j}^2}{2\tau_j}
  \le1
  \qquad\text{and}\qquad
  \Ren_q(\widehat P_j\mmid P_j)
  \le\frac{\Delta_0^2}{J+1}\,.
\end{equation*}
We now verify the induction step.  Under \ref{ass:smoothed-w2}, the KL
chain rule and data processing give
\begin{align*}
  \KL(\widehat R_j\mmid R_j)
  &\le
  \KL(\widehat P_j\mmid P_j)
  +\int\KL\bigl(
    \widehat K_j(y,\cdot)\bigm\Vert K_j(y,\cdot)
  \bigr)\,\widehat P_j(\dd y)
  \\
  &\le
  \frac{\Delta_0^2}{J+1}
  +\frac{J-j}{J+1}\,\Delta_0^2
  =\frac{J-j+1}{J+1}\,\Delta_0^2\,.
\end{align*}
Under \ref{ass:smoothed-psi2}, the same argument applies except that we replace the KL chain rule with the R\'enyi composition rule.
Finally, under \ref{ass:smoothed-wp}, we take $\widehat R_j^\dagger \deq \widehat P_j^\dagger \widehat K_j^\dagger$, and we obtain the bound on $\Ren_q(\widehat R_j^\dagger \mmid R_j)$ as before.
Also, we maximally couple \(\widehat P_j\) and
\(\widehat P_j^\dagger\), and on the event that their draws agree, we use the inductive
coupling of $\widehat K_j^\dagger$ and $\widehat K_j$.  This gives
\begin{align*}
  \TV(\widehat R_j,\widehat R_j^\dagger)
  &\le
  \TV(\widehat P_j,\widehat P_j^\dagger)
  +\sup_{y\in\R^d}
    \TV\bigl(
      \widehat K_j(y,\cdot),\widehat K_j^\dagger(y,\cdot)
  \bigr)
  \le
  \frac{\delta_0}{J}
  +\frac{J-j-1}{J}\,\delta_0
  =\frac{J-j}{J}\,\delta_0\,.
\end{align*}
This completes the induction.

\medskip
\noindent\textbf{Reference point construction.}
We verify that every smoothed sampler call is supplied with an admissible
reference point and account for the work needed to construct these points.
Let \(U\) be the current normalized potential and \(Y\) the output of the
smoothed sampler, and write \(\tau\) for the current heat flow duration.
Under \ref{ass:smoothed-w2} and \ref{ass:smoothed-wp}, use \(W_2\le W_p\).
Under \ref{ass:smoothed-psi2}, Jensen's inequality in the definition of
\(W_{\psi_2}\) gives \(W_2\le\sqrt{\log2}\,W_{\psi_2}\).  Thus, in every
case, we can couple $Y$ to the ideal
\(Y^\star=X+\sqrt\eta\, G'\), where \(X\sim\pi_U\).
This yields, by $1$-smoothness of $U$,
\begin{equation*}
  \E\norm{\grad U(Y)}^2
  \le 2\,\bigl(\E\norm{\grad U(Y^\star)}^2 + \kappa_U \varepsilon_j^2\bigr)
  \le C\,(d+\eta d+\kappa_U\eps_j^2)\,,
\end{equation*}
where \(\eps_j\) denotes the accuracy at stage $j$.

In normalized coordinates, the next RGO potential is
\begin{equation*}
  U^+
  \deq U+\frac{\norm{\cdot-(Y+\sqrt\tau\, G)}^2}{2\,(\eta+\tau)}\,.
\end{equation*}
Since
\(\kappa_U^{-1}\Id\preceq\Hess U\preceq\Id\), the potential \(U^+\) is
\(\alpha^+\)-strongly convex and \(\beta^+\)-smooth, where
\begin{equation*}
  \alpha^+\deq\kappa_U^{-1}+(\eta+\tau)^{-1}\,,
  \qquad
  \beta^+\deq1+(\eta+\tau)^{-1}\,.
\end{equation*}
Its gradient at \(Y\) adds only
\(-\sqrt\tau\, G/(\eta+\tau)\), and hence
\begin{equation*}
  \E\norm{\grad U^+(Y)}^2
  \le C\,\Bigl(
    d+\kappa_U\eps_j^2+\frac{\tau d}{(\eta+\tau)^2}
  \Bigr)\,.
\end{equation*}
Gradient descent with step size \(1/\beta^+\) produces iterates $\{x_k\}_{k\ge 0}$ with
\begin{equation*}
  \norm{\grad U^+(x_k)}^2
  \le \kappa^+\,\Bigl(1-\frac1{\kappa^+}\Bigr)^k\,
    \norm{\grad U^+(Y)}^2\,,
  \qquad
  \kappa^+\deq\frac{\beta^+}{\alpha^+}\,.
\end{equation*}
Stopping when \(\norm{\grad U^+(x_k)}\le\sqrt{\alpha^+d}\) gives the admissible reference point for the next stage.  The requested radii make the
initial squared gradient polynomial in the stage parameters, so Jensen's
inequality bounds the expected work at this stage by
\(\wtO(\kappa^+)\).  The condition numbers in the
large conditioning regime form a decreasing geometric sequence, and all later
stages have universally bounded condition number.  Summing over the stages
in \eqref{eq:stage-count} bounds the expected cost of all inner reference
points by \(C\kappa_0\mathfrak L_q\).

\medskip
\noindent\textbf{Terminal stage.}
By \cref{lem:recursive-geometric-progress}(i), the regime
\(\kappa_j\ge2\) lasts for at most \(C\log(e\kappa_0)\) stages.  Thereafter,
part~\textup{(ii)} of the same lemma contracts \(A_j\) by the fixed factor
\(\rho_0<1\).  Thus \eqref{eq:stage-count} ensures that
\(A_J\le\underline A\).  Set
\(\eps_J\deq\Delta_0/\sqrt{J+1}\); this is the accuracy requested from the
terminal FORS sampler.

Starting from the retained reference point, gradient descent on the terminal
potential produces \(x_+\) satisfying $\norm{\grad V_{A_J,u_J}(x_+)}\le\sqrt{d/A_J}$.
The same descent estimate used for the inner reference points shows that
this terminal construction costs at most \(C\kappa_0\mathfrak L_q\)
expected gradient queries. Moreover,
\begin{equation*}
  \norm{u_J-A_J\grad V(x_+)-x_+}
  =A_J\,\norm{\grad V_{A_J,u_J}(x_+)}
  \le\sqrt{dA_J}\,,
\end{equation*}
which is exactly the condition in \cref{thm:fors-implementation} with
target \(R^V_{A_J,u_J}\).

Under \ref{ass:smoothed-w2}, let \(\widehat R_J\) be the output of FORS at
order $2$.  Under \ref{ass:smoothed-wp}, let
\(\widehat R_J\) be the output of FORS at order \(q\) and set
\(\widehat R_J^\dagger=\widehat R_J\).  Under
\ref{ass:smoothed-psi2}, let \(\widehat R_J\) be the output of FORS at order
\(q\).  The choice of $\varepsilon_J$ verifies the localized-scale condition in \cref{thm:fors-implementation}.  Hence every terminal call
contributes at most \(\Delta_0^2/(J+1)\) in the relevant divergence and has
constant expected gradient query cost, establishing the base cases of the
backward induction.
\end{proof}

\subsection{Outer proximal sampler}
\label{sec:warm-iteration}
\label{sec:adaptive-gibbs}

From \cref{thm:w2-main,thm:rhmc-module}, the smoothed Picard HMC sampler requires at least $\kappa^2$ queries, and if we use it as our smoothed sampler then this cost is inherited by the recursive RGO sampler.
Instead, the full warm start generator runs an outer proximal chain with step size
\((2\beta)^{-1}\) and applies the recursive RGO sampler only to its uniformly
well-conditioned RGO targets.  This is the construction described in
\cref{alg:outer-warm-overview}.

Initialize
\(X_0\sim\widehat\mu_0\deq
\cN(x_{\rm ref},(2\beta)^{-1}\Id)\).
An outer step draws
\(Y_n=X_n+\cN(0,(2\beta)^{-1} I)\), and then applies an approximate RGO
kernel at \(Y_n\).

\begin{proof}[Proof of \cref{thm:recursive-warm-generator}]
Under \ref{ass:smoothed-w2}, choose
\begin{equation*}
  \Delta_0^2\deq\frac{c\Delta^2}{\kappa}\,,
  \qquad
  N\deq\Bigl\lceil
    C\kappa\log\frac{ed\kappa}{\Delta}
  \Bigr\rceil\,.
\end{equation*}
Under \ref{ass:smoothed-wp} and \ref{ass:smoothed-psi2}, keep the
prescribed order \(q\ge2\), and choose
\begin{equation*}
  \Delta_0^2\deq
  \frac{c\Delta^2}{\kappa\log(e q)}\,,
  \qquad
  N_0\deq\Bigl\lceil
    C\kappa\log\frac{ed\kappa}{\Delta}
  \Bigr\rceil\,,
  \qquad N_1\deq\Bigl\lceil
    \frac{\log(q-1)}{\log(1+(2\kappa)^{-1})}
  \Bigr\rceil\,,
  \qquad
  N\deq N_0+N_1\,.
\end{equation*}

\noindent\textbf{Initialization.}
Write \(V_\star\deq\inf V\) and \(g=\grad V(x_{\rm ref})\).  The
logarithmic Sobolev inequality implies the Poincar\'e inequality, and hence
\(\operatorname{Cov}_\pi\preceq\alpha^{-1}\Id\).  The Gaussian
maximum entropy bound, together with
\(-\int\log \pi\,\dd\pi
\ge V_\star+\log\int e^{-V}\),
therefore gives
\begin{equation*}
  \log\int_{\R^d}e^{-V}+V_\star
  \le\frac d2\log\frac{2\uppi e}{\alpha}\,.
\end{equation*}
On the other hand, \(\beta\)-smoothness yields
\begin{equation*}
  V(x)\le V(x_{\rm ref})
    +\ip{g}{x-x_{\rm ref}}
    +\frac\beta2\,\norm{x-x_{\rm ref}}^2
\end{equation*}
and by the descent lemma,
\begin{equation*}
  \frac{\norm{g}^2}{2\beta}
  \le V(x_{\rm ref})-V_\star
  \le d\,.
\end{equation*}
Comparing the density of
\(\widehat\mu_0=\cN(x_{\rm ref},(2\beta)^{-1}\Id)\) with that of
\(\pi\), and maximizing the resulting quadratic upper bound, gives the
pointwise estimate
\begin{equation*}
  \log\frac{\dd\widehat\mu_0}{\dd\pi}
  \le\frac d2\log(2e\kappa)+2d
  \le C d\log(e\kappa)\,.
\end{equation*}
Consequently,
\(\Ren_q(\widehat\mu_0\mmid\pi)\le C d\log(e\kappa)\) for every
\(q\ge1\), with the convention \(\Ren_1=\KL\).

\medskip
\noindent\textbf{Approximate RGO kernels.}
For every outer RGO target, apply
\cref{thm:recursive-rgo-sampler} with the local accuracy \(\Delta_0\)
chosen above.  Under
\ref{ass:smoothed-wp}, use \(\delta_0=\delta/(2N)\).  Every outer RGO
has curvature between \(\beta\Id\) and \(3\beta\Id\).  Apply
\cref{thm:recursive-rgo-sampler} to sample from the RGO, taking \(A_0=\infty\).  Thus, its three explicit bounds
apply with \(\kappa_0\le3\), uniformly over the outer center and complete
history.

\medskip
\noindent\textbf{Outer contraction and error propagation.}
Let \(\widehat\mu_n\) denote the law of the $n$-th iterate $X_n$ of the proximal sampler,
and, under \ref{ass:smoothed-wp}, let \(\mu_n^\dagger\)
denote the comparison law.  Write
\(\mu_n^\sharp=\widehat\mu_n\) under \ref{ass:smoothed-w2} and
\ref{ass:smoothed-psi2}, and \(\mu_n^\sharp=\mu_n^\dagger\) under
\ref{ass:smoothed-wp}.

Under \ref{ass:smoothed-w2}, the forward half of the proximal sampler
contraction in \cite[Theorem~3 and Appendix~A.4]{CCSW22} and the KL chain rule give
\begin{equation*}
  \KL(\widehat\mu_{n+1}\mmid\pi)
  \le (1+(2\kappa)^{-1})^{-1}
    \KL(\widehat\mu_n\mmid\pi)
  +\Delta_0^2\,.
\end{equation*}
Consequently,
\begin{equation*}
  \KL(\widehat\mu_N\mmid\pi)
  \le (1+(2\kappa)^{-1})^{-N}
    \,C d\log(e\kappa)
  +C\kappa\Delta_0^2
  \le\Delta^2\,.
\end{equation*}

It remains to treat \ref{ass:smoothed-wp} and
\ref{ass:smoothed-psi2}.  During the first \(N_0\) steps, use the
R\'enyi contraction from \cite[Theorem~3 and
Appendix~A.4]{CCSW22} and R\'enyi composition. This yields
\begin{equation*}
  \Ren_2(\mu_{N_0}^\sharp\mmid\pi)
  \le (1+(2\kappa)^{-1})^{-N_0/2}\,
    C d\log(e\kappa)
  +C\kappa\Delta_0^2
  \le\frac{\Delta^2}{2}\,.
\end{equation*}
For the remaining \(N_1\) steps, we leverage hypercontractivity to obtain a sharper dependence on $q$. Define \(q_{N_0}\deq 2\) and
\begin{equation*}
  q_{n+1}-1
  \deq\min\{
    q-1,(1+(2\kappa)^{-1})\,(q_n-1)
  \}\,,
  \qquad n=N_0,\ldots,N-1\,.
\end{equation*}
Thus, \(q_N=q\).  Since \(\pi\) has log-Sobolev constant at most
\(\alpha^{-1}\), hypercontractivity of simultaneous heat flow
\cite[Theorem~1.2 and Corollary~1.3]{KV26} gives
\begin{equation*}
  \Ren_{q_{n+1}}(
    \mu_n^\sharp*\gamma_{(2\beta)^{-1}}
    \mmid \pi*\gamma_{(2\beta)^{-1}}
  )
  \le
  \Ren_{q_n}(\mu_n^\sharp\mmid\pi)\,.
\end{equation*}
Data processing through the exact backward RGO, followed by
R\'enyi composition with the approximate RGO kernel, therefore yields
\begin{equation*}
  \Ren_{q_{n+1}}(
    \mu_{n+1}^\sharp\mmid\pi
  )
  \le
  \Ren_{q_n}(\mu_n^\sharp\mmid\pi)
  +\Delta_0^2\,.
\end{equation*}
Iterating and using \(N_1\le C\kappa\log(e q)\), we obtain
\begin{equation*}
  \Ren_q(\mu_N^\sharp\mmid\pi)
  \le
  \Ren_2(\mu_{N_0}^\sharp\mmid\pi)
  +N_1\Delta_0^2
  \le\Delta^2\,.
\end{equation*}
Under \ref{ass:smoothed-wp}, the TV errors also sum to at most \(\delta/2\).

\medskip
\noindent\textbf{Outer reference points.}
Under \ref{ass:smoothed-w2} and \ref{ass:smoothed-psi2}, the preceding
bounds on the KL divergence, followed by
Talagrand's inequality and smoothness, bounds the second moment of
\(\grad V(Y_n)\). If we run gradient descent starting from \(Y_n\),
Jensen's inequality implies that we can find an admissible reference point to apply \cref{thm:recursive-rgo-sampler} using logarithmic expected work per
outer step.

Under \ref{ass:smoothed-wp}, the implementable chain need not have
controlled moments of all orders, so cap the outer reference gradient descent after
\begin{equation*}
  N_{\rm cap}\deq\Bigl\lceil
    \log_2\frac{C\kappa N}{\delta}
  \Bigr\rceil
\end{equation*}
steps. Markov's inequality shows that the
sum of the probabilities that we hit $N_{\rm cap}$ iterations is at most \(\delta/2\).  Together with TV bounds from the preceding argument, this gives
\(\TV(\widehat\mu_N,\mu_N^\dagger)\le\delta\).  Set
\(\widehat\pi\deq\widehat\mu_N\), and, under \ref{ass:smoothed-wp}, set
\(\widehat\pi^\dagger\deq\mu_N^\dagger\).

\medskip
\noindent\textbf{Query cost.}
The choice above satisfies \(N\le C\kappa\mathfrak L_2\) under
\ref{ass:smoothed-w2}, and \(N\le C\kappa\mathfrak L_q\) under
\ref{ass:smoothed-wp} and \ref{ass:smoothed-psi2}.
Since the capped outer reference construction uses at most
\begin{equation*}
  C\kappa\mathfrak L_q\,
  \Bigl(
    \mathfrak L_q+\log\frac{C\kappa\mathfrak L_q}{\delta}
  \Bigr)
\end{equation*}
queries, and in the other two cases the cost is at most the final displayed term, \cref{thm:recursive-rgo-sampler} and careful bookkeeping proves the result.
\end{proof}

\section{Picard HMC warm starts and the high-accuracy sampler}
\label{sec:picard-warm-high-accuracy}

\subsection{Picard HMC warm starts}
\label{sec:picard-warm-start}

We first state a strengthened form of \cref{thm:adaptive-gibbs-warm}.  In
particular, strong log-concavity is replaced by log-smoothness and a logarithmic
Sobolev inequality, and we also state KL divergence guarantee.

\begin{theorem}[Picard HMC warm starts]
\label{thm:picard-warm-start}
Suppose that \(\pi\propto\exp(-V)\) is \(\beta\)-log-smooth and satisfies
the logarithmic Sobolev inequality with constant \(\alpha^{-1}\).  Write
\(\kappa\deq\beta/\alpha\), and suppose that we are given a point
\(x_{\rm ref}\) satisfying $V(x_{\rm ref})-\inf V\le d$.
For every \(0<\Delta,\delta\le1/2\), Picard HMC within the recursive warm start
generator satisfies the following guarantees.
\begin{enumerate}[label=\textup{(\roman*)},leftmargin=2.4em]
\item It returns a sample with law \(\widehat\pi\) satisfying
\begin{equation*}
  \KL(\widehat\pi\mmid\pi)\le\Delta^2\,.
\end{equation*}
The expected number of gradient queries is at most
\begin{equation*}
  C\,\bigl(
    \kappa^{7/6}d^{1/6}\Delta^{-1/3}+\kappa
  \bigr)
  \log^7\frac{e\kappa d}{\Delta}\,.
\end{equation*}
\item It returns a sample with law \(\widehat\pi_\delta\)
for which there exists
\(\widehat\pi_\delta^\dagger\) satisfying
\begin{equation*}
  \TV(\widehat\pi_\delta,\widehat\pi_\delta^\dagger)\le\delta\,,
  \qquad
  \Ren_2(\widehat\pi_\delta^\dagger\mmid\pi)\le\Delta^2\,.
\end{equation*}
The expected number of gradient queries is at most
\begin{equation*}
  C\,\bigl(
    \kappa^{7/6}d^{1/6}\Delta^{-1/3}+\kappa
  \bigr)
  \log^7\frac{e\kappa d}{\Delta\delta}\,.
\end{equation*}
\end{enumerate}
\end{theorem}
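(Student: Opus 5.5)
The proof is an instantiation of \cref{thm:recursive-warm-generator}, using smoothed Picard HMC as the smoothed sampler $\mathfrak A$. By \cref{thm:w2-main} (its prefix, \cref{alg:rhmc-prefix}, without the terminal FORS step) and \cref{thm:rhmc-module}, Picard HMC implements interfaces \ref{ass:smoothed-w2} and \ref{ass:smoothed-wp}. We make the complexity functions explicit from \cref{thm:rhmc-module}. In the normalized scale the interface asks for $W_p\le\sqrt{\kappa_U}\,\eps$, so we invoke \cref{thm:rhmc-module} with accuracy $\sqrt{\kappa_U}\,\eps$ and order $p$. This gives
\[
 \mathsf C_p(\kappa_U,d,\eps)\le C\Bigl\{\kappa_U^2+\frac{\kappa_U^{4/3}(d+p)^{1/6}}{(\sqrt{\kappa_U}\,\eps)^{1/3}}\Bigr\}\Bigl(p+\log\frac{e\kappa_U d}{\eps}\Bigr)^{9/2},
\]
with $\mathsf C_2$ the case $p=2$. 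These functions are monotone in the required directions. We also check that the smoothing level satisfies $\eta\le c_0$ with $c_0<1/4$, as \cref{lem:recursive-geometric-progress} requires; this is allowed because \cref{thm:rhmc-module} holds for any fixed $c_0$.

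For part (i), we apply \cref{thm:recursive-warm-generator}(i). The only call is $\mathsf C_2(6,d,c\Delta/\sqrt{\kappa\mathfrak L_2})$. With $\kappa_U=6$ the $\kappa_U^2$ term is a constant, and the main term is
\[
 d^{1/6}\bigl(\Delta/\sqrt{\kappa\mathfrak L_2}\bigr)^{-1/3}=\kappa^{1/6}d^{1/6}\Delta^{-1/3}\mathfrak L_2^{1/6}.
\]
Multiplying by the outer prefactor $C\kappa\mathfrak L_2^2$ gives $\kappa^{7/6}d^{1/6}\Delta^{-1/3}$ times $\mathfrak L_2^{2+1/6}$ times the $(\log)^{9/2}$ factor. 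The constant term of $\mathsf C_2$, together with the additive $C\kappa\mathfrak L_2^2$, contributes $\kappa\,\polylog$. Here $\mathfrak L_2\asymp\log(e\kappa d/\Delta)$, and the logarithm inside $\mathsf C_2$ is also $O(\log(e\kappa d/\Delta))$. The total power is $2+1/6+9/2<7$, so we obtain $\log^7$.

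For part (ii), we apply \cref{thm:recursive-warm-generator}(ii) with $q=2$. Then $p=2\vee\lceil\log(C\kappa\mathfrak L_2^2/\delta)\rceil=O(\log(e\kappa d/(\Delta\delta)))$. The bound on $\mathsf C_p(6,d,c\Delta/\sqrt{2\kappa\log(2e)\mathfrak L_2})$ now has $(d+p)^{1/6}\le C d^{1/6}p^{1/6}$ and a logarithmic factor $(p+\log(\cdot))^{9/2}$. Both are polylogarithmic in $\kappa d/(\Delta\delta)$. The additive term $C\kappa\mathfrak L_2(\mathfrak L_2+\log(C\kappa\mathfrak L_2/\delta))$ is $\kappa\log^2$. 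Collecting all log factors gives a power of at most $2+1/6+1/6+9/2<7$, so the bound is $\log^7(e\kappa d/(\Delta\delta))$.

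The main point to verify is that the hypotheses of \cref{thm:recursive-warm-generator} are matched. First, \cref{thm:rhmc-module} needs the reference point condition $\norm{\grad U(x_{\rm ref})}\le\sqrt{d/\kappa_U}$ for each inner call. This is exactly what the reference point construction in the proof of \cref{thm:recursive-rgo-sampler} supplies, and the outer condition $V(x_{\rm ref})-\inf V\le d$ is the hypothesis of \cref{thm:recursive-warm-generator}. Second, \cref{thm:rhmc-module} gives a gradient-only implementation (via \cref{app:gradient-only-prox}) and counts only gradient queries, so no proximal oracle is needed. The rest is bookkeeping of the logarithmic exponents. \cref{thm:adaptive-gibbs-warm} then follows from part (ii) with $\Delta=1$ (or $1/2$) under strong log-concavity. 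We rescale so that $\beta=1$, and note that $\norm{\grad V(x_{\rm ref})}\le\sqrt{\alpha d}$ implies $V(x_{\rm ref})-\inf V\le\norm{\grad V(x_{\rm ref})}^2/(2\alpha)\le d$.
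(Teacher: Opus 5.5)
Your proposal is correct and follows the paper's proof essentially verbatim. The paper also realizes \ref{ass:smoothed-w2} and \ref{ass:smoothed-wp} through \cref{thm:rhmc-module} at accuracy $\sqrt{\kappa_U}\,\eps$, plugs these into \cref{thm:recursive-warm-generator}, and counts logarithms; your exponents $2+\tfrac16+\tfrac92$ and $2+\tfrac13+\tfrac92$ are right.

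The one detail the paper adds is to request accuracy $\sqrt{\kappa_U}\,\bar\eps$ with $\bar\eps\deq\min\{\eps,\kappa_U^{-1/2}\}$, because \cref{thm:rhmc-module} is only stated for accuracy at most $1$. This is harmless in the calls actually made, where $\kappa_U\le 6$ and the requested $\eps$ is of order $\Delta/\sqrt{\kappa\mathfrak L_2}$. Separately, the closing specialization to \cref{thm:adaptive-gibbs-warm} must use $\Delta=1/2$, since the theorem requires $\Delta\le 1/2$.
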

\begin{proof}
We verify that Picard HMC implements \ref{ass:smoothed-w2} and
\ref{ass:smoothed-wp}, and then invoke
\cref{thm:recursive-warm-generator}.  Fix a normalized potential \(U\) with
condition number \(\kappa_U\), let \(p\ge2\), and put $\bar\eps\deq\min\{\eps,\kappa_U^{-1/2}\}$.
Apply \cref{thm:rhmc-module} with accuracy
\(\sqrt{\kappa_U}\,\bar\eps\).  It returns \(0<\eta\le c_0\) and a law
\(\widehat\pi_{U,\eta}\) satisfying
\begin{equation*}
  \kappa_U^{-1/2}\,
  W_p(\widehat\pi_{U,\eta},\pi_{U,\eta})
  \le\bar\eps\le\eps\,.
\end{equation*}
Thus Picard HMC implements \ref{ass:smoothed-wp} with complexity
\begin{equation*}
  \mathsf C_p(\kappa_U,d,\eps)
  \le C\,\bigl(
    \kappa_U^2
    +\kappa_U^{7/6}\,(d+p)^{1/6}\,\bar\eps^{-1/3}
  \bigr)\,
  \Bigl(
    p+\log\frac{e\sqrt{\kappa_U}\,d}{\bar\eps}
  \Bigr)^{9/2}\,.
\end{equation*}
Taking \(p=2\) also implements \ref{ass:smoothed-w2}.
Careful bookkeeping shows that the costs are as displayed.
\end{proof}

Under \eqref{eq:curvature}, the reference assumption of
\cref{thm:adaptive-gibbs-warm} implies
\begin{equation*}
  V(x_{\rm ref})-\inf V
  \le\frac{\norm{\grad V(x_{\rm ref})}^2}{2\alpha}
  \le\frac d2\,.
\end{equation*}
Thus \cref{thm:adaptive-gibbs-warm} follows from
\cref{thm:picard-warm-start}(ii) by taking \(\Delta=1/2\).  Part~\textup{(i)}
also gives a genuine KL warm start under the more general assumptions of
\cref{thm:picard-warm-start}.

\subsection{From the proxy warm start to high accuracy}
\label{sec:picard-high-accuracy}

We now record the guarantee for proximal BPS\@.

\begin{theorem}[Proximal BPS]
\label{thm:bps-first-order}
Assume \(V\in C^2\).  Let \(\Delta\ge1\) and \(0<\tau<1/4\), and set
\begin{equation*}
 \mathfrak L_{\BPS}
 \deq \Delta^2+\log\frac{Cd\kappa}{\tau}\,.
\end{equation*}
Then, the proximal BPS Markov kernel
\(K_{\tau,\Delta}\) satisfies the following guarantee:
for every \(\lambda_0\) with
\(\Ren_2(\lambda_0 \mmid \pi)\le\Delta^2\), it holds that $\TV(\lambda_0K_{\tau,\Delta},\pi)\le\tau$.
Moreover, if \(N_{\grad V}\) is the total number of gradient queries made by this
kernel, then
\begin{equation}
  \E N_{\grad V}
  \le
  C\sqrt\kappa\,
  \bigl(d\mathfrak L_{\BPS}+\mathfrak L_{\BPS}^2\bigr)^{1/4}\,
  \mathfrak L_{\BPS}^{3/2}
  \log(\kappa\mathfrak L_{\BPS})\,.
  \label{eq:bps-gradient-cost}
\end{equation}
\end{theorem}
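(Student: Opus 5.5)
The statement is imported from the companion work~\cite{PBPS26}, so the plan is mainly to check that its hypotheses match the interface used here, and to recall how its proof is organized.

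\textbf{Warmness conversion.} First I would convert the R\'enyi warm start into the quantity the mixing analysis consumes. The bound $\Ren_2(\lambda_0\mmid\pi)\le\Delta^2$ gives $\int (\dd\lambda_0/\dd\pi)^2\,\dd\pi\le e^{\Delta^2}$. Chebyshev's inequality under $\lambda_0$ then shows that the likelihood ratio exceeds $M\deq e^{\Delta^2}/\tau'$ only on a set of $\lambda_0$-mass at most $\tau'$. Conditioning on the complement yields an $M$-warm start at TV cost $\tau'$, with $\log M\lesssim \Delta^2+\log(1/\tau)$. This is exactly why $\mathfrak L_{\BPS}=\Delta^2+\log(Cd\kappa/\tau)$ plays the role of a single logarithmic parameter.

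\textbf{Structure of the kernel.} The kernel $K_{\tau,\Delta}$ consists of a number of steps of a $\pi$-reversible (or $\pi$-invariant, Metropolis-filtered) proximal bouncy particle dynamics. Its step length is tuned to $h\asymp(d\mathfrak L_{\BPS}+\mathfrak L_{\BPS}^2)^{-1/4}$, in the usual manner for $d^{1/4}$-type adjusted samplers.

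\textbf{Mixing.} For mixing I would use the standard $s$-conductance route from an $M$-warm start. One needs two ingredients:
\begin{enumerate}[label=(\alph*),leftmargin=2.4em]
\item a one-step overlap bound: $\TV(\delta_xK,\delta_yK)\le 1/2$ whenever $\norm{x-y}\lesssim h$, for $x,y$ in a high-probability set of $\pi$-mass $1-s$;
\item an isoperimetric inequality for $\pi$ at scale $\sqrt{\kappa}$ in normalized units.
\end{enumerate}
Together these give mixing in $O(\sqrt\kappa\,h^{-1}\,\mathrm{polylog})$ steps when the velocity dynamics is accelerated, i.e.\ ballistic with refreshment at rate $\asymp\kappa^{-1/2}$. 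The number of steps to reach TV error $\tau$ then scales with $\log(M/\tau)\lesssim\mathfrak L_{\BPS}$. Finally, event times of the piecewise-deterministic process are simulated by Poisson thinning against a local upper bound on $\ip{\grad V}{v}$, built from a gradient query and $\beta$-smoothness. This keeps the expected number of gradient queries per step at $O(\log(\kappa\mathfrak L_{\BPS}))$ and produces the last factor in \eqref{eq:bps-gradient-cost}.

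\textbf{Main obstacle.} I expect the hardest step to be (a), the acceptance/overlap bound at step size $d^{-1/4}$ when $V$ is only $C^2$. Without third-derivative control, a plain leapfrog acceptance analysis yields only $d^{-1/2}$. The companion work circumvents this with the proximal (smoothed-score) trick from \cref{ssec:smoothing}: the bounce rates are driven by a regularized potential whose higher derivatives obey \cref{lem:rhmc-proof-potential-gradient-derivatives}, and the rejection step corrects the residual bias. I would first try to bound the expected log acceptance ratio by the Hilbert--Schmidt estimate \eqref{eq:rhmc-proof-hessian-F-lipschitz} integrated along the trajectory, using the fact that $\E\norm{\cdot}^2$ over a stationary Gaussian velocity gives a $\sqrt d\,h^2$ rather than $d\,h^2$ contribution. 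Everything else is bookkeeping, which in the present paper amounts to citing~\cite{PBPS26}.
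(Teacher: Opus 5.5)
Your conclusion matches the paper. \cref{thm:bps-first-order} is not proved here; it is imported from the companion work, and the paper's entire proof is the sentence identifying it with Algorithm~4.2, Theorem~4.3, and Corollary~4.4 of \cite{PBPS26}. Your conversion of $\Ren_2(\lambda_0\mmid\pi)\le\Delta^2$ into an $M$-warm start is also correct. It uses Markov's inequality for $\dd\lambda_0/\dd\pi$ under $\lambda_0$, at TV cost $\tau'$, with $\log M=\Delta^2+\log(1/\tau')$.

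Nothing in this paper supports the middle of your proposal, where you reconstruct the companion argument, and two of its steps would fail if carried out.

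\textbf{The mixing step.} $s$-conductance cannot deliver the $\sqrt\kappa$ factor. It bounds the mixing time by order $\Phi_s^{-2}\log(M/\tau)$. With one-step overlap at scale $h$ and isoperimetric constant $\asymp\kappa^{-1/2}$ (normalizing $\beta=1$), one has $\Phi_s\asymp h/\sqrt\kappa$. This gives about $\kappa h^{-2}\mathfrak L_{\BPS}$ steps, the linear-in-$\kappa$ rate familiar from conductance analyses of MALA and Metropolized HMC, not $\sqrt\kappa\,h^{-1}$. The refreshment rate does not enter this bound. Accelerated rates for bouncy-particle and randomized HMC dynamics come instead from hypocoercive, space--time Poincar\'e type estimates, as in \cite{LuWan22PDMP}.

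\textbf{The query count.} Thinning with the smoothness bound $\ip{\grad V(x+tv)}{v}\le\ip{\grad V(x)}{v}+t\norm v^2$ cannot cost only $O(\log(\kappa\mathfrak L_{\BPS}))$ gradients per step of length $h\asymp d^{-1/4}$. Already for $\pi=\cN(0,\Id)$ with Gaussian velocities, the stationary bounce rate $\E(\ip{\grad V(X)}{v})_+$ is of order $\sqrt d$. Each step therefore contains about $d^{1/4}$ genuine bounces, and each bounce needs a gradient evaluation. The $d^{1/4}$ in \eqref{eq:bps-gradient-cost} must come from some mechanism other than plain thinned BPS. Your guess of a Metropolis filter correcting a smoothed-score bias is likewise not confirmed anywhere in this paper.

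As in the paper, the proof should simply be the citation; do not present this sketch as the organization of the companion proof.
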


\begin{proof}
This is Algorithm~4.2, Theorem~4.3, and Corollary~4.4 of
\cite{PBPS26}.
\end{proof}

\begin{proof}[Proof of \cref{thm:main-synthesis}]
Apply \cref{thm:picard-warm-start}(ii) with
\(\Delta=1/2\) and \(\delta=\varepsilon/2\).  This produces an
implementable law \(\widehat\pi_{\varepsilon/2}\) and a comparison law
\(\widehat\pi_{\varepsilon/2}^\dagger\) such that
\begin{equation*}
  \TV(\widehat\pi_{\varepsilon/2},
      \widehat\pi_{\varepsilon/2}^\dagger)
  \le\frac\varepsilon2\,,
  \qquad
  \Ren_2(\widehat\pi_{\varepsilon/2}^\dagger\mmid\pi)
  \le\frac14\,.
\end{equation*}
Run the proximal BPS kernel
\(K_{\varepsilon/2,1}\) from the implementable law.  By
data processing,
\begin{equation*}
  \TV\bigl(
    \widehat\pi_{\varepsilon/2}K_{\varepsilon/2,1},
    \widehat\pi_{\varepsilon/2}^\dagger
      K_{\varepsilon/2,1}
  \bigr)
  \le\frac\varepsilon2\,.
\end{equation*}
Since $\widehat \pi_{\varepsilon/2}^\dagger$ satisfies the hypothesis of
\cref{thm:bps-first-order} with \(\Delta=1\), its image under this kernel is
within \(\varepsilon/2\) of \(\pi\) in total variation.  The triangle
inequality therefore gives the desired error \(\varepsilon\).

Put \(\mathfrak L\deq\log(e\kappa d/\varepsilon)\).  The warm start
cost is at most
\(C\kappa^{7/6}d^{1/6}\mathfrak L^7\), since \(d,\kappa\ge1\).
For the subsequent proximal BPS call, \(\Delta=1\) and
\(\tau=\varepsilon/2\), so \(\mathfrak L_{\BPS}\le C\mathfrak L\).
Thus \eqref{eq:bps-gradient-cost} gives the cost bound
\begin{equation*}
 C\sqrt\kappa\,
 (d^{1/4}\mathfrak L^{7/4}+\mathfrak L^2)
 \log(\kappa\mathfrak L)
 \le C\sqrt\kappa\,
 (d^{1/4}\mathfrak L^{11/4}+\mathfrak L^3)\,.
\end{equation*}
Here we used \(\log(\kappa\mathfrak L)\le C\mathfrak L\).
The remaining term \(\sqrt\kappa\,\mathfrak L^3\) is absorbed by
\(\kappa^{7/6}d^{1/6}\mathfrak L^7\), proving the stated query bound.
\end{proof}

\newpage
\appendix
\section{Terminal RGO implementation by FORS}
\label{sec:fors-terminal}

This appendix records the FORS subroutine that we use in the paper. After reviewing the generic FORS mechanism,
we describe a variant that is tailored to sampling from RGO distributions.
Since the main arguments here were already introduced in prior works, we will be more terse and only emphasize the differences from the analyses of
\cite{CCDR26,CCRZ26}.

\subsection{Review of first-order rejection sampling (FORS)}
\label{sec:fors-review}

For \(q\geq1\), write
\begin{equation*}
 \overline{\mathsf D}_q(P,Q)
 \deq
 \max\Bigl\{
  \int\Bigl(\frac{\dd P}{\dd Q}\Bigr)^q\,\dd Q-1\,,
  \int\Bigl(\frac{\dd Q}{\dd P}\Bigr)^q\,\dd P-1
 \Bigr\}\,.
\end{equation*}
Either term is understood to be infinite when the corresponding
absolute continuity condition fails.

Let \(\mathbf Q\) be a proposal law for a path \(\mathbf Z\), and suppose the
desired path law satisfies
\[
 \frac{\dd\mathbf P_\star}{\dd\mathbf Q}(\mathbf Z)
 \propto \exp\{w_\star(\mathbf Z)\}\,.
\]
Given an auxiliary variable \(\xi\sim\Xi\), FORS uses an estimator
\(W(\xi;\mathbf Z)\) with
\(w(\mathbf Z)\deq\E_{\xi\sim\Xi}W(\xi;\mathbf Z)\).  At clipping level
\(B>0\), its implemented path law \(\widehat{\mathbf P}\) satisfies
\[
 \frac{\dd\widehat{\mathbf P}}{\dd\mathbf Q}(\mathbf Z)
 \propto
 \exp\bigl\{\E_{\xi\sim\Xi}
  \operatorname{clip}_{[-B,B]}\bigl(W(\xi;\mathbf Z)\bigr)\bigr\}\,.
\]

\begin{lemma}[FORS likelihood comparison]
\label{lem:fors-likelihood-comparison}
Let \(P_\star\) and \(\widehat P\) be the laws of the terminal endpoint of
\(\mathbf Z\) under \(\mathbf P_\star\) and \(\widehat{\mathbf P}\),
respectively.  For every \(q\geq2\),
\begin{align*}
 2\log\{1+\overline{\mathsf D}_q
  (\widehat P,P_\star)\}
 \leq{}&
 \log\E_{\mathbf P_\star}
 \exp\bigl\{4q\,|w_\star(\mathbf Z)-w(\mathbf Z)|\bigr\}
 +\log\E_{\substack{\mathbf Z\sim\mathbf P_\star\\\xi\sim\Xi}}
  \exp\bigl\{4q\,
   \bigl(|W(\xi;\mathbf Z)|-B\bigr)_+\bigr\}\,.
\end{align*}
\end{lemma}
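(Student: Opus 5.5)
The plan is to compare path laws first and then pass to endpoints. Since $P_\star$ and $\widehat P$ are images of $\mathbf P_\star$ and $\widehat{\mathbf P}$ under the same terminal-endpoint map, the data processing inequality for the $\chi^q$-type functionals $\int (\dd P/\dd Q)^q\,\dd Q$ gives $\overline{\mathsf D}_q(\widehat P,P_\star)\le\overline{\mathsf D}_q(\widehat{\mathbf P},\mathbf P_\star)$. So it suffices to prove the bound at the path level.

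\textbf{Density ratio.} Write $\widehat w(\mathbf Z)\deq\E_{\xi}\operatorname{clip}_{[-B,B]}(W(\xi;\mathbf Z))$ and $D\deq|\widehat w-w_\star|$. Both path laws have densities with respect to $\mathbf Q$ proportional to $e^{w_\star}$ and $e^{\widehat w}$. Hence
\[
 \frac{\dd\widehat{\mathbf P}}{\dd\mathbf P_\star}
 =\frac{e^{\widehat w-w_\star}}{\E_{\mathbf P_\star}e^{\widehat w-w_\star}}\,,
\]
because the ratio of normalizing constants is exactly $\E_{\mathbf P_\star}e^{\widehat w-w_\star}$. The denominator lies in $[(\E_{\mathbf P_\star}e^{D})^{-1},\,\E_{\mathbf P_\star}e^{D}]$: the upper end is trivial, and the lower end follows from Jensen, since $\E e^{-D}\ge e^{-\E D}\ge(\E e^{D})^{-1}$.

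\textbf{Bounding both divergences.} For the first term,
\[
 \E_{\mathbf P_\star}\Bigl(\frac{\dd\widehat{\mathbf P}}{\dd\mathbf P_\star}\Bigr)^q\le\E e^{qD}\,(\E e^{D})^q\le(\E_{\mathbf P_\star}e^{qD})^2\,,
\]
where the last step is Jensen in the form $(\E e^D)^q\le\E e^{qD}$. For the reverse term, rewrite $\E_{\widehat{\mathbf P}}(\dd\mathbf P_\star/\dd\widehat{\mathbf P})^q=\E_{\mathbf P_\star}(\dd\widehat{\mathbf P}/\dd\mathbf P_\star)^{1-q}$. Its integrand is at most $e^{(q-1)D}(\E e^{D})^{q-1}$, so this term is also at most $(\E e^{qD})^2$. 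Together these give
\[
 \log\{1+\overline{\mathsf D}_q\}\le2\log\E_{\mathbf P_\star}e^{qD}\,.
\]

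\textbf{Splitting the error.} By the triangle inequality, $D\le|w_\star-w|+|\widehat w-w|$. The clipping error satisfies $|\operatorname{clip}_{[-B,B]}(x)-x|=(|x|-B)_+$, so $|\widehat w-w|\le\E_\xi(|W(\xi;\mathbf Z)|-B)_+$. Cauchy--Schwarz splits $\E e^{qD}$ into $(\E e^{2q|w_\star-w|})^{1/2}(\E e^{2q\E_\xi(|W|-B)_+})^{1/2}$. Jensen in $\xi$ then moves the $\xi$-expectation outside the exponential. Doubling the logarithm yields the claimed inequality with $2q$ in place of $4q$. Since the right-hand side is monotone in this constant, the stated bound with $4q$ follows, leaving slack.

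The main point requiring care is the treatment of the unknown normalizing constants, especially in the reverse direction, where the power $1-q$ is negative. The two-sided Jensen control of $\E_{\mathbf P_\star}e^{\widehat w-w_\star}$ handles this uniformly. One should also check integrability, or note that the inequality is trivial when the right-hand side is infinite.
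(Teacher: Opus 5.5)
The paper does not prove this lemma; it only cites \cite[Lemma~2.4]{CCRZ26}. Your argument is a self-contained substitute, and its main steps are sound. Data processing reduces the claim to path laws. The ratio $e^{\widehat w-w_\star}/\E_{\mathbf P_\star}e^{\widehat w-w_\star}$ is exact. The two-sided Jensen control of the normalizing constant handles both directions, including the negative power $1-q$. Finally, $|\widehat w-w|\le\E_\xi(|W|-B)_+$, combined with Cauchy--Schwarz and Jensen in $\xi$, gives
\[
 \log\{1+\overline{\mathsf D}_q(\widehat P,P_\star)\}
 \le\log\E_{\mathbf P_\star}e^{2q|w_\star-w|}
 +\log\E_{\mathbf P_\star,\Xi}\,e^{2q(|W|-B)_+}\,.
\]
What this buys over the citation is that the whole mechanism is visible: one factor comes from the normalizing-constant ratio, and one from splitting the model error and the clipping error.

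Your last step, however, misreads this display. The lemma has $2\log\{1+\overline{\mathsf D}_q\}$ on the left, so the display is \emph{not} the claimed inequality with $2q$ in place of $4q$. Doubling it puts $2\log\E e^{2qX}$ terms on the right. Monotonicity of $s\mapsto\log\E e^{sX}$ only gives $\log\E e^{2qX}\le\log\E e^{4qX}$, which is too weak to absorb the factor $2$. The missing line is Jensen: $(\E e^{2qX})^2\le\E e^{4qX}$, or equivalently convexity of the log-moment generating function together with its vanishing at $0$. Applied to each of the two terms, this gives exactly the stated bound. So your chain has no slack: the constant $4q$ is precisely what it produces once this step is included, and the ``leaving slack'' remark should be dropped.
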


\begin{proof}
This is \cite[Lemma~2.4]{CCRZ26}.
\end{proof}

To draw a sample from the law $\widehat{\mathbf P}$, we use the following mechanism.

\begin{lemma}[Poisson exponentiation]
\label{lem:fors-poisson}
Let \(X\sim Q\).  Suppose that, conditionally on \(X=x\), one can simulate a
random variable \(W_x\in[-B,B]\) with
\(w(x)\deq\E[W_x\mid X=x]\).  Draw
\(N\sim\mathsf{Poisson}(2B)\) and, conditionally on \(X=x\),
independent copies \(W_x^{(1)},\ldots,W_x^{(N)}\).  Accept \(x\) with
probability
\begin{equation*}
 \prod_{k=1}^N\frac{B+W_x^{(k)}}{2B}\,.
\end{equation*}
Then the accepted proposal has law proportional to
\(\exp(w)\,Q\).  The acceptance probability of one attempt is
at least \(\exp(-2B)\).
\end{lemma}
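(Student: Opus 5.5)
We prove \cref{lem:fors-poisson} by computing the acceptance probability conditionally on the proposal and then averaging over the Poisson randomness. Fix \(x\). Conditionally on \(X=x\) and on \(N=n\), the copies \(W_x^{(1)},\ldots,W_x^{(n)}\) are i.i.d.\ with mean \(w(x)\) and take values in \([-B,B]\). Each factor \((B+W_x^{(k)})/(2B)\) therefore lies in \([0,1]\), so the product is a genuine probability. Independence gives the conditional acceptance probability
\begin{equation*}
 \E\Bigl[\prod_{k=1}^n\frac{B+W_x^{(k)}}{2B}\Bigm\vert X=x,\,N=n\Bigr]
 =\Bigl(\frac{B+w(x)}{2B}\Bigr)^n\,.
\end{equation*}

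Next we average over \(N\sim\mathsf{Poisson}(2B)\), which is independent of \(X\), using the Poisson probability generating function \(\E s^N=e^{2B(s-1)}\). With \(s=(B+w(x))/(2B)\) we get
\begin{equation*}
 \Pr(\text{accept}\mid X=x)
 =\exp\Bigl\{2B\Bigl(\frac{B+w(x)}{2B}-1\Bigr)\Bigr\}
 =e^{-B}\,e^{w(x)}\,.
\end{equation*}

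Bayes' rule now identifies the law of an accepted proposal. Its density with respect to \(Q\) is proportional to \(\Pr(\text{accept}\mid X=x)=e^{-B}e^{w(x)}\), so the accepted proposal has law proportional to \(\exp(w)\,Q\). For the acceptance bound, note that \(w(x)\in[-B,B]\) because it is an average of values in that interval. Hence \(\Pr(\text{accept}\mid X=x)\ge e^{-B}e^{-B}=e^{-2B}\) for every \(x\), and integrating against \(Q\) shows that one attempt succeeds with probability at least \(e^{-2B}\).

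There is no real obstacle here. The only points to check are that the product of factors is a valid probability, which uses \(|W_x|\le B\), and that \(N\) is independent of the copies \(W_x^{(k)}\). Both hold by construction.
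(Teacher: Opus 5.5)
Your proof is correct and complete: averaging the conditional product $\bigl((B+w(x))/(2B)\bigr)^n$ against the Poisson generating function gives $\Pr(\text{accept}\mid X=x)=e^{w(x)-B}$, and the tilt $e^{w}Q$ and the bound $e^{-2B}$ follow from this. The paper gives no argument of its own here and simply cites \cite[Algorithm~1 and Theorem~2.3]{CCRZ26}; your computation is the standard Poisson-estimator argument behind that result, so it makes the lemma self-contained.
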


\begin{proof}
See~\cite[Algorithm~1 and Theorem~2.3]{CCRZ26}.
\end{proof}

\subsection{FORS for RGO distributions}
\label{sec:fors-quadratic-core}

Given a convex potential \(V\), a variance \(a>0\), and a center \(y\), let
\begin{equation*}
 R_{a,y}
 \propto
 \exp\Bigl\{-V(\cdot)-\frac{\norm{\cdot-y}^2}{2a}\Bigr\}\,.
\end{equation*}
Here, we describe a variant of FORS diffusion simulation, specialized to distributions of this form.
Compared to \cite{CCRZ26}, we exactly integrate the
linear part of the drift in the underdamped Langevin proposal.
Namely, from \((x,p)\), the proposal is the path measure of
\begin{align}
 \dd X_t&=P_t\,\dd t\,,
 \notag\\
 \dd P_t&=-\Bigl\{\frac{X_t-y}{a}+\grad V(x)+\gamma P_t\Bigr\}\,\dd t
          +\sqrt{2\gamma}\,\dd B_t\,,
 \qquad c_0a^{-1/2}\leq\gamma\leq C_0a^{-1/2}\,.
 \label{eq:fors-core-proposal}
\end{align}
The full algorithm is described below.

\begin{algorithm}[FORS for RGO]
\label{alg:fors-quadratic-core}
Fix a block length \(T>0\), a number \(N\) of blocks, a mesh width
\(h_{\rm mesh}\) dividing \(T\), a clipping level \(B_{\rm clip}>0\), and a
stopping set \(\mathcal E\subseteq\R^d\times\R^d\).  Compute
\(x^+\deq\prox_{aV}(y)\).  On each attempt, initialize
\begin{equation*}
 X^0\sim\mathsf N\Bigl(x^+,\frac{a}{1+a\beta}\Id\Bigr)
 \qquad\text{and}\qquad
 P^0\sim\mathsf N(0,\Id)
\end{equation*}
independently.  Restart the attempt if \((X^0,P^0)\notin\mathcal E\).

For \(n=0,\ldots,N-1\), repeat the following proposal until it is accepted.
Starting from \(x=X^n\) and \(p=P^n\), draw
\(M\sim\mathsf{Poisson}(2B_{\rm clip})\) and independent
\(t_j\sim\mathsf{Unif}[0,T)\), \(j\in[M]\).  Jointly sample the linear
proposal \eqref{eq:fors-core-proposal}, its driving Brownian motion, and all
values needed below.  Let \(r_j\in[T/h_{\rm mesh}]\) be the unique index such
that \((r_j-1)h_{\rm mesh}\leq t_j<r_jh_{\rm mesh}\), and set
\begin{align*}
 \mu_t
 &\deq
 \frac{\grad V(X_t)-\grad V(x)}{\sqrt{2\gamma}}\,,
 \\
 -\mathcal W(t_j)
 &\deq
 \frac{T}{\sqrt{2\gamma}\,h_{\rm mesh}}
 \,\ip{B_T-B_{r_jh_{\rm mesh}}}
 {\grad V(X_{r_jh_{\rm mesh}})
  -\grad V(X_{(r_j-1)h_{\rm mesh}})}
 +\frac{T}{2}\,\norm{\mu_{t_j}}^2\,.
\end{align*}
Accept the proposed endpoint \((X_T,P_T)\) with probability
\begin{equation*}
 \prod_{j\in[M]}
 \frac{B_{\rm clip}
 +\operatorname{clip}_{[-B_{\rm clip},B_{\rm clip}]}
   (\mathcal W(t_j))}{2B_{\rm clip}}\,.
\end{equation*}
Upon acceptance, set \((X^{n+1},P^{n+1})\deq(X_T,P_T)\).  If the accepted
state lies outside \(\mathcal E\), restart from the initialization step.
After \(N\) accepted blocks, return \(X^N\).
\end{algorithm}

\begin{theorem}[FORS for RGO distributions]
\label{thm:fors-implementation}
Let \(V:\R^d\to\R\) be convex and \(\beta\)-smooth, and let
\(R_{a,y}\) be the law above.
Fix \(q\geq2\) and \(0<\varepsilon,p\leq1/2\).
\begin{enumerate}
    \item If \(0<a\beta\leq1\), there is a randomized routine using evaluations of
\(\grad V\) and one evaluation of \(\prox_{aV}(y)\) whose output law
\(\widehat R_{a,y}\) satisfies
\begin{equation}
 \sup_{y\in\R^d}
 \Ren_q(\widehat R_{a,y} \mmid R_{a,y})
 \leq\varepsilon^2\,.
 \label{eq:fors-implementation-renyi}
\end{equation}
Writing
\begin{equation*}
 \mathfrak L
 \deq
 q+\log\frac{Cd}{a\beta\varepsilon p}\,,
\end{equation*}
its expected number of queries and, with probability at least \(1-p\), its
realized number of queries are at most
\begin{equation}
 C\mathfrak L^{5/3}\,
 \bigl\{1+\bigl((a\beta)^2d\bigr)^{1/3}\bigr\}\,.
 \label{eq:fors-implementation-cost}
\end{equation}
\item
Suppose that a
point \(x_+\) is supplied such that
\begin{equation*}
 \norm{y-a\grad V(x_+)-x_+}\leq\sqrt{da}\,.
\end{equation*}
If
\begin{equation*}
 a^{-1}
 \geq
 C\beta\,\bigl[
  \sqrt{d\,(q+\log(1/\varepsilon))}
 +q+\log(1/\varepsilon)
 \bigr]\,,
\end{equation*}
then a gradient-only routine returns a law satisfying
\eqref{eq:fors-implementation-renyi}.  It uses \(O(1)\) expected evaluations
of \(\grad V\), and at most \(C\log(2/p)\) evaluations with probability at
least \(1-p\).
\end{enumerate}
\end{theorem}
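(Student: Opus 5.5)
Both parts follow the FORS template of \cite{CCRZ26}: an exactly simulable Gaussian proposal path, a Girsanov weight estimated by \cref{lem:fors-poisson}, and the Rényi error controlled by \cref{lem:fors-likelihood-comparison}. The difference from \cite{CCRZ26} is that the quadratic part $\norm{\cdot-y}^2/(2a)$ of the RGO potential is integrated exactly in \eqref{eq:fors-core-proposal}. The Girsanov density of the true underdamped Langevin path (drift $\grad V(X_t)$) relative to this proposal (drift frozen at $\grad V(x)$) therefore only involves the increment $\grad V(X_t)-\grad V(x)$. The proposal is a linear SDE, so its path can be sampled exactly, jointly with its Brownian motion.

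We first write $w_\star$ as the log-likelihood $-\int_0^T\ip{\mu_t}{\dd B_t}-\frac12\int_0^T\norm{\mu_t}^2\,\dd t$. We then check that $\mathcal W(t_j)$ with $t_j$ uniform is an estimator whose mean $w$ differs from $w_\star$ by a mesh-discretization error of the stochastic integral. This is the Itô-to-piecewise-constant replacement of \cite{CCRZ26}, rearranged via summation by parts into the $B_T-B_{r h}$ form.

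The quantitative step is a sub-Gaussian bound on $\mu_t$ along the stationary path. With $\gamma\asymp a^{-1/2}$, the natural time scale of the RGO is $\sqrt a$, and over a block of length $T\lesssim\sqrt a$ we have $\norm{X_t-x}\lesssim \sqrt{da}\,T/\sqrt a$. Since $\grad V$ is $\beta$-Lipschitz, this gives $\norm{\mu_t}^2\lesssim \beta^2 a^{1/2}\,d\,T^2$ up to log factors. Hence $T\norm{\mu}^2\lesssim \beta^2\sqrt a\, dT^3$, and a block is affordable (clipping level $B=O(1)$) when $T\asymp(\beta^2\sqrt a\,d)^{-1/3}$.

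We then pick the number of blocks $N\asymp$ (mixing time $\sqrt a$, contraction since $a\beta\le1$) divided by $T$, times $\log$ factors. This gives $N\asymp 1+(a^2\beta^2 d)^{1/3}$ up to logs. We choose $B_{\rm clip}\asymp q+\log$ so that both exponential moments in \cref{lem:fors-likelihood-comparison} are $1+O(\varepsilon^2/N)$ per block. Rényi composition across the $N$ blocks adds these errors. The initialization $\cN(x^+,\frac a{1+a\beta}\Id)$ has bounded Rényi divergence to $R_{a,y}$ (Hessian sandwich), and the $N$ blocks of Langevin contract it to $\varepsilon^2$. The stopping set $\mathcal E$ (a ball of radius $\sqrt{da}\,\mathfrak L$ around $x^+$) restricts to the typical set so that the sub-Gaussian bounds hold pointwise; restarts cost a factor $1+p$. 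Expected cost is then $N\cdot e^{O(1)}\cdot(\text{mesh points})$, with mesh $h_{\rm mesh}$ chosen polylog-fine, yielding \eqref{eq:fors-implementation-cost}. The high-probability statement follows from geometric tails of the rejection counts.

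For part 2, the condition $a^{-1}\ge C\beta[\sqrt{d\mathfrak L}+\mathfrak L]$ makes $\beta^2 a^2 d\,\mathfrak L\ll1$. In this regime a single block of length $T\asymp\sqrt a$ has $O(1)$ clipped weight, so no subdivision is needed. The prox is replaced by the supplied $x_+$: the condition $\norm{y-a\grad V(x_+)-x_+}\le\sqrt{da}$ says $x_+$ is within $O(\sqrt{da})$ of $\prox_{aV}(y)$, which keeps the initialization a bounded-Rényi start. The main obstacle is the exponential-moment bound for the stochastic-integral part of $|w_\star-w|$ and for $(|W|-B)_+$ uniformly over the stopping set. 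I would try conditional Gaussianity of $B_T-B_{rh}$ given the path increments, plus sub-Gaussian concentration of $\norm{X_t-x}$ under the $1/a$-strongly log-concave stationary law.
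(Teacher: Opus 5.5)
Your template is the paper's: the quadratic part is integrated exactly, the Girsanov weight is driven only by $\grad V(X_t)-\grad V(x)$, the estimator uses summation by parts, and the error is controlled by \cref{lem:fors-poisson,lem:fors-likelihood-comparison} plus R\'enyi composition, all imported from \cite{CCRZ26}. Your scalings of $T$ and $N$ also agree with the paper's up to logarithmic factors, after the normalization $x=x^++\sqrt a\,z$. But the cost accounting in part~1, which is the real content of the theorem, does not close.

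\textbf{Clip level.} Taking $B_{\rm clip}\asymp q+\log(\cdot)$ contradicts your own ``$e^{O(1)}$ per block''. By \cref{lem:fors-poisson}, given the proposed path, an attempt is accepted with probability $\exp\{\E\operatorname{clip}_{[-B,B]}(\mathcal W)-B_{\rm clip}\}$. Since $\E_{\mathbf Q}e^{w_\star}=1$, the expected number of attempts is about $e^{B_{\rm clip}}$, which is exponential in $q$ and polynomial in $d/(a\beta\varepsilon p)$. The clip level must be $O(1)$, and the paper buys this by shrinking the block by an extra factor $\mathfrak L^{-2/3}$. In normalized units it takes $T=c\mathfrak L^{-2/3}\min\{1,(a^2d)^{-1/3}\}$. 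Then the sub-Gaussian scale $a^2T^3\mathcal R(z,p)$ in \eqref{eq:fors-core-factor-mgf} is $\lesssim\mathfrak L^{-1}$ on the typical set, while the admissible range of $\lambda$ extends to order $\mathfrak L$. Consequently the clipping term in \cref{lem:fors-likelihood-comparison} is exponentially small in $\mathfrak L$ with $B_{\rm clip}=O(1)$. With your block length, $a^2T^3d\asymp1$ in normalized units, the weight has order-one scale, and a large clip is forced. The missing $\mathfrak L^{2/3}$, times the $\mathfrak L$ relaxation times, is exactly the $\mathfrak L^{5/3}$ in \eqref{eq:fors-implementation-cost}.

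\textbf{Mesh.} The mesh cannot be polylog-fine. The mesh error \eqref{eq:fors-core-mesh-mgf} is linear in $h_{\rm mesh}$ with a prefactor polynomial in $d$. So $h_{\rm mesh}$ must be polynomially small in $d$ and $1/\varepsilon$, and a cost of ``$N\cdot e^{O(1)}\cdot(\text{mesh points})$'' would be polynomial. The observation you need has two parts. Each $\mathcal W(t_j)$ touches $\grad V$ at only three times: $t_j$ and the two mesh points bracketing it. And since the proposal $(X,P,B)$ is a linear Gaussian process, it can be sampled jointly at just these times by Gaussian regression. Refining the mesh is then free, and each attempt costs $O(B_{\rm clip})$ gradients.

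\textbf{Part~2.} Your sketch does not go through; note that the paper does not reprove this part but invokes the smooth case of \cite[Theorem~D.1]{CCDR26}.
\begin{itemize}
\item Strong monotonicity of $x\mapsto x+a\grad V(x)$ only gives $\norm{x_+-\prox_{aV}(y)}\le\sqrt{da}$. This bound is attained already for $V=0$, and it is $\sqrt d$ standard deviations of a law with covariance of order $a\Id$. So $\cN(x_+,\frac a{1+a\beta}\Id)$ is at $\Ren_q$-distance of order $qd$ from $R_{a,y}$, not $O(1)$. (In part~1 your ``bounded'' start also has divergence up to $a\beta d/2$, but there this costs only $\log d$, which $\mathfrak L$ absorbs.)
\item More fundamentally, one block of length $\asymp\sqrt a$ is a single relaxation time and contracts the divergence only by a constant factor. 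Even the natural start $\cN(y-a\grad V(x_+),a\Id)$ is at distance of order $q\,(a\beta)^2d$. The hypothesis makes this $O(1/C^2)$, not $\varepsilon^2$.
\end{itemize}
Because the hypothesis involves only $q+\log(1/\varepsilon)$, the $\varepsilon^2$ accuracy has to come from the rejection step being essentially exact, not from Langevin mixing. Concretely, the log-weight's fluctuations would need to be $\lesssim(q+\log(1/\varepsilon))^{-1/2}$ so that an $O(1)$ clip suffices. Your sketch establishes neither this accuracy nor the $O(1)$ query bound.
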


\begin{proof}
The second statement is the smooth case of
\cite[Theorem~D.1]{CCDR26}. We therefore focus on the first statement, which is based on \cref{alg:fors-quadratic-core}.

By rescaling, it suffices to consider \(\beta=1\).
Let
\(x^+\deq\prox_{aV}(y)\), set \(x=x^++\sqrt a\,z\), and define
\begin{equation*}
 F(z)
 \deq
 V(x^++\sqrt a\,z)-V(x^+)
 -\sqrt a\,\ip{\grad V(x^+)}z\,.
\end{equation*}
After a change of variables, we consider the density proportional to
\begin{equation*}
 z\mapsto \exp\Bigl\{-\frac12\,\norm z^2-F(z)\Bigr\}\,,
 \qquad
 F(0)=0\,,
 \qquad
 \grad F(0)=0\,,
 \qquad
 0\preceq\Hess F\preceq a\Id\,.
\end{equation*}
Writing \(g\deq\grad F\) and
\(\bar\gamma\deq\sqrt a\,\gamma\), the corresponding space--time rescaling
transforms \eqref{eq:fors-core-proposal}, conditionally on the blockwise
initial state \((Z_0,P_0)=(z,p)\), into
\begin{align}
 \dd Z_t&=P_t\,\dd t\,,
 \notag\\
 \dd P_t&=-\{Z_t+g(z)+\bar\gamma P_t\}\,\dd t
          +\sqrt{2\bar\gamma}\,\dd B_t\,,
 \qquad c_0\leq\bar\gamma\leq C_0\,.
 \label{eq:fors-normalized-core-proposal}
\end{align}

We explain the only changes needed in the FORS proof of \cite{CCRZ26}.  Let
\(S\) solve
\[
 S''+\bar\gamma S'+S=0\,,
 \qquad S(0)=0\,,
 \qquad S'(0)=1\,,
 \qquad b_t\deq\int_0^tS_r\,\dd r\,.
\]
For bounded \(\bar\gamma\) and sufficiently small \(t\),
\(|S_t|\leq Ct\), \(|b_t|\leq Ct^2\), and
\(|S_t'|+|S_t''|\leq C\).  Under
\eqref{eq:fors-normalized-core-proposal},
\begin{equation}
 Z_t-z
 =S_t\,p-b_t\,\{z+g(z)\}
 +\sqrt{2\bar\gamma}\int_0^tS_{t-s}\,\dd B_s\,.
 \label{eq:fors-core-proposal-formula}
\end{equation}
Consequently the Girsanov control contains only the residual force,
\begin{equation*}
 \mu_t\deq\frac{g(Z_t)-g(z)}{\sqrt{2\bar\gamma}}\,.
\end{equation*}
This is the main improvement due to exact integration of the linear term.

Now put
\begin{equation*}
 \mathcal R(z,p)
 \deq\norm p^2+\norm z^2+a^{-1}\,\norm{g(z)}^2+d\,.
\end{equation*}
The calculations in
\cite[Lemmas~B.1 and B.3]{CCRZ26}, applied to
\eqref{eq:fors-core-proposal-formula}, gives the same estimate
as in that paper with its full smoothness replaced by the residual
smoothness \(a\).  In particular, the random likelihood factor has
sub-Gaussian scale
\begin{equation}
 \log\E\exp\{\lambda\,|\mathcal W_t|\}
 \leq
 \log 2+
 C\lambda^2\,\frac{a^2T^3}{\bar\gamma}\,\mathcal R(z,p)\,,
 \qquad 1\leq\lambda\leq\frac{c}{aT^2}\,.
 \label{eq:fors-core-factor-mgf}
\end{equation}
If the control is frozen on a mesh of width \(h_{\rm mesh}\), its
discretization error \(\mathcal E_{\rm mesh}\) satisfies
\begin{equation}
 \log\E\exp\{\lambda\,|\mathcal E_{\rm mesh}|\}
 \leq
 C\lambda\,\frac{ah_{\rm mesh}}{\bar\gamma}\,
 (d^{-1/2}+aT^2)\,\mathcal R(z,p)\,.
 \label{eq:fors-core-mesh-mgf}
\end{equation}
The proofs are the same as
\cite[Lemma~C.1 and the proof of Proposition~3.1]{CCRZ26}; the bounds on
\(S_t\) above replace the exponential Euler bounds there.

Applying \cref{lem:fors-likelihood-comparison} using
\cref{eq:fors-core-factor-mgf,eq:fors-core-mesh-mgf}, and then choosing the
clipping level and mesh exactly as in that proof, gives the required
error on one block.  The stopped composition and query tail arguments
are unchanged from \cite[Theorem~B.11]{CCRZ26}.  One $\mathcal W_t$ factor
uses \(g\) at no more than three times.  Since the linear process
\((Z,P,B)\) can be sampled by Gaussian regression at only those times and the
two adjacent mesh points, refining the mesh makes no additional
gradient queries.

Initialize from \(\mathsf N(0,(1+a)^{-1}\,\Id)\otimes \mathsf N(0,\Id)\).  The bounds
\[
 \frac12\,\norm z^2
 \leq\frac12\,\norm z^2+F(z)
 \leq\frac{1+a}{2}\,\norm z^2
\]
imply that the initial
$\Ren_{2q}$ divergence is at most \(ad/2\).  For the choices below,
the logarithmic factor in the stopped composition argument satisfies $2q+\log\frac{CNq}{\varepsilon^2} \leq C\mathfrak L$.
Take
\begin{equation*}
 T
 \deq
 c\mathfrak L^{-2/3}
 \min\{1,(a^2d)^{-1/3}\}
 \qquad\text{and}\qquad
 N\deq\bigl\lceil C\mathfrak L/T\bigr\rceil\,.
\end{equation*}
Indeed, the cubic restriction from
\cref{eq:fors-core-factor-mgf} is
\(T^3\leq c/(a^2d\mathfrak L^2)\), while the quadratic restriction is
\(T^2\leq c/(a\mathfrak L)\); the displayed choice satisfies both.
The mesh is then refined until
\cref{eq:fors-core-mesh-mgf} gives the required path accuracy, without
additional gradient queries.  Since the target is \(1\)-strongly
log-concave, a total time \(NT\geq C\mathfrak L\) suffices in the
LSI mixing and R\'enyi composition argument of
\cite[Theorem~3.2(ii)]{CCRZ26}, run at R\'enyi order \(2q\).  This yields
\eqref{eq:fors-implementation-renyi}.  Moreover,
\begin{equation*}
 N
 \leq
 C\mathfrak L^{5/3}\,
 \{1+(a^2d)^{1/3}\}\,.
\end{equation*}
The stopped implementation uses \(O(N)\) expected queries and
\(O(N+\log(1/p))\) queries with probability at least \(1-p\).  Since
\(\log(1/p)\leq\mathfrak L\), this proves
\eqref{eq:fors-implementation-cost}.
\end{proof}

\section{Chebyshev--Lobatto interpolation and quadrature}
\label{app:chebyshev-lobatto}

This appendix collects the interpolation facts used by the Picard
integrator.
\subsection{Nodes, interpolating polynomials, and integrated weights}

For an integer \(J\ge2\) indicating the total number of nodes, define the increasing Chebyshev--Lobatto nodes on
\([0,h]\) by
\begin{equation*}
    t_j\deq\frac h2\,\bigl\{1-\cos\bigl({\textstyle \frac{j-1}{J-1}}\,\uppi\bigr)\bigr\}\,,
 \qquad 1\le j\le J\,.
\end{equation*}
Their interpolating polynomials are
\begin{equation*}
 \ell_j(t)\deq
 \prod_{\substack{1\le k\le J\\k\ne j}}
 \frac{t-t_k}{t_j-t_k}\,,
 \qquad
 \ell_j(t_k)=\one_{\{j=k\}}\,.
\end{equation*}
Thus, the operator \(\mathcal I_{J,h}\) in
\eqref{eq:interpolation-operator} is the unique polynomial interpolant of
degree at most \(J-1\).  Its Lebesgue constant is defined as
\begin{equation}
 \Lambda_J
 \deq\sup_{0\le t\le h}\sum_{j=1}^{J}|\ell_j(t)|\,.
 \label{eq:chebyshev-lebesgue-constant}
\end{equation}
For a normed space \(E\) and \( \{f(t_j)\}_{j=1}^J \subseteq E\), the triangle
inequality immediately gives
\begin{equation*}
 \sup_{0\le t\le h}\,\norm{(\mathcal I_{J,h}f)(t)}_E
 \le\Lambda_J\max_{j\in[J]}\,\norm{f(t_j)}_E\,.
\end{equation*}

Recall the quadrature coefficients \(\omega_{i,j}\) and \(\omega_j\) from
\eqref{eq:rhmc-chebyshev-data}.  By their
definition, for every polynomial \(f\) of degree at most \(J-1\),
\begin{align*}
 \int_0^{t_i}(t_i-t)f(t)\,\dd t
 &=\sum_{j=1}^{J}\omega_{i,j}f(t_j)\,, \qquad \int_0^h f(t)\,\dd t
 =\sum_{j=1}^{J}\omega_jf(t_j)\,.
\end{align*}

\begin{proposition}[Chebyshev--Lobatto coefficient bounds]
\label{prop:chebyshev-lobatto-bounds}
For every \(J\ge2\),
\begin{equation}
 \begin{aligned}
     \max_{i\in[J]}\sum_{j\in[J]}|\omega_{i,j}|&\le h^2\,,
  &\omega_j&\ge0\,,
  &\sum_{j\in[J]}\omega_j&=h\,,
  &\Lambda_J&\le \frac2\uppi\log J+1\,.
 \end{aligned}
 \label{eq:rhmc-chebyshev-bounds}
\end{equation}
\end{proposition}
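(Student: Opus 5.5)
The four claims are of two different kinds, and I would handle them separately after the affine change of variables $t=\frac h2(1-x)$. This maps $[0,h]$ to $[-1,1]$ and the nodes $t_j$ to the Chebyshev extreme points $x_j=\cos(\frac{j-1}{J-1}\uppi)$. The Lagrange basis is invariant under affine maps, so $\Lambda_J$ is the Lebesgue constant of the $n\deq J-1$ degree Chebyshev--Lobatto interpolant on $[-1,1]$. The weights scale as $\omega_j=\frac h2\,w_j$ and $\omega_{i,j}=(\frac h2)^2\,w_{i,j}$, where $w_j,w_{i,j}$ are the corresponding weights on the reference interval.

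\textbf{The easy three claims.} The identity $\sum_j\omega_j=h$ is exactness of the quadrature on the constant $f\equiv1$, which has degree $0\le J-1$. The positivity $\omega_j\ge0$ is the classical positivity of Clenshaw--Curtis weights. I would verify it from the explicit formula obtained by expanding $\ell_j$ in Chebyshev polynomials via the discrete cosine transform, namely
\[
 w_j=\frac{c_j}{n}\Bigl(1-\sum_{k=1}^{\lfloor n/2\rfloor}\frac{b_k}{4k^2-1}\cos\frac{2k(j-1)\uppi}{n}\Bigr)\,,
\]
with $c_j,b_k\in\{1,2\}$. The subtracted sum is at most $\sum_{k\ge1}\frac{2}{4k^2-1}=1$, so $w_j\ge0$. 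For the Lebesgue constant I would cite the sharp Ehlich--Zeller/Brutman bound for Chebyshev extreme points, $\Lambda\le\frac2\uppi\log n+1$. Since $\log(J-1)\le\log J$, this gives the stated bound.

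\textbf{The bound $\sum_j|\omega_{i,j}|\le h^2$.} The naive estimate $\sum_j|\omega_{i,j}|\le\Lambda_J t_i^2/2$ loses a factor $\log J$, so I would avoid the Lebesgue constant. Instead, write $\omega_{i,j}=\int_0^{t_i}L_j(s)\,\dd s$ with $L_j(s)\deq\int_0^s\ell_j$. This gives
\[
 \sum_j|\omega_{i,j}|\le t_i\,\sup_{0\le s\le h}\sum_j|L_j(s)|\,,
\]
so it suffices to show $\sum_j|L_j(s)|\le h$. The quantity $\sum_j|L_j(s)|$ is the norm of the functional $f\mapsto\int_0^s\mathcal I_{J,h}f$ on data with $\max_j|f(t_j)|\le1$. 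For such data, the discrete cosine coefficients $a_k$ of the interpolant satisfy $|a_k|\le2$. The indefinite integral of $T_k$ on $[-1,x]$ is bounded by $\frac{2}{k^2-1}$ for $k\ge2$, and by $2$ and $1$ for $k=0,1$.

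\textbf{Main obstacle.} Summing these bounds gives $\sum_j|L_j(s)|\le C h$ with an explicit but not obviously $\le1$ constant, so the sharp constant $h^2$ is the step I expect to be hardest. First I would exploit the extra factor $t_i/2$ coming from $\int_0^{t_i}(t_i-t)\,\dd t=t_i^2/2$: I would keep the kernel $(t_i-\cdot)$ together, rather than splitting off $\sup_s$, and use the sharper coefficient bounds $|a_0|\le1$, $|a_n|\le1$. Second, the estimate $\sum_j|\omega_{i,j}|\le C\,t_i^2$ for a universal $C$ is what the rest of the paper actually uses, since only constants change. So if the sharp constant fails I would either state $C h^2$ or absorb the constant into the definition of the nodes. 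I would double-check small $J$, namely $J=2,3$, directly, where the weights are explicit: trapezoid-type for $J=2$ and Simpson-type for $J=3$.
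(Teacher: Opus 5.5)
Your arguments for $\sum_j\omega_j=h$, $\omega_j\ge0$, and $\Lambda_J$ are fine and agree with the paper. The gap is the first inequality $\max_i\sum_j|\omega_{i,j}|\le h^2$, which is the only substantive claim. You do not prove it with constant $1$, and falling back to $Ch^2$ (or altering the nodes) proves a different statement. The paper also uses the constant, e.g.\ in $h^2\le1/4$ making the node map an $h^2$-contraction.

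The route you sketch also contains a false step: $\bigl|\int_{-1}^xT_k\bigr|\le 2/(k^2-1)$ does not hold. Only the full-interval integral $\int_{-1}^1T_k$ is $O(k^{-2})$. The indefinite integral equals $\frac12\bigl[T_{k+1}/(k+1)-T_{k-1}/(k-1)\bigr]$ plus a constant. With $x=\cos\theta$, its size is $|\sin(k\theta)\sin\theta|/(k-1)+O(k^{-2})$; already for $k=2$ its maximum is $\approx0.80>2/3$. Combined with $|a_k|\le2$, the mode-by-mode estimate of $\sup_s\sum_j|L_j(s)|$ therefore only gives $O(h\log J)$.

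Keeping the kernel $(t_i-\cdot)$ intact does restore $O(k^{-2})$ decay. Even so, the mode-wise triangle inequality cannot reach constant $1$. Take $i=J$, i.e.\ $x_i=-1$, where $\int_{-1}^1(y+1)T_k\,\dd y$ equals $2$, $\frac23$, $-\frac23$ for $k=0,1,2$, and take $J\ge4$. The first three modes alone contribute $1\cdot2+2\cdot\frac23+2\cdot\frac23=\frac{14}3>4$ on the reference interval. After the scaling $\omega_{i,j}=\frac{h^2}4w_{i,j}$, this is already more than $h^2$.

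What is missing is an argument that exploits orthogonality, rather than summing Chebyshev modes in absolute value. The paper uses the Chebyshev--Gauss--Lobatto discrete norm comparison: for $f=\mathcal I_{J,h}z$, $\int_0^h|f(s)|^2(s(h-s))^{-1/2}\,\dd s\le\frac{\uppi}{J-1}\norm z^2$. Write $\sum_j\omega_{i,j}z_j=\int_0^hf(s)\,(t_i-s)_+\,\dd s$ and apply Cauchy--Schwarz with the reciprocal weights $(s(h-s))^{\mp1/2}$. This gives $\sum_j\omega_{i,j}^2\le\frac{\uppi}{J-1}\int_0^h(h-s)^2\sqrt{s(h-s)}\,\dd s=\frac{5\uppi^2h^4}{128(J-1)}$. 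Then $\sum_j|\omega_{i,j}|\le\sqrt J\,\bigl(\sum_j\omega_{i,j}^2\bigr)^{1/2}\le\frac{\uppi\sqrt5}8\,h^2<h^2$, because the factor $J$ cancels against $1/(J-1)$. This gives the sharp constant uniformly in $J$, and also uniformly in the upper limit $t\in[0,h]$, which the paper needs as well. It requires no coefficient-by-coefficient bookkeeping.
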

\begin{proof}
    The classical estimate for $\Lambda_J$ can be found in \cite[Theorem~15.2]{T19}.  The first estimate follows from \cref{lem:integrated-weights-l2,rem:integrated-weights-l1} below.
The \(\omega_j\)'s are the
Clenshaw--Curtis weights, which are non-negative.  Since the rule integrates
constants exactly on \([0,h]\), they sum to \(h\).
\end{proof}

\begin{lemma}[Square estimates]
\label{lem:integrated-weights-l2}
For every $J\geq2$,
\begin{equation*}
 \max_{i\in[J]}\sum_{j\in[J]}\omega_{i,j}^2\leq\frac{5\uppi^2h^4}{128\,(J-1)}\,,
 \qquad
 \sum_{j\in[J]}\omega_j^2\leq\frac{\uppi^2h^2}{8\,(J-1)}\,.
\end{equation*}
\end{lemma}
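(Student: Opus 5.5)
The plan is to pass to Chebyshev coordinates and convert both sums into a discrete Parseval identity, which is then bounded by the continuous Parseval identity for a cosine series. Write \(t=\frac h2(1-x)\) with \(x=\cos\theta\). The nodes are then \(x_j=\cos\theta_j\) with \(\theta_j=\frac{(j-1)\uppi}{J-1}\), and \(T_k(x_j)=\cos(k\theta_j)\).

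Both families of weights have the form \(\omega^{(\kappa)}_j\deq\int_0^h\kappa(t)\,\ell_j(t)\,\dd t\). For \(\omega_j\) the kernel is \(\kappa\equiv1\), and for \(\omega_{i,j}\) it is \(\kappa(t)=(t_i-t)_+\). It therefore suffices to prove the single bound
\begin{equation*}
 \sum_{j\in[J]}\bigl(\omega^{(\kappa)}_j\bigr)^2
 \le\frac{2}{J-1}\cdot\frac{h^2}{4}\cdot\frac\uppi2\int_0^\uppi\kappa^2\sin^2\theta\,\dd\theta\,,
\end{equation*}
and then evaluate the right-hand side.

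First I would expand each Lagrange polynomial in the Chebyshev basis using the discrete cosine transform at the Lobatto points:
\begin{equation*}
 \ell_j=\frac{c_j}{J-1}\sum_{k=0}^{J-1}{}''\,T_k(x_j)\,T_k\,.
\end{equation*}
Here \(c_j=1\) at the two endpoints and \(c_j=2\) otherwise, and the double prime halves the terms \(k=0\) and \(k=J-1\). This gives
\begin{equation*}
 \omega^{(\kappa)}_j=\frac{c_j}{J-1}\sum_{k}{}''\,a_k\cos(k\theta_j)\,,
 \qquad
 a_k\deq\int_0^h\kappa\,T_k=\frac h2\int_0^\uppi\kappa\,\sin\theta\,\cos(k\theta)\,\dd\theta\,.
\end{equation*}

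Next I would square, sum over \(j\), and use \(c_j^2\le2c_j\). The sum \(\sum_j c_j(\cdot)^2\) is the discrete trapezoidal inner product for which the vectors \(\bigl(\cos(k\theta_j)\bigr)_j\), \(0\le k\le J-1\), are exactly orthogonal. The cross terms therefore vanish, and
\begin{equation*}
 \sum_j\bigl(\omega^{(\kappa)}_j\bigr)^2\le\frac{2}{J-1}\sum_k{}''\,a_k^2\,.
\end{equation*}
This step needs care at \(k=0\) and \(k=J-1\), where the discrete norm doubles. The double prime on \(a_k\) is designed to compensate, and I would check that the endpoint bookkeeping only helps, so that the factor is at most \(2/(J-1)\).

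The remaining step is Bessel's inequality for the cosine system on \([0,\uppi]\). Applied to \(f(\theta)=\kappa(t(\theta))\sin\theta\), it gives
\begin{equation*}
 \sum_k{}''\Bigl(\int_0^\uppi f\cos(k\theta)\,\dd\theta\Bigr)^2\le\frac\uppi2\int_0^\uppi f^2\,\dd\theta\,,
\end{equation*}
which yields the displayed bound. For \(\kappa\equiv1\) we have \(\int_0^\uppi\sin^2\theta\,\dd\theta=\uppi/2\), so the bound is \(\uppi^2h^2/(8(J-1))\).

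For the kernel \((t_i-t)_+\), I would use monotonicity in \(t_i\) to reduce to the worst case \(t_i=h\). There the kernel is \(\frac h2(1+\cos\theta)\), and
\begin{equation*}
 \int_0^\uppi(1+\cos\theta)^2\sin^2\theta\,\dd\theta=\frac{5\uppi}8\,,
\end{equation*}
which gives exactly \(5\uppi^2h^4/(128\,(J-1))\).

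I expect the main obstacle to be this reduction to \(t_i=h\). Because of sign changes in the cosine coefficients, a pointwise monotonicity argument is not enough. What I would actually use is that the whole estimate depends on \(\kappa\) only through \(\int_0^\uppi\kappa^2\sin^2\theta\,\dd\theta\). Since \(0\le(t_i-t)_+\le h-t\) pointwise, this integral is monotone in \(t_i\), and the reduction follows. The endpoint terms of the discrete orthogonality form the other delicate point, and I would verify them directly.
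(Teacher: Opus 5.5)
Your argument is correct and reaches exactly the paper's constants, but it is organized differently.

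The paper never expands $\ell_j$. It quotes the Chebyshev--Lobatto discrete norm comparison from Shen--Tang: for the interpolant $f$ of $z$, $\int_0^h |f|^2/\sqrt{s(h-s)}\,\dd s\le\frac{\uppi}{J-1}\{\frac{|z_1|^2+|z_J|^2}{2}+\sum_{j=2}^{J-1}|z_j|^2\}$. It then argues by duality: it writes $\sum_j a_j(k)\,z_j=\int_0^h fk$, applies Cauchy--Schwarz with the reciprocal weights $1/\sqrt{s(h-s)}$ and $\sqrt{s(h-s)}$, and takes the supremum over unit $z$.

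You work in the primal instead. The discrete cosine form of $\ell_j$ expresses each weight through the Chebyshev moments $a_k$ of the kernel. Exact discrete orthogonality at the Lobatto nodes then gives $\sum_j c_j(\cdot)^2=(J-1)\sum_k{}''\,a_k^2$. So the endpoint bookkeeping you flagged is an identity and costs nothing. Bessel's inequality for the cosine system on $[0,\uppi]$ finishes.

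The two proofs agree at the intermediate stage. Under $t=\frac h2(1-\cos\theta)$ one has $\sqrt{t(h-t)}\,\dd t=\frac{h^2}{4}\sin^2\theta\,\dd\theta$. Hence your bound $\frac{\uppi h^2}{4(J-1)}\int_0^\uppi\kappa^2\sin^2\theta\,\dd\theta$, with $\kappa$ your kernel, is literally the paper's $\frac{\uppi}{J-1}\int_0^h k^2\sqrt{s(h-s)}\,\dd s$. The last step is the same up to this change of variables: pointwise domination $(t_i-t)_+\le h-t$, then the integrals $5\uppi/8$ and $\uppi/2$.

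Both proofs rest on the same facts, namely continuous and discrete orthogonality of Chebyshev polynomials at the Lobatto nodes. Both also lose the same factor $2$ at the endpoint nodes: your $c_j^2\le 2c_j$ corresponds to the paper discarding the halved weights on $z_1,z_J$. The paper's version is shorter because the norm comparison is taken from a textbook. Yours is self-contained and shows explicitly where each constant comes from.
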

\begin{proof}
For $z=(z_j)_{j\in[J]}\in\R^J$, let
$f\deq\mathcal I_{J,h}z=\sum_{j\in[J]}z_j\ell_j$.
The discrete norm comparison in \cite[Lemma 1.3.1]{ShenTang06}, with
the Chebyshev--Lobatto weights in \cite[(1.3.11)]{ShenTang06}, gives
\begin{equation*}
 \int_0^h\frac{\lvert f(s)\rvert^2}{\sqrt{s\,(h-s)}}\,\dd s
 \leq\frac{\uppi}{J-1}\,\Bigl\{
       \frac{\lvert z_1\rvert^2+\lvert z_J\rvert^2}{2}
       +\sum_{j=2}^{J-1}\lvert z_j\rvert^2\Bigr\}
 \leq\frac{\uppi}{J-1}\,\norm z^2\,.
\end{equation*}
Indeed, the change of variables $x=1-2s/h$ sends $t_j$ to
$\cos(\frac{j-1}{J-1}\,\uppi)$ and the measure
$\dd s/\sqrt{s\,(h-s)}$ to $\dd x/\sqrt{1-x^2}$, with the limits reversed.
The two endpoint weights are $\frac{\uppi}{2\,(J-1)}$ and every interior
weight is $\frac{\uppi}{J-1}$.

For a continuous function $k:[0,h]\to\R$, put
$a_j(k)\deq\int_0^h k(s)\,\ell_j(s)\,\dd s$, $j\in[J]$.
Cauchy--Schwarz with the reciprocal weights
$1/\sqrt{s\,(h-s)}$ and $\sqrt{s\,(h-s)}$ yields
\begin{align*}
 \Bigl\lvert\sum_{j\in[J]}a_j(k)\,z_j\Bigr\rvert^2
 &=\Bigl\lvert\int_0^h f(s)\,k(s)\,\dd s\Bigr\rvert^2
 \leq\Bigl(\int_0^h\frac{\lvert f(s)\rvert^2}{\sqrt{s\,(h-s)}}\,\dd s\Bigr)\,
       \Bigl(\int_0^h\lvert k(s)\rvert^2\sqrt{s\,(h-s)}\,\dd s\Bigr)\\
 &\leq\frac{\uppi\,\norm z^2}{J-1}
       \int_0^h\lvert k(s)\rvert^2\sqrt{s\,(h-s)}\,\dd s\,.
\end{align*}
Taking the supremum over $\norm z=1$ gives
\begin{equation*}
 \sum_{j\in[J]}a_j(k)^2
 \leq\frac{\uppi}{J-1}
       \int_0^h\lvert k(s)\rvert^2\sqrt{s\,(h-s)}\,\dd s\,.
\end{equation*}
For $k=(t_i-\cdot)_+$, these coefficients are $a_j(k)=\omega_{i,j}$.
Since $0\leq(t_i-s)_+\leq h-s$ for $s \in [0,h]$,
\begin{equation*}
 \int_0^h(t_i-s)_+^2\sqrt{s\,(h-s)}\,\dd s
 \leq\int_0^h(h-s)^2\sqrt{s\,(h-s)}\,\dd s
 =\frac{5\uppi h^4}{128}\,.
\end{equation*}
This proves the first estimate, uniformly in $i\in[J]$.
For $k=1$, we have $a_j(k)=\omega_j$ and
\begin{equation*}
 \int_0^h\sqrt{s\,(h-s)}\,\dd s=\frac{\uppi h^2}{8}\,,
\end{equation*}
which proves the second estimate. \end{proof}

\begin{remark}[Refined weight estimate]
\label{rem:integrated-weights-l1}
By Cauchy--Schwarz and $J/(J-1)\leq2$, \cref{lem:integrated-weights-l2} gives
\begin{equation*}
 \max_{i\in[J]}\sum_{j\in[J]} |\omega_{i,j}|
 \leq\sqrt{\frac{5\uppi^2J}{128\,(J-1)}}\,h^2
 \leq\frac{\uppi\sqrt5}{8}\,h^2
 \leq h^2\,.
\end{equation*}
The same bound holds with $\omega_{i,j}$ replaced by
$\int_0^t(t-s)\ell_j(s)\,\dd s$, uniformly over $t\in[0,h]$:
the proof above only uses $0\le(t-s)_+\le h-s$.
\end{remark}

\subsection{Interpolation error bound}

We record the standard interpolation error bound.

\begin{lemma}[Banach-valued interpolation error bound]
\label{lem:chebyshev-banach-remainder}
Let \(E\) be a Banach space and let \(f\in C^J([0,h];E)\).  Then
\begin{equation*}
 \norm{f(t)-(\mathcal I_{J,h}f)(t)}_E
 \le
 \frac1{J!}\sup_{0\le s\le h}\,\norm{f^{(J)}(s)}_E
 \prod_{j=1}^{J}|t-t_j|
 \le
 \frac{h^J}{J!}\sup_{0\le s\le h}\,\norm{f^{(J)}(s)}_E\,.
\end{equation*}
\end{lemma}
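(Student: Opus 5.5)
The plan is to reduce the Banach-valued statement to the classical scalar Cauchy remainder formula by duality. The key tool is Hahn--Banach. Fix $t\in[0,h]$. If $f(t)-(\mathcal I_{J,h}f)(t)=0$ there is nothing to prove. Otherwise, choose a functional $\phi\in E^*$ with $\norm{\phi}_{E^*}=1$ and
\begin{equation*}
 \phi\bigl(f(t)-(\mathcal I_{J,h}f)(t)\bigr)=\norm{f(t)-(\mathcal I_{J,h}f)(t)}_E\,.
\end{equation*}
Set $u\deq\phi\circ f:[0,h]\to\R$. Since $\phi$ is bounded and linear, $u\in C^J([0,h];\R)$ with $u^{(J)}=\phi\circ f^{(J)}$. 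Linearity of $\mathcal I_{J,h}$ gives $\phi\circ(\mathcal I_{J,h}f)=\mathcal I_{J,h}u$, because the Lagrange interpolant is a finite linear combination $\sum_j f(t_j)\,\ell_j$ with scalar polynomials $\ell_j$.

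Next I would invoke the classical scalar remainder for polynomial interpolation at the $J$ distinct nodes $t_1<\cdots<t_J$. There is some $\xi\in[0,h]$, depending on $t$, with
\begin{equation*}
 u(t)-(\mathcal I_{J,h}u)(t)=\frac{u^{(J)}(\xi)}{J!}\prod_{j=1}^J(t-t_j)\,.
\end{equation*}
If one prefers not to quote this, it follows by the standard argument. Fix $t$ not a node (the node case is trivial). Apply Rolle's theorem $J$ times to
\begin{equation*}
 s\mapsto u(s)-(\mathcal I_{J,h}u)(s)-K\prod_j(s-t_j)\,,
\end{equation*}
where $K$ is chosen so that this function vanishes at $t$. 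The function then has $J+1$ zeros, so its $J$-th derivative vanishes somewhere, and $(\mathcal I_{J,h}u)^{(J)}=0$ because $\mathcal I_{J,h}u$ has degree at most $J-1$.

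Then I would combine the pieces. We have $|u^{(J)}(\xi)|\le\norm{\phi}\,\norm{f^{(J)}(\xi)}_E\le\sup_s\norm{f^{(J)}(s)}_E$, which gives the first inequality. The second inequality is immediate: $t,t_j\in[0,h]$ implies $|t-t_j|\le h$, so the product is at most $h^J$. (The sharper Chebyshev bound $2(h/4)^J$ is not needed.)

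The only point requiring care is that the mean-value form of the remainder fails directly for vector-valued functions, since different coordinates may need different $\xi$. The duality step is exactly what avoids this, because $\xi$ is allowed to depend on $\phi$.
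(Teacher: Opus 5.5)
Your proposal is correct and is essentially the paper's argument: both apply the scalar interpolation remainder to $\ell\circ f$ for a unit-norm $\ell\in E^*$ and recover $\norm{\cdot}_E$ by Hahn--Banach. The only difference is cosmetic. The paper cites the Genocchi--Hermite (integral) form of the scalar remainder and takes a supremum over functionals, while you use the Rolle mean-value form with a single norming functional; either suffices.
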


This is the Banach-valued form of the Genocchi--Hermite interpolation
remainder; see \cite[Section~9]{DB05}. Apply the scalar argument to $\ell \circ f$, where $\ell \in E^*$ has unit norm. Taking the supremum over all such $\ell$ concludes the proof.

\section{Gaussian analysis}
\label{app:gaussian-analysis}

This appendix collects the Gaussian analysis used throughout
the paper.

\subsection{Basic inequalities and heat smoothing}

We recall the definition of a Wiener chaos. The $n$-th Wiener chaos is the subspace of $L^2(\gamma)$
consisting of polynomials of degree at most $n$ that are
orthogonal to every polynomial of degree less than $n$.
The zeroth chaos consists of constants.

\paragraph{Derivative and divergence inequalities.}
Fix $d\ge 1$ and let \(\gamma\) be standard Gaussian measure on \(\R^d\). We also let
\(\EuScript H\) be a finite-dimensional\footnote{We expect that the results can be extended to the infinite-dimensional setting, but this will not be needed here.} Hilbert space, and let
\(V=(V_i)_{i\in[d]}:\R^d\to\EuScript H^d\) be smooth. Its Gaussian
divergence is
\[
 \delta_\gamma V
 \deq\sum_{i\in[d]}(x_iV_i-\partial_iV_i)\,.
\]
It is the \(L^2(\gamma)\)-adjoint of the gradient:
\begin{equation}
 \E_\gamma \ip{DF}{V}_{\HS}
 =\E_\gamma \ip{F}{\delta_\gamma V}_{\EuScript H}\,.
 \label{eq:gaussian-divergence-adjoint}
\end{equation}
Here, $D = \nabla$ is the usual gradient.

Componentwise Gaussian Poincar\'e and two applications of
\eqref{eq:gaussian-divergence-adjoint} give
\begin{equation}
 \norm{F-\E F}_{L^2(\gamma)}
 \le\norm{DF}_{L^2(\gamma;\HS)}\,,
 \qquad
 \norm{\delta_\gamma V}_{L^2(\gamma)}^2
 \le\norm V_{L^2(\gamma;\HS)}^2
     +\norm{DV}_{L^2(\gamma;\HS)}^2\,.
 \label{eq:rhmc-proof-l2-gaussian-tools}
\end{equation}
Indeed, the exact identity behind the second inequality is
\begin{equation*}
 \norm{\delta_\gamma V}_{L^2(\gamma)}^2
 =\norm V_{L^2(\gamma;\HS)}^2
  +\sum_{i,j\in[d]}
    \E\ip{\partial_jV_i}{\partial_iV_j}_{\EuScript H}\,;
\end{equation*}
the last sum is at most \(\norm{DV}_{L^2(\gamma;\HS)}^2\) by
Cauchy--Schwarz.  If \(\gamma_t\deq\cN(0,t\Id)\), then
\[
 \delta_{\gamma_t}V
 \deq\sum_{i\in[d]}(t^{-1}x_iV_i-\partial_iV_i)\,,
\]
and the change of variables \(x=\sqrt t\,z\) gives
\begin{equation}
 \norm{\delta_{\gamma_t}V}_{L^2(\gamma_t)}^2
 \le t^{-1}\,\norm V_{L^2(\gamma_t;\HS)}^2
     +\norm{DV}_{L^2(\gamma_t;\HS)}^2\,.
 \label{eq:gaussian-divergence-covariance-t}
\end{equation}
For the $L^q$ analogues, we use the following inequalities.

\begin{lemma}[Gaussian Sobolev inequalities]
\label{lem:gaussian-sobolev-inequalities}
Let \(q\ge2\).  If
\(F\in W^{1,q}(\gamma;\EuScript H)\), then
\begin{equation*}
 \norm{F-\E F}_{L^q(\gamma;\EuScript H)}
 \le C\sqrt q\,\norm{DF}_{L^q(\gamma;\HS)}\,.
\end{equation*}
If \(V\in W^{1,q}(\gamma;\EuScript H^d)\) and
\(\delta_\gamma V\) is its row divergence, then
\begin{equation}
 \norm{\delta_\gamma V}_{L^q(\gamma;\EuScript H)}
 \le C\sqrt q\,\norm{\E V}_{\HS}
   +Cq\,\norm{DV}_{L^q(\gamma;\HS)}
 \,.
 \label{eq:rhmc-proof-gaussian-div-q}
\end{equation}
If we replace $\gamma$ by $\gamma_t$, the centered Gaussian measure of covariance \(t\Id\), then the first term on
the right side of \eqref{eq:rhmc-proof-gaussian-div-q} is multiplied by
\(t^{-1/2}\).
\end{lemma}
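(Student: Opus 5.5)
We prove the two inequalities in \cref{lem:gaussian-sobolev-inequalities} separately. Both reduce to classical $L^q$ functional inequalities for the Ornstein--Uhlenbeck semigroup with dimension-free constants, applied to Hilbert-valued functions.

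\emph{First inequality.} This is the Gaussian $L^q$ Poincar\'e inequality for vector-valued functions. We would use the Maurey--Pisier representation. For $X,Y\sim\gamma$ independent, set $X_\theta\deq X\sin\theta+Y\cos\theta$ and $X_\theta'\deq X\cos\theta-Y\sin\theta$. Then
\[
F(X)-F(Y)=\int_0^{\uppi/2}DF(X_\theta)[X'_\theta]\,\dd\theta\,,
\]
and for each fixed $\theta$, $X'_\theta$ is a standard Gaussian independent of $X_\theta$. Conditionally on $X_\theta$, the vector $DF(X_\theta)[X'_\theta]\in\EuScript H$ is a centered Gaussian whose covariance operator has trace $\norm{DF(X_\theta)}_{\HS}^2$. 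Gaussian moment bounds in Hilbert space then give $\E[\norm{DF(X_\theta)[X'_\theta]}^q\mid X_\theta]^{1/q}\le C\sqrt q\,\norm{DF(X_\theta)}_{\HS}$. We combine this with Minkowski's inequality in $\theta$ and with $\norm{F-\E F}_{L^q}\le\norm{F(X)-F(Y)}_{L^q}$, which is Jensen's inequality. The result is $C\sqrt q\,\norm{DF}_{L^q(\gamma;\HS)}$.

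\emph{Second inequality.} We split $V=\E V+(V-\E V)$. For the constant part, $\delta_\gamma(\E V)(x)=\sum_i x_i\,\E V_i$ is a centered Gaussian vector in $\EuScript H$ with covariance trace $\norm{\E V}_{\HS}^2$, so its $L^q$ norm is at most $C\sqrt q\,\norm{\E V}_{\HS}$. For the centered part $W\deq V-\E V$, we use the identity $\delta_\gamma W=\sum_i x_iW_i-\operatorname{tr}DW$. We then aim for the bound $\norm{\delta_\gamma W}_{L^q}\le Cq\,\norm{DW}_{L^q}$, which is the main obstacle.

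For this step, we would first write $\delta_\gamma W$ as a mean-zero function and apply the first inequality to it. That requires controlling $D\delta_\gamma W$ via the commutation relation $D_j\delta_\gamma W=W_j+\delta_\gamma(D_jW)$, and this loses derivatives. So we instead use duality: for $G\in L^{q'}(\gamma;\EuScript H)$ with $\E G=0$, we have $\E\ip{\delta_\gamma W}{G}=\E\ip{DW}{\,D L^{-1}G}$, where $L$ is the OU generator; we can restrict to centered $G$ because $\E\delta_\gamma W=0$. Meyer's inequality, i.e.\ $L^{q'}$ boundedness of the Riesz transform $DL^{-1/2}$, in the Hilbert-valued, dimension-free form, gives $\norm{DL^{-1}G}_{L^{q'}}\le C_q\norm{L^{-1/2}G}_{L^{q'}}$. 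This reduces the problem to bounding $\norm{\delta_\gamma W}_{L^q}\lesssim\norm{(-L)^{1/2}W}_{L^q}\lesssim\norm{DW}_{L^q}$, which is the reverse Riesz inequality. With Pisier's proof, the constants in both Riesz-transform inequalities grow linearly in $q$, giving the $Cq$ factor.

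If tracking the constants through Meyer's inequality proves delicate, the fallback is a direct Pisier-type argument. We would write $\delta_\gamma W$ through the Mehler rotation: integrate by parts in the representation $W(X)-W(Y)$ and use $\delta_\gamma W(X)=\E[\,\cdot\mid X]$ of $\int_0^{\uppi/2}\ip{X'_\theta}{\,\cdot\,}\,\dd\theta$-type expressions. A Gaussian chaos of order two then produces the $q$ factor via hypercontractivity, $\norm{\cdot}_{L^q}\le(q-1)\norm{\cdot}_{L^2}$ on the second chaos, applied conditionally.

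Finally, for $\gamma_t$ we rescale $x=\sqrt t\,z$. The derivative picks up a factor $\sqrt t$, the divergence satisfies $\delta_{\gamma_t}V(\sqrt t z)=t^{-1/2}\delta_\gamma[V(\sqrt t\,\cdot)](z)$, and $D[V(\sqrt t\,\cdot)]=\sqrt t\,DV$. The $DV$ term is therefore unchanged, while the mean term acquires the factor $t^{-1/2}$.
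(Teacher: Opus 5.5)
Your first inequality (the Maurey--Pisier rotation together with Gaussian moments of $DF(X_\theta)[X'_\theta]$) is correct and matches the paper, which invokes Pisier's inequality. Your treatment of the constant part $\sum_i x_i\,\E V_i$ and the rescaling to $\gamma_t$ are also correct.

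The gap is the centered bound $\norm{\delta_\gamma W}_{L^q}\le Cq\,\norm{DW}_{L^q}$, which is the entire content of the second estimate.

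\emph{The duality identity.} Your identity $\E\ip{\delta_\gamma W}{G}=\E\ip{DW}{DL^{-1}G}$ does not typecheck: $DW$ is a $d\times d$ array, while $DL^{-1}G$ is a $d$-vector. The correct version comes from writing $G=-\delta_\gamma DL^{-1}G$ and polarizing the identity $\norm{\delta_\gamma V}_{L^2}^2=\norm V_{L^2}^2+\sum_{i,j}\E\ip{\partial_jV_i}{\partial_iV_j}$. It reads $\E\ip{\delta_\gamma W}{G}=-\E\ip{W}{DL^{-1}G}-\E\ip{DW}{D^2L^{-1}G}$, so what actually appears is a second-order Riesz transform.

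\emph{The constant.} This is the decisive problem. Your chain $\norm{\delta_\gamma W}_{L^q}\le C_1\norm{(-L)^{1/2}W}_{L^q}\le C_1K_q\norm{DW}_{L^q}$ produces the product $C_1K_q$. If each factor is linear in $q$, as you say, the result is of order $q^2$, not $q$. Sharper constants cannot rescue this route. In $d=1$, test on $W=\phi(N\,\cdot)$ with $\phi\in C_c^\infty$ and $N\to\infty$. Then $\delta_\gamma W\approx-W'$ and $(-L)^{1/2}\approx(-\partial^2)^{1/2}=\mathcal H\partial$, where $\mathcal H$ is the Hilbert transform. Hence both $C_1$ and $K_q$ are at least $\norm{\mathcal H}_{L^q\to L^q}=\cot(\uppi/2q)\asymp q$. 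Any argument that factors through $\norm{(-L)^{1/2}W}_{L^q}$ therefore loses a factor of $q$.

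\emph{The fallback.} It is too vague to check, and its most natural form breaks down. Suppose you write $\delta_\gamma W(X)=\E\bigl[\int_0^{\uppi/2}c(\theta)\,\{\ip{X'_\theta}{DW(X_\theta)X'_\theta}-\operatorname{tr}DW(X_\theta)\}\,\dd\theta\bigm| X\bigr]$, so as to apply hypercontractivity to the second chaos in $X'_\theta$ conditionally on $X_\theta$. Testing on $W$ equal to the $n$-th normalized Hermite polynomial in $d=1$ forces $n\int_0^{\uppi/2}c(\theta)\sin^{n-1}\theta\cos^2\theta\,\dd\theta=1$ for every $n\ge1$. That forces $c(\theta)=\sec\theta$, which is not integrable on $[0,\uppi/2]$, so Minkowski's inequality in $\theta$ diverges. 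Any such argument would have to exploit the cancellation of the conditional expectation near $\theta=\uppi/2$.

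The dimension-free, linear-in-$q$ constant is exactly the nontrivial input here. The paper does not derive it from Meyer's inequalities; it imports it as Corollary~1.4 of \cite{CCLZ26Divergence}. As written, your proposal proves the second estimate at best with $q^2$ in place of $q$, not the stated lemma.
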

\begin{proof}
The first estimate follows from Pisier's Gaussian convexity inequality
\cite[Theorem~2.2]{P86}, applied with the convex function $u\mapsto \norm u_{\EuScript H}^2$.  The divergence estimate is
\cite[Corollary~1.4]{CCLZ26Divergence}, applied to the Hilbert target
$\EuScript H$.  The last sentence follows from rescaling.
\end{proof}

\paragraph{Dual Poincar\'e inequality.}
Let $\mathcal L_{\msf{OU}}\deq\Delta-\ip{x}{\grad}=-\delta_\gamma D$
be the Ornstein--Uhlenbeck generator.  On the $n$-th Wiener chaos,
$\mathcal L_{\msf{OU}}F=-nF$; its inverse below is taken on centered
functions and extended by zero on constants.

For a centered, finite-dimensional Hilbert-valued \(F\in L^2(\gamma;\EuScript H)\), define
\begin{equation*}
 \norm F_{\dot H^{-1}(\gamma;\EuScript H)}
 \deq
 \sup\bigl\{
   \E\ip FH_{\EuScript H}:
   \E H=0,\quad
   \norm{DH}_{L^2(\gamma;\HS)}\le1
 \bigr\}\,.
\end{equation*}
We then have
\begin{equation*}
 \norm F_{\dot H^{-1}(\gamma;\EuScript H)}
 =\norm{D(-\mathcal L_{\msf{OU}})^{-1}F}_{L^2(\gamma;\HS)}\,.
\end{equation*}
The following is the dual form of the Gaussian Poincar\'e inequality, see~\cite{K13}.

\begin{lemma}[Dual Gaussian Poincar\'e inequality]
\label{lem:rhmc-proof-gaussian-poisson}
For all such $F$,
\begin{equation*}
 \norm F_{\dot H^{-1}(\gamma;\EuScript H)}
 \le\norm F_{L^2(\gamma;\EuScript H)}\,.
\end{equation*}
\end{lemma}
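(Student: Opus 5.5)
The plan is to argue through the variational definition of $\norm F_{\dot H^{-1}(\gamma;\EuScript H)}$ and reduce the claim to the ordinary Gaussian Poincar\'e inequality from \eqref{eq:rhmc-proof-l2-gaussian-tools}. Fix a centered $F\in L^2(\gamma;\EuScript H)$ and any test function $H$ with $\E H=0$ and $\norm{DH}_{L^2(\gamma;\HS)}\le1$.

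First, Cauchy--Schwarz in $L^2(\gamma;\EuScript H)$ gives
\begin{equation*}
 \E\ip FH_{\EuScript H}\le\norm F_{L^2(\gamma;\EuScript H)}\,\norm H_{L^2(\gamma;\EuScript H)}\,.
\end{equation*}
Second, since $H$ is centered, the componentwise Gaussian Poincar\'e inequality in \eqref{eq:rhmc-proof-l2-gaussian-tools} gives
\begin{equation*}
 \norm H_{L^2(\gamma;\EuScript H)}=\norm{H-\E H}_{L^2(\gamma;\EuScript H)}\le\norm{DH}_{L^2(\gamma;\HS)}\le1\,.
\end{equation*}
Because $\EuScript H$ is finite-dimensional, this follows by applying the scalar Poincar\'e inequality to each coordinate in an orthonormal basis and summing. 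Taking the supremum over admissible $H$ then yields $\norm F_{\dot H^{-1}}\le\norm F_{L^2}$.

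As a cross-check, I would also verify the bound through the displayed identity $\norm F_{\dot H^{-1}}=\norm{D(-\mathcal L_{\msf{OU}})^{-1}F}_{L^2}$. Expand $F=\sum_{n\ge1}F_n$ in Wiener chaoses; $F$ has no zeroth-chaos term because it is centered. Set $U\deq(-\mathcal L_{\msf{OU}})^{-1}F=\sum_n F_n/n$. Then
\begin{equation*}
 \norm{DU}_{L^2}^2=\E\ip U{-\mathcal L_{\msf{OU}}U}=\sum_{n\ge1}\frac{\norm{F_n}_{L^2}^2}{n}\le\sum_{n\ge1}\norm{F_n}_{L^2}^2=\norm F_{L^2}^2\,.
\end{equation*}

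The only mildly delicate point is the domain: which $H$ are admissible, and whether the chaos expansion is legitimate for Hilbert-valued $F$. This is handled coordinatewise, since $\EuScript H$ is finite-dimensional, and by density of polynomials in $L^2(\gamma)$. I do not expect a genuine obstacle.
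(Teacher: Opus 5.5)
Your proof is correct and is the standard duality argument the paper relies on: the paper gives no proof beyond calling this the dual form of the Gaussian Poincar\'e inequality and giving a citation, and Cauchy--Schwarz combined with Poincar\'e for centered test functions $H$ is exactly that argument. Your chaos computation $\norm{D(-\mathcal L_{\msf{OU}})^{-1}F}_{L^2}^2=\sum_{n\ge1}\norm{F_n}_{L^2}^2/n$ is also correct. It directly yields the form in which the lemma is actually used later, namely bounding $\norm{D(-\mathcal L_{\msf{OU}})^{-1}\Gamma_j}_{L^2}$ in the near-identity cumulant estimate.
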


\paragraph{Heat equation estimates.}
For later reference, let \((P_t)_{t\ge0}\) denote the Euclidean heat
semigroup,
\begin{equation*}
 (P_tF)(x)\deq\E F(x+\sqrt t\,G)\,.
\end{equation*}
Thus \(\partial_tP_tF=\frac12\Delta P_tF\).  If \(F\) is weakly
differentiable, differentiation commutes with the heat semigroup, whereas
Gaussian integration by parts gives the derivative formula
\begin{equation}
    D P_tF(x)=P_t(DF)(x)
 =\frac1{\sqrt t}\,\E[F(x+\sqrt t\,G)\otimes G]\,.
 \label{eq:gaussian-heat-derivative}
\end{equation}
In particular, Jensen and Bessel's inequality give
\begin{align}
 \norm{D P_tF(x)}_{\op}
 &\le\Lip(F)\,,\notag\\
 \norm{D P_tF(x)}_{\HS}^2
 &\le\frac1t\inf_{a\in\EuScript H}
       \E\norm{F(x+\sqrt t\,G)-a}_{\EuScript H}^2\,.
 \label{eq:gaussian-heat-bessel}
\end{align}
To justify the second estimate, fix \(a\in\EuScript H\) and put
\[
 Y\deq F(x+\sqrt t\,G)-a\,.
\]
The orthogonal projection of \(Y\in L^2(\gamma;\EuScript H)\) onto the
\(\EuScript H\)-valued first Gaussian chaos is
\[
 \mathcal P_1Y
 \deq\sum_{i\in[d]}G_i\,\E[G_iY]\,.
\]
Since the coordinate functions \((G_i)_{i\in[d]}\) are orthonormal in
\(L^2(\gamma)\), Bessel's inequality gives
\[
 \sum_{i\in[d]}\norm{\E[G_iY]}_{\EuScript H}^2
 =\E\norm{\mathcal P_1Y}_{\EuScript H}^2
 \le\E\norm Y_{\EuScript H}^2\,.
\]
Moreover, \(\E[a\otimes G]=0\), so
\eqref{eq:gaussian-heat-derivative} and the preceding display imply
\[
 t\,\norm{DP_tF(x)}_{\HS}^2
 =\sum_{i\in[d]}\norm{\E[G_iY]}_{\EuScript H}^2
 \le\E\norm{F(x+\sqrt t\,G)-a}_{\EuScript H}^2\,.
\]
Taking the infimum over \(a\in\EuScript H\) proves
\eqref{eq:gaussian-heat-bessel}.

\subsection{Cumulants and derivatives of smoothed potentials}

For random variables \(F_1,\ldots,F_k\) whose joint moment-generating
function is finite near the origin, define their joint cumulant by
\begin{equation*}
 \operatorname{Cum}(F_1,\ldots,F_k)
 \deq
 \biggl.
 \partial_{t_1}\cdots\partial_{t_k}
 \log\E\exp\Bigl\{\sum_{j=1}^k t_jF_j\Bigr\}
 \biggr|_{t_1=\cdots=t_k=0}\,.
\end{equation*}
If \(X\) is vector-valued, \(\operatorname{Cum}_k(X)\) denotes the
symmetric \(k\)-tensor obtained by applying this definition to linear
functionals of \(X\).

We record the connection with derivatives of the smoothed potential.  Let
\(X_y\sim R_{\eta,y}\), and set
\[
 Z_\eta(y)
 \deq
 \int\exp\Bigl\{-V(x)-\frac{\norm x^2}{2\eta}
                 +\frac{\ip yx}{\eta}\Bigr\}\,\dd x\,.
\]
Up to an additive constant,
\[
 V_\eta(y)=\frac{\norm y^2}{2\eta}-\log Z_\eta(y)\,.
\]
Since \(\log Z_\eta\) is the cumulant-generating function of the exponential
family \(R_{\eta,y}\), differentiation gives, for every \(k\ge2\),
\begin{equation}
 D^kV_\eta(y)
 =\one_{\{k=2\}}\,\eta^{-1}\Id
  -\eta^{-k}\operatorname{Cum}_k(X_y)\,.
 \label{eq:smoothed-potential-cumulant-identity}
\end{equation}

We need the following recursive representation of cumulants.

\begin{lemma}[Iterated Gaussian covariance]
\label{lem:rhmc-proof-iterated-covariance}
Let \(F_1,\ldots,F_k\) be smooth scalar functions of a standard Gaussian,
with bounded derivatives of every positive order. Define
\begin{align}
 \Gamma_1(F_1)&\deq F_1-\E F_1\,,\notag\\
 \Gamma_j(F_1,\ldots,F_j)
 &\deq\ip{DF_j}{
   D(-\mathcal L_{\msf{OU}})^{-1}
   \Gamma_{j-1}(F_1,\ldots,F_{j-1})}\,,
 \quad j\ge2\,.
 \label{eq:rhmc-proof-Gamma-recursion}
\end{align}
Then
\begin{equation*}
 \operatorname{Cum}(F_1,\ldots,F_k)
 =\sum_{\pi\in\mathfrak S_{k-1}}
   \E\Gamma_k(F_1,F_{\pi(2)},\ldots,F_{\pi(k)})\,,
\end{equation*}
where each permutation is over $\{2,\dotsc,k\}$.
\end{lemma}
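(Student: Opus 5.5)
We argue by induction on $k$, using the Gaussian integration by parts identity in the form
\[
 \operatorname{Cov}(F,H)=\E\ip{DH}{D(-\mathcal L_{\msf{OU}})^{-1}(F-\E F)}\,,
\]
which follows from $\mathcal L_{\msf{OU}}=-\delta_\gamma D$ and the adjoint relation \eqref{eq:gaussian-divergence-adjoint}. This already gives the case $k=2$: $\operatorname{Cum}(F_1,F_2)=\operatorname{Cov}(F_1,F_2)=\E\Gamma_2(F_1,F_2)$, and $\mathfrak S_1$ is trivial. The boundedness of all positive-order derivatives guarantees that every $\Gamma_j$ is smooth with polynomially bounded derivatives. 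In particular, the inverse OU operator applied to a centered function of polynomial growth stays in this class, via the Mehler representation $(-\mathcal L_{\msf{OU}})^{-1}F=\int_0^\infty P^{\msf{OU}}_sF\,\dd s$. The joint MGF is also finite near the origin, so all manipulations below are justified.

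For the inductive step we use the standard tilting recursion for joint cumulants. Consider the tilted measure $\dd\gamma_t\propto e^{t F_k}\dd\gamma$. Then
\[
 \operatorname{Cum}(F_1,\ldots,F_k)=\partial_t\big|_{t=0}\operatorname{Cum}_{\gamma_t}(F_1,\ldots,F_{k-1})\,.
\]
Equivalently, we work with the expanded quantity: the derivative of $\E_{\gamma_t}[G]$ at $t=0$ is $\operatorname{Cov}(G,F_k)$. The cleanest way to organize the argument is to prove a stronger, functional statement. Namely, we show that
\[
 \frac{\dd}{\dd t}\Big|_{0}\,\E_{\gamma_t}\Gamma^{(t)}_{j}(F_1,\ldots,F_j)
\]
equals the sum over the $j$ insertion positions of $F_k$ into the chain. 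Here $\Gamma^{(t)}$ is built using the OU-type generator of the tilted measure,
\[
 \mathcal L_t=\Delta-\ip{x-t\grad F_k}{\grad}\,.
\]
The term where $\E_{\gamma_t}$ is differentiated produces $\operatorname{Cov}(\Gamma_j,F_k)=\E\Gamma_{j+1}(F_1,\ldots,F_j,F_k)$, by the $k=2$ identity. The derivative of $\mathcal L_t^{-1}$ is
\[
 \partial_t\mathcal L_t^{-1}=-\mathcal L^{-1}\,\ip{\grad F_k}{\grad}\,\mathcal L^{-1}\,.
\]
Through self-adjointness and one integration by parts, this inserts $F_k$ at the earlier positions of the chain. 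The derivative of the centering $\E_{\gamma_t}F_1$ contributes a constant, which is killed by $D$. Summing over insertion positions and iterating from $j=2$ up to $k-1$ reproduces exactly the sum over orderings of $\{2,\ldots,k\}$, i.e.\ over $\mathfrak S_{k-1}$.

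The main obstacle is making this derivative computation match the recursion \eqref{eq:rhmc-proof-Gamma-recursion} exactly. Concretely, we must check that differentiating the inner $(-\mathcal L_t)^{-1}$ in $\Gamma_{j}$ yields precisely $\Gamma$-chains with $F_k$ inserted at an earlier position, with no leftover terms. If this bookkeeping becomes unwieldy, the fallback is to verify the identity by expanding both sides in Wiener chaos using the Stroock formula. Alternatively, since both sides are multilinear and symmetric in $F_2,\ldots,F_k$, we can check it on exponentials $F_i=\ip{u_i}{x}$ plus polynomial perturbations and conclude by density. In either route, the case $k=3$ should be checked by hand first to fix signs.
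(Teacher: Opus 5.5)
The derivative computation at the heart of your tilting argument is right, but the induction does not close as you have set it up. To reach $\operatorname{Cum}(F_1,\ldots,F_k)$ you differentiate in $t$ the identity $\operatorname{Cum}_{\gamma_t}(F_1,\ldots,F_{k-1})=\sum_{\pi\in\mathfrak S_{k-2}}\E_{\gamma_t}\Gamma^{(t)}_{k-1}(F_1,F_{\pi(2)},\ldots,F_{\pi(k-1)})$. That requires the identity for all small $t\neq0$, i.e.\ the lemma at order $k-1$ for the tilted measure $\gamma_t\propto e^{tF_k}\gamma$ with generator $\mathcal L_t$. But $\gamma_t$ is not Gaussian when $F_k$ is nonlinear. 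Your induction hypothesis concerns only $\gamma$ and $\mathcal L_{\msf{OU}}$, and your derivative identity is stated only at $t=0$, around $\gamma$, so it cannot be iterated either.

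The repair is to run the entire induction over a class of measures $\mu\propto e^{-W}$ that is closed under small tilts by the $F_i$, for example $\Hess W\succeq c\,\Id$ for some $c>0$ with all derivatives of $W$ of order $\ge2$ bounded. In that class, $\mathcal L_{\msf{OU}}$ is replaced by $\mathcal L_\mu=\Delta-\ip{\grad W}{\grad}$. The algebra survives verbatim, because it only uses $\operatorname{Cov}_\mu(G,H)=\E_\mu\ip{\grad H}{\grad(-\mathcal L_\mu)^{-1}(G-\E_\mu G)}$ and $\partial_t\mathcal L_{\mu_t}=\ip{\grad F_k}{\grad}$. Your regularity justification via Mehler's formula does not survive, however. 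You would need spectral-gap and regularity estimates for $(-\mathcal L_\mu)^{-1}$ on non-Gaussian $\mu$. You would also need differentiability in $t$ of $(-\mathcal L_t)^{-1}$, which acts on the $t$-dependent space of $\gamma_t$-centered functions.

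The step you flag as the main obstacle is actually easy and needs no integration by parts. Differentiating $-\mathcal L_tU_t=\Gamma^{(t)}_{j-1}-\E_{\gamma_t}\Gamma^{(t)}_{j-1}$ at $t=0$ gives $\grad\dot U=\grad(-\mathcal L_{\msf{OU}})^{-1}\{\Gamma_j(F_1,\ldots,F_{j-1},F_k)+\dot\Gamma_{j-1}\}$. By induction on $j\ge2$, $\dot\Gamma_j$ is then pointwise the sum of the chains with $F_k$ inserted at positions $2,\ldots,j$. Differentiating the outer $\E_{\gamma_t}$ supplies position $j+1$.

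The paper itself just cites \cite[Theorem~4.4]{NN11}. The standard route for such formulas is shorter and stays under $\gamma$ throughout. Write $F_S\deq\prod_{i\in S}F_i$. For any chain $c=(F_1,F_{i_2},\ldots,F_{i_j})$ and $S$ disjoint from it, integration by parts and the Leibniz rule give $\E[F_S\,\Gamma_j(c)]=\E F_S\,\E\Gamma_j(c)+\sum_{i\in S}\E[F_{S\setminus\{i\}}\,\Gamma_{j+1}(c,F_i)]$. Unroll this starting from $\E[F_1F_S]=\E F_1\,\E F_S+\E[F_S\,\Gamma_1(F_1)]$. The result is that the right-hand side of the lemma, indexed by subsets $T\subseteq S$, satisfies the recursion $\E[F_1F_S]=\sum_{T\subseteq S}\kappa(F_1,(F_i)_{i\in T})\,\E F_{S\setminus T}$. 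This recursion characterizes joint cumulants by induction on $|S|$. The argument needs only the OU regularity you already have.

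Your fallback is not a proof. Taking linear $F_i$ makes both sides vanish trivially for $k\ge3$, and the polynomial case would still need an actual argument, not a check.
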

\begin{proof}
This is a symmetrized form of \cite[Theorem~4.4]{NN11}.
\end{proof}

\begin{lemma}[Cumulants of a near-identity Gaussian image]
\label{lem:rhmc-proof-near-identity-cumulant}
Suppose \(T:\R^d\to\R^d\) is locally Lipschitz and, almost everywhere,
\[
 \norm{DT}_{\op}\le1\,,
 \qquad
 \norm{DT-\Id}_{\op}\le\varepsilon_T\,,
 \qquad 0\le\varepsilon_T\le1\,.
\]
If \(G\) is standard Gaussian, then, for every \(k\ge3\),
\begin{align}
 \sup_{\norm v=1}\,
 \norm{\operatorname{Cum}_k(T(G))[v,\cdot^{k-1}]}_{\HS}
 &\le2\,(k-1)!\,\varepsilon_T\,,\notag\\
 \norm{\operatorname{Cum}_k(T(G))}_{\HS}
 &\le2\,(k-1)!\,\varepsilon_T\sqrt d\,.
 \label{eq:rhmc-proof-near-cumulant-full}
\end{align}
Here, $A[v,\cdot^{k-1}]$ denotes the $(k-1)$-tensor obtained by fixing
the first argument of a $k$-tensor $A$ to $v$, leaving the remaining
$k-1$ arguments free.  In particular,
\[
 \norm{A[v,\cdot^{k-1}]}_{\HS}^2
 =\sum_{j_2,\ldots,j_k\in[d]}
   \bigl|A[v,e_{j_2},\ldots,e_{j_k}]\bigr|^2\,,
\]
where $(e_j)_{j\in[d]}$ is any orthonormal basis of $\R^d$.
\end{lemma}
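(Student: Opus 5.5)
We will use the iterated covariance representation of \cref{lem:rhmc-proof-iterated-covariance} applied to the linear functionals \(F_j(g)\deq\ip{v_j}{T(g)}\) for unit vectors \(v_j\). Then \(DF_j=DT^\T v_j\) satisfies \(\norm{DF_j}\le1\). The key observation is that for the identity map, all cumulants of order \(k\ge3\) vanish. To exploit this, we write \(DT=\Id+E\) with \(\norm E_{\op}\le\varepsilon_T\). Each \(\Gamma_k\) will then be expanded as an identity part plus a perturbation of first order in \(E\).

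\textbf{Step 1: commuting \(D\) with \((-\mathcal L_{\msf{OU}})^{-1}\).} We use the intertwining relation \(D(-\mathcal L_{\msf{OU}})^{-1}=(1-\mathcal L_{\msf{OU}})^{-1}D\). This gives the pointwise/\(L^2\) bound \(\norm{D(-\mathcal L_{\msf{OU}})^{-1}\Gamma}_{L^2}\le\norm{D\Gamma}_{L^2}\), and the same bound holds in sup norm, because \((1-\mathcal L_{\msf{OU}})^{-1}=\int_0^\infty e^{-t}P^{\msf{OU}}_t\,\dd t\) is a sup-norm contraction.

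Applying this recursively, \(D\Gamma_{j}\) involves \(D^2F_j=D^2T[v_j]\) as well as derivatives of earlier terms. For the first-order cancellation we instead use the structure of the recursion directly. Set \(u_1\deq D(-\mathcal L_{\msf{OU}})^{-1}\Gamma_1=(1-\mathcal L_{\msf{OU}})^{-1}DT^\T v_1\). This is a vector field with sup norm at most \(1\) and differs from \(v_1\) by at most \(\varepsilon_T\). Then \(\Gamma_2=\ip{DT^\T v_2}{u_1}\), and \(\Gamma_2-\ip{v_2}{v_1}\) has sup norm \(\le 2\varepsilon_T\).

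\textbf{Step 2: induction.} By induction, \(\Gamma_j\) for \(j\ge3\) is a sum of terms each containing at least one factor of \(E\) or of \(D(\text{constant})=0\). More precisely, since \(D\) annihilates the leading constant of \(\Gamma_{j-1}\), we get \(u_{j-1}=(1-\mathcal L_{\msf{OU}})^{-1}D\Gamma_{j-1}\). Here \(D\Gamma_{j-1}\) involves \(DE=D^2T\), which is not controlled. I would therefore avoid derivatives of \(E\) by moving them via Gaussian integration by parts onto the OU resolvent. The alternative I would try first is to use the Caffarelli-type representation directly: write \(T(G)\) and compute the cumulant bound via the \(\dot H^{-1}\) duality of \cref{lem:rhmc-proof-gaussian-poisson}, bounding \(\E\Gamma_k\) through \(L^2\) norms of first derivatives only. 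That is, for \(k\ge3\), \(\E\Gamma_k=\E\ip{DF_k}{u_{k-1}}=\E\ip{E^\T v_k}{u_{k-1}}+\ip{v_k}{\E u_{k-1}}\). The last term vanishes because \(\E D(-\mathcal L)^{-1}H=\E[G\,(-\mathcal L)^{-1}H]\) picks out only the first chaos of \(\Gamma_{k-1}\), which I expect to be \(O(\varepsilon_T)\) by the same expansion. Hence each permutation contributes at most \(C\varepsilon_T\) in a slot-wise sense, and summing the \((k-1)!\) permutations gives \(2(k-1)!\varepsilon_T\).

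\textbf{Step 3: HS norms.} For the HS bounds, we do not bound the terms scalarly. Instead, we take \(v_2,\dots,v_k\) ranging over an orthonormal basis and treat the cumulant as a tensor. Each slot contraction by \(DT^\T\) is an operator of norm \(\le1\) on HS tensors. The perturbation factor \(E\) contributes \(\varepsilon_T\) in operator norm on one leg, and the \(\sqrt d\) arises only when the first slot is also freed, which gives \eqref{eq:rhmc-proof-near-cumulant-full}.

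\textbf{Main obstacle.} The main obstacle is Step 2: showing that the first-chaos and mean contributions of \(\Gamma_{k-1}\) are \(O(\varepsilon_T)\) without any control on \(D^2T\). The regularity of \(T\) is only Lipschitz, so all bounds must be routed through \(\dot H^{-1}\)/\(L^2\) duality rather than through higher derivatives.
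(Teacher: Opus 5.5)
\textbf{There is a genuine gap in Step 2, and it is exactly the obstacle you name at the end.} Your Step 1 is correct. In vector form, with $v_2$ left free, it shows that $\Gamma_2(v)$ is within $2\varepsilon_T$ of the constant $v$ in sup norm, which matches the paper's $\Gamma_2$ estimate.

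The route you actually write for $k\ge3$ does not close:
\begin{itemize}
\item The claim that $\ip{v_k}{\E u_{k-1}}$ vanishes is false. Gaussian integration by parts gives $\E u_{k-1}=\E[G\,(-\mathcal L_{\msf{OU}})^{-1}\Gamma_{k-1}]=\E[G\,\Gamma_{k-1}]$, the first-chaos coefficient of $\Gamma_{k-1}$, which is generally nonzero.
\item You then assert that this term is $O(\varepsilon_T)$ without proof.
\item Bounding $\E\ip{E^\T v_k}{u_{k-1}}$ by $\varepsilon_T$ needs an $L^2$ bound on $u_{k-1}$ that you never establish. Your only tool for $u_{k-1}$ is $\norm{D(-\mathcal L_{\msf{OU}})^{-1}\Gamma}\le\norm{D\Gamma}$, and that costs a derivative. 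It requires $D\Gamma_{k-1}$, hence $D^2T$.
\item Summing $(k-1)!$ terms of size ``$C\varepsilon_T$'' does not yield the stated constant $2$.
\end{itemize}

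\textbf{The missing idea} is to use the dual Poincar\'e inequality (\cref{lem:rhmc-proof-gaussian-poisson}) in the direction that gains a derivative:
$\norm{D(-\mathcal L_{\msf{OU}})^{-1}\Gamma}_{L^2(\gamma;\HS)}=\norm{\Gamma}_{\dot H^{-1}}\le\norm{\Gamma-\E\Gamma}_{L^2(\gamma;\HS)}$.
With this, you need no chaos bookkeeping and no expansion in $E$ beyond $j=2$.
\begin{enumerate}
\item Step 1 gives $\norm{\Gamma_2(v)-\E\Gamma_2(v)}_{L^2}\le\norm{\Gamma_2(v)-v}_{L^2}\le2\varepsilon_T$.
\item $\Gamma_{j+1}(v)$ is $DT$ applied to one leg of $D(-\mathcal L_{\msf{OU}})^{-1}\Gamma_j(v)$, and $\norm{DT}_{\op}\le1$. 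Hence $\norm{\Gamma_{j+1}(v)}_{L^2(\HS)}\le\norm{\Gamma_j(v)-\E\Gamma_j(v)}_{L^2(\HS)}\le\norm{\Gamma_j(v)}_{L^2(\HS)}$, so induction gives $\norm{\Gamma_k(v)}_{L^2(\HS)}\le2\varepsilon_T$ for every $k\ge3$.
\item By Jensen, each of the $(k-1)!$ terms $\E\Gamma_k^\pi(v)$ has HS norm at most $2\varepsilon_T$. This gives the slot bound with constant exactly $2$.
\end{enumerate}
No derivative of $DT$ ever appears.

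\textbf{This also repairs your Step 3.} Keep $v$ fixed but leave the indices $i_2,\dots,i_j$ free, so that $\Gamma_j(v)$ is a Hilbert-space-valued (tensor-valued) function. The Hilbert-valued dual Poincar\'e inequality and the one-leg contraction by $DT$ then apply directly. The full HS bound follows by squaring the slot bound and summing over $v=e_1,\dots,e_d$, which produces the $\sqrt d$. Bounding scalar entries and summing them would instead lose a factor $d^{(k-1)/2}$.
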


\begin{proof}
First take \(T\) smooth.
For fixed \(v\), define the $(j-1)$-tensor
\(\Gamma_j(v)\) by
\[
 [\Gamma_j(v)]_{i_2,\ldots,i_j}
 \deq\Gamma_j\bigl(\ip v{T(G)},T_{i_2}(G),\ldots,T_{i_j}(G)\bigr)\,,
 \qquad i_2,\ldots,i_j\in[d]\,,
\]
using the scalar recursion \eqref{eq:rhmc-proof-Gamma-recursion}.
For $j=2$, its $i$-th entry is
$\ip{DT_i(G)}{D(-\mathcal L_{\msf{OU}})^{-1}(\ip v{T(G)})}$,
so collecting the entries gives
$\Gamma_2(v)=DT(G)\,D(-\mathcal L_{\msf{OU}})^{-1}(\ip v{T(G)})$.
Moreover, $\ip vG$ belongs to the first Wiener chaos, so
$D(-\mathcal L_{\msf{OU}})^{-1}(\ip vG)=v$.
Subtracting this linear contribution yields
\begin{align*}
    \norm{\Gamma_2(v)-v}_{L^2(\gamma)}
    &\le \norm{(DT-I)\,v}_{L^2(\gamma)} + \bigl\lVert DT\,\bigl(D(-\mathcal L_{\msf{OU}})^{-1} (\ip v{T})-v\bigr)\bigr\rVert_{L^2(\gamma)} \\
    &\le \varepsilon_T\,\norm v + \norm{DT\,D(-\mathcal L_{\msf {OU}})^{-1}(\ip v{T-\Id})}_{L^2(\gamma)} \\
    &\le \varepsilon_T\,\norm v + \norm{D(-\mathcal L_{\msf {OU}})^{-1}(\ip v{T-\Id})}_{L^2(\gamma)} \\
    &= \varepsilon_T\,\norm v + \norm{\langle v, T-\Id \rangle}_{\dot H^{-1}(\gamma)}
    \le \varepsilon_T\,\norm v + \norm{D\langle v, T-\Id\rangle}_{L^2(\gamma)}
    \le 2\varepsilon_T\,\norm v\,,
\end{align*}
where we applied \cref{lem:rhmc-proof-gaussian-poisson} and the Gaussian Poincar\'e inequality.
This implies
\begin{equation}
 \norm{\Gamma_2(v)-\E\Gamma_2(v)}_{L^2(\gamma;\HS)}
 \le2\varepsilon_T\,\norm v\,.
 \label{eq:rhmc-proof-Gamma-two-bound}
\end{equation}
For \(j\ge2\), collect the free indices into a tensor.  Then, $\Gamma_{j+1}$ is a
contraction of \(DT\) with
\(D(-\mathcal L_{\msf{OU}})^{-1}\Gamma_j\); hence, by~$\norm{DT}_{\op}\le1$ and \cref{lem:rhmc-proof-gaussian-poisson},
\[
 \norm{\Gamma_{j+1}(v)}_{L^2(\gamma;\HS)}
 \le\norm{D(-\mathcal L_{\msf{OU}})^{-1}
          \Gamma_j(v)}_{L^2(\gamma;\HS)} =
          \norm{\Gamma_j(v)}_{\dot H^{-1}(\gamma;\HS)}
 \le\norm{\Gamma_j(v)-\E\Gamma_j(v)}_{L^2(\gamma;\HS)}\,.
\]
Starting from \eqref{eq:rhmc-proof-Gamma-two-bound} thus gives
\(\norm{\Gamma_k(v)}_{L^2(\gamma;\HS)}\le2\varepsilon_T\,\norm v\) for every ordering.
Thus \cref{lem:rhmc-proof-iterated-covariance}, Minkowski, and Jensen imply
\[
 \norm{\operatorname{Cum}_k(T(G))[v,\cdot^{k-1}]}_{\HS}
 \le\sum_{\pi\in\mathfrak S_{k-1}}
      \norm{\E\Gamma_k^\pi(v)}_{\HS}
 \le2\,(k-1)!\,\varepsilon_T\,\norm v\,.
\]
Squaring and summing over an orthonormal basis proves
\eqref{eq:rhmc-proof-near-cumulant-full}.  A standard approximation argument extends the bound to locally Lipschitz \(T\).
\end{proof}

\subsection{Proofs of the second-order Wasserstein estimates}
\label{app:second-order-wasserstein-estimates}
\label{app:second-order-w2-estimate}

\begin{proof}[Proof of \cref{lem:rhmc-proof-random-map-l2}]
By approximation, it is enough to treat smooth \(f\).  Throughout the
proof, for functions of $(z,w)$, super- or sub-scripts \(1\) and \(2\) indicate operations in the first and second
arguments, respectively.  Thus, \(P_s^{(2)}\) is the heat semigroup in the
second argument, \(D_1\) and \(D_2\) are the corresponding derivatives,
\(\delta_1\) is Gaussian row divergence in the first argument under
\(\cN(0,\Id_m)\), and \(\delta_{2,t}\) is Gaussian row divergence in the
second argument under \(\cN(0,t\Id_n)\).

Let
\((W_t)_{0\le t\le1}\) be Brownian motion in \(\R^n\), independent of
\(Z\), and define
\[
 u_t(z,w)\deq P_{1-t}^{(2)}f(z,\cdot)(w)\,,
 \qquad X_t\deq Z+u_t(Z,W_t)\,.
\]
Thus, \(X_0=Z+\bar f(Z)\), while \(X_1\) has the law of \(Z+f(Z,G)\).
Writing \( (\mathcal F_t^W)_{0\le t \le 1}\) for the Brownian filtration, the Markov property
gives
\[
 u_t(z,W_t)=\E[f(z,W_1)\mid\mathcal F_t^W]\,, \qquad (0\le t\le1)\,.
\]
Consequently, \( (X_t)_{0\le t \le 1}\) is a martingale with respect to the enlarged
filtration \((\sigma(Z)\vee\mathcal F_t^W)_{0\le t\le1}\).  Since \(u\)
solves the backward heat equation with respect to \(w\),
It\^o's formula identifies its martingale representation as
\[
 \dd X_t=D_2u_t(Z,W_t)\,\dd W_t\,.
\]
Define
\[
 \Sigma_t(z,w)\deq D_2u_t(z,w)D_2u_t(z,w)^\T\,.
\]
A second application of It\^o's formula gives, for any smooth scalar test
function \(\varphi\),
\[
 \partial_t\,\E\varphi(X_t)
 =\frac12\E\ip{\Sigma_t(Z,W_t)}{D^2\varphi(X_t)}_{\HS}\,.
\]
To estimate the Wasserstein distance, we recast this as a continuity equation.  The chain rule gives
\begin{align*}
 D_1\grad\varphi(z+u_t(z,w))
 &=D^2\varphi(z+u_t(z,w))
   \,(\Id+D_1u_t(z,w))\,,\\
 D_2\grad\varphi(z+u_t(z,w))
 &=D^2\varphi(z+u_t(z,w))\,D_2u_t(z,w)\,.
\end{align*}
Since \(\Sigma_t\) and \(D^2\varphi\) are symmetric, it follows that
\begin{align*}
 &\ip{\Sigma_t(z,w)}
       {D^2\varphi(z+u_t(z,w))}\\
 &\qquad={}
 \ip{\Sigma_t(z,w)}
    {D_1\grad\varphi(z+u_t(z,w))}
 -\ip{D_1u_t(z,w)\,D_2u_t(z,w)}
    {D_2\grad\varphi(z+u_t(z,w))}\,.
\end{align*}
Here and in the sequel, all derivatives of \(u_t\) and \(\Sigma_t\)
are understood as evaluated at $(Z,W_t)$.
Applying the divergence adjoint identity
\eqref{eq:gaussian-divergence-adjoint} in each Gaussian variable gives
\[
 \E\ip{\Sigma_t}{D^2\varphi(X_t)}_{\HS}
 =\E\ip{
   \delta_1\Sigma_t-\delta_{2,t}(D_1u_tD_2u_t)}
  {\grad\varphi(X_t)}\,.
\]
It follows that
\[
 \partial_t\E\varphi(X_t)
 =\E\ip{V_t}{\grad\varphi(X_t)}\,,
 \qquad
 V_t\deq\frac12\,\{
   \delta_1\Sigma_t-\delta_{2,t}(D_1u_tD_2u_t)
 \}\,.
\]
Consequently, the evolution of \(\law(LX_t)\) satisfies the continuity equation
\[
 \partial_t\law(LX_t)
 +\operatorname{div}\bigl(v_t\law(LX_t)\bigr)=0\,,
\]
where the velocity field is given by
\[
 v_t(y)\deq\E[LV_t\mid LX_t=y]\,,
 \qquad
 \norm{v_t}_{L^2}\le\norm{LV_t}_{L^2}\,.
\]

It remains to estimate this explicit velocity.  The heat semigroup preserves
the first derivative bounds, while \eqref{eq:gaussian-heat-bessel} applied
to the weak derivatives of \(f\) gives the following bounds:
\begin{align*}
 \norm{D_1u_t}_{\op}&\le A\,,&
 \norm{D_2u_t}_{\op}&\le B\,,\\
 \norm{D_1D_2u_t}_{\HS} \vee \norm{D_2 D_1 u_t}_{\HS} &\le\frac{A\sqrt m}{\sqrt{1-t}}\,,&
 \norm{D_2^2u_t}_{\HS}
 &\le\frac{B\sqrt m}{\sqrt{1-t}}\,.
\end{align*}
Here, we used
\(\norm{D_1f}_{\HS}\le\sqrt m\,A\) and
\(\norm{D_2f}_{\HS}\le\sqrt m\,B\).  The product rule and
\(\norm{RS}_{\HS}\le\norm R_{\op}\,\norm S_{\HS}\) therefore give
\begin{align*}
 \norm{\Sigma_t}_{\HS}&\le B^2\sqrt m\,,&
 \norm{D_1\Sigma_t}_{\HS}
 &\le\frac{2AB\sqrt m}{\sqrt{1-t}}\,,\\
 \norm{D_1u_tD_2u_t}_{\HS}&\le AB\sqrt m\,,&
 \norm{D_2(D_1u_tD_2u_t)}_{\HS}
 &\le\frac{2AB\sqrt m}{\sqrt{1-t}}\,.
\end{align*}
Every column of \(D_2u_t\) lies in \(E\), so
\(\operatorname{range}(\Sigma_t)\subseteq E\).  Since \(E\) is a fixed linear subspace,
differentiating the columns of \(\Sigma_t\) in \(z\) preserves membership
in \(E\), and hence \(\delta_1\Sigma_t\in E\).

We now apply the Gaussian divergence bounds conditionally in each
variable. First,
\eqref{eq:rhmc-proof-l2-gaussian-tools} in the \(z\)-variable gives
\begin{align*}
 \norm{L\delta_1\Sigma_t}_{L^2}
 &=\norm{\delta_1(L\Sigma_t)}_{L^2}
 \le \norm{L\Sigma_t}_{L^2(\HS)}
      +\norm{LD_1\Sigma_t}_{L^2(\HS)}\\
 &\le \sqrt m\,\Bigl\{
       \norm{L\Pi_E}_{\op}\,B^2
       +2\,\norm L_{\op}\,\frac{AB}{\sqrt{1-t}}
      \Bigr\}\,.
\end{align*}
Here we used \(L\Sigma_t=L\Pi_E\Sigma_t\) in the first term.  Next,
the estimate
\eqref{eq:gaussian-divergence-covariance-t} in the \(w\)-variable gives
\begin{align*}
 \norm{L\delta_{2,t}(D_1u_tD_2u_t)}_{L^2}
 &={}
   \norm{\delta_{2,t}(LD_1u_tD_2u_t)}_{L^2}\\
 &\le
   t^{-1/2}\,\norm{LD_1u_tD_2u_t}_{L^2(\HS)}
   +\norm{LD_2(D_1u_tD_2u_t)}_{L^2(\HS)}\\
 &\le \sqrt m\,\norm L_{\op}\,AB
   \,\{t^{-1/2}+2\,(1-t)^{-1/2}\}\,.
\end{align*}
Combining these two estimates in the definition of \(V_t\) yields
\begin{equation*}
 \norm{LV_t}_{L^2}
 \le \sqrt m\,\Bigl\{
 \frac12\,\norm{L\Pi_E}_{\op}\,B^2
 +\frac12\,\norm L_{\op}\,AB\,t^{-1/2}
 +2\,\norm L_{\op}\,AB\,(1-t)^{-1/2}
 \Bigr\}\,.
\end{equation*}
The first singularity comes from rescaling Gaussian divergence under
\(\cN(0,t\Id_n)\), and the second from differentiating the heat extension.

Since $\int_0^1t^{-1/2}\,\dd t
 =\int_0^1(1-t)^{-1/2}\,\dd t=2$, we have
 \begin{align*}
     W_2(\law(LX_\varepsilon), \law(LX_{1-\varepsilon}))
     &\le \int_\varepsilon^{1-\varepsilon} \norm{v_t(LX_t)}_{L^2}\,\dd t
 \le \sqrt m\,\Bigl\{
  \frac12\,\norm{L\Pi_E}_{\op}\,B^2
  +5\,\norm L_{\op}\,AB
 \Bigr\}\,.
 \end{align*}
We can then let $\varepsilon\searrow 0$.  This proves
\eqref{eq:rhmc-proof-random-map-l2-result}.
\end{proof}

\begin{proof}[Proof of \cref{lem:rhmc-proof-random-map}]
Use the notation and repeat the proof of
\cref{lem:rhmc-proof-random-map-l2} through the construction of
\(u_t,X_t,\Sigma_t,V_t\), and \(v_t\), including all the displayed
pointwise derivative estimates.  Conditional expectation is now used as
an \(L^q\) contraction.  Applying
\eqref{eq:rhmc-proof-gaussian-div-q} conditionally in the \(z\)-variable,
and retaining separately its mean and derivative terms, gives
\[
 \norm{L\delta_1\Sigma_t}_{L^q}
 \le C\sqrt m\,\bigl\{
   \sqrt q\,\norm{L\Pi_E}_{\op}\,B^2
   +q\,\norm L_{\op}\,AB\,(1-t)^{-1/2}
 \bigr\}\,.
\]
Its rescaled version in the \(w\)-variable similarly gives
\[
 \norm{L\delta_{2,t}(D_1u_tD_2u_t)}_{L^q}
 \le C\sqrt m\,\norm L_{\op}\,AB\,
 \bigl\{\sqrt q\,t^{-1/2}
          +q\,(1-t)^{-1/2}\bigr\}\,.
\]
Integrating these bounds proves
\eqref{eq:rhmc-proof-random-map-result}.
\end{proof}

\subsection{Gaussian polynomial bounds}

For this subsection, \(D\) denotes the gradient in \(p\in\R^d\), and the
Gaussian divergence is the operator defined above with \(x\) replaced by
\(p\).

\begin{lemma}[Gaussian polynomial estimates]
\label{lem:gaussian-polynomial-estimates}
Let \(r\ge0\) be an integer.  If \(F:\R^d\to\EuScript H\) is a polynomial of total
degree at most \(r\), then
\begin{equation*}
 \norm{DF}_{L^2(\gamma;\HS)}
 \le\sqrt r\,\norm F_{L^2(\gamma)}\,.
\end{equation*}
If \(V:\R^d\to \EuScript H^d\) is a polynomial vector field of total degree
at most \(r\), then
\begin{equation*}
 \norm{\delta_\gamma V}_{L^2(\gamma)}
 \le\sqrt{r+1}\,\norm V_{L^2(\gamma;\HS)}\,.
\end{equation*}
\end{lemma}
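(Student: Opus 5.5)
The plan is to decompose $F$ (resp.\ $V$) into Wiener chaoses and use that both $D$ and $\delta_\gamma$ shift chaos degree by exactly one, with explicit norm identities on each chaos. Since $\EuScript H$ is finite-dimensional, fix an orthonormal basis of $\EuScript H$ and work componentwise. All norms are sums of squares over components, so it suffices to treat scalar-valued $F$ and vector fields $V:\R^d\to\R^d$. A polynomial of degree at most $r$ decomposes orthogonally as $F=\sum_{n=0}^r F_n$ with $F_n$ in the $n$-th chaos.

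For the gradient bound, I will use the adjoint identity \eqref{eq:gaussian-divergence-adjoint} together with $\mathcal L_{\msf{OU}}=-\delta_\gamma D$ to write
\[
\E_\gamma\norm{DF}^2_{\HS}=\E_\gamma\ip{F}{-\mathcal L_{\msf{OU}}F}=\sum_{n\le r}n\,\E F_n^2\le r\,\E F^2\,,
\]
where orthogonality of the chaoses kills the cross terms. This gives the first inequality directly.

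For the divergence bound, I will avoid the expansion of $\norm{\delta_\gamma V}^2$ into a Hessian-type cross term and argue by duality instead. Each component $V_i$ has degree at most $r$, so $\delta_\gamma V=\sum_i(x_iV_i-\partial_iV_i)$ is a polynomial of degree at most $r+1$. Then
\[
\norm{\delta_\gamma V}_{L^2}=\sup\bigl\{\E\ip{H}{\delta_\gamma V}:\ H\ \text{polynomial of degree}\le r+1,\ \norm H_{L^2}\le1\bigr\}\,,
\]
and the supremum may be restricted to such $H$ because $\delta_\gamma V$ lies in that finite-dimensional space. By the adjoint identity and Cauchy--Schwarz,
\[
\E\ip{H}{\delta_\gamma V}=\E\ip{DH}{V}_{\HS}\le\norm{DH}_{L^2(\HS)}\,\norm V_{L^2(\HS)}\,.
\]
Applying the first part with degree $r+1$ gives $\norm{DH}_{L^2(\HS)}\le\sqrt{r+1}$, which proves the second claim. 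The same duality argument works verbatim for Hilbert-valued $V$ by pairing with $\EuScript H$-valued test polynomials $H$.

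The only point needing care is justifying that the test functions $H$ can be restricted to polynomials of degree at most $r+1$, so that the first inequality applies with degree $r+1$. This holds because $\delta_\gamma V$ itself lies in that space, so the supremum is attained at $H=\delta_\gamma V/\norm{\delta_\gamma V}$. Beyond that, I expect no real obstacle; both steps are routine chaos calculus.
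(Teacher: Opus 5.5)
Your proof is correct. The gradient half coincides with the paper's argument: the cited \cite[Proposition~1.2.2]{N06} is exactly your chaos computation $\E\norm{DF}_{\HS}^2=\sum_{n\le r}n\,\E\norm{F_n}^2$.

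The divergence half takes a different route. The paper's divergence reference, \cite[Proposition~1.3.1]{N06}, is the second-order identity whose finite-dimensional form the paper records right after \eqref{eq:rhmc-proof-l2-gaussian-tools}: $\norm{\delta_\gamma V}_{L^2(\gamma)}^2=\norm V_{L^2(\gamma;\HS)}^2+\sum_{i,j\in[d]}\E\ip{\partial_jV_i}{\partial_iV_j}_{\EuScript H}$. Cauchy--Schwarz bounds the cross term by $\norm{DV}_{L^2(\gamma;\HS)}^2$. The gradient bound, applied to $V$ itself at degree $r$, then gives $(1+r)\,\norm V_{L^2(\gamma;\HS)}^2$. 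So the expansion you chose to avoid actually closes in one line.

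You instead apply the gradient bound at degree $r+1$ to the output $\delta_\gamma V$ and transfer it to $V$ by adjointness. What each buys:
\begin{itemize}
\item Your argument uses only the adjoint identity and a degree count, with no commutation relation between $D$ and $\delta_\gamma$. It also shows that the constant $\sqrt{r+1}$ is inherited directly from the norm of $D$ on polynomials of degree at most $r+1$.
\item The paper's route factors through the general bound $\norm{\delta_\gamma V}_{L^2}^2\le\norm V_{L^2}^2+\norm{DV}_{L^2}^2$. That bound holds for arbitrary Sobolev fields and is reused elsewhere in the paper (e.g.\ in the proof of the second-order $W_2$ estimate), with the polynomial structure entering only at the last step.
\end{itemize}
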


These estimates are standard; see \cite[Propositions~1.2.2 and~1.3.1]{N06}, restricting to Wiener chaoses of degree at
most \(r\).
We use the following consequences of the sharp Gaussian divergence and
Bernstein--Markov inequalities in \cite{CCLZ26Divergence}.

\begin{lemma}[$L^q$ Gaussian polynomial estimates]
\label{lem:rhmc-proof-gaussian-polynomial}
Let $q\ge2$ and let $r\ge0$ be an integer.  If
$F:\R^d\to\EuScript H$ is a polynomial of total degree at most $r$, then
\begin{equation*}
 \norm{DF}_{L^q(\gamma;\HS)}
 \le C\sqrt r\,\norm{F-\E F}_{L^q(\gamma;\EuScript H)}\,.
\end{equation*}
If $V:\R^d\to\EuScript H^d$ is a polynomial vector field of total degree
at most $r$, then
\begin{equation*}
 \norm{\delta_\gamma V}_{L^q(\gamma;\EuScript H)}
 \le C\sqrt q\,\norm{\E V}_{\HS}
      +Cq\sqrt r\,\norm{V-\E V}_{L^q(\gamma;\HS)}
 \le C\,(\sqrt q+q\sqrt r)\,\norm V_{L^q(\gamma;\HS)}\,.
\end{equation*}
\end{lemma}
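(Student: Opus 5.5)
The first claim is a bound on $D$ of a polynomial; the second is a bound on the Gaussian divergence $\delta_\gamma$. We would derive both from the $L^q$ Gaussian Sobolev inequalities (\cref{lem:gaussian-sobolev-inequalities}) together with a hypercontractive (reverse Bernstein--Markov type) estimate on finite sums of Wiener chaoses.

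\textbf{Gradient bound.} Since $D(F-\E F)=DF$, we may assume $\E F=0$. The space of polynomials of degree at most $r$ is a direct sum of chaoses of orders $1,\dots,r$, and $D$ maps it into the $\EuScript H^d$-valued chaoses of order at most $r-1$.

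In $L^2$ we have $\norm{DF}_{L^2}^2=\sum_n n\norm{J_nF}^2\le r\norm F_{L^2}^2$, where $J_n$ denotes projection onto the $n$-th chaos. Comparing $L^q$ with $L^2$ on finite chaos sums via hypercontractivity would lose a factor $(q-1)^{r/2}$, which is far too large. So we would instead invoke the sharp Bernstein--Markov inequality of \cite{CCLZ26Divergence} for Gaussian polynomials with values in a Hilbert space. It asserts $\norm{DF}_{L^q}\le C\sqrt r\,\norm{F-\E F}_{L^q}$ with $C$ independent of $q$, $d$ and $\dim\EuScript H$. This is exactly the first claim. Hilbert-valuedness is handled as in that reference, or by embedding $\EuScript H$-valued Gaussian polynomials as scalar ones on an enlarged Gaussian space via $\ip{F}{\xi}$ with $\xi$ an independent Gaussian vector; averaging over $\xi$ gives comparable $L^q$ norms by Khintchine.

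\textbf{Divergence bound.} Apply \eqref{eq:rhmc-proof-gaussian-div-q}:
\[
\norm{\delta_\gamma V}_{L^q}\le C\sqrt q\,\norm{\E V}_{\HS}+Cq\,\norm{DV}_{L^q(\HS)}.
\]
The vector field $V$ is again a polynomial of degree at most $r$, now with values in $\EuScript H^d$, so the first claim applied to it gives $\norm{DV}_{L^q}\le C\sqrt r\,\norm{V-\E V}_{L^q}$. Substituting yields the middle expression of the stated inequality.

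For the final inequality, Jensen gives $\norm{\E V}_{\HS}\le\norm V_{L^q}$, and hence $\norm{V-\E V}_{L^q}\le 2\norm V_{L^q}$ by the triangle inequality. Combining, the right-hand side is at most $C(\sqrt q+q\sqrt r)\norm V_{L^q}$.

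\textbf{Main obstacle.} The crux is the dimension-free $\sqrt r$ constant in the $L^q$ Bernstein--Markov step, uniform in $q$. We would take this directly from \cite{CCLZ26Divergence}. If it had to be proved from scratch, the route would be to write $DF=P$ applied to $e^{-t}$-type Mehler semigroup derivatives and optimize $t\asymp 1/r$. One would bound $\norm{DP_tF}_{L^q}\lesssim t^{-1/2}\norm F_{L^q}$ using the Mehler formula, which only uses that the Gaussian $L^q$ norm of a linear form does not grow with $q$ after conditioning. One would then compare $P_tF$ with $F$ on degree $\le r$ chaos, losing only $e^{rt}=O(1)$.
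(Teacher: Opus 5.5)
There is a genuine gap, and it sits in the one step that is not a citation: the passage from scalar to $\EuScript H$-valued polynomials. Your overall architecture is the paper's. You cite the Gaussian Bernstein--Markov inequality of \cite{CCLZ26Divergence}, feed it into \eqref{eq:rhmc-proof-gaussian-div-q}, and finish with Jensen and the triangle inequality. But the cited result (Theorem~1.2 there) is scalar, so the Hilbert-valued, $\dim\EuScript H$-free bound has to be derived.

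Suppose the scalar input is $\norm{\nabla f}_{L^q}\le C\sqrt r\,\norm{f-\E f}_{L^q}$ with $C$ independent of $q$, as you state it. Then randomizing with an independent Gaussian $\xi$ and using Jensen on the left gives
\[
 \norm{DF}_{L^q(\gamma;\HS)}\le\norm{D\ip{\xi}{F-\E F}}_{L^q}\le C\sqrt r\,\norm{\ip{\xi}{F-\E F}}_{L^q}=C\sqrt r\,\norm{Z}_{L^q}\,\norm{F-\E F}_{L^q(\gamma;\EuScript H)}\,,
\]
where $Z\sim\cN(0,1)$ and $\norm Z_{L^q}\asymp\sqrt q$. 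Your phrase ``comparable by Khintchine'' hides exactly this factor. The right side always picks up $\norm Z_{L^q}$. The left side satisfies $(\E_\xi\norm{D\ip{\xi}{F}(x)}^q)^{1/q}\asymp\norm{DF(x)}_{\HS}+\sqrt q\,\norm{DF(x)}_{\op}$, so it gains nothing when $DF$ has high rank.

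For example, take $F(x)=x\in\R^n$ with $r=1$ and $n\gg q$. Then $\norm{DF}_{L^q}=\sqrt n$ and $\norm F_{L^q}\asymp\sqrt n$, but your route only certifies $\sqrt n\le C\sqrt{qn}$. So you obtain $C\sqrt{rq}$ rather than $C\sqrt r$. Propagated through your divergence step, this gives $Cq\sqrt{rq}$, which would worsen the $q$-dependence downstream (e.g.\ $q^{3k/2}$ in place of $q^k$ in \cref{lem:rhmc-proof-iterated-derivative}).

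The missing idea is that the scalar Bernstein--Markov inequality is sharper than you quote, by a factor of order $\norm Z_{L^q}^{-1}$. You can see this on a linear form $f=\ip vx$, where $\norm{\nabla f}_{L^q}/\norm f_{L^q}=1/\norm Z_{L^q}$. The paper's reduction via $\ip{G_F}{F-\E F}$ works precisely because this denominator cancels the Gaussian moment factor $\E_\xi|\ip{\xi}{v}|^q=\E|Z|^q\,\norm v^q$ on the right.

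Your from-scratch Mehler sketch does not close the gap either. Its first half is fine: applying Bessel conditionally and then Jensen gives $\norm{DP_tF}_{L^q}\lesssim t^{-1/2}\norm{F-\E F}_{L^q}$, free of $q$ and of dimension. The second half, comparing $P_tF$ with $F$ in $L^q$ at a cost of only $e^{rt}$, is a $q$-uniform $L^q$ bound for the inverse Ornstein--Uhlenbeck semigroup on chaoses of order at most $r$. That is not a standard fact, and it is essentially the whole difficulty.

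Once the Hilbert-valued gradient bound with constant $C\sqrt r$ is in hand, your divergence step and the final Jensen/triangle step are correct.
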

\begin{proof}
The scalar Bernstein--Markov bound is
\cite[Theorem~1.2]{CCLZ26Divergence}.  Its Hilbert-valued consequence is
obtained by applying it to $\ip{G_F}{F-\E F}$, where $G_F$ is a standard
Gaussian in the finite-dimensional span $\EuScript H_F$ of the
coefficients of $F$.  Jensen's inequality bounds the $L^q$ norm of $DF$
by the joint $L^q$ norm of $D\ip{G_F}{F-\E F}$; the Gaussian moment
factor on the right cancels the scalar theorem's denominator.
The divergence estimate then follows from
\cite[Corollary~1.4]{CCLZ26Divergence}, exactly as in the proof of
\cite[Corollary~1.7]{CCLZ26Divergence}, now with Hilbert target
$\EuScript H$.  For $r=0$, the derivative vanishes and the divergence
is a centered Gaussian with $L^q$ norm at most
$C\sqrt q\,\norm{\E V}_{\HS}$.
\end{proof}

\section{Replacing proximal queries by gradient queries}
\label{app:gradient-only-prox}

\subsection{Approximate proximal queries}
\label{app:gradient-only-proof-modifications}

For \(0<\eta<1\) and \(\eps_{\mathsf{prox}}\ge0\), an
\(\eps_{\mathsf{prox}}\)-accurate proximal query at \(y\) returns a
measurable point \(\widetilde x_y\) satisfying
\begin{equation}
 \norm{\widetilde x_y-\prox_{\eta V}(y)}
 \le\eps_{\mathsf{prox}}\,.
 \label{eq:rhmc-prox-tolerance}
\end{equation}
Given such a query and \(G\sim\cN(0,\Id)\), define
\begin{equation*}
 \widehat g_{\eta,\eps_{\mathsf{prox}}}(y;G)
 \deq\grad V(\widetilde x_y+\sqrt\eta\,G)\,.
\end{equation*}
The approximate proximal variant of \cref{alg:rhmc-prefix} replaces every
occurrence of \(\widehat g_\eta\) by
\(\widehat g_{\eta,\eps_{\mathsf{prox}}}\).

The proof of \cref{thm:rhmc-module} in the main text assumes exact
proximal queries.  We now record the modifications needed when these calls
are replaced by the approximate proximal queries from
\eqref{eq:rhmc-prox-tolerance}.

For every queried center \(y\), let \(\widetilde x_y\) be the output of an
abstract \(\eps_{\mathsf{prox}}\)-accurate proximal oracle, and let
\(\widehat K^{\eps_{\mathsf{prox}}}\) denote the resulting kernel.

\begin{lemma}[Stability under approximate proximal centers]
\label{lem:rhmc-proof-prox-stability}
If \(h^2\le1\), then
\begin{equation}
 W_q(\delta_z\widehat K^{\eps_{\mathsf{prox}}}\,,
     \delta_z\widehat K)
 \le \mathcal E_{\rm prox}\deq
 Ch\eps_{\mathsf{prox}}\,,
 \qquad q\ge1\,.
 \label{eq:rhmc-proof-prox-local-error}
\end{equation}
Consequently, for $J\ge 2$ and $h\le c/(\kappa q)$, and every phase-space law $\widehat \mu$,
\begin{align}
    W_{q,M_\kappa}^2(\widehat\mu(\widehat K^{\varepsilon_{\mathsf{prox}}})^N,\Pi_\eta)
 &\le e^{-cNh/\kappa}\,W_{q,M_\kappa}^2(\widehat\mu,\Pi_\eta)
   +\frac{C\kappa q}{h}\,\Evar[q]^2
 +\frac{C\kappa^2}{h^2}\,
       (\Ebias^2+\Edisc[q]^2+\mathcal E_{\rm prox}^2)\,.
 \label{eq:rhmc-proof-global-recurrence-inexact}
\end{align}
\end{lemma}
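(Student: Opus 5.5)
The local bound \eqref{eq:rhmc-proof-prox-local-error} follows from a synchronous coupling. Fix $z$ and drive one phase of $\widehat K^{\eps_{\mathsf{prox}}}$ and one phase of $\widehat K$ with the same OU variables $\zeta_0,\zeta_1$ and the same gradient Gaussians $(G_j^{[0]},G_j^{[1]})$. For any centers $y,y'$ we have
\[
\norm{\widehat g_{\eta,\eps_{\mathsf{prox}}}(y;G)-\widehat g_\eta(y';G)}
\le\norm{\widetilde x_y-\prox_{\eta V}(y')}
\le\eps_{\mathsf{prox}}+\norm{y-y'}\,.
\]
This uses only that $\grad V$ is $1$-Lipschitz, the tolerance \eqref{eq:rhmc-prox-tolerance}, and non-expansiveness of the prox (the same facts behind \eqref{eq:rhmc-proof-stochastic-gradient-y-lip}).

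The errors then propagate through the two Picard steps as follows.
\begin{itemize}
\item The starting trajectories $X^{[0]}_{t_j}$ coincide, so by \cref{prop:chebyshev-lobatto-bounds} ($\sum_j|\omega_{i,j}|\le h^2$), $\max_i\norm{\Delta X^{[1]}_{t_i}}\le h^2\eps_{\mathsf{prox}}$.
\item The endpoint gradients then differ by at most $(1+h^2)\eps_{\mathsf{prox}}\le2\eps_{\mathsf{prox}}$.
\item Weighting by $\omega_{J,j}$ (total $\le h^2$) and by $a\omega_j$ (total $\le h$) gives $\norm{\Delta X_h^{[2]}}\le 2h^2\eps_{\mathsf{prox}}$ and $\norm{\Delta P_h^{[2]}}\le 2h\eps_{\mathsf{prox}}$.
\item The final refresh multiplies the momentum difference by $e^{-h/2}\le1$ and the common Gaussian increment cancels.
\end{itemize}
So the coupled outputs differ almost surely by at most $Ch\eps_{\mathsf{prox}}$. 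This gives the $W_q$ bound for every $q\ge1$, and it also holds in the $M_\kappa$ norm by norm equivalence. Measurability of $y\mapsto\widetilde x_y$ keeps the coupling well defined.

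For the global recurrence \eqref{eq:rhmc-proof-global-recurrence-inexact}, I would rerun the proof of \cref{prop:rhmc-proof-global-recurrence}, i.e.\ apply \cref{lem:rhmc-proof-centered-kernel-perturbation}, with $\widehat P\deq\widehat K^{\eps_{\mathsf{prox}}}$ and $P\deq K$. The contraction and increment hypotheses concern only $K$, so \cref{prop:rhmc-proof-contraction,lem:rhmc-proof-increment-coupling} still give $\rho=1-ch/\kappa$ and $\chi_q\le Ch$. The hypothesis $h\le c/(\kappa q)$ ensures $4(q-1)\chi_q^2\le\rho(1-\rho)$.

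By the triangle inequality and the local bound just proved,
\[
\mathcal E_{\rm strong}\le\Evar[q]+\Ebias+\mathcal E_{\rm prox}\,,\qquad
\mathcal E_{\rm weak}\le\Ebias+\mathcal E_{\rm prox}\,,
\]
where the weak bound uses Jensen on the almost-sure coupling. Plugging these into the lemma gives a one-step recurrence with the extra contribution $Cq\,\mathcal E_{\rm prox}^2+\frac{C\kappa}{h}\mathcal E_{\rm prox}^2$. Since $hq\le c/\kappa\le1$, this is at most $\frac{C\kappa}{h}\mathcal E_{\rm prox}^2$. The Young's inequality step that brings in $\Edisc[q]$ is unchanged.

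Iterating over $N$ steps multiplies the additive terms by $\sum_k(1-ch/\kappa)^k\le C\kappa/h$, which yields the stated bound.

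I do not expect a real obstacle. The one point that needs care is that the prox error enters the weak error and not only the strong error. That forces it into the $\kappa^2/h^2$ group rather than the $\kappa q/h$ variance group, and that placement is what the statement already records.
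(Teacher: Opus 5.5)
Your proof is correct and follows the paper's argument. The local bound uses the same synchronous coupling: the prox error moves the layer-$1$ nodes by $h^2\varepsilon_{\mathsf{prox}}$ and the endpoint gradients by $(1+h^2)\varepsilon_{\mathsf{prox}}$, hence the output by $Ch^2\varepsilon_{\mathsf{prox}}$ in position and $Ch\varepsilon_{\mathsf{prox}}$ in momentum. The only difference is cosmetic and lies in the global step: the paper does not re-enter the $W_q$ local error lemma, but perturbs the already-proved one-step recurrence for $\widehat K$ via $W_{q,M_\kappa}(\widehat\mu\widehat K^{\varepsilon_{\mathsf{prox}}},\Pi_\eta)\le W_{q,M_\kappa}(\widehat\mu\widehat K^{\varepsilon_{\mathsf{prox}}},\widehat\mu\widehat K)+W_{q,M_\kappa}(\widehat\mu\widehat K,\Pi_\eta)$ and Young's inequality with $\lambda\asymp h/\kappa$, while your route of adding $\mathcal E_{\rm prox}$ to both the strong and weak errors gives the same $\kappa^2/h^2$ placement.
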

\begin{proof}
For the same Gaussian \(G\) in the stochastic gradient, \(1\)-smoothness and
\eqref{eq:rhmc-prox-tolerance} imply
\[
 \norm{\grad V(\widetilde x_y+\sqrt\eta G)
       -\grad V(x_y^++\sqrt\eta G)}
 \le\eps_{\mathsf{prox}}\,.
\]
The first stochastic gradient block therefore moves every Picard node by at most
\(h^2\eps_{\mathsf{prox}}\).  The error in the second block
stochastic gradient is at most
\(\eps_{\mathsf{prox}}+h^2\eps_{\mathsf{prox}}\).  This yields position error at most
\(Ch^2\eps_{\mathsf{prox}}\) and momentum error at most
\(Ch\eps_{\mathsf{prox}}\), proving
\eqref{eq:rhmc-proof-prox-local-error}.
Then, \eqref{eq:rhmc-proof-global-recurrence-inexact} by
applying the triangle inequality and Young's inequality in the one-step recurrence
\eqref{eq:rhmc-proof-one-step-recurrence}.
\end{proof}

For the gradient-only implementation, set
\begin{equation}
 \eps_{\mathsf{prox}}
 \deq c\mathfrak L_q^{-4}\sqrt d\,h^3\,.
 \label{eq:rhmc-finite-parameters}
\end{equation}
Since
\(\mathcal E_{\rm prox}\le Ch\eps_{\mathsf{prox}}\), we have
\[
 \frac\kappa h\, \mathcal E_{\rm prox}
 \le C\kappa\eps_{\mathsf{prox}}
 \le C\mathfrak L_q^{9/2}\kappa\sqrt{d+q}\,h^3\,.
\]
Thus \eqref{eq:rhmc-proof-global-recurrence-inexact} gives the same
conclusion as the exact proximal proof of \cref{thm:rhmc-module}.

\subsection{Gradient-only implementation}
\label{app:gradient-only-prox-implementation}

The Picard HMC algorithm in \cref{alg:rhmc-prefix} is stated using exact
proximal queries, and \cref{sec:rhmc-higher-moment-statement} introduces its
\(\eps_{\mathsf{prox}}\)-accurate variant.  This subsection implements every
such approximate query using only evaluations of the gradient.

Fix a potential \(V\) satisfying
\(\kappa^{-1}\Id\preceq\Hess V\preceq\Id\), a smoothing variance
\(0<\eta\le c_0<1\), and a center \(y\).  Write $x_y^+\deq\prox_{\eta V}(y)$.

\begin{algorithm}[Gradient descent]
\label{alg:rhmc-proof-prox}
Given \(y\in\R^d\) and \(\eps_{\mathsf{prox}}>0\), set \(x^{(0)}\deq y\) and
iterate
\begin{equation*}
 x^{(k+1)}\deq y-\eta\grad V(x^{(k)})
\end{equation*}
until
\begin{equation*}
 \norm{x^{(k+1)}-x^{(k)}}
 \le(1-\eta)\,\eps_{\mathsf{prox}}\,.
\end{equation*}
Return \(\widetilde x_y\deq x^{(k)}\).
\end{algorithm}

\begin{lemma}[Certificate and pointwise work]
\label{lem:rhmc-proof-prox-work}
The iteration in \cref{alg:rhmc-proof-prox} stops after finitely many steps
and satisfies
\begin{equation}
 \norm{\widetilde x_y-x_y^+}\le\eps_{\mathsf{prox}}\,.
 \label{eq:rhmc-proof-prox-certified}
\end{equation}
Its number \(N_{\mathsf{prox}}(y)\) of gradient evaluations obeys
\begin{equation}
 N_{\mathsf{prox}}(y)\le C_{c_0}\,\Bigl[1+
 \log\Bigl(1+\frac{\norm{\grad V(y)}}{\eps_{\mathsf{prox}}}\Bigr)\Bigr]\,.
 \label{eq:rhmc-proof-prox-work-pointwise}
\end{equation}
\end{lemma}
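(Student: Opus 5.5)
The plan is to read \cref{alg:rhmc-proof-prox} as a fixed-point iteration for the map \(T(x)\deq y-\eta\grad V(x)\), whose unique fixed point is \(x_y^+\) by proximal optimality \(\grad V(x_y^+)=(y-x_y^+)/\eta\). Since \(\grad V\) is \(1\)-Lipschitz and \(\eta\le c_0<1\), the map \(T\) is an \(\eta\)-contraction. This gives
\[
 \norm{x^{(k+1)}-x^{(k)}}\le\eta^k\,\norm{x^{(1)}-x^{(0)}}
 =\eta^{k+1}\,\norm{\grad V(y)}\,,
\]
because \(x^{(1)}-x^{(0)}=-\eta\grad V(y)\). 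Hence the stopping test is met as soon as \(\eta^{k+1}\norm{\grad V(y)}\le(1-\eta)\,\eps_{\mathsf{prox}}\), so the iteration terminates; if \(\grad V(y)=0\), it stops at \(k=0\).

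For the certificate \eqref{eq:rhmc-proof-prox-certified}, use the standard a posteriori bound for contractions. From
\[
 \norm{x^{(k)}-x_y^+}\le\norm{x^{(k)}-x^{(k+1)}}+\norm{T(x^{(k)})-T(x_y^+)}
 \le\norm{x^{(k+1)}-x^{(k)}}+\eta\,\norm{x^{(k)}-x_y^+}\,,
\]
one gets \(\norm{x^{(k)}-x_y^+}\le(1-\eta)^{-1}\norm{x^{(k+1)}-x^{(k)}}\le\eps_{\mathsf{prox}}\) at the stopping index. Since the output is \(x^{(k)}\), not \(x^{(k+1)}\), the bound is stated for \(x^{(k)}\).

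For the work bound \eqref{eq:rhmc-proof-prox-work-pointwise}, let \(k_\star\) be the stopping index. It satisfies
\[
 k_\star\le\Bigl\lceil\frac{\log(\norm{\grad V(y)}/((1-\eta)\eps_{\mathsf{prox}}))}{\log(1/\eta)}\Bigr\rceil_+\,.
\]
The run uses \(k_\star+1\) gradient evaluations, one for each computed \(x^{(j+1)}\). Using \(\eta\le c_0\), so that \(\log(1/\eta)\ge\log(1/c_0)\) and \(1-\eta\ge1-c_0\), this is at most \(C_{c_0}[1+\log(1+\norm{\grad V(y)}/\eps_{\mathsf{prox}})]\).

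The main subtlety is only bookkeeping. One has to evaluate \(\grad V(x^{(k)})\) once per step and reuse it, and to absorb the constant \(\log(1/(1-c_0))\) into \(C_{c_0}\). No step should be a real obstacle.
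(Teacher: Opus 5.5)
Your proposal is correct and follows essentially the same route as the paper: the map $y-\eta\grad V$ is an $\eta$-contraction with fixed point $x_y^+$, which gives the a posteriori certificate, and the geometric decay $\norm{x^{(k+1)}-x^{(k)}}\le\eta^{k+1}\norm{\grad V(y)}$ together with $\eta\le c_0<1$ gives the work bound. The only differences are minor: you derive the certificate by a one-step triangle inequality where the paper telescopes the increments (the two are equivalent), and you spell out the stopping index and the $k_\star+1$ gradient evaluations, which the paper leaves implicit.
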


\begin{proof}
    The map \(T_y\deq y-\eta\grad V\) is an \(\eta\)-contraction, and its
unique fixed point is \(x_y^+\).  Therefore
\[
 \norm{x^{(k)}-x_y^+}
 \le\sum_{j=k}^{\infty}\,\norm{x^{(j+1)}-x^{(j)}}
 \le\frac{\norm{x^{(k+1)}-x^{(k)}}}{1-\eta}\,,
\]
which proves the certificate is valid.  Moreover, $\norm{x^{(k+1)}-x^{(k)}}
 \le\eta^k\,\norm{x^{(1)}-x^{(0)}}
 =\eta^{k+1}\,\norm{\grad V(y)}$.
Using \(\eta\le c_0<1\)
gives \eqref{eq:rhmc-proof-prox-work-pointwise}.
\end{proof}

The pointwise bound depends on the queried center.  To obtain the expected
complexity claimed by the gradient-only sampler, we record that the centers
encountered along the realized run have controlled second moments.

\begin{lemma}[Expected work along the Picard HMC run]
\label{lem:rhmc-proof-expected-prox-work}
Under the parameter choices in
\eqref{eq:rhmc-proof-parameter-choices} and
\eqref{eq:rhmc-finite-parameters}, assume
also that \(h\mathfrak L_q\le c/\kappa\) and \(\eta\le c_0\), and implement every
\(\eps_{\mathsf{prox}}\)-accurate proximal query in
\cref{alg:rhmc-prefix} using
\cref{alg:rhmc-proof-prox}.  Every argument \(Y\) supplied to this routine
obeys
\begin{equation}
 \E\norm{\grad V(Y)}^2\le C\kappa d\,\mathfrak L_q^{18}\,.
 \label{eq:rhmc-proof-queried-gradient-moment}
\end{equation}
Consequently the expected total number of gradient evaluations is at most
\begin{equation}
 \E N_{\grad V}\le C\,\frac\kappa h\,\mathfrak L_q^3\,.
 \label{eq:rhmc-proof-inexact-total-work}
\end{equation}
\end{lemma}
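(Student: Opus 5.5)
Two claims need proof: the moment bound \eqref{eq:rhmc-proof-queried-gradient-moment} for every queried center, and the cost bound \eqref{eq:rhmc-proof-inexact-total-work}. The cost bound follows from the moment bound. The pointwise work bound \eqref{eq:rhmc-proof-prox-work-pointwise} is concave in $\norm{\grad V(Y)}$ through the logarithm, so Jensen gives
\begin{equation*}
 \E N_{\mathsf{prox}}(Y)\le C\Bigl[1+\tfrac12\log\Bigl(1+\frac{\E\norm{\grad V(Y)}^2}{\eps_{\mathsf{prox}}^2}\Bigr)\Bigr]\,.
\end{equation*}
We then insert \eqref{eq:rhmc-proof-queried-gradient-moment} together with $\eps_{\mathsf{prox}}=c\mathfrak L_q^{-4}\sqrt d\,h^3$. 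The ratio inside the logarithm is $\kappa\,\mathfrak L_q^{26}h^{-6}$ up to constants. Since $\log(1/h)\le C\mathfrak L_q$ and $\log\kappa\le\mathfrak L_q$, the per-query work is $O(\mathfrak L_q)$. Each phase makes $2J\le C\mathfrak L_q$ proximal queries, and there are $N\le C\kappa\mathfrak L_q/h$ phases. Multiplying gives $C\kappa\mathfrak L_q^3/h$, and the $2JN$ direct gradient queries are of lower order.

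For the moment bound, I would first note that every queried center is either a node $X^{[0]}_{t_j}$ or a node $X^{[1]}_{t_j}$ within some phase. Because $\grad V$ is $1$-Lipschitz, we have $\norm{\grad V(Y)}^2\le 2\norm{\grad V(Y^\star)}^2+2\norm{Y-Y^\star}^2$ for any coupled point $Y^\star$. The plan is to compare against the phase start $X_{\rm init}$. From the algorithm, $\norm{X^{[0]}_{t_j}-X_{\rm init}}\le h\norm{P_0}$. For the second-level nodes, \cref{prop:chebyshev-lobatto-bounds} gives $\norm{X^{[1]}_{t_j}-X_{\rm init}}\le h\norm{P_0}+h^2\max_j\norm{\widehat g_{\eta,\eps_{\mathsf{prox}}}(X^{[0]}_{t_j};G_j)}$. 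The stochastic gradients are in turn bounded by $\norm{\grad V(X^{[0]}_{t_j})}+\eta\norm{\grad V(X^{[0]}_{t_j})}+\eps_{\mathsf{prox}}+\sqrt\eta\,\norm{G_j}$. This uses $\norm{\widetilde x_y-y}\le\eta\norm{\grad V(\widetilde x_y)}+\eps_{\mathsf{prox}}$ and the prox fixed-point relation. Hence it suffices to bound $\E\norm{\grad V(X_{\rm init})}^2$ and $\E\norm{P_0}^2$ uniformly over phases by $C\kappa d\,\mathfrak L_q^{18}$.

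That uniform-in-phase bound is the main obstacle. The cleanest route is to reuse the $W_2$ recurrence. By \eqref{eq:rhmc-proof-global-recurrence-inexact} with $q=2$ and the bound $W_{2}\le W_q$, every intermediate phase law $\widehat\mu_n$ satisfies $W_{2,M_\kappa}(\widehat\mu_n,\Pi_\eta)\le C\sqrt{\kappa d}+\text{(error terms)}$. Here the initial term comes from \cref{lem:rhmc-proof-initial-radius} and decays monotonically, and the error terms are at most $C\mathfrak L_q^{9/2}\kappa\sqrt{d+q}\,h^3\le C\sqrt d$ under $h\le c/(\kappa\mathfrak L_q)$. Coupling with $(Y^\star,P^\star)\sim\Pi_\eta$ and using $\E_{\pi_\eta}\norm{\grad V}^2\le C\,(d+\eta d)$ then gives $\E\norm{\grad V(X_{\rm init})}^2+\E\norm{P_0}^2\le C\kappa d$. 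The subtlety is circularity: the recurrence's cost analysis is exactly what we are proving, but the recurrence itself only needs the certificate \eqref{eq:rhmc-proof-prox-certified}, which holds regardless of work, so there is no loop. If this step is cleaner with a crude but robust bound, I would instead accept the generous polylog $\mathfrak L_q^{18}$ and absorb the $q$- and $\mathfrak L_q$-dependent factors in the error terms, since the stated power leaves ample room.
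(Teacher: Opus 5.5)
Your proposal is correct and follows essentially the same route as the paper. Both arguments first obtain a uniform-in-phase second-moment bound from the $q=2$ recurrence \eqref{eq:rhmc-proof-global-recurrence-inexact} together with \cref{lem:rhmc-proof-initial-radius}. They then push this bound to both layers of queried centers using $1$-smoothness, the prox certificate, and $\sum_j|\omega_{i,j}|\le h^2$, and finish with Jensen on the logarithmic pointwise work and a count of the $2JN$ calls.

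The paper anchors at the minimizer $x_\star$, using $x^+_{x_\star}=x_\star$ and prox non-expansiveness, whereas you anchor at a $\Pi_\eta$-coupled point; this difference is cosmetic. Your intermediate inequality $\norm{\widetilde x_y-y}\le\eta\norm{\grad V(\widetilde x_y)}+\eps_{\mathsf{prox}}$ should have $x_y^+$ in place of $\widetilde x_y$ (or carry an extra $\eta\eps_{\mathsf{prox}}$), but this is harmless up to constants.
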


\begin{proof}
Let \(x_\star\) minimize \(V\).  By
\eqref{eq:rhmc-proof-global-recurrence-inexact} with \(q=2\) and
\cref{lem:rhmc-proof-initial-radius},
\[
 \sup_{j\le N}\E\bigl[\norm{X_j-x_\star}^2+\norm{P_j}^2\bigr]
 \le C\kappa d\,\mathfrak L_q^{18}\,.
\]
This also controls every layer \(1\) node.  Since
\(x_{x_\star}^+=x_\star\), non-expansiveness and
\eqref{eq:rhmc-proof-prox-certified} give
\(\norm{\widetilde x_Y-x_\star}\le
\norm{Y-x_\star}+\eps_{\mathsf{prox}}\); hence \(1\)-smoothness and Gaussian
second moments control the gradients used to form layer \(2\).  The bound
\(\max_{i\in[J]}\sum_{j\in[J]}|\omega_{i,j}|\le h^2\) then controls
every layer \(2\) node.  Finally,
\(\norm{\grad V(Y)}\le\norm{Y-x_\star}\) at either layer, proving
\eqref{eq:rhmc-proof-queried-gradient-moment}.

By \cref{lem:rhmc-proof-prox-work} and Jensen's inequality,
\[
 \E N_{\mathsf{prox}}(Y)
 \le C\,\Bigl[1+
 \log\Bigl(
   1+\frac{(\E\norm{\grad V(Y)}^2)^{1/2}}
           {\eps_{\mathsf{prox}}}
 \Bigr)\Bigr]
 \le C\mathfrak L_q\,.
\]
There are \(2J\) proximal and \(2J\) direct gradient calls per phase.
Using \(J\le C\mathfrak L_q\) and
\(N\le C\kappa\mathfrak L_q/h\) proves
\eqref{eq:rhmc-proof-inexact-total-work}.
\end{proof}

\begin{proof}[Proof of \cref{thm:rhmc-module}]
Replace every proximal query in \cref{alg:rhmc-prefix} by
\cref{alg:rhmc-proof-prox} at the tolerance
\eqref{eq:rhmc-finite-parameters}.
The certificate in \cref{lem:rhmc-proof-prox-work} makes every call a valid
\(\eps_{\mathsf{prox}}\)-accurate proximal query.  Then, \cref{app:gradient-only-proof-modifications} gives the
claimed \(W_q\) bound, and the query bound follows from
\cref{lem:rhmc-proof-expected-prox-work} after substituting the step size
chosen in the main text proof.
\end{proof}

\section{Extension to arbitrary Picard depth}
\label{app:depth-K-picard}

The algorithm used in the main results takes Picard depth $2$.  In this appendix, we record the discretization error of Picard HMC using the exact smoothed gradient, at arbitrary depth.

Retain the Chebyshev--Lobatto nodes and coefficients from
\eqref{eq:rhmc-chebyshev-data}; their properties are summarized in
\cref{app:chebyshev-lobatto}.  Let \(g_\eta\deq\grad V_\eta\), let
\(\Pi_\eta\deq\pi_\eta\otimes\cN(0,\Id)\), set
\(\Omega\deq(\omega_{i,j})_{i,j=1}^J\). Recall that
\begin{equation*}
 \max_{i\in[J]}\sum_{j\in[J]}\lvert\omega_{i,j}\rvert
 \le h^2\,.
\end{equation*}

\begin{algorithm}[Depth-\(K\) exact Picard HMC phase]
\label{alg:depth-K-reference}
Given \((X_{\rm init},P_{\rm init})\), draw independent
\(\zeta_0,\zeta_1\sim\cN(0,\Id)\), set
\(a_h\deq e^{-h/2}\), \(\sigma_h\deq(1-e^{-h})^{1/2}\), and define
\begin{align}
 P_0&\deq a_hP_{\rm init}+\sigma_h\zeta_0\,,
 &X_{t_i}^{[0]}&\deq X_{\rm init}+t_iP_0\,,
 \notag\\
 X_{t_i}^{[\ell+1]}
 &\deq X_{\rm init}+t_iP_0
   -\sum_{j\in[J]}\omega_{i,j}g_\eta(X_{t_j}^{[\ell]})\,,
 &&0\le\ell<K-1\,.
 \label{eq:depth-K-iteration}
\end{align}
Set
\begin{align*}
 P_h^{[K]}
 &\deq P_0-\sum_{j\in[J]}\omega_jg_\eta(X_{t_j}^{[K-1]})\,,
 &X_h^{[K]}
 &\deq X_{\rm init}+hP_0
   -\sum_{j\in[J]}\omega_{J,j}g_\eta(X_{t_j}^{[K-1]})\,,
\end{align*}
and write the resulting next-phase initialization as
\begin{align*}
 X_{\rm init}^+&\deq X_h^{[K]}\,,
 &P_{\rm init}^+&\deq a_hP_h^{[K]}+\sigma_h\zeta_1\,.
\end{align*}
For \(z=(X_{\rm init},P_{\rm init})\) and
\(\zeta=(\zeta_0,\zeta_1)\), write
\(\Phi_z^{[J,K]}(\zeta) \deq (X_{\rm init}^+, P_{\rm init}^+) \) and
\(K_{\eta,h}^{[J,K]}\) for its kernel.
\end{algorithm}

\begin{theorem}[Depth-\(K\) Picard HMC]
\label{thm:depth-K-reference}
Suppose \(\kappa\ge1\), \(0<\eta\le1\), \(V\in C^2(\R^d)\), and
\(\kappa^{-1}\Id\preceq\Hess V\preceq\Id\).  There are
universal \(c,C>0\) such that, if
\begin{equation}
 h^2\le\frac14\,,
 \qquad
 h\le\frac c\kappa\,,
 \label{eq:depth-K-restrictions}
\end{equation}
then, for every integer \(K\ge1\), every \(q\ge2\), every
\(z,z'\in\R^{2d}\), and every \(\zeta\in\R^{2d}\),
\begin{align}
 W_{q,M_\kappa}\bigl(
   \delta_zK_{\eta,h}^{[J,K]}\,,
   \delta_{z'}K_{\eta,h}^{[J,K]}
 \bigr)
 &\le
 (1-ch/\kappa)\,\norm{z-z'}_{M_\kappa}\,,
 \label{eq:depth-K-contraction}\\
 \norm{
   \{\Phi_z^{[J,K]}(\zeta)-z\}
   -\{\Phi_{z'}^{[J,K]}(\zeta)-z'\}
 }_{M_\kappa}
 &\le Ch\,\norm{z-z'}_{M_\kappa}\,,
 \label{eq:depth-K-increment-coupling}\\
 W_{q,M_\kappa}\bigl(
   \Pi_\eta K_{\eta,h}^{[J,K]}\,,
 \Pi_\eta
 \bigr)
 &\le
 C\sqrt{d+q}\,
 \Bigl\{
  B_{J,q} h\,\Bigl(\frac h{\sqrt\eta}\Bigr)^J
  +h^{2K+1}
 \Bigr\}\,,
 \label{eq:depth-K-defect}
\end{align}
where $B_{J,q}\deq C_0^{J+1}q^J\sqrt{J!}$, and \(C_0\) is universal.
In particular, \(K=2\) recovers \cref{prop:rhmc-proof-deterministic-defect}.
\end{theorem}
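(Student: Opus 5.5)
The proof reuses the depth-$2$ arguments of \cref{sec:rhmc-proof-exact-kernel} and \cref{prop:rhmc-proof-deterministic-defect-l2,prop:rhmc-proof-deterministic-defect}, with an induction over the Picard layer index $\ell$ wherever depth $2$ was hard-coded.

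\textbf{Contraction and increment coupling.} The kernel is driven only by $\zeta$, so \eqref{eq:depth-K-contraction} follows from a pointwise synchronous bound for every fixed $\zeta$. We extend \cref{lem:rhmc-proof-difference-expansion} to depth $K$.
\begin{itemize}
\item Write $\Delta X^{[\ell]}_{t_i}=\Delta x+a t_i\Delta p+E^{[\ell]}_i$, with the same averaged-Hessian representation $g_\eta(x)-g_\eta(x')=H(x,x')(x-x')$ as before.
\item By induction on $\ell$, the bound $\sum_j|\omega_{i,j}|\le h^2$ and $\norm{H}_{\op}\le1$ give
\[
\max_i\norm{E_i^{[\ell]}}\le h^2\bigl(\norm{\Delta x}+h\norm{\Delta p}\bigr)\bigl(1+h^2+\dots+h^{2(\ell-1)}\bigr)\le 2h^2\bigl(\norm{\Delta x}+h\norm{\Delta p}\bigr),
\]
where the last step uses $h^2\le1/4$. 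This is uniform in $\ell$ and hence in $K$.
\item The endpoint computation is then verbatim. We obtain $\Delta_1=(\Id+hA_H)\Delta_0+\mathcal R_h\Delta_0$ with $\norm{\mathcal R_h}_{\op}\le Ch^2$, where $C$ does not depend on $K$, and $H=h^{-1}\sum_j\omega_jH_j$ satisfies $\frac1{2\kappa}\Id\preceq H\preceq\Id$.
\item The LMI \eqref{eq:rhmc-proof-LMI-2} and the proofs of \cref{prop:rhmc-proof-contraction,lem:rhmc-proof-increment-coupling} then give \eqref{eq:depth-K-contraction} and \eqref{eq:depth-K-increment-coupling}. The $W_q$ statement follows by taking the synchronous coupling.
\end{itemize}

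\textbf{Discretization defect.} Couple an exact phase started from $\Pi_\eta$ with the numerical phase using the same OU noise. Work in the uniform-in-time norm $\norm\cdot_{\infty,q}$ over $L^q(\Pi_\eta)$-valued curves.
\begin{itemize}
\item Stationarity and \cite[Lemma~6.2.7]{Chewi26Book} give $\norm{X^{[0]}-X}_{\infty,q}\le C\sqrt{d+q}\,h^2$.
\item The interpolation error $\mathcal E_{J,q}$ is bounded by \eqref{eq:rhmc-proof-interpolation-error} through \cref{lem:rhmc-proof-iterated-derivative}.
\item For each $\ell$, insert the interpolant along the exact trajectory, exactly as in \cref{prop:rhmc-proof-deterministic-defect-l2}. \Cref{rem:integrated-weights-l1} holds uniformly in $t$, and $g_\eta$ is $1$-Lipschitz, so
\[
\norm{X^{[\ell+1]}-X}_{\infty,q}\le \tfrac{h^2}{2}\mathcal E_{J,q}+h^2\norm{X^{[\ell]}-X}_{\infty,q}.
\]
\item Unrolling this recursion and summing the geometric series with $h^2\le1/4$ yields
\[
\norm{X^{[K-1]}-X}_{\infty,q}\le Ch^2\mathcal E_{J,q}+C\sqrt{d+q}\,h^{2K}.
\]
\item At the endpoint, the momentum carries a factor $h$ from $\sum_j\omega_j=h$, giving error $Ch\mathcal E_{J,q}+C\sqrt{d+q}\,h^{2K+1}$. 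The position error is smaller by a further factor $h$.
\item The final OU refresh cancels the common noise and contracts the momentum difference. Norm equivalence of $M_\kappa$ then gives \eqref{eq:depth-K-defect}.
\end{itemize}

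\textbf{Main obstacle.} The main care point is keeping every constant independent of $K$. This is what the restriction $h^2\le1/4$ is for: it makes each layer recursion a contraction with summable geometric factors. It also requires checking that the Picard layers are evaluated only at the nodes, while the interpolation remainder is measured along the exact stationary trajectory, never along the iterates; otherwise the stationarity bounds would not apply.

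Finally, setting $K=2$ gives the $h^5$ term and recovers \cref{prop:rhmc-proof-deterministic-defect}.
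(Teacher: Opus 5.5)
Your proposal is correct and follows essentially the paper's proof. The contraction and increment coupling use the same layer-by-layer induction with $\max_i\sum_j|\omega_{i,j}|\le h^2\le 1/4$, feeding into the unchanged endpoint expansion and the LMI. For the defect, the paper compares the exact nodes and the Picard iterates to the fixed point $\overline{\mathsf X}$ of the node map $\mathcal T$, whereas you unroll the depth-$2$ recursion directly against the exact trajectory; both routes give the same node error $Ch^2\mathcal E_{J,q}+C\sqrt{d+q}\,h^{2K}$ with $K$-independent constants.
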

\begin{proof}
\textbf{Contraction and coupling.}
As in the proof of \cref{prop:rhmc-proof-contraction}, couple inputs
\(z=(x_{\rm init},p_{\rm init})\) and
\(z'=(x'_{\rm init},p'_{\rm init})\) using the same Gaussian variables, and
write $\Delta_0\deq z-z'=(\Delta x,\Delta p)$, \(\Delta_1\deq
\Phi_z^{[J,K]}(\zeta)-\Phi_{z'}^{[J,K]}(\zeta)\).
Then,
\begin{align*}
 \Delta X_{t_i}^{[0]}&=\Delta x+a_ht_i\Delta p\,,
 \qquad \Delta X_{t_i}^{[\ell+1]}
 =\Delta X_{t_i}^{[0]}-
   \sum_{j\in[J]}\omega_{i,j}H_j^{[\ell]}
   \Delta X_{t_j}^{[\ell]}\,,
\end{align*}
where
\[
 H_j^{[\ell]}
 \deq H\bigl(X_{t_j}^{[\ell]},X_{t_j}^{\prime[\ell]}\bigr)\,,
 \qquad
 (2\kappa)^{-1}\Id
 \preceq H_j^{[\ell]}\preceq\Id\,.
\]
Since $\max_{i\in[J]}\sum_{j\in[J]}\lvert\omega_{i,j}\rvert \le h^2\le 1/4$,
induction in \(\ell\) gives
\[
 \max_{j\in[J]}\,
 \norm{\Delta X_{t_j}^{[K-1]}-\Delta X_{t_j}^{[0]}}
 \le Ch^2\,\norm{\Delta_0}\,.
\]
Now set
\[
 H_j\deq H\bigl(X_{t_j}^{[K-1]},X_{t_j}^{\prime[K-1]}\bigr)\,,
 \qquad
 H\deq h^{-1}\sum_{j\in[J]}\omega_jH_j\,.
\]
The endpoint calculation in
\cref{lem:rhmc-proof-difference-expansion} therefore applies unchanged and
yields
\[
 \Delta_1=(\Id+hA_H)\Delta_0+\mathcal R_h\Delta_0\,,
 \qquad
 \norm{\mathcal R_h}_{\op}\le Ch^2\,.
\]
The matrix inequality in the proof of
\cref{prop:rhmc-proof-contraction}, followed by
\(h\le c/\kappa\), proves
\eqref{eq:depth-K-contraction}.
Moreover, subtracting \(\Delta_0\) from the expansion and using
\(\norm{A_H}_{\op}\le C\) gives
\[
 \norm{\Delta_1-\Delta_0}_{M_\kappa}
 \le C\,(h+h^2)\,\norm{\Delta_0}_{M_\kappa}
 \le Ch\,\norm{\Delta_0}_{M_\kappa}\,,
\]
which is \eqref{eq:depth-K-increment-coupling}.

\textbf{Discretization error.}
Start the phase from \(\Pi_\eta\).
The first OU half-refresh preserves this law.  Let \(X_t\),
\(0\le t\le h\), be the position along the exact Hamiltonian trajectory,
and put $g(t) \deq g_\eta(X_t)$.
The interpolation estimate already proved in
\eqref{eq:rhmc-proof-interpolation-error} gives
\begin{equation}
 \mathcal E_{J,q}
 \deq \sup_{0\le t\le h}\,
 \norm{g(t)-\mathcal I_{J,h}g(t)}_{L^q}
 \le B_{J,q}\sqrt{d+q}\,
       \Bigl(\frac h{\sqrt\eta}\Bigr)^J\,.
 \label{eq:depth-K-interpolation}
\end{equation}

Let \(\mathsf X\deq(X_{t_j})_{j\in[J]}\) and
\(\mathsf X^{[0]}\deq(X_{t_j}^{[0]})_{j\in[J]}\), and use the norm given by
the maximum over nodes of the \(L^q\)-norm.  The exact Volterra equation
and polynomial interpolation give
\begin{equation}
 \mathsf X
 =\mathsf X^{[0]}-\Omega g_\eta(\mathsf X)+\varrho\,,
 \qquad
 \norm{\varrho}\le\frac{h^2}{2}\,\mathcal E_{J,q}\,.
 \label{eq:depth-K-exact-node-equation}
\end{equation}
The map $\mathcal T :\mathsf y \mapsto \mathsf X^{[0]}-\Omega g_\eta(\mathsf y)$ has Lipschitz constant at most \(h^2\), because \(g_\eta\) is
\(1\)-Lipschitz.  Let \(\overline{\mathsf X}\) be its fixed point.  From
\eqref{eq:depth-K-exact-node-equation},
\begin{equation}
 \norm{\overline{\mathsf X}-\mathsf X}
 \le\frac{h^2}{2\,(1-h^2)}\,\mathcal E_{J,q}\,.
 \label{eq:depth-K-fixed-exact}
\end{equation}
The proof of \cref{prop:rhmc-proof-deterministic-defect} gives
\(\sup_{0\le t\le h}\,\norm{g(t)}_{L^q}\le C\sqrt{d+q}\).  Combining this with
\eqref{eq:depth-K-fixed-exact} and the fixed-point equation yields
\begin{equation*}
 \norm{\mathsf X^{[0]}-\overline{\mathsf X}}
 \le Ch^2\sqrt{d+q}
      +Ch^2\mathcal E_{J,q}\,.
\end{equation*}
The recursion \eqref{eq:depth-K-iteration} is precisely the iteration of
\(\mathcal T\), starting from \(\mathsf X^{[0]}\).  Hence
\begin{equation}
 \norm{\mathsf X^{[K-1]}-\overline{\mathsf X}}
 \le Ch^{2K}\,
       \{\sqrt{d+q}+\mathcal E_{J,q}\}\,.
 \label{eq:depth-K-Picard-remainder}
\end{equation}

The exact Hamiltonian endpoint satisfies
\begin{align*}
 X_h&=X_{\rm init}+hP_0-\int_0^h(h-t)g(t)\,\dd t\,, \qquad
 P_h=P_0-\int_0^h g(t)\,\dd t\,.
\end{align*}
The quadrature rules integrate \(\mathcal I_{J,h}g\) exactly.  Use
\eqref{eq:depth-K-fixed-exact}--\eqref{eq:depth-K-Picard-remainder},
\(1\)-Lipschitzness of \(g_\eta\), and the coefficient bounds in
\eqref{eq:rhmc-chebyshev-bounds}.  The momentum estimate dominates and
gives
\begin{equation}
 \norm{(X_{\rm init}^+,P_{\rm init}^+)-(X_h,P_h^+)}_{L^q,M_\kappa}
 \le C\,
 \bigl\{h\mathcal E_{J,q}
 +h^{2K+1}\sqrt{d+q}\bigr\}\,.
 \label{eq:depth-K-endpoint-error}
\end{equation}
Here \(P_h^+\) includes the common final OU half-refresh.  The terms in
\eqref{eq:depth-K-Picard-remainder} containing \(\mathcal E_{J,q}\) are
absorbed into the first term because \(h^2\le1/4\) and
\(h\le1\).
Substituting
\eqref{eq:depth-K-interpolation} into
\eqref{eq:depth-K-endpoint-error} proves
\eqref{eq:depth-K-defect}.
\end{proof}

\bibliographystyle{alpha}
\bibliography{ref}

\end{document}